\theoremstyle{plain}
\newtheorem{satz}{Satz}[section]
\newtheorem{korollar}[satz]{Korollar}
\newtheorem{lemma}[satz]{Lemma}
\theoremstyle{definition}
\newtheorem{beispiel}[satz]{Beispiel}
\newtheorem{definition}[satz]{Definition}
\newtheorem{notation}[satz]{Notation}
\theoremstyle{remark}
\newtheorem{bemerkung}[satz]{Bemerkung}
\numberwithin{equation}{section} 
\newcommand{\N}{\mathbb{N}}
\newcommand{\R}{\mathbb{R}}
\newcommand{\C}{\mathbb{C}}
\newcommand{\T}{\mathbb{T}}
\newcommand{\BB}{\mathcal{B}}
\newcommand{\EE}{\mathcal{E}}
\newcommand{\FF}{\mathcal{F}}
\newcommand{\KK}{\mathcal{K}}
\newcommand{\MM}{\mathcal{M}}
\newcommand{\OO}{\mathcal{O}}
\newcommand{\TT}{\mathcal{T}}
\newcommand{\FrU}{\mathfrak{U}}
\newcommand{\set}[2]{\left\{#1:#2\right\}}
\newcommand{\compl}[1]{#1^{\textnormal{c}}}
\newcommand{\intr}[1]{#1^{o}}
\newcommand{\cl}[1]{\overline{#1}}
\newcommand{\abs}[2][]{\left|#2\right|_{#1}}
\newcommand{\norm}[2][]{\left\|#2\right\|_{#1}}
\newcommand{\chevr}[3][]{\left\langle #2,\,#3\right\rangle_{#1}}
\newcommand{\pred}[2][]{PD_{#1}(#2)}
\newcommand{\lanh}[1]{{}^{\bot}#1}
\newcommand{\ranh}[1]{#1^{\bot}}
\newcommand{\orth}[1]{#1^{\bot}} 
\DeclareMathOperator{\spn}{span}
\DeclareMathOperator{\ran}{ran}
\DeclareMathOperator{\id}{id}
\DeclareMathOperator{\re}{Re} 
\DeclareMathOperator{\im}{Im} 
\DeclareMathOperator{\tr}{tr}
\DeclareMathOperator{\supp}{supp}
\DeclareMathOperator{\Sub}{Sub}
\begin{document}

\begin{titlepage}
  \begin{center}
    \includegraphics[width=0.45\textwidth]{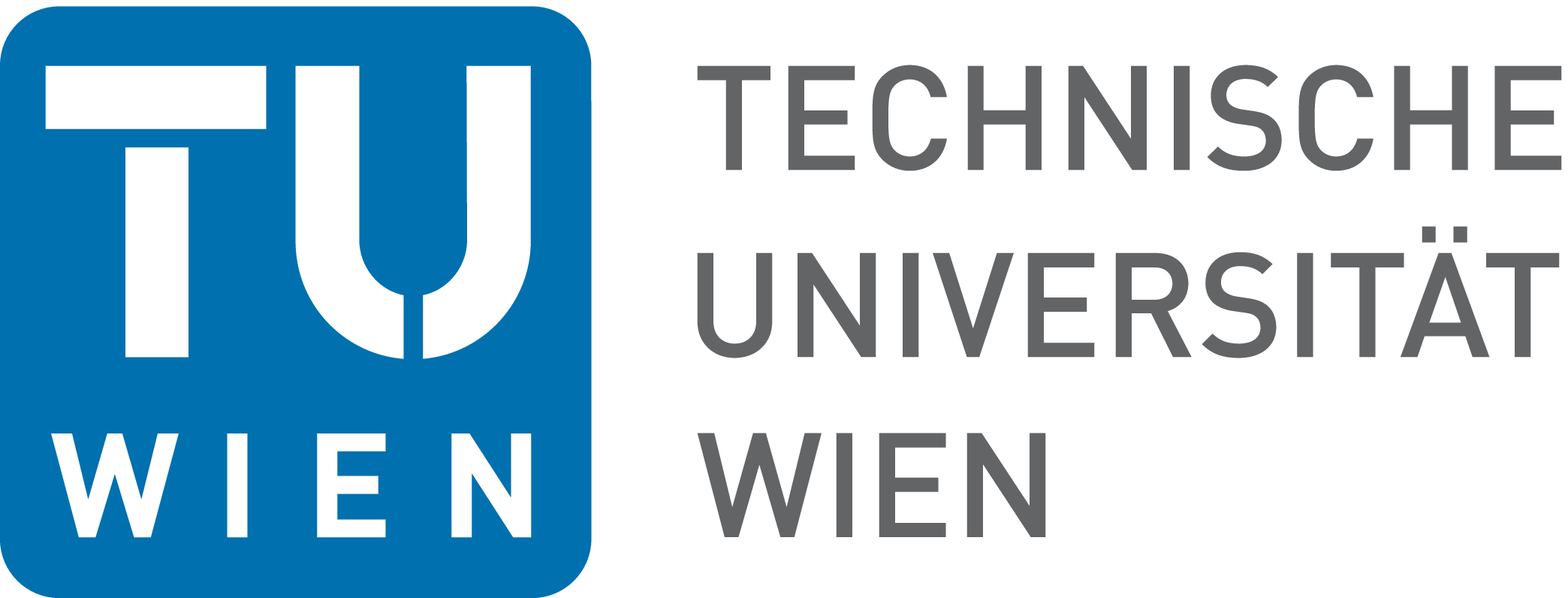}
    \vskip 1.5cm
    {\LARGE \textsc{Diplomarbeit}}
    \vskip 1cm
    {\huge\bfseries $W^{*}$-Algebren als vollständige\\[0.4ex]
      Axiomatisierung der\\[1ex]
      Von-Neumann-Algebren}
    \vskip 1.5cm
    \large 
    ausgeführt am    
    \vskip 0.75cm
    {\Large Institut für\\[1ex] Analysis und Scientific Computing}\\[1ex]
    {\Large TU Wien}
    \vskip 1.25cm
    unter der Anleitung von
    \vskip 1cm
    {\Large\bfseries Ao.Univ.Prof. Dipl.-Ing. Dr.techn.\\[1ex]
      Michael Kaltenbäck}\\[1ex]
    \vskip 1.25cm
    durch
    \vskip 1cm
    {\Large\bfseries Clemens Schindler, BSc.}\\[1ex]
  \end{center}
  
  \vfill
  
  \small
  Wien, am 28. Dezember 2019
  \vspace*{-15mm}
\end{titlepage}

\tableofcontents
\newpage
\addcontentsline{toc}{chapter}{Einleitung}
\chapter*{Einleitung}
\label{cha:einleitung}
Die Theorie der $C^{*}$-Algebren ist entstanden als Verallgemeinerung
der Theorie beschränkter Operatoren auf einem Hilbertraum,
insbesondere der Spektraltheorie. Anstelle von Räumen von Operatoren
werden abstrakte $*$-Algebren untersucht, die eine derartige
Normstruktur tragen, dass die algebraische und die
analytisch-topologische Struktur kompatibel sind. Die Definition einer
$C^{*}$-Algebra besteht also nur aus einer Liste von Axiomen, ohne auf
Eigenschaften einer konkreten zugrundeliegenden Struktur,
beispielsweise eines Hilbertraums, Bezug zu nehmen. Diese allgemeinere
Sichtweise bietet -- vom unbestrittenen ästhetischen Wert einer
axiomatischen Behandlung abgesehen -- einige "`handfeste"' Vorteile;
zu nennen ist vor allem die gesamte Theorie kommutativer
$C^{*}$-Algebren, die auch bei der Behandlung nichtkommutativer
$C^{*}$-Algebren sowie konkreter normaler Operatoren einfließt.

Es würde jedoch den $C^{*}$-Algebren nicht gerecht werden, sie als
echte Verallgemeinerung des Raums $L_{b}(H)$ der beschränkten
Operatoren auf einem Hilbertraum $H$ und der abgeschlossenen
$*$-Unteralgebren von $L_{b}(H)$ zu sehen. Der Satz von
Gelfand-Naimark besagt nämlich, dass eine beliebige $C^{*}$-Algebra
isometrisch isomorph als $*$-Algebra zu einer solchen abgeschlossenen
$*$-Unteralgebra ist. Dies bedeutet, dass durch $C^{*}$-Algebren die
Struktur abgeschlossener $*$-Unteralgebren des Raums der beschränkten
Operatoren auf einem beliebigen Hilbertraum vollständig axiomatisiert
wird, obwohl die Abgeschlossenheit über die Abbildungsnorm vom
zugrundeliegenden Hilbertraum in ad hoc nicht zu trennender Weise
abhängig ist. Dieser Standpunkt der abstrakten Strukturanalyse soll in
der folgenden Arbeit eingenommen werden, um einerseits den Satz von
Gelfand-Naimark zu beweisen und andererseits eine weitere Klasse von
Operatoralgebren zu untersuchen, die Von-Neumann-Algebren. Diese
ergeben sich auch als abgeschlossene $*$-Unteralgebren, wobei
allerdings eine andere Topologie auf dem Raum der beschränkten
Operatoren betrachtet wird -- eine Möglichkeit ist die schwache
Operatortopologie. Klarerweise hängt diese ebenfalls, intuitiv noch
enger als die Abbildungsnorm, mit der Hilbert\-raumstruktur zusammen,
sodass die Frage nach einer passenden Axiomatisierung aufgeworfen
wird. \emph{S. Sakai} konnte dieses Problem mit der Einführung der
sogenannten \emph{$W^{*}$-Algebren} lösen. Dabei handelt es sich um
$C^{*}$-Algebren, die gleichzeitig, als Banachraum betrachtet, bis auf
isometrische Isomorphie der Dualraum eines Banachraums sind. Dass
dieses Axiomensystem, das keinerlei Hilbertraum erwähnt, die
gewünschten Eigenschaften hat, wird durch den Satz von Sakai
gezeigt. Das Hauptziel der vorliegenden Arbeit soll es sein, aufbauend
auf den Inhalten eines Funktionalanalysis-Vorlesungszyklus den Beweis
dieses Satzes und die dafür notwendigen Grundlagen in moderner Weise
darzustellen. Dabei stützen wir uns auf die Argumentation in Sakais
Buch \cite{sakai:cstar-wstar}, die wir an vielen Stellen ergänzt und
erweitert haben, um diverse Ungenauigkeiten zu klären und wenn
notwendig zu korrigieren. Im Zuge dessen war es möglich, den Satz von
Sakai noch auszubauen, wodurch wir den Beweis eines weiteren Resultats
vereinfachen konnten.

In Kapitel~\ref{cha:resultate-fana} werden notwendige Grundlagen aus
verschiedenen Teilgebieten der Funktionalanalysis behandelt,
Kapitel~\ref{cha:eins-proj-cstar} greift die $C^{*}$-Algebren auf und
konzentriert sich auf gewisse Elemente, nämlich Einselemente und
Projektionen. Kapitel~\ref{cha:abstrakte-c-algebren} schließt die
Behandlung abstrakter $C^{*}$-Algebren mit dem Satz von
Gelfand-Naimark ab. Mit Operatoralgebren beschäftigt sich
Kapitel~\ref{cha:optop-von-neumann-algebren}; insbesondere werden
verschiedene Operatortopologien einführt, deren Zusammenhänge
untersucht sowie die Von-Neumann-Algebren definiert. In den
verbleibenden Kapiteln~\ref{cha:w-algebren} bis
\ref{cha:eindeutigkeit} stehen $W^{*}$-Algebren im
Mittelpunkt. Kapitel~\ref{cha:w-algebren} behandelt allgemeine
Eigenschaften wie eine kanonische Topologie und diverse
Stetigkeitsaussagen. In Kapitel~\ref{cha:satz_sakai} werden die
Verbindungen zu Operatoren, vor allem zu Von-Neumann-Algebren,
untersucht sowie der Satz von Sakai bewiesen. Das abschließende
Kapitel~\ref{cha:eindeutigkeit} beschäftigt sich mit
Eindeutigkeitsaussagen, die der Definition der $W^{*}$-Algebren zu
hoher Ästhetik verhelfen.
\vspace{1cm}

\noindent An dieser Stelle möchte ich mich bei meinem Betreuer Michael
Kaltenbäck dafür bedanken, dass er mich auf ein interessantes Stück
Mathematik aufmerksam gemacht hat, und natürlich für seine
Verbesserungsvorschläge dieser Arbeit. Großer Dank gebührt außerdem
meinen Eltern für ihre moralische und finanzielle Unterstützung.

\clearpage

\chapter{Einige Resultate aus der Funktionalanalysis}
\label{cha:resultate-fana}
Wir starten mit der Auflistung diverser in dieser Arbeit benötigter
Ergebnisse der Funktionalanalysis. Der vollständige Beweis sämtlicher
Aussagen würde allerdings den Rahmen dieser Arbeit sprengen.

\section{Banachräume}
\label{sec:banachraume}
\begin{notation}
  \hspace{0mm}
  \begin{enumerate}[label=(\roman*),ref=\roman*]
  \item Im Folgenden bezeichne $S$ oder $S_{X}$ die abgeschlossene
    Einheitskugel $K_{1}(0)$ eines Banachraums $(X,\norm{\cdot})$ über
    $\C$. Für $r>0$ ist $rS$ dann die Kugel $K_{r}(0)$ mit Radius $r$.
  \item Für die Funktionsauswertung werden wir im Kontext von
    schwachen und schwach-*-Topologien die Dualitätsklammer
    $\chevr{x}{f}:=f(x)$ verwenden.
  \end{enumerate}
\end{notation}
Zunächst wollen wir konvexe, bezüglich verschiedener Topologien
abgeschlossene Teilmengen eines Banachraums betrachten.
\begin{satz}\label{satz:krein-sm-motiv}
  Sei $(X,\norm{\cdot})$ ein Banachraum.
  \begin{enumerate}[label=(\roman*),ref=\roman*]
  \item Eine Teilmenge $C\subseteq X$ ist genau dann bezüglich
    $\norm{\cdot}$ abgeschlossen, wenn für jedes $r>0$ die Menge
    $C\cap rS$ bezüglich $\norm{\cdot}$ abgeschlossen ist.
  \item Ist die Teilmenge $C\subseteq X$ zusätzlich konvex, so ist sie
    genau dann bezüglich der schwachen Topologie $\sigma(X,X')$
    abgeschlossen, wenn für jedes $r>0$ die Menge $C\cap rS$ bezüglich
    $\sigma(X,X')$ ab\-ge\-schlos\-sen ist.
  \end{enumerate}
\end{satz}
\begin{proof}
  \hspace{0mm}\begin{enumerate}[label=(\roman*),ref=\roman*]
  \item Die Beweisrichtung "`$\Rightarrow$"' ist klar, da $rS$
    abgeschlossen ist. Sei also $C\cap rS$ für jedes $r>0$
    abgeschlossen und $(x_{n})_{n\in\N}$ eine Folge in $C$, die gegen
    $x$ konvergiert. Da konvergente Folgen beschränkt sind, gilt
    $\norm{x_{n}}\leq r$ für ein $r>0$ und alle $n\in\N$. Die Folge
    $(x_{n})_{n\in\N}$ liegt folglich in $C\cap rS$ und wegen der
    Abgeschlossenheit dieser Menge damit auch $x$. Insbesondere ist
    $x$ in $C$ enthalten.
  \item Als Folgerung des Satzes von Hahn-Banach ist eine konvexe
    Menge genau dann $\sigma(X,X')$-abgeschlossen, wenn sie
    $\norm{\cdot}$-abgeschlossen ist, da $X$ versehen mit beiden
    Topologien dieselben stetigen linearen Funktionale aufweist. Die
    Aussage folgt daher sofort aus dem letzten Punkt.
  \end{enumerate}
\end{proof}
Ersetzt man $X$ im obigen Satz durch seinen Dualraum $X'$, so stellt
sich die Frage, ob eine analoge Aussage nicht nur für die schwache
Topologie $\sigma(X',X'')$ sondern auch für die schwach-*-Topologie
$\sigma(X',X)$ gilt. Der gerade gegebene Beweis ist nicht einfach
adaptierbar, da die schwache Topologie und die schwach-*-Topologie im
Allgemeinen nicht dieselben stetigen Funktionale
induzieren\footnote{Präzise formuliert passiert dies genau dann, wenn
  $X$ nicht reflexiv ist.}. Es ist nun eine wichtige Tatsache, dass
man dennoch aus den Schnitten mit Vielfachen der Einheitskugel
$S=S_{X'}$ die Abgeschlossenheit einer Menge extrahieren kann.
\begin{satz}[Krein-Smulian]\label{satz:krein-sm}
  Sei $(X,\norm{\cdot})$ ein Banachraum. Eine konvexe Teilmenge
  $C\subseteq X'$ ist genau dann bezüglich der schwach-*-Topologie
  $\sigma(X',X)$ abgeschlossen, wenn für jedes $r>0$ die Menge
  $C\cap rS$ bezüglich $\sigma(X',X)$ abgeschlossen ist.
\end{satz}
\begin{proof}
  Die Richtung "`$\Rightarrow$"' ist wieder klar, da $rS$ auch
  bezüglich $\sigma(X',X)$ abgeschlossen ist; nach dem Satz von
  Banach-Alaoglu ist sie ja sogar kompakt. Für die Umkehrung sei
  auf~\cite[Theorem~V.12.1]{conway:fana} verwiesen.
\end{proof}
Daraus folgt auf einfache Weise:
\begin{satz}[Banach-Dieudonné]\label{satz:banach-dieud}
  Sei $(X,\norm{\cdot})$ ein Banachraum. Eine konvexe, bezüglich der
  skalaren Multiplikation mit positiven Zahlen abgeschlossene
  Teilmenge $C\subseteq X'$ -- beispielsweise ein Unterraum $C\leq X'$
  -- ist genau dann bezüglich der schwach-*-Topologie $\sigma(X',X)$
  abgeschlossen, wenn $C\cap S$ bezüglich $\sigma(X',X)$ abgeschlossen
  ist.
\end{satz}
\begin{proof}
  Wegen der vorausgesetzten Abgeschlossenheit bezüglich der skalaren
  Multiplikation gilt $C\cap rS=r(C\cap S)$ für $r>0$. Da die
  Streckung $x\mapsto rx$ einen Homöomorphismus darstellt, ist somit
  $C\cap rS$ genau dann für jedes $r>0$ abgeschlossen, wenn $C\cap S$
  abgeschlossen ist. Aus dem Satz von Krein-Smulian,
  Satz~\ref{satz:krein-sm}, folgt die Aussage.
\end{proof}
Zum Satz von Banach-Dieudonné existiert ein praktisches Korollar.
\begin{korollar}\label{kor:banach-dieud}
  Sei $(X,\norm{\cdot})$ ein Banachraum und $f$ irgendein lineares
  Funktional auf $X'$, d.~h. $f\in (X')^{*}$. Die Abbildung $f$ ist
  genau dann bezüglich der schwach-*-Topologie $\sigma(X',X)$ stetig,
  wenn die Einschränkung $f|_{S}$ auf die Einheitskugel $S$ stetig
  ist, wobei $S$ mit der Spurtopologie $\sigma(X',X)|_{S}$ versehen
  wird.
\end{korollar}
\begin{proof}
  Ist $f$ schwach-*-stetig, so ist klarerweise auch die Einschränkung
  auf $S$ stetig.

  Sei umgekehrt $f|_{S}$ schwach-*-stetig. Als lineares Funktional ist
  $f$ genau dann schwach-*-stetig, wenn $\ker f$
  schwach-*-abgeschlossen ist. Nach dem Satz von Banach-Dieudonné ist
  das wiederum genau dann der Fall, wenn $(\ker f)\cap S$
  abgeschlossen ist. Wegen $(\ker f)\cap S=(f|_{S})^{-1}(\{0\})$ ist
  dies eine unmittelbare Konsequenz der angenommenen Stetigkeit.
\end{proof}

Das nächste Lemma behandelt die Stetigkeit von Projektionen, also
idempotenten und linearen Abbildungen, auf einem Banachraum. Es sei
daran erinnert, dass diese im Gegensatz zu Orthogonalprojektionen auf
einem Hilbertraum nicht zwingend beschränkt sein müssen.
\begin{lemma}\label{lem:proj-beschr}
  Sei $(X,\norm{\cdot})$ ein Banachraum. Eine Projektion $P:X\to X$
  ist genau dann beschränkt, wenn $\ker P$ und $\ran P$ beide
  abgeschlossen sind.
\end{lemma}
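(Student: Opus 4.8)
Der Plan ist, die beiden Beweisrichtungen getrennt zu behandeln. Die Richtung "`$\Rightarrow$"' würde ich auf die für jede Projektion gültige Identität $\ran P = \ker(\id - P)$ stützen: Die Inklusion "`$\subseteq$"' folgt aus $P^{2} = P$, denn für $x = Py$ gilt $Px = P^{2}y = Py = x$; die umgekehrte Inklusion ergibt sich aus $x = Px$ für $x \in \ker(\id - P)$. Ist nun $P$ beschränkt, so sind $P$ und $\id - P$ stetig, und damit sind $\ker P = P^{-1}(\{0\})$ sowie $\ran P = (\id - P)^{-1}(\{0\})$ als Urbilder der abgeschlossenen Menge $\{0\}$ abgeschlossen.

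Für die interessantere Umkehrung "`$\Leftarrow$"' würde ich den Satz vom abgeschlossenen Graphen heranziehen. Dazu betrachte ich eine Folge $(x_{n})_{n\in\N}$ in $X$ mit $x_{n} \to x$ und $Px_{n} \to y$ und will $Px = y$ nachweisen. Da $Px_{n} \in \ran P$ und $\ran P$ abgeschlossen ist, folgt $y \in \ran P$, also $Py = y$, weil $P$ auf seinem Bild wie die Identität wirkt. Andererseits liegt $x_{n} - Px_{n}$ wegen $P(x_{n} - Px_{n}) = Px_{n} - P^{2}x_{n} = 0$ in $\ker P$; aus der Abgeschlossenheit von $\ker P$ und $x_{n} - Px_{n} \to x - y$ erhalte ich $x - y \in \ker P$, mithin $0 = P(x - y) = Px - Py = Px - y$. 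Insgesamt ist $Px = y$, der Graph von $P$ also abgeschlossen, und nach dem Satz vom abgeschlossenen Graphen ist $P$ beschränkt.

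Als eigentliche Hürde sehe ich weniger eine technische Schwierigkeit als die passende Idee: nämlich den Satz vom abgeschlossenen Graphen zu verwenden und dabei die Zerlegung $x_{n} = Px_{n} + (x_{n} - Px_{n})$ in einen Bild- und einen Kernanteil auszunutzen -- beide Abgeschlossenheitsvoraussetzungen gehen genau an dieser Stelle ein. Ein alternativer Zugang über die stets algebraisch gültige direkte Summenzerlegung $X = \ker P \oplus \ran P$ und die Stetigkeit der zu ihr gehörigen Projektion bei abgeschlossenen Summanden wäre ebenfalls gangbar, läuft aber im Kern auf dasselbe Argument hinaus.
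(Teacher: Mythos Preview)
Your proof is correct. The forward direction coincides with the paper's argument almost verbatim, using $\ran P=\ker(I-P)$. For the converse you invoke the Closed Graph Theorem directly, whereas the paper proceeds via the Open Mapping Theorem: it considers the bounded bijection $\varphi:\ker P\times\ran P\to X$, $(x_{1},x_{2})\mapsto x_{1}+x_{2}$, deduces that $\varphi^{-1}$ is bounded, and writes $P=\pi_{2}\circ\varphi^{-1}$. You already anticipate this alternative in your closing remark, and you are right that the two routes are close cousins --- the Closed Graph Theorem and the Open Mapping Theorem are equivalent consequences of Baire's theorem. Your version is arguably more direct (no auxiliary product space needed), while the paper's formulation makes the underlying direct-sum decomposition $X=\ker P\dotplus\ran P$ more visible.
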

\begin{proof}
  Für beschränktes $P$ ist $\ker P$ als Kern eines stetigen Operators
  abgeschlossen. Mit $P$ ist auch $I-P$ beschränkt, sodass
  $\ran P=\ker(I-P)$ ebenfalls abgeschlossen ist.

  Sind umgekehrt $\ker P$ und $\ran P$ abgeschlossen, so ist mit
  $\ker P$ und $\ran P$ auch der Produktraum $\ker P\times\ran P$,
  versehen beispielsweise mit der Summennorm, ein Banachraum. Wir
  betrachten die offensichtlich beschränkte lineare Abbildung
  \begin{displaymath}
    \varphi: \ker P \times\ran P \to X,\quad \varphi(x_{1},x_{2}):=x_{1}+x_{2}.
  \end{displaymath}
  Da $X$ mit der direkten Summe $\ker P\dotplus\ran P$ von Unterräumen
  übereinstimmt, ist $\varphi$ bijektiv. Nach einem Korollar des
  Satzes von der offenen Abbildung ist $\varphi^{-1}$ und damit auch
  $P=\pi_{2}\circ\varphi^{-1}$ beschränkt, wobei
  $\pi_{2}:\ker P\dotplus\ran P\to\ran P$ die Projektion auf die
  Komponente aus $\ran P$ bezeichnet.
\end{proof}
Kombiniert man die letzten beiden Resultate, so erhält man das
folgende Analogon für die schwach-*-Topologie.
\begin{lemma}\label{lem:proj-wstar-stetig}
  Sei $X$ ein Banachraum und $P:X'\to X'$ eine Projektion. Werden
  sowohl Definitions- als auch Bildbereich mit der schwach-*-Topologie
  $\sigma(X',X)$ versehen, so ist $P$ genau dann stetig\footnote{Der
    Einfachheit halber nennen wir $P$ in diesem Fall auch
    schwach-*-stetig.}, wenn $\ker P$ und $\ran P$ beide
  schwach-*-abgeschlossen sind.
\end{lemma}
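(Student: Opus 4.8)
The plan is to handle the two implications separately; the direction "`$\Rightarrow$"' is a formality, and the direction "`$\Leftarrow$"' carries all the weight. For "`$\Rightarrow$"': if $P$ is $\sigma(X',X)$-continuous, then $\ker P=P^{-1}(\{0\})$ is $\sigma(X',X)$-closed because the weak-*-topology is Hausdorff; moreover $\sigma(X',X)$ makes $X'$ a topological vector space, so $\id_{X'}-P$ is again $\sigma(X',X)$-continuous, and from $P^{2}=P$ one reads off $\ran P=\ker(\id_{X'}-P)$, which is therefore $\sigma(X',X)$-closed as well.

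For "`$\Leftarrow$"', assume $\ker P$ and $\ran P$ are $\sigma(X',X)$-closed. They are then $\norm{\cdot}$-closed, hence Lemma~\ref{lem:proj-beschr} shows that $P$ is bounded; put $M:=\max\{1,\norm{P}\}$, so that $P(S)\subseteq MS$. The decisive reduction is that the weak-*-topology on the \emph{codomain} $X'$ is the initial topology generated by the evaluations $\hat{x}\colon f\mapsto\chevr{x}{f}$, $x\in X$; consequently $P$ is $\sigma(X',X)$-continuous as soon as every linear functional $\Phi_{x}\colon X'\to\C$, $\Phi_{x}(f):=\chevr{x}{Pf}$, is $\sigma(X',X)$-continuous. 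By Korollar~\ref{kor:banach-dieud} the latter follows once $\Phi_{x}|_{S}$ is continuous for the subspace topology $\sigma(X',X)|_{S}$; and since $\Phi_{x}|_{S}=(\hat{x}|_{MS})\circ(P|_{S})$ with $\hat{x}|_{MS}$ continuous by definition of $\sigma(X',X)$, everything comes down to the continuity of
\[
  P|_{S}\colon\bigl(S,\sigma(X',X)\bigr)\longrightarrow\bigl(MS,\sigma(X',X)\bigr).
\]

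To establish this last point I would invoke Banach--Alaoglu: $MS$ is weak-*-compact and Hausdorff, so $P|_{S}$ is continuous as soon as its graph $G=\{(f,Pf):f\in S\}$ is closed in $S\times MS$ (a standard consequence of compactness of the target, via the projection onto $S$ along the compact factor $MS$ being closed). Closedness of $G$ is then a short net argument using the splitting along $\ker P\dotplus\ran P$: if $(f_{i},Pf_{i})$ converges in $S\times MS$ to $(f,g)$, then $Pf_{i}\to g$ weak-*, so $g\in\ran P$ and $Pg=g$; moreover $f_{i}-Pf_{i}\in\ker P$ converges weak-* to $f-g$, so $f-g\in\ker P$ and hence $Pf=Pg=g$, i.e.\ $(f,g)\in G$. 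The only genuinely non-formal ingredient is the passage "`continuous on the unit ball $\Rightarrow$ continuous everywhere"', which is exactly Korollar~\ref{kor:banach-dieud} (ultimately Krein--Smulian); and it only becomes usable after Lemma~\ref{lem:proj-beschr} has delivered the boundedness of $P$, since that is precisely what furnishes a \emph{fixed} weak-*-compact target $MS$ for $P|_{S}$. The rest is routine.
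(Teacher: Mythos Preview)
Your proof is correct and follows essentially the same route as the paper: both use Lemma~\ref{lem:proj-beschr} to obtain boundedness, reduce via the initial-topology description and Korollar~\ref{kor:banach-dieud} to continuity of $P|_{S}$ into a fixed compact ball, and then exploit compactness together with the closedness of $\ker P$ and $\ran P$ to force the limit. The only cosmetic difference is that the paper argues directly that $Px$ is the unique accumulation point of $(Px_{i})$, whereas you phrase the same net computation as a closed-graph criterion for maps into compact Hausdorff spaces; the underlying argument is identical.
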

\begin{proof}
  Ist $P$ stetig, so folgt die Abgeschlossenheit von $\ker P$ und
  $\ran P$ wie im Beweis von Lemma~\ref{lem:proj-beschr}.

  Sind umgekehrt $\ker P$ und $\ran P$ schwach-*-abgeschlossen, so
  sind diese Unterräume klarerweise auch bezüglich der Normtopologie
  abgeschlossen. Lemma~\ref{lem:proj-beschr} zeigt, dass $P$
  beschränkt ist. Als Nächstes beweisen wir, dass die Einschränkung
  $P|_{S}$ auf die Einheitskugel $S=S_{X'}$ schwach-*-stetig ist. Dazu
  sei $(x_{i})_{i\in I}$ ein Netz in $S$, das bezüglich $\sigma(X',X)$
  gegen $x$ konvergiert. Alle Bilder $Px_{i}$ sind in der kompakten
  Menge $\norm{P}S$ enthalten, sodass es für die Konvergenz
  $Px_{i}\to Px$ genügt zu zeigen, dass $Px$ der einzige Häufungspunkt
  des Netzes $(Px_{i})_{i\in I}$ ist. Sei also $y$ ein Häufungspunkt
  und $(Px_{i(j)})_{j\in J}$ ein gegen $y$ konvergentes Teilnetz. Da
  $\ran P$ bezüglich $\sigma(X',X)$ abgeschlossen ist, gilt
  $y\in\ran P$. Wegen der Abgeschlossenheit von $\ker P$ erhalten wir
  auf ähnliche Weise $x-y=\lim_{j\in J}x_{i(j)}-Px_{i(j)}\in\ker P$.
  Die Summe $x=y+(x-y)$ ist somit die eindeutige Zerlegung in ein
  Element aus $\ran P$ und eines aus $\ker P$, woraus $y=Px$ folgt.

  Der Raum $X'$ trägt die schwach-*-Topologie, die als initiale
  Topologie bezüglich der Auswertungsfunktionale $\iota(x):X'\to\C$,
  $f\mapsto\chevr{x}{f}$ definiert ist. Somit ist $P$ genau dann
  stetig, wenn alle Kompositionen $\iota(x)\circ P:X'\to\C$
  schwach-*-stetig sind. Diese Abbildungen sind lineare Funktionale,
  sodass es nach Korollar~\ref{kor:banach-dieud} genügt, die
  Einschränkungen auf $S$ zu betrachten. Nach dem gerade Gezeigten
  handelt es sich dabei tatsächlich um schwach*-stetige Funktionen,
  womit die Aussage bewiesen ist.
\end{proof}
\begin{bemerkung}\label{bem:reeller-br}
  Alle bisherigen Resultate, insbesondere
  Lemma~\ref{lem:proj-wstar-stetig}, gelten auch für einen reellen
  Banachraum.
\end{bemerkung}

Die schwache Topologie $\sigma(X,Y)$ für einen Banachraum\footnote{Ein
  lokalkonvexer Raum würde ausreichen.} $X$ und einen punktetrennenden
Raum $Y\leq X^{*}$ von Funktionalen auf $X$, aus $\chevr{x}{f}=0$ für
alle $f\in Y$ folgt also schon $x=0$, ist bekanntlich die
\emph{gröbste} Topologie auf $X$, für die $X$ ein lokalkonvexer
topologischer Vektorraum\footnote{Zwecks kompakterer Formulierung
  nennen wir die Topologie in diesem Fall eine
  \emph{Vektorraum-Topologie.}}  und $Y$ der topologische Dualraum von
$X$ ist. Auf natürliche Weise ergibt sich nun die Frage, ob es auch
eine \emph{feinste} derartige Topologie gibt. Es stellt sich heraus,
dass diese Topologie tatsächlich existiert.
\begin{definition}\label{def:tech-mackey}
  Sei $\epsilon>0$ und $D\subseteq X^{*}$ eine Menge von Funktionalen
  auf $X$. Wir definieren
  \begin{displaymath}
    U(D,\epsilon):=\set{x\in
      X}{\abs{\chevr{x}{f}}<\epsilon\,\,\text{für alle}\,f\in D}.
  \end{displaymath}
\end{definition}
\begin{lemma}\label{lem:tech-mackey}
  Sei $Y\leq X^{*}$ ein punktetrennender Raum von Funktionalen auf
  $X$. Dann gibt es eine eindeutige lokalkonvexe Vektorraum-Topologie
  $\tau(X,Y)$ auf $X$, für die eine Nullumgebungsbasis gegeben ist
  durch
  \begin{displaymath}
    \set{U(D,\epsilon)}{\epsilon>0, D\subseteq Y\,\text{ist
        kreisförmig, konvex und $\sigma(Y,X)$-kompakt}}.
  \end{displaymath}
\end{lemma}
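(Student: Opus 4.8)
The plan is to verify directly that the proposed collection of sets $U(D,\epsilon)$, where $D\subseteq Y$ ranges over the absolutely convex $\sigma(Y,X)$-compact subsets and $\epsilon>0$, satisfies the standard axioms of a neighbourhood basis at $0$ for a locally convex vector-space topology. Recall the criterion: a family $\mathfrak{U}$ of subsets of a vector space $X$ is a $0$-neighbourhood basis for a (necessarily unique) locally convex vector-space topology if and only if every $U\in\mathfrak{U}$ is convex, balanced and absorbing, and for any $U_{1},U_{2}\in\mathfrak{U}$ there is $U_{3}\in\mathfrak{U}$ with $U_{3}\subseteq U_{1}\cap U_{2}$. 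First I would note that each $U(D,\epsilon)$ is convex and balanced, being the preimage of the convex balanced set $\{z\in\C:|z|<\epsilon\}^{D}$ under the linear map $x\mapsto(\chevr{x}{f})_{f\in D}$ — concretely, $|\chevr{\lambda x}{f}|=|\lambda|\,|\chevr{x}{f}|$ and convexity follows from the triangle inequality applied coordinatewise. For the intersection property, given $U(D_{1},\epsilon_{1})$ and $U(D_{2},\epsilon_{2})$, I would set $D:=\overline{\mathrm{co}}(D_{1}\cup D_{2})$ in the sense of the absolutely convex $\sigma(Y,X)$-closed hull and $\epsilon:=\min(\epsilon_{1},\epsilon_{2})$; since $D_{1}\cup D_{2}$ is $\sigma(Y,X)$-compact (a finite union of compacta), its closed absolutely convex hull is again $\sigma(Y,X)$-compact — here one invokes that the closed absolutely convex hull of a compact set in a locally convex space is compact, which for the weak-$*$-type topology $\sigma(Y,X)$ can be obtained from the Banach–Alaoglu-type boundedness together with completeness, or more simply from the fact that $\overline{\mathrm{co}}$ of a compact set is compact in a quasi-complete locally convex space. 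Then $D\supseteq D_{1}$ and $D\supseteq D_{2}$ give $U(D,\epsilon)\subseteq U(D_{1},\epsilon_{1})\cap U(D_{2},\epsilon_{2})$, as required.

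The one axiom that needs genuine argument is that each $U(D,\epsilon)$ is \emph{absorbing}: for every $x\in X$ there is $t>0$ with $x\in tU(D,\epsilon)$, equivalently $\sup_{f\in D}|\chevr{x}{f}|<\infty$. This is where the compactness hypothesis on $D$ is essential. I would argue as follows: the map $f\mapsto\chevr{x}{f}$ is, by definition of $\sigma(Y,X)$, continuous on $(Y,\sigma(Y,X))$; restricted to the $\sigma(Y,X)$-compact set $D$ it is therefore bounded, being a continuous scalar-valued function on a compact set. Hence $\sup_{f\in D}|\chevr{x}{f}|=:M<\infty$, and choosing any $t>M/\epsilon$ gives $x\in tU(D,\epsilon)$. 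This also explains the choice of \emph{compact} rather than merely bounded $D$: without compactness the supremum need not be finite, and the topology would fail to be a vector-space topology. Thus the \textbf{main obstacle} is really bookkeeping around the compact absolutely convex hulls — ensuring the intersection property stays within the prescribed family — rather than any deep argument.

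Finally, for uniqueness: two locally convex vector-space topologies with the \emph{same} $0$-neighbourhood basis coincide, since in a topological vector space every neighbourhood of an arbitrary point is a translate of a $0$-neighbourhood, so the $0$-neighbourhood filter determines the whole topology. Hence once we have exhibited a family satisfying the basis axioms, the induced topology $\tau(X,Y)$ is unique by construction. I would close by remarking — though this is not strictly demanded by the statement — that $\tau(X,Y)$ is finer than $\sigma(X,Y)$ (take $D$ finite, recovering the defining subbasic sets of the weak topology) and has the same continuous dual $Y$, which is the content pointing towards the Mackey–Arens theorem; but for the lemma as stated only the basis axioms and uniqueness need to be checked.
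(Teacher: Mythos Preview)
Your direct verification is a perfectly reasonable approach; the paper itself does not prove this lemma but simply cites \cite[III.3.2]{schaefer-wolff:topvctspcs} (the general theory of $\mathfrak{S}$-topologies) and reformulates via the canonical embedding $X\to X''$. So your route is more self-contained, at the cost of doing the bookkeeping by hand.

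There is, however, a genuine soft spot in your intersection argument. You want the closed absolutely convex hull of $D_{1}\cup D_{2}$ to be $\sigma(Y,X)$-compact and invoke either ``Banach--Alaoglu-type boundedness together with completeness'' or the fact that the closed absolutely convex hull of a compact set is compact in a quasi-complete space. Neither applies cleanly: $Y$ is merely some point-separating subspace of $X^{*}$, and $(Y,\sigma(Y,X))$ has no reason to be quasi-complete. Fortunately the fix is elementary and avoids any completeness hypothesis. Since $D_{1}$ and $D_{2}$ are themselves absolutely convex, the (non-closed) absolutely convex hull of $D_{1}\cup D_{2}$ equals
\[
\bigl\{\lambda_{1}f_{1}+\lambda_{2}f_{2}\;:\;f_{i}\in D_{i},\ |\lambda_{1}|+|\lambda_{2}|\le 1\bigr\},
\]
which is the continuous image of the compact set $\{(\lambda_{1},\lambda_{2}):|\lambda_{1}|+|\lambda_{2}|\le 1\}\times D_{1}\times D_{2}$ and hence already compact (so in particular closed). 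Alternatively, $D:=D_{1}+D_{2}$ works just as well: it is compact as the image of $D_{1}\times D_{2}$ under addition, absolutely convex, and contains both $D_{i}$ since $0\in D_{i}$. Two further minor points: your stated criterion for a $0$-neighbourhood basis omits the condition that for each $U$ some $V$ in the family satisfies $V+V\subseteq U$ (here $U(D,\epsilon/2)$ does the job), and Hausdorffness should be noted explicitly---it follows from $Y$ being punktetrennend by taking $D=\{\lambda f:|\lambda|\le 1\}$ for a suitable $f$.
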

\begin{proof}
  Spezialfall von~\cite[III.3.2]{schaefer-wolff:topvctspcs} und
  Umformulierung mithilfe der kanonischen Einbettung $X\to X''$. Siehe
  auch~\cite[III.3.2 Example 4.b]{schaefer-wolff:topvctspcs}.
\end{proof}
\begin{definition}\label{def:mackey-top}
  Die Topologie $\tau(X,Y)$ aus dem letzten Lemma nennt man die
  \emph{Mackey-Topologie} auf $X$ zum Raum $Y$.
\end{definition}
\begin{bemerkung}\label{bem:mackey-top-konv}
  \hspace{0mm}
  \begin{enumerate}[label=(\roman*),ref=\roman*]
  \item\label{item:mackey-top-konv-i} Ein Netz $(x_{i})_{i\in I}$ in
    $X$ konvergiert bezüglich der Mackey-Topologie $\tau(X,Y)$ genau
    dann gegen $x\in X$, wenn es für alle $\epsilon>0$ und alle
    kreisförmigen, konvexen und $\sigma(Y,X)$-kompakten $D\subseteq Y$
    ein $i_{0}\in I$ gibt mit $\abs{\chevr{x_{i}-x}{f}}<\epsilon$ für
    alle $f\in D$ und $i\succcurlyeq i_{0}$. Es liegt also eine
    Verschärfung der schwachen Konvergenz vor, wobei zusätzlich die
    Konvergenz für Funktionale aus einer zulässigen Menge $D$ mit der
    gleichen Geschwindigkeit verläuft. Wir werden diesen
    Konvergenztypus im Folgenden als \emph{auf den kreisförmigen,
      konvexen und $\sigma(Y,X)$-kompakten Mengen gleichgradig
      schwache Konvergenz} bezeichnen.
  \item\label{item:mackey-top-konv-ii} Analog zur schwachen Topologie
    können wir auch die Mackey-Topologie $\tau(X',X)$ durch
    $\tau(X',\iota(X))$ mit der kanonischen Einbettung
    $\iota:X\to X''$ definieren. In dieser Situation erhalten wir als
    Konvergenzbedingung für ein Netz $(f_{i})_{i\in I}$ gegen
    $f\in X'$, dass es zu jedem $\epsilon>0$ und jeder kreisförmigen,
    konvexen und $\sigma(X,X')$-kompakten\footnote{Hier geht ein, dass
      die kanonische Einbettung $\iota:X\to X''$ bezüglich der
      schwachen Topologie auf $X$ und der schwach-*-Topologie auf
      $X''$ ein Homöomorphismus auf das Bild $\iota(X)$ ist.} Menge
    $C\subseteq X$ ein $i_{0}\in I$ geben muss mit
    $\abs{\chevr{x}{f_{i}-f}}<\epsilon$ für alle $x\in C$ und
    $i\succcurlyeq i_{0}$. Mit anderen Worten bedeutet Konvergenz in
    der Mackey-Topologie $\tau(X',X)$ genau gleichmäßige Konvergenz
    auf allen kreisförmigen, konvexen und $\sigma(X,X')$-kompakten
    Mengen $C\subseteq X$.
  \end{enumerate}
\end{bemerkung}
Der folgende Satz beantwortet die oben gestellte Frage:
\begin{satz}[Mackey-Arens]\label{satz:mackey-arens}
  Sei $Y\leq X^{*}$ ein punktetrennender Raum von Funktionalen auf
  $X$. Für die Mackey-Topologie $\tau(X,Y)$ gilt
  \begin{equation}\label{eq:mackey-arens}
    (X,\tau(X,Y))'=Y.
  \end{equation}
  Außerdem ist sie die feinste Vektorraum-Topologie auf $X$, für die
  \eqref{eq:mackey-arens} sinngemäß gilt.
\end{satz}
\begin{proof}
  Siehe~\cite[IV.3.2 -- Corollary 1]{schaefer-wolff:topvctspcs}.
\end{proof}
Im weiteren Verlauf dieser Arbeit wird die Bedeutung der
Mackey-Topologie darin liegen, dass ein Funktional $f:X\to\C$ genau
dann stetig bezüglich der schwachen Topologie $\sigma(X,Y)$ ist, wenn
es bezüglich der Mackey-Topologie $\tau(X,Y)$ stetig ist. Diese
Stetigkeit kann einfacher zu beweisen sein, da $\tau(X,Y)$ feiner ist
als $\sigma(X,Y)$. Um das volle Potenzial dieser Überlegung
auszunützen, beweisen wir bereits an dieser Stelle ein Analogon von
Korollar~\ref{kor:banach-dieud} für die Mackey-Topologie $\tau(X',X)$.
\begin{korollar}\label{kor:mackey-arens}
  Sei $(X,\norm{\cdot})$ ein Banachraum und $f$ irgendein lineares
  Funktional auf $X'$, d.~h. $f\in (X')^{*}$. Die Abbildung $f$ ist
  genau dann bezüglich der Mackey-Topologie $\tau(X',X)$ stetig, wenn
  die Einschränkung $f|_{S}$ auf die Einheitskugel $S$ stetig ist,
  wobei $S$ mit der Spurtopologie $\tau(X',X)|_{S}$ versehen wird.
\end{korollar}
\begin{proof}
  Aus der Stetigkeit von $f$ bezüglich der Mackey-Topologie folgt
  klarerweise auch die Stetigkeit der Einschränkung auf $S$.

  Sei umgekehrt $f|_{S}$ stetig bezüglich $\tau(X',X)|_{S}$. Die Menge
  $(\ker f)\cap S=(f|_{S})^{-1}(\{0\})$ ist
  $\tau(X',X)|_{S}$-abgeschlossen und konvex. Außerdem ist die
  Einheitskugel $S$ bezüglich $\sigma(X',X)$ abgeschlossen, also
  insbesondere bezüglich der feineren Topologie $\tau(X',X)$. Somit
  ist $(\ker f)\cap S$ sogar $\tau(X',X)$-abgeschlossen. Nach einer
  Folgerung des Satzes von Hahn-Banach hängt der Abschluss einer
  konvexen Menge nur vom topologischen Dualraum und nicht von der
  genauen Topologie ab. Da $X'$ versehen mit der Mackey-Topologie
  $\tau(X',X)$ und der schwach-*-Topologie $\sigma(X',X)$ denselben
  topologischen Dualraum hat, ist eine konvexe Teilmenge von $X'$
  genau dann $\tau(X',X)$-abgeschlossen, wenn sie
  $\sigma(X',X)$-abgeschlossen ist. Folglich ist $(\ker f)\cap S$
  abgeschlossen bezüglich $\sigma(X',X)$. Wie im Beweis von
  Korollar~\ref{kor:banach-dieud} ergibt sich daraus die
  $\sigma(X',X)$-Abgeschlossenheit von $\ker f$, also die
  schwach-*-Stetigkeit von $f$. Dies ist äquivalent dazu, dass $f$
  bezüglich der Mackey-Topologie stetig ist.
\end{proof}

\section{$C^{*}$-Algebren}
\label{sec:c-algebren}
Nun gehen wir von Banachräumen zu $C^{*}$-Algebren über. Es ist eine
wichtige Tatsache, dass man zu $C^{*}$-Algebren ohne Eins ein
Einselement adjungieren kann und wieder eine $C^{*}$-Algebra
erhält. Für Banach(-$*$-)algebren ist die Konstruktion relativ einfach
und durch die Betrachtung der Elemente $a+\lambda e$ mit einem
fiktiven Einselement $e$ motiviert.
\begin{bemerkung}\label{bem:b-alg-eins}
  Sei $(A,\norm{\cdot})$ eine Banachalgebra. Definiert man auf
  $\tilde{A}:=A\times\C$ die Addition und Skalarmultiplikation
  komponentenweise, so wird $\tilde{A}$ mit der Norm
  $\norm[\tilde{A}]{(a,\lambda)}:=\norm{a}+\abs{\lambda}$ zu einem
  Banachraum. Wir setzen zusätzlich
  $(a,\lambda)\cdot(b,\mu):=(ab+\lambda b+\mu a,\lambda\mu)$ und
  erhalten, wie man leicht nachrechnet, wieder die Struktur einer
  Banachalgebra. Ist $A$ sogar eine Banach-$*$-Algebra, dann ist
  $\tilde{A}$ mit der Operation
  $(a,\lambda)^{*}:=(a^{*},\overline{\lambda})$ ebenfalls eine
  Banach-$*$-Algebra. Zusätzlich enthält $\tilde{A}$ ein Einselement,
  nämlich $(0,1)$. Die Algebra $\tilde{A}$ enthält $A$ vermöge
  $a\mapsto(a,0)$ isometrisch als Ideal\footnote{Wir meinen hier und
    in der restlichen Arbeit ein Ideal im Sinne der Algebrentheorie
    (nicht der Ringtheorie), also einen \emph{Unterraum}
    $I\leq\tilde{A}$, der die Idealeigenschaft erfüllt: Aus $a\in I$
    und $b\in\tilde{A}$ folgt $ab,ba\in I$.}.  Aufgrund dieser
  isomorphen Einbettung können wir durch Umbenennen der Elemente
  $(a,0)$ zu $a$ zusätzlich erreichen, dass $A$ eine Teilmenge von
  $\tilde{A}$ ist. An dieser Stelle sei noch explizit bemerkt, dass
  sich ein etwaiges Einselement $e$ in $A$ \emph{nicht} auf
  $\tilde{A}$ vererbt, d.~h. $(e,0)$ ist kein Einselement in
  $\tilde{A}$.

  Eine wesentliche Eigenschaft von $\tilde{A}$ ist, dass sich
  multiplikative Funktionale auf $A$ eindeutig zu solchen auf
  $\tilde{A}$ fortsetzen lassen. Das liegt daran, dass jedes
  multiplikative Funktional $\tilde{m}$ auf $\tilde{A}$ die Gleichung
  $\tilde{m}((0,1))=1$ erfüllt: Es gilt
  $\tilde{m}((0,1))=\tilde{m}((0,1)^{2})=\tilde{m}((0,1))^{2}$, also
  $\tilde{m}((0,1))\in\{0,1\}$. Wäre $\tilde{m}((0,1))=0$, dann ergäbe
  sich
  $\tilde{m}((a,\lambda))=\tilde{m}((a,\lambda))\tilde{m}((0,1))=0$
  für jedes $(a,\lambda)\in\tilde{A}$, also $\tilde{m}=0$.  Dies ist
  aber nach Definition eines multiplikativen Funktionals
  ausgeschlossen. Ist $m$ ein multiplikatives Funktional auf $A$, dann
  ist die einzige Möglichkeit der Fortsetzung folglich
  $\tilde{m}((a,\lambda)):=m(a)+\lambda$.  Einfaches Nachrechnen
  zeigt, dass dies tatsächlich ein multiplikatives Funktional
  definiert. Da die Abbildungsnorm eines multiplikativen Funktionals
  auf einer Banach(-$*$-)algebra mit Eins stets gleich $1$ ist,
  siehe~\cite[Korollar~1.3.4]{kaltenb:fana2}, folgt aus dieser
  Konstruktion auch, dass die Abbildungsnorm eines multiplikativen
  Funktionals auf einer beliebigen Banach(-$*$-)algebra zumindest
  kleiner gleich $1$ ist.

  Es sei noch erwähnt, dass ein Element $a\in A$ wegen
  \begin{displaymath}
    (a,0)\cdot(b,\mu)=(ab+\mu a,0)\neq (0,1)
  \end{displaymath}
  niemals in $\tilde{A}$ invertierbar ist.
\end{bemerkung}
\begin{definition}\label{def:b-alg-eins-adjung}
  Ist $A$ eine Banach(-$*$-)algebra, so bezeichne $\tilde{A}$ die
  gemäß Bemerkung~\ref{bem:b-alg-eins} definierte Banach(-$*$-)algebra
  mit Einselement, wobei wir $A$ als Teilmenge von $\tilde{A}$
  auffassen.
\end{definition}

Für $C^{*}$-Algebren ist die Situation komplizierter, da man zur
Aufrechterhaltung der $C^{*}$-Eigen\-schaft eine andere Norm
definieren muss, wie das folgende Beispiel zeigt.
\begin{beispiel}\label{bsp:atilde-cstar-norm}
  Sei $A:=C_{0}(\R)$ die $C^{*}$-Algebra der stetigen, im Unendlichen
  verschwindenden Funktionen auf $\R$. Bezeichnet $f$ die Funktion
  $(t\mapsto\exp(-t^{2}))\in A$, so müsste ein Einselement $e$ die
  Gleichung $e\cdot f=f$ erfüllen. Da $f$ nie den Wert $0$ annimmt,
  kommt nur $e=\mathds{1}$ in Frage; $\R$ ist aber nicht kompakt,
  sodass die konstante Einsfunktion nicht in $A$ liegt. Folglich hat
  $A$ kein Einselement. Wir betrachten die Banach-$*$-Algebra
  $\tilde{A}$ und darin das Element $(-f,1)$. Es gilt
  \begin{displaymath}
    (-f,1)^{*}(-f,1)=(-f,1)(-f,1)=(f^{2}-2f,1),
  \end{displaymath}
  sodass wir\footnote{Ab dem zweiten Gleichheitszeichen rechnen wir im
    Raum $C_{b}(\R)$ der stetigen und beschränkten Funktionen.}
  \begin{align*}
    \norm[\tilde{A}]{(-f,1)^{*}(-f,1)}&=\norm[\infty]{f^{2}-2f}+1=\norm[\infty]{f(f-2)}+1\leq\underbrace{\norm[\infty]{f}}_{=1}\underbrace{\norm[\infty]{2-f}}_{=2}+1\\
                                      &=3<4=\norm[\tilde{A}]{(-f,1)}^{2}.
  \end{align*}
  erhalten. $\tilde{A}$ ist also mit dieser Norm keine
  $C^{*}$-Algebra.
\end{beispiel}
Dennoch gilt:
\begin{satz}\label{satz:cstar-alg-eins}
  Sei $(A,\norm{\cdot})$ eine $C^{*}$-Algebra. Dann gibt es eine Norm
  $\norm[C^{*}]{\cdot}$ auf $\tilde{A}$, sodass
  $(\tilde{A},\norm[C^{*}]{\cdot})$ eine $C^{*}$-Algebra mit Eins ist
  und $\norm[C^{*}]{(a,0)}=\norm{a}$ für alle $a\in A$ gilt.
\end{satz}
\begin{proof}
  Siehe~\cite[Theorem~2.1.6]{murphy:cstar}.
\end{proof}
Im weiteren Verlauf dieser Arbeit ist die von einer $C^{*}$-Algebra
ausgehend gebildete Algebra $\tilde{A}$ stets mit der Norm
$\norm{\cdot}:=\norm[C^{*}]{\cdot}$ versehen und daher eine
$C^{*}$-Algebra mit Eins. Außerdem identifizieren wir wieder $a\in A$
mit $(a,0)\in\tilde{A}$ und nehmen an, dass $A$ eine Teilmenge von
$\tilde{A}$ ist.

Die Adjunktion eines Einselements erlaubt es, die zentralen Begriffe
der Spektraltheorie in beliebigen Banachalgebren zu definieren. Wir
werden uns der Einfachheit halber auf $C^{*}$-Algebren beschränken.
\begin{definition}\label{def:spek-resmenge}
  Sei $A$ eine $C^{*}$-Algebra ohne Einselement. Dann definiert man
  für $a\in A$
  \begin{enumerate}[label=(\alph*),ref=\alph*]
  \item das \emph{Spektrum} von $a$ als
    $\sigma_{A}(a):=\sigma_{\tilde{A}}(a)$,
  \item die \emph{Resolventenmenge} von $a$ als
    $\rho_{A}(a):=\rho_{\tilde{A}}(a)$ und
  \item den \emph{Spektralradius} von $a$ als
    $r_{A}(a):=r_{\tilde{A}}(a)\,\,\big(=\sup_{\lambda\in\sigma_{\tilde{A}}(a)}\abs{\lambda}=\sup_{\lambda\in\sigma_{A}(a)}\abs{\lambda}\big)$,
  \end{enumerate}
  wobei wir die Indizes zur leichteren Lesbarkeit meist weglassen
  werden.
\end{definition}

Eines der mächtigsten Instrumente der Theorie von $C^{*}$-Algebren ist
die Gelfandtransformation. Oftmals wird nur der Fall von kommutativen
$C^{*}$-Algebren mit Einselement diskutiert, es gilt allerdings die
Verallgemeinerung in Satz~\ref{satz:gelfand-trafo}. Bevor wir dazu
kommen, führen wir eine Definition ein.
\begin{definition}\label{def:re-im-cstar}
  Sei $A$ eine $C^{*}$-Algebra. Für $a\in A$ sind der \emph{Real-} und
  \emph{Imaginärteil} von $a$ definiert durch $\re a:=(a+a^{*})/2$ und
  $\im a:=(a-a^{*})/2i$.
\end{definition}
Man überprüft unmittelbar folgende Eigenschaften von Real- und
Imaginärteil:
\begin{lemma}\label{lem:eig-re-im}
  Sei $A$ eine $C^{*}$-Algebra und $a\in A$. Dann sind $\re a$ und
  $\im a$ selbstadjungiert und es gilt $a=\re a+i\im a$, wobei
  $\norm{\re(a)},\norm{\im(a)}\leq\norm{a}$.
\end{lemma}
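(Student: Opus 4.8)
The plan is to verify the three assertions by direct computation, using only the defining properties of the involution (conjugate-linearity and $(a^{*})^{*}=a$) together with the $C^{*}$-norm. First I would settle self-adjointness. For the real part, $(\re a)^{*}=\tfrac12(a+a^{*})^{*}=\tfrac12(a^{*}+a)=\re a$. For the imaginary part one must track the conjugate appearing in front of $2i$: since $\overline{(2i)^{-1}}=-(2i)^{-1}$ and $(a-a^{*})^{*}=a^{*}-a=-(a-a^{*})$, one gets $(\im a)^{*}=\overline{(2i)^{-1}}\,(a-a^{*})^{*}=(2i)^{-1}(a-a^{*})=\im a$. Hence both elements lie in the self-adjoint part of $A$.

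Next, the decomposition $a=\re a+i\im a$ follows by substituting the definitions and simplifying $i/(2i)=1/2$: indeed $\re a+i\im a=\tfrac12(a+a^{*})+\tfrac{i}{2i}(a-a^{*})=\tfrac12(a+a^{*})+\tfrac12(a-a^{*})=a$. For the norm estimates, the one auxiliary fact I need is that the involution is isometric, i.e.\ $\norm{a^{*}}=\norm{a}$; this is the standard consequence of the $C^{*}$-identity, since $\norm{a}^{2}=\norm{a^{*}a}\le\norm{a^{*}}\,\norm{a}$ gives $\norm{a}\le\norm{a^{*}}$, and applying the same inequality to $a^{*}$ in place of $a$ yields the reverse inequality. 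Granting this, the triangle inequality gives $\norm{\re a}=\tfrac12\norm{a+a^{*}}\le\tfrac12(\norm{a}+\norm{a^{*}})=\norm{a}$ and, likewise, $\norm{\im a}=\tfrac12\norm{a-a^{*}}\le\norm{a}$.

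I do not anticipate any genuine obstacle here; the argument is entirely routine. The only point worth flagging is the isometry of the involution, which is used implicitly in the norm bound and which could either be cited from the background material on $C^{*}$-algebras or recorded as a one-line preliminary observation before the lemma.
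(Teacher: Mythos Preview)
Your proof is correct and is precisely the routine verification the paper has in mind: the paper states the lemma without proof, prefacing it only with ``Man überprüft unmittelbar folgende Eigenschaften von Real- und Imaginärteil'' (one checks the following properties directly). Your computation fills in exactly these omitted details, including the standard derivation of $\norm{a^{*}}=\norm{a}$ from the $C^{*}$-identity.
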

Der entscheidende Schritt im Beweis der Gelfandtransformation für
kommutative $C^{*}$-Alge\-bren ohne Einselement ist die Verbindung von
multiplikativen Funktionalen auf $A$ mit jenen auf der, wie man leicht
nachrechnet ebenfalls kommutativen, $C^{*}$-Algebra $\tilde{A}$. Dazu
legen wir folgende Notation fest.
\begin{notation}\label{not:mult-fkt}
  Sei $M$ die Menge der multiplikativen Funktionale auf $A$ und
  $\tilde{M}$ die entsprechende Menge auf $\tilde{A}$. Für ein
  Funktional $m\in M$ sei $\tilde{m}\in\tilde{M}$ die gemäß
  Bemerkung~\ref{bem:b-alg-eins} existierende eindeutige Fortsetzung
  zu einem multiplikativen Funktional auf $\tilde{A}$, nämlich
  $(a,\lambda)\mapsto m(a)+\lambda$. Außerdem bezeichne
  $m_{0}'\in\tilde{M}$ das multiplikative Funktional
  $(a,\lambda)\mapsto\lambda$ auf $\tilde{A}$.
\end{notation}

Mit diesen Bezeichnungen erhalten wir:
\begin{lemma}\label{lem:mult-fkt}
  Für eine kommutative $C^{*}$-Algebra $A$ ohne Einselement gilt
  \begin{displaymath}
    \tilde{M}=\set{\tilde{m}}{m\in M}\cup\{m_{0}'\}.
  \end{displaymath}
\end{lemma}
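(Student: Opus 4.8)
The plan is to prove the set equality by establishing the two inclusions separately. The inclusion ``$\supseteq$'' is immediate from Bemerkung~\ref{bem:b-alg-eins}: each $\tilde m$ with $m\in M$ is by construction a multiplicative functional on $\tilde A$, and $m_0'$ -- the projection $(a,\lambda)\mapsto\lambda$, i.e.\ the quotient map of $\tilde A$ modulo the ideal $A$ -- is evidently multiplicative and nonzero. So the content lies entirely in ``$\subseteq$'': I must show that an arbitrary $\tilde n\in\tilde M$ is either $m_0'$ or of the form $\tilde m$ for some $m\in M$.

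The decisive point, already recorded in Bemerkung~\ref{bem:b-alg-eins}, is that every multiplicative functional on the \emph{unital} algebra $\tilde A$ sends the unit $(0,1)$ to $1$. Writing $(a,\lambda)=(a,0)+\lambda\,(0,1)$ and using linearity, this yields $\tilde n((a,\lambda))=\tilde n((a,0))+\lambda$ for all $(a,\lambda)\in\tilde A$; hence $\tilde n$ is completely determined by its restriction to the ideal $A\cong A\times\{0\}$. I therefore set $n(a):=\tilde n((a,0))$ and distinguish two cases according to whether $n$ vanishes identically.

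If $n\equiv 0$, the displayed formula reduces to $\tilde n((a,\lambda))=\lambda$, that is, $\tilde n=m_0'$. If $n\not\equiv 0$, then $n$ is a nonzero linear functional on $A$, and it is multiplicative because $A\times\{0\}$ is a subalgebra of $\tilde A$: $n(ab)=\tilde n((ab,0))=\tilde n\bigl((a,0)(b,0)\bigr)=\tilde n((a,0))\,\tilde n((b,0))=n(a)\,n(b)$. Thus $n\in M$, and the identity $\tilde n((a,\lambda))=n(a)+\lambda$ is precisely the defining formula of the extension from Notation~\ref{not:mult-fkt}, so $\tilde n=\tilde m$ with $m=n$. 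These two cases are exhaustive, which gives ``$\subseteq$''.

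The argument is essentially bookkeeping and, notably, uses neither commutativity of $A$ nor the $C^*$-structure; the only step requiring a moment's attention is checking that a nonvanishing restriction $n$ is genuinely \emph{multiplicative} rather than merely linear, which is where one invokes that $A\times\{0\}$ is a subalgebra of $\tilde A$. If one wishes to be thorough, one may add that the two families on the right-hand side are disjoint and that $m\mapsto\tilde m$ is injective -- since $m_0'$ annihilates $A$ whereas $\tilde m|_A=m\neq 0$ -- but this is not needed for the asserted equality of sets.
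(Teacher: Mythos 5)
Your proof is correct and follows essentially the same route as the paper: restrict a given $\tilde n\in\tilde M$ to $A$, distinguish whether the restriction vanishes identically or is a multiplicative functional $m\in M$, and conclude $\tilde n=m_0'$ or $\tilde n=\tilde m$ using $\tilde n((0,1))=1$ respectively the uniqueness of the extension from Bemerkung~\ref{bem:b-alg-eins}. Your explicit derivation of $\tilde n((a,\lambda))=\tilde n((a,0))+\lambda$ is just the uniqueness argument unwound, so the two proofs coincide in substance.
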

\begin{proof}
  Dass die rechte Seite in $\tilde{M}$ enthalten ist, ist klar.
  Umgekehrt ist für $m'\in\tilde{M}$ die Einschränkung\footnote{Es sei
    daran erinnert, dass wir $A$ als Teilmenge von $\tilde{A}$
    auffassen.}  $m'|_{A}$ auf $A$ ein Funktional $m\in M$ oder die
  Nullfunktion. Im ersten Fall gilt wegen der Eindeutigkeit der
  Fortsetzung $m'=\tilde{m}$, im zweiten Fall $m'=m_{0}'$ aufgrund von
  $m'((0,1))=1$.
\end{proof}

\begin{satz}[Gelfandtransformation]\label{satz:gelfand-trafo}
  Sei $A\neq\{0\}$ eine kommutative $C^{*}$-Algebra ohne
  Einselement. Es gelten folgende Aussagen:
  \begin{enumerate}[label=(\roman*),ref=\roman*]
  \item\label{item:gelfand-trafo-i} Der Gelfandraum $M$ aller
    multiplikativen Funktionale auf $A$ ist nicht leer und, versehen
    mit der Spurtopologie der schwach-*-Topologie, lokalkompakt.
  \item\label{item:gelfand-trafo-ii} Für $a\in A$ gilt
    $\sigma(a)=\set{m(a)}{m\in M}\cup\{0\}$.
  \item\label{item:gelfand-trafo-iii} Für $a\in A$ ist die Abbildung
    \begin{displaymath}
      \hat{a}:
      \begin{cases}
        \hfill M&\to\C \\
        \hfill m&\mapsto m(a)
      \end{cases}
    \end{displaymath}
    ein Element von $C_{0}(M)$.
  \item\label{item:gelfand-trafo-iv} Die \emph{Gelfandtransformation}
    $\hat{.}:A\to C_{0}(M)$ ist ein isometrischer
    $*$-Algebren\-isomorphis\-mus.
  \end{enumerate}
\end{satz}
\begin{proof}
  \hspace{0mm}
  \begin{enumerate}[label=(\roman*),ref=\roman*]
  \item Das bekannte Resultat für kommutative $C^{*}$-Algebren mit
    Einselement, siehe~\cite[Satz~1.4.4]{kaltenb:fana2}, liefert bei
    Anwendung in $\tilde{A}$ für beliebiges $a\in A$
    \begin{equation}\label{eq:gelfand-trafo}
      \sigma_{A}(a)=\sigma_{\tilde{A}}(a)=\set{\tilde{m}(a)}{\tilde{m}\in\tilde{M}}.
    \end{equation}
    Da $A\neq \{0\}$ ist, gibt es ein selbstadjungiertes Element
    $b\in A$ mit $b\neq 0$, nämlich entweder Real- oder Imaginärteil
    irgendeines von $0$ verschiedenen Elements. Wegen
    $r_{\tilde{A}}(b)=\norm{b}>0$ gibt es ein $\tilde{m}\in\tilde{M}$
    mit $\tilde{m}(b)\neq 0$. Die Einschränkung $\tilde{m}|_{A}$ ist
    dann mit den Operationen verträglich und von der Nullfunktion
    verschieden, also ein multiplikatives Funktional auf $A$. Wir
    erhalten $M\neq\emptyset$. Außerdem ist $M\cup\{0\}$ nach
    Bemerkung~\ref{bem:b-alg-eins} eine Teilmenge von $S_{A'}$, die
    sogar schwach-*-abgeschlossen ist: Definieren wir für Elemente
    $a,b\in A$ die Funktion $T_{a,b}:S_{A'}\to\C$ durch
    $T_{a,b}(f):=f(ab)-f(a)f(b)$, so gilt nämlich\footnote{An dieser
      Stelle sei auf die Analogie zum Beweis für $C^{*}$-Algebren mit
      Einselement hingewiesen; dort kann man das Nullfunktional
      mithilfe einer weiteren schwach-*-stetigen Funktion aussondern,
      da ja für das Einselement $e$ schon $m(e)=1$ gelten muss.}
    \begin{displaymath}
      M\cup\{0\}=\bigcap_{a,b\in A}T_{a,b}^{-1}(\{0\}).
    \end{displaymath}
    Da $T_{a,b}$ schwach-*-stetig ist, erhalten wir die behauptete
    Abgeschlossenheit. Nach dem Satz von Banach-Alaoglu ist
    $M\cup\{0\}$ somit schwach-*-kompakt. Folglich ist
    $M=(M\cup\{0\})\setminus\{0\}$ als offene Teilmenge eines
    kompakten Hausdorffraums lokalkompakt.
  \item Diese Aussage ergibt sich sofort aus Lemma~\ref{lem:mult-fkt}
    und \eqref{eq:gelfand-trafo}.
  \item Da $M$ mit der Spurtopologie der schwach-*-Topologie versehen
    ist, ist $\hat{a}$ für beliebiges $a\in A$ eine stetige
    Funktion. Für $\epsilon>0$ ist die Menge
    $\set{m\in M}{\abs{m(a)}\geq\epsilon}$ schwach-*-abgeschlossen in
    $S_{A'}$ und damit schwach-*-kompakt. Daraus folgt
    $\hat{a}\in C_{0}(M)$.
  \item Die Gelfandtransformation ist offenbar linear und
    multiplikativ. Die Verträglichkeit mit $.^{*}$ folgt daraus, dass
    für $m\in M$ und $a\in A$ die Beziehung $m(a^{*})=\overline{m(a)}$
    gilt: Wir schreiben $a=\re a+i\im a$. Da $\re a$ und $\im a$
    selbstadjungiert sind, gilt
    $\sigma(\re a),\sigma(\im a)\subseteq\R$,
    siehe~\cite[Lemma~1.5.5]{kaltenb:fana2}, woraus wegen
    (\ref{item:gelfand-trafo-ii}) sofort $m(\re a),m(\im a)\in\R$
    folgt. Wir erhalten
    \begin{displaymath}
      m(a^{*})=m(\re a-i\im a)=\overline{m(\re a)+i\cdot m(\im
        a)}=\overline{m(a)}.
    \end{displaymath}
    Für $a\in A$ ist $a^{*}a$ selbstadjungiert (sowohl in $A$ als auch
    in $\tilde{A}$). Die Rechnung
    \begin{displaymath}
      \norm[\infty]{\hat{a}}^{2}=\norm[\infty]{\abs{\hat{a}}^{2}}=\norm[\infty]{\overline{\hat{a}}\hat{a}}=\norm[\infty]{\widehat{a^{*}a}}=r_{\tilde{A}}(a^{*}a)=\norm[C^{*}]{(a^{*}a,0)}=\norm{a^{*}a}=\norm{a}^{2}
    \end{displaymath}
    zeigt, dass $\hat{.}$ eine isometrische Abbildung ist. Also ist
    $\hat{.}$ injektiv und $\ran \hat{.}$ abgeschlossen in
    $C_{0}(M)$. Andererseits stellt $\ran \hat{.}$ eine
    punktetrennende und nirgends identisch verschwindende
    $*$-Unteralgebra von $C_{0}(M)$ dar: Erstens gibt es für
    $m_{1}\neq m_{2}$ ein $a\in A$ mit
    $\hat{a}(m_{1})=m_{1}(a)\neq m_{2}(a)=\hat{a}(m_{2})$, und
    zweitens gibt es kein $m\in M$ mit $\hat{a}(m)=0$ für alle
    $a\in A$, da das Nullfunktional nicht im Gelfandraum enthalten
    ist. Der Satz von Stone-Weierstraß für lokalkompakte Räume
    impliziert daher die Dichtheit von $\ran \hat{.}$ in
    $C_{0}(M)$. Insgesamt erhalten wir, dass $\hat{.}$ auch surjektiv
    ist.
  \end{enumerate}
\end{proof}

\begin{bemerkung}\label{bem:alexandroff}
  Ist $A$ eine kommutative $C^{*}$-Algebra ohne Einselement, so ist
  $A$ bzw. $\tilde{A}$ isometrisch isomorph zu $C_{0}(M)$
  bzw. $C(\tilde{M})=C_{0}(\tilde{M})$. Über die (lokal-)kompakten
  Hausdorffräume $M$ und $\tilde{M}$ erhalten wir also einen
  topologischen Zugang zu kommutativen $C^{*}$-Algebren\footnote{Aus
    diesem Grund wird die Theorie von allgemeinen $C^{*}$-Algebren
    auch oft als \emph{nichtkommutative Topologie} bezeichnet.}. Dabei
  gilt folgende bemerkenswerte Tatsache: In der Sprache der
  Gelfandräume entspricht die Adjunktion eines Einselements zu einer
  kommutativen $C^{*}$-Algebra genau der
  Alexandroff-Kompakti\-fizierung\footnote{Ist $(X,\TT)$ ein
    lokalkompakter Hausdorffraum und $\infty$ kein Element von $X$, so
    bezeichnet man $Y:=X\cup\{\infty\}$ versehen mit der kompakten
    Topologie
    $\OO:=\TT\cup\set{(X\setminus K)\cup\{\infty\}}{K\subseteq X\
      \text{kompakt}}$ als \emph{Alexandroff-Kompaktifizierung}.}
  eines lokalkompakten Hausdorffraums. Mit anderen Worten ist
  $\tilde{M}$ homöomorph zur Alexandroff-Kompaktifizierung von $M$,
  die im Folgenden mit $Y$ bezeichnet sei. Die Abbildung
  \begin{displaymath}
    \iota:
    \begin{cases}
      M &\to\tilde{M} \\
      m &\mapsto\tilde{m}
    \end{cases}
  \end{displaymath}
  ist, wie man leicht nachprüft, ein Homöomorphismus $M\to\iota(M)$
  bezüglich der schwach-*-Topologien. Nach Lemma~\ref{lem:mult-fkt}
  gilt dabei $\iota(M)=\tilde{M}\setminus\{m_{0}'\}$, sodass
  $\iota(M)$ eine offene Teilmenge von $\tilde{M}$ ist. Die Funktion
  $f:\tilde{M}\to Y$ definiert durch
  \begin{displaymath}
    f(m'):=\begin{cases}
      m,& m'=\iota(m) \\
      \infty,& m'=m_{0}'
    \end{cases}
  \end{displaymath}
  ist wegen der Injektivität von $\iota$ und Lemma~\ref{lem:mult-fkt}
  wohldefiniert; die Bijektivität ist offensichtlich. Außerdem ist $f$
  stetig, denn ist $O$ eine offene Menge in $Y$, so ist entweder $O$
  eine offene Teilmenge von $M$ oder es gilt
  $O=(M\setminus K)\cup\{\infty\}$ für eine kompakte Menge
  $K\subseteq M$. Im ersten Fall ist $f^{-1}(O)=\iota(O)$ offen in
  $\iota(M)$, somit auch in $\tilde{M}$. Im zweiten Fall gilt
  \begin{displaymath}
    f^{-1}((M\setminus K)\cup\{\infty\})=\tilde{M}\setminus\iota(K),
  \end{displaymath}
  woraus wiederum die Offenheit in $\tilde{M}$ folgt. Als bijektive
  und stetige Abbildung von einem kompakten Raum in einen
  Hausdorffraum ist $f$ somit ein Homöomorphismus von $\tilde{M}$ nach
  $Y$.
\end{bemerkung}

Um die Aussagekraft der Gelfandtransformation auch in beliebigen
$C^{*}$-Algebren ausnutzen zu können, ist es oft hilfreich, zu einer
kommutativen $C^{*}$-Unteralgebra überzugehen, die die aktuell
untersuchten Elemente enthält\footnote{Diese Möglichkeit ist es, die
  die normalen Operatoren in $L_{b}(H)$ oder allgemeiner die normalen
  Elemente von $A$ auszeichnet und dafür sorgt, dass sie leichter
  zugänglich und besser verstanden sind.}.
\begin{definition}\label{def:erz-unteralg}
  Ist $A$ eine $C^{*}$-Algebra und $S\subseteq A$ eine Teilmenge, so
  definieren wir die \emph{von $S$ erzeugte $C^{*}$-Unteralgebra von
    $A$}, in Zeichen $C^{*}_{A}(S)$, als die kleinste
  $C^{*}$-Unteralgebra von $A$, die alle Elemente von $S$
  enthält. Dabei unterdrücken wir den Index $A$, wenn die
  zugrundeliegende $C^{*}$-Algebra aus dem Kontext klar ist. Im Falle
  $S=\{x_{1},\dots,x_{n}\}$ schreiben wir für $C^{*}_{A}(S)$ auch
  $C^{*}_{A}(x_{1},\dots,x_{n})$ bzw. $C^{*}(x_{1},\dots,x_{n})$.
\end{definition}
\begin{bemerkung}\label{bem:erz-unteralg}
  Implizit ist in Definition~\ref{def:erz-unteralg} die Behauptung
  enthalten, dass die Menge der $S$ umfassenden $C^{*}$-Unteralgebren
  von $A$ ein kleinstes Element hat. Bekanntermaßen existiert dieses
  kleinste Element tatsächlich und es gilt
  \begin{displaymath}
    C^{*}_{A}(S)=\cl{\bigcap\set{B}{S\subseteq B\leq A}},
  \end{displaymath}
  wobei $B\leq A$ bedeuten möge, dass $B$ eine $C^{*}$-Unteralgebra
  von $A$ ist.
  
  Ist $S=\{x_{1},\dots,x_{n}\}$ endlich und kommutieren alle Elemente
  von $\{x_{1},\dots,x_{n},x_{1}^{*},\dots,x_{n}^{*}\}$ paarweise
  miteinander, so lässt sich das Erzeugnis
  $C^{*}_{A}(x_{1},\dots,x_{n})$ auch von unten durch
  \begin{equation}\label{eq:erz-unteralg-i}
    C^{*}_{A}(S)=\cl{\set{p(x_{1},\dots,x_{n},x_{1}^{*},\dots
        x_{n}^{*})}{p\in\C[z_{1},\dots,z_{n},w_{1},\dots,w_{n}],\
        p(0,\dots, 0)=0}}
  \end{equation}
  konstruieren. Die Zusatzbedingung $p(0,\dots, 0)$ ist notwendig, da
  man ansonsten ein Einselement zur Verfügung haben müsste, um
  $p(x_{1},\dots,x_{n},x_{1}^{*},\dots,x_{n}^{*})$ berechnen zu
  können. Anhand dieser Darstellung kann man auf einfache Weise
  nachprüfen, dass $C^{*}_{A}(S)$ kommutativ ist.

  Wir wollen noch den Spezialfall betrachten, dass $A$ ein Einselement
  enthält und dass $S$ aus einem einzigen selbstadjungierten Element
  und dem Einselement besteht, $S=\{x,1\}$. Die $C^{*}$-Algebra
  $C_{A}^{*}(S)$ ist also die von $x=x^{*}$ erzeugte $C^{*}$-Algebra
  mit Eins. In diesem Fall können wir statt $p(x,1,x^{*},1^{*})$ für
  ein Polynom $p\in\C[z_{1},z_{2},w_{1},w_{2}]$ mit $p(0,0,0,0)=0$
  auch $r(x)$ für ein Polynom $r\in\C[z]$, das nicht notwendigerweise
  $r(0)=0$ erfüllt, verwenden. Die Formel \eqref{eq:erz-unteralg-i}
  vereinfacht sich somit zu
  \begin{equation}\label{eq:erz-unteralg-ii}
    C_{A}^{*}(x,1)=\cl{\set{r(x)}{r\in\C[z]}}.
  \end{equation}

  Betrachten wir nur $S=\{x\}$, so erhält man auf analoge Art
  \begin{equation}\label{eq:erz-unteralg-iii}
    C_{A}^{*}(x)=\cl{\set{r(x)}{r\in\C[z],\ r(0)=0}}.
  \end{equation}
\end{bemerkung}

Eine unscheinbar wirkende aber sehr nützliche Anwendung des Übergangs
zu einer kommutativen $C^{*}$-Unteralgebra ist das nächste Lemma.
\begin{lemma}\label{lem:norm-summe}
  Sei $A$ eine $C^{*}$-Algebra. Sind $a,b\in A$ selbstadjungiert mit
  $ab=ba=0$, dann gilt $\norm{a+b}=\max(\norm{a},\norm{b})$.
\end{lemma}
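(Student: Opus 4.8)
Der Plan ist, die Behauptung durch Übergang zur von $a$ und $b$ erzeugten $C^{*}$-Unteralgebra auf eine elementare Aussage über Funktionen in einem $C_{0}(X)$ zurückzuführen. Ist $a=b=0$, so ist wegen $\norm{a+b}=0=\max(\norm{a},\norm{b})$ nichts zu zeigen. Andernfalls betrachte ich $B:=C^{*}_{A}(a,b)\neq\{0\}$. Da $a$ und $b$ selbstadjungiert sind und $ab=ba=0$ gilt, kommutieren die vier Elemente $a,b,a^{*}=a,b^{*}=b$ paarweise; nach Bemerkung~\ref{bem:erz-unteralg} ist $B$ daher eine kommutative $C^{*}$-Algebra, deren Norm die Einschränkung von $\norm{\cdot}$ ist, sodass $a$, $b$ und $a+b$ in $B$ dieselbe Norm wie in $A$ haben.

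Als Nächstes wende ich die Gelfandtransformation auf $B$ an. Besitzt $B$ kein Einselement, so liefert Satz~\ref{satz:gelfand-trafo} einen isometrischen $*$-Isomorphismus von $B$ auf ein $C_{0}(M)$ mit lokalkompaktem Hausdorffraum $M$; andernfalls erhält man mit der bekannten Gelfandtransformation für kommutative $C^{*}$-Algebren mit Einselement, siehe~\cite[Satz~1.4.4]{kaltenb:fana2}, einen isometrischen $*$-Isomorphismus auf ein $C(K)$ mit kompaktem Hausdorffraum $K$. In jedem Fall gibt es also einen lokalkompakten Hausdorffraum $X$ und einen isometrischen $*$-Isomorphismus $\Phi:B\to C_{0}(X)$.

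Setzt man $f:=\Phi(a)$ und $g:=\Phi(b)$, so sind $f$ und $g$ wegen der Verträglichkeit von $\Phi$ mit der Involution und der Selbstadjungiertheit von $a$, $b$ reellwertig, und aus $ab=0$ folgt $fg=\Phi(ab)=0$. In jedem Punkt $t\in X$ verschwindet somit mindestens einer der Werte $f(t)$, $g(t)$, weshalb $\abs{f(t)+g(t)}=\max(\abs{f(t)},\abs{g(t)})$ gilt. Supremumsbildung über $t$ ergibt $\norm[\infty]{f+g}=\max(\norm[\infty]{f},\norm[\infty]{g})$, und mit der Isometrie von $\Phi$ erhält man
\begin{displaymath}
  \norm{a+b}=\norm[\infty]{f+g}=\max(\norm[\infty]{f},\norm[\infty]{g})=\max(\norm{a},\norm{b}).
\end{displaymath}

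Eine wirkliche Schwierigkeit tritt dabei nicht auf; etwas Sorgfalt erfordern lediglich der triviale Fall $a=b=0$ (bzw.\ $B=\{0\}$) und die Fallunterscheidung zwischen unitalem und nicht-unitalem $B$ beim Anwenden der Gelfandtransformation. Die eigentliche Kernbeobachtung ist, dass sich die algebraische Bedingung $ab=0$ unter der Gelfandtransformation in die punktweise Aussage übersetzt, dass an jeder Stelle mindestens einer der Funktionswerte von $f$ und $g$ verschwindet, womit sich die Norm der Summe unmittelbar als Maximum der Einzelnormen ergibt.
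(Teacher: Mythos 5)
Dein Beweis ist korrekt und verläuft im Wesentlichen genauso wie der in der Arbeit: Übergang zur kommutativen Unteralgebra $C^{*}_{A}(a,b)$, Gelfandtransformation, und die Beobachtung, dass wegen $ab=0$ in jedem Punkt höchstens einer der Funktionswerte von $0$ verschieden ist, woraus die Maximumsformel punktweise und damit für die Supremumsnorm folgt. Deine zusätzliche Sorgfalt beim trivialen Fall und bei der Fallunterscheidung unital/nicht-unital ist unschädlich, ändert aber nichts am Argument.
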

\begin{proof}
  Die von $a$ und $b$ erzeugte $C^{*}$-Unteralgebra $C^{*}_{A}(a,b)$
  von $A$ ist kommutativ, da die Elemente $a,a^{*}=a,b,b^{*}=b$
  miteinander kommutieren. Die Aussage betrifft nur die Norm in $A$
  und ist deswegen von der konkreten Unteralgebra unabhängig, solange
  sie nur die Elemente $a$ und $b$ enthält. Folglich können wir zu
  $C^{*}_{A}(a,b)$ übergehen und wegen der Gelfandtransformation sogar
  $A=C_{0}(M)$ mit einem lokalkompakten Hausdorffraum $M$
  annehmen. Die Voraussetzung $ab=ba=0$ besagt dann, dass für jedes
  $m\in M$ höchstens einer der Werte $a(m)$ und $b(m)$ von $0$
  verschieden ist. Daraus folgt
  \begin{displaymath}
    \abs{a(m)+b(m)}=\abs{a(m)}+\abs{b(m)}=\max(\abs{a(m)},\abs{b(m)})
  \end{displaymath}
  und wir erhalten unmittelbar die Aussage.
\end{proof}
Als Abschluss des Abschnitts wollen wir für den späteren Gebrauch das
Spektrum in einer bestimmten $C^{*}$-Algebra mit Einselement
bestimmen, nämlich im Raum $C(K)$.
\begin{beispiel}\label{bsp:spektrum-ck}
  Ist $K$ ein kompakter Hausdorffraum, so sind die invertierbaren
  Elemente von $C(K)$ jene stetigen Funktionen $g$, für die die
  Funktion $1/g$ wohldefiniert und stetig ist. Dies ist genau dann der
  Fall, wenn $g$ nullstellenfrei ist. Für eine Funktion $f\in C(K)$
  besteht somit $\sigma(f)$ aus den $\lambda\in\C$, für die
  $f-\lambda$ eine Nullstelle hat. Anders formuliert gilt
  $\sigma(f)=f(K)$.
\end{beispiel}

\section{Beschränkte Operatoren auf einem Hilbertraum}
\label{sec:beschr-op-auf-hr}
\begin{notation}\label{not:hr}
  \hspace{0mm}
  \begin{enumerate}[label=(\roman*),ref=\roman*]
  \item\label{item:not-hr-i} In der gesamten weiteren Arbeit bezeichne
    $H$ einen Hilbertraum mit Skalarprodukt
    $(\cdot,\cdot):=(\cdot,\cdot)_{H}$. Den Raum der beschränkten
    Operatoren auf $H$ notieren wir als $L_{b}(H)$.
  \item\label{item:not-hr-ii} Ist $T\in L_{b}(H)$, dann bezeichne
    $\abs{T}$ den Operator\footnote{Nach dem Funktionalkalkül für
      selbstadjungierte Operatoren existiert eine eindeutige positive
      Wurzel des positiven Operators $T^{*}T$.} $(T^{*}T)^{1/2}$.
  \end{enumerate}
\end{notation}

Die folgenden beiden Konstruktionen werden sich als nützlich
herausstellen.
\begin{bemerkung}\label{bem:kart-prod-matrix-hr}
  \hspace{0mm}
  \begin{enumerate}[label=(\roman*),ref=\roman*]
  \item\label{item:kart-prod-matrix-hr-i} Ist $I$ eine beliebige
    Indexmenge und $H_{i}$ für $i\in I$ ein Hilbertraum mit
    Skalarprodukt $(.,.)_{H_{i}}$, so ist die äußere direkte Summe
    \begin{displaymath}
      X_{I}:=\prod_{i\in I}H_{i}\,\,\big(=\set{(x_{i})_{i\in I}}{x_{i}\in H_{i}}\big),
    \end{displaymath}
    versehen mit den punktweisen Operationen klarerweise ein
    Vektorraum. Die Teilmenge
    \begin{displaymath}
      \ell^{2}(H_{i} : i\in I):=\set{(x_{i})_{i\in I}\in X_{I}}{\left(\norm[H_{i}]{x_{i}}\right)_{i\in I}\in\ell^{2}(I)}
    \end{displaymath}
    ist ein Unterraum von $X_{I}$, der das Skalarprodukt
    \begin{displaymath}
      \big((x_{i})_{i\in I},(y_{i})_{i\in I}\big):=\sum_{i\in I}(x_{i},y_{i})_{H_{i}}
    \end{displaymath}
    trägt, wobei wir die Summe als Integral bezüglich des Zählmaßes
    oder alternativ als unbedingt konvergente Reihe auffassen. Nach
    der Hölder'schen Ungleichung ist $(\cdot,\cdot)$ wohldefiniert,
    denn $\abs{(x_{i},y_{i})_{H_{i}}}$ können wir punktweise in $i$
    durch das Produkt $\norm[H_{i}]{x_{i}}\norm[H_{i}]{y_{i}}$ zweier
    quadratsummierbarer Funktionen abschätzen. Die durch
    $(\cdot,\cdot)$ induzierte Norm ist
    $\norm{(x_{i})_{i\in I}}=\sqrt{\sum_{i\in
        I}\norm[H_{i}]{x_{i}}^{2}}$.

    Als Nächstes zeigen wir, dass der Raum $\ell^{2}(H_{i}:i\in I)$
    damit zu einem Hilbertraum wird. Dazu sei
    $((x_{i}^{n})_{i\in I})_{n\in\N}$ eine Cauchy-Folge in
    $\ell^{2}(H_{i}:i\in I)$. Wegen
    \begin{displaymath}
      \norm[H_{i}]{x_{j}^{m}-x_{j}^{n}}\leq\norm{(x_{i}^{m}-x_{i}^{n})_{i\in
          I}}=\norm{(x_{i}^{m})_{i\in I}-(x_{i}^{n})_{i\in I}}
    \end{displaymath}
    ist für jedes feste $j\in I$ auch $(x_{j}^{n})_{n\in\N}$ eine
    Cauchy-Folge und infolge gegen einen Vektor $x_{j}\in H_{j}$
    konvergent. Für beliebiges $F$ aus der Menge $\EE(I)$ der
    endlichen Teilmengen von $I$ und für jedes $m\in\N$ gilt
    \begin{align}
      \begin{split}\label{eq:kart-prod-matrix-hr-i}
        \sqrt{\sum_{i\in
            F}\norm[H_{i}]{x_{i}^{n}-x_{i}}^{2}}&\leq\sqrt{\sum_{i\in
            F}\norm[H_{i}]{x_{i}^{n}-x_{i}^{m}}^{2}}+\sqrt{\sum_{i\in
            F}\norm[H_{i}]{x_{i}^{m}-x_{i}}^{2}}\\
        &\leq\norm{(x_{i}^{n})_{i\in I}-(x_{i}^{m})_{i\in
            I}}+\sqrt{\sum_{i\in F}\norm[H_{i}]{x_{i}^{m}-x_{i}}^{2}}.
      \end{split}
    \end{align}
    Wählen wir $N\in\N$ so groß, dass die Abschätzung
    $\norm{(x_{i}^{n})_{i\in I}-(x_{i}^{m})_{i\in I}}\leq\epsilon$ für
    $m,n\geq N$ erfüllt ist, setzen $n\geq N$ in
    \eqref{eq:kart-prod-matrix-hr-i} ein und bilden den Grenzwert
    $m\to\infty$, so folgt
    \begin{displaymath}
      \sqrt{\sum_{i\in F}\norm[H_{i}]{x_{i}^{n}-x_{i}}^{2}}\leq\epsilon,
    \end{displaymath}
    weil die linke Seite von \eqref{eq:kart-prod-matrix-hr-i} nicht
    von $m$ abhängt.  Da $F\in\EE(I)$ beliebig war, erhalten wir
    \begin{displaymath}
      \sqrt{\sum_{i\in I}\norm[H_{i}]{x_{i}^{n}-x_{i}}^{2}}\leq\epsilon
    \end{displaymath}
    für $n\geq N$, was einerseits
    $(x_{i})_{i\in I}\in\ell^{2}(H_{i}:i\in I)$ und andererseits
    $((x_{i}^{n})_{i\in I})_{n\in\N}\to (x_{i})_{i\in I}$
    impliziert. Somit liegt tatsächlich ein vollständiger Raum, also
    ein Hilbertraum, vor, den man die \emph{direkte Summe} der $H_{i}$
    nennt und als $\bigoplus_{i\in I}H_{i}$ anschreibt. An späterer
    Stelle werden wir benötigen, dass der Unterraum
    \begin{displaymath}
      Y:=\set{(x_{i})_{i\in I}\in\bigoplus_{i\in I}H_{i}}{x_{i}\neq 0\,\,\text{nur für endlich
          viele}\,\,i\in I},
    \end{displaymath}
    den wir nach geeigneter Identifikation als Raum aller
    Linearkombinationen von Elementen aus $\bigcup_{i\in I}H_{i}$
    auffassen können, dicht in $\bigoplus_{i\in I}H_{i}$ enthalten
    ist. Zum Beweis sei $(x_{i})_{i\in I}\in\bigoplus_{i\in I}H_{i}$
    gegeben. Wir definieren das Netz
    $\left((x_{i}^{F})_{i\in I}\right)_{F\in\EE(I)}$ durch
    \begin{displaymath}
      x_{i}^{F}:=
      \begin{cases}
        x_{i},& i\in F\\
        \hfill 0,& \text{sonst}
      \end{cases}
    \end{displaymath}
    und bemerken, dass die Elemente $x_{i}^{F}$ sicher in $Y$
    liegen. Weiters konvergiert dieses Netz gegen $(x_{i})_{i\in I}$,
    da
    \begin{displaymath}
      \norm{(x_{i})_{i\in I}-(x_{i}^{F})_{i\in
          I}}=\sqrt{\sum_{i\in
          I}\norm[H_{i}]{x_{i}-x_{i}^{F}}^{2}}=\sqrt{\sum_{i\in I\setminus
          F}\norm[H_{i}]{x_{i}}^{2}}
    \end{displaymath}
    wegen $(\norm[H_{i}]{x_{i}})_{i\in I}\in\ell^{2}(I)$ für
    $F\in\EE(I)$ gegen $0$ konvergiert.

    Wir wollen noch den Sonderfall hervorheben, dass $I=\{1,\dots,n\}$
    eine endliche Indexmenge ist und alle $H_{i}=H$ übereinstimmen. In
    diesem Fall erhalten wir für $\bigoplus_{i\in I}H_{i}$ das volle
    kartesische Produkt $H^{n}$, dessen Elemente wir aus formalen
    Gründen (siehe (\ref{item:kart-prod-matrix-hr-ii})) als Spalten
    $(x_{1},\dots,x_{n})^{T}$ schreiben. Das Skalarprodukt ist dann
    \begin{displaymath}
      \left((x_{1},\dots,x_{n})^{T},(y_{1},\dots,y_{n})^{T}\right):=\sum_{k=1}^{n}(x_{k},y_{k}).
    \end{displaymath}
  \item\label{item:kart-prod-matrix-hr-ii} Sei $A$ eine
    $C^{*}$-Unteralgebra von $L_{b}(H)$. Sind für $i,j\in\{1,2\}$
    Operatoren $T_{ij}\in A$ gegeben, so kann man auf $H^{2}$ den
    Operator
    \begin{displaymath}
      (x,y)^{T}\mapsto (T_{11}x+T_{12}y,T_{21}x+T_{22}y)^{T}
    \end{displaymath}
    definieren, den wir im Folgenden als Matrix
    $\left(\begin{smallmatrix}
        T_{11} & T_{12}\\
        T_{21} & T_{22}
      \end{smallmatrix}\right)$
    schreiben werden. Die Menge $M_{2}(A)$ aller derartigen Matrizen
    über $A$ bildet offensichtlich einen unter der Komposition von
    Operatoren abgeschlossenen Unterraum von $L_{b}(H^{2})$. Einfaches
    Nachrechnen zeigt außerdem, dass die Adjungierte in $L_{b}(H^{2})$
    von $\left(\begin{smallmatrix}
        T_{11} & T_{12}\\
        T_{21} & T_{22}
      \end{smallmatrix}\right)$ durch
    $\left(\begin{smallmatrix}
        T_{11}^{*} & T_{21}^{*}\\
        T_{12}^{*} & T_{22}^{*}
      \end{smallmatrix}\right)$
    gegeben ist, womit $M_{2}(A)$ auch unter $.^{*}$ abgeschlossen
    ist. Für die Einschränkung der Abbildungsnorm auf $M_{2}(A)$
    gelten die Ungleichungen
    \begin{equation}\label{eq:kart-prod-matrix-hr-ii}
      \max_{i,j=1,2}\norm{T_{ij}}\leq\norm{
        \begin{pmatrix}
          T_{11} & T_{12}\\
          T_{21} & T_{22}
        \end{pmatrix}
      }\leq\sum_{i,j=1}^{2}\norm{T_{ij}},
    \end{equation}
    wie man elementar überprüft. Es folgt, dass eine Folge aus
    $M_{2}(A)$ genau dann konvergiert bzw. eine Cauchy-Folge ist, wenn
    die Folgen der Einträge alle konvergieren bzw. Cauchy-Folgen
    sind. Da $A$ in $L_{b}(H)$ abgeschlossen ist, ergibt sich daraus,
    dass $M_{2}(A)$ in $L_{b}(H^{2})$ abgeschlossen und somit eine
    $C^{*}$-Unteralgebra ist.

    Es sei nicht verschwiegen, dass man dieselbe Konstruktion auch für
    $H^{n}$ statt $H^{2}$ durch\-führen kann, um den Raum $M_{n}(A)$
    der $n\times n$-Matrizen über $A$ zu erhalten. Sämtliche
    Überlegungen sind ident, der notationelle Aufwand steigt jedoch
    an. Da wir nur $M_{2}(A)$ verwenden werden, haben wir auf die
    volle Allgemeinheit verzichtet.
  \end{enumerate}
\end{bemerkung}

Schließlich benötigen wir, dass der Raum $L_{b}(H)$ bis auf
isometrische Isomorphie dem Dualraum eines Banachraums
entspricht. Dazu müssen wir etwas ausholen.

\begin{lemma}\label{lem:hs-norm-spnorm-wohldef}
  Sei $E$ eine Orthonormalbasis von $H$. Ist $S\in L_{b}(H)$, dann ist
  der Ausdruck
  \begin{equation}\label{eq:hs-norm}
    \sum_{e\in E}(Se,Se)=\sum_{e\in E}\norm{Se}^{2}\in[0,+\infty]
  \end{equation}
  unabhängig von der Wahl von $E$.
\end{lemma}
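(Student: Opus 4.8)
The plan is to prove the stronger symmetric statement that for \emph{any} two orthonormal bases $E$ and $F$ of $H$ one has $\sum_{e\in E}\norm{Se}^{2}=\sum_{f\in F}\norm{S^{*}f}^{2}$. Since the right-hand side does not involve $E$ at all, independence of the choice of $E$ is then immediate: applying the identity twice with a fixed basis $F$ in the second slot and two different bases $E_{1},E_{2}$ in the first slot yields $\sum_{e\in E_{1}}\norm{Se}^{2}=\sum_{f\in F}\norm{S^{*}f}^{2}=\sum_{e\in E_{2}}\norm{Se}^{2}$.

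First I would fix orthonormal bases $E$ and $F$ and expand each $\norm{Se}^{2}$ by the Parseval identity with respect to $F$, namely $\norm{Se}^{2}=\sum_{f\in F}\abs{(Se,f)}^{2}$. Summing over $e\in E$ gives
\begin{displaymath}
  \sum_{e\in E}\norm{Se}^{2}=\sum_{e\in E}\sum_{f\in F}\abs{(Se,f)}^{2}.
\end{displaymath}
All summands are nonnegative, so the double sum over the index set $E\times F$ is well-defined in $[0,+\infty]$ and may be reordered freely -- this is just the associativity of unconditional summation of a nonnegative family, equivalently Tonelli's theorem for the counting measure on $E\times F$. Hence
\begin{displaymath}
  \sum_{e\in E}\norm{Se}^{2}=\sum_{f\in F}\sum_{e\in E}\abs{(Se,f)}^{2}.
\end{displaymath}

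Next I would rewrite $\abs{(Se,f)}^{2}=\abs{(e,S^{*}f)}^{2}$ using the defining property of the adjoint, and then apply Parseval once more, now expanding $\norm{S^{*}f}^{2}=\sum_{e\in E}\abs{(e,S^{*}f)}^{2}$. This turns the inner sum into $\norm{S^{*}f}^{2}$ and produces
\begin{displaymath}
  \sum_{e\in E}\norm{Se}^{2}=\sum_{f\in F}\norm{S^{*}f}^{2},
\end{displaymath}
which is exactly the symmetric identity sought above; the conclusion of the lemma follows as described.

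The only genuinely delicate point is the interchange of the two summations, and it is harmless precisely because every term is a nonnegative extended real -- no integrability or boundedness hypothesis is invoked at that step. Boundedness of $S$ enters only to guarantee that $S^{*}$ exists and that each individual $\norm{Se}^{2}$ and $\norm{S^{*}f}^{2}$ is finite; the common value of the sums may well be $+\infty$, in agreement with the statement. (The degenerate case $H=\{0\}$, where every basis is empty and all sums vanish, needs no separate argument.)
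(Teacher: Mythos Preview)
Your argument is correct and is precisely the standard proof. The paper itself does not give an argument here but simply refers to Murphy's textbook; the proof found there is exactly the one you wrote -- Parseval in both directions together with the switch $(Se,f)=(e,S^{*}f)$, the interchange of sums being justified by nonnegativity.
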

\begin{proof}
  Siehe~\cite[S.59f.]{murphy:cstar}.
\end{proof}
Diese Tatsache ermöglicht die Definition zweier Klassen von Operatoren.
\begin{definition}\label{def:hs-spklasse-op}
  \hspace{0mm}
  \begin{enumerate}[label=(\roman*),ref=\roman*]
  \item\label{item:hs-spklasse-op-i} Ein Operator $S\in L_{b}(H)$
    heißt \emph{Hilbert-Schmidt-Operator}, wenn der Ausdruck
    \eqref{eq:hs-norm} endlich ist. In diesem Fall definiert man
    \begin{displaymath}
      \norm[2]{S}:=\left(\sum_{e\in E}(Se,Se)\right)^{1/2}.
    \end{displaymath}
    Die Menge aller Hilbert-Schmidt-Operatoren wird mit $L^{2}(H)$
    bezeichnet.
  \item\label{item:hs-spklasse-op-ii} Ein Operator $S\in L_{b}(H)$
    heißt \emph{Spurklasseoperator}, wenn $\abs{S}^{1/2}$ ein
    Hilbert-Schmidt-Operator ist. In diesem Fall definiert man
    \begin{displaymath}
      \norm[1]{S}:=\norm[2]{\abs{S}^{1/2}}^{2}
    \end{displaymath}
    Die Menge aller Spurklasseoperatoren wird mit $L^{1}(H)$
    bezeichnet.
  \end{enumerate}
\end{definition}
\begin{bemerkung}
  Wegen $(|S|^{1/2}e,|S|^{1/2}e)=(|S|e,e)$ ist ein Operator
  $S\in L_{b}(H)$ genau dann ein Spurklasseoperator, wenn für eine
  Orthonormalbasis (äquivalent für alle Orthonormalbasen) $E$ von $H$
  der Ausdruck
  \begin{displaymath}
    \sum_{e\in E}(|S|e,e)
  \end{displaymath}
  endlich ist. Ist das der Fall, so stimmt diese Zahl mit
  $\norm[1]{S}$ überein.
\end{bemerkung}
Direkt aus der Definition erhalten wir einige äquivalente Bedingungen
dafür, dass $S$ ein Hilbert-Schmidt-Operator ist.
\begin{lemma}\label{lem:hs-equiv}
  Für einen Operator $S\in L_{b}(H)$ sind die folgenden Aussagen
  äquivalent:
  \begin{enumerate}[label=(\roman*),ref=\roman*]
  \item\label{item:hs-equiv-i} $S$ ist ein Hilbert-Schmidt-Operator.
  \item\label{item:hs-equiv-ii} $\abs{S}$ ist ein
    Hilbert-Schmidt-Operator.
  \item\label{item:hs-equiv-iii} $\abs{S}^{2}$ ist ein
    Spurklasseoperator.
  \end{enumerate}
  In diesem Fall gilt
  \begin{displaymath}
    \norm[2]{S}=\norm[2]{\abs{S}}=\norm[1]{\abs{S}^{2}}.
  \end{displaymath}
\end{lemma}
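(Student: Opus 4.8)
The plan is to reduce the whole lemma to one elementary pointwise identity together with uniqueness of positive square roots. First I record that $\abs{S}=(S^{*}S)^{1/2}$ is a well-defined bounded \emph{positive} (hence self-adjoint) operator as in Notation~\ref{not:hr}, and that squaring the defining equation gives $\abs{S}^{2}=S^{*}S$. Fixing an orthonormal basis $E$ of $H$, for each $e\in E$ one then has
\[
\norm{Se}^{2}=(Se,Se)=(S^{*}Se,e)=(\abs{S}^{2}e,e)=(\abs{S}e,\abs{S}e)=\norm{\abs{S}e}^{2},
\]
using only $S^{*}S=\abs{S}^{2}$ and self-adjointness of $\abs{S}$. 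Summing over $e\in E$ and invoking Lemma~\ref{lem:hs-norm-spnorm-wohldef} so as to know that these sums are basis-independent, the two series $\sum_{e\in E}\norm{Se}^{2}$ and $\sum_{e\in E}\norm{\abs{S}e}^{2}$ coincide as elements of $[0,+\infty]$. This immediately yields that statement~(\ref{item:hs-equiv-i}) holds if and only if statement~(\ref{item:hs-equiv-ii}) does, and that $\norm[2]{S}=\norm[2]{\abs{S}}$ whenever the common value is finite.

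For the equivalence with~(\ref{item:hs-equiv-iii}) I would simply unwind the definition of a trace class operator (Definition~\ref{def:hs-spklasse-op}) applied to $T:=\abs{S}^{2}$. Since $T=S^{*}S$ is self-adjoint and positive, uniqueness of the positive square root from the functional calculus gives $\abs{T}=(T^{*}T)^{1/2}=(T^{2})^{1/2}=T$, and hence $\abs{T}^{1/2}=(\abs{S}^{2})^{1/2}=\abs{S}$, again because $\abs{S}$ is positive with square $\abs{S}^{2}$. Therefore $T=\abs{S}^{2}$ is a trace class operator precisely when $\abs{T}^{1/2}=\abs{S}$ is a Hilbert-Schmidt operator, i.e. precisely when~(\ref{item:hs-equiv-ii}) holds; so all three statements are equivalent. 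Carrying the constants along, $\norm[1]{\abs{S}^{2}}=\norm[2]{\abs{T}^{1/2}}^{2}=\norm[2]{\abs{S}}^{2}$, and together with the first paragraph this gives $\norm[2]{S}^{2}=\norm[2]{\abs{S}}^{2}=\norm[1]{\abs{S}^{2}}$, which is the asserted chain of equalities (with the two Hilbert-Schmidt norms appearing squared).

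There is no genuine obstacle here; the lemma is essentially a bookkeeping exercise. The only points that need attention are: that $\abs{S}$ is self-adjoint, so that $(\abs{S}e,\abs{S}e)=(\abs{S}^{2}e,e)$; that $S^{*}S$ and $\abs{S}$ are positive, so that the ``absolute value'' and the ``square root'' operations entering the definitions of $L^{1}(H)$ and $L^{2}(H)$ collapse by uniqueness of the positive square root; and that the defining sums are to be read as values in $[0,+\infty]$, so that the three equivalences can be established before any finiteness hypothesis is invoked and the norm identities then come out for free.
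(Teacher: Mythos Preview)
Your argument is correct and is essentially the paper's own proof, carried out with more care: both rest on the pointwise identity $(Se,Se)=(\abs{S}^{2}e,e)=(\abs{S}e,\abs{S}e)$ and then sum over an orthonormal basis, with your second paragraph merely spelling out why $\bigl|\,\abs{S}^{2}\bigr|^{1/2}=\abs{S}$ via uniqueness of positive square roots, a step the paper leaves implicit. Your parenthetical remark is also correct: the chain of norm equalities actually comes out as $\norm[2]{S}^{2}=\norm[2]{\abs{S}}^{2}=\norm[1]{\abs{S}^{2}}$, so the lemma's displayed line is off by a square on the last term.
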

\begin{proof}
  Ist $e\in H$ ein Vektor, so gilt
  \begin{displaymath}
    (Se,Se)=(S^{*}Se,e)=(|S|^{2}e,e)=(|S|e,|S|e).
  \end{displaymath}
  Die Aussagen folgen nun sofort aus
  \begin{displaymath}
    \sum_{e\in E}(Se,Se)=\sum_{e\in E}(|S|e,|S|e)=\sum_{e\in
      E}(|S|^{2}e,e)
  \end{displaymath}
  für eine Orthonormalbasis $E$.
\end{proof}
\begin{bemerkung}\label{bem:motiv-notation-hs-spklasse}
  Die Äquivalenz von (\ref{item:hs-equiv-i}) und
  (\ref{item:hs-equiv-iii}) gilt bekanntlich auch für die
  Funktionenräume $L^{2}(\mu)$ und $L^{1}(\mu)$. Die Hilbert-Schmidt-
  und Spurklasseoperatoren haben noch weitere Eigenschaften mit den
  Elementen dieser Funktionenräume gemeinsam. Beispielsweise ist das
  Produkt zweier Hilbert-Schmidt-Operatoren stets ein
  Spurklasseoperator; vgl.~\cite[Theorem~2.4.13]{murphy:cstar}. Dies
  motiviert die Schreibweisen $L^{2}(H)$ und $L^{1}(H)$.
\end{bemerkung}
Die grundlegenden Eigenschaften von $L^{1}(H)$ haben stets ein
Analogon für $L^{2}(H)$. Im Folgenden werden wir uns auf die
Spurklasseoperatoren konzentrieren, da diese im Verlauf der Arbeit die
wesentlich prominentere Rolle spielen werden.
\begin{lemma}\label{lem:eig-spklasse}
  \hspace{0mm}
  \begin{enumerate}[label=(\roman*),ref=\roman*]
  \item\label{item:eig-spklasse-i} Die Funktion $\norm[1]{\cdot}$ ist
    eine Norm auf $L^{1}(H)$, die sogenannte \emph{Spurklassenorm}.
  \item\label{item:eig-spklasse-ii} $L^{1}(H)$ ist ein unter $.^{*}$
    abgeschlossenes Ideal (insbesondere ein Unterraum) in $L_{b}(H)$,
    wobei für $S\in L^{1}(H)$ und $T\in L_{b}(H)$ einerseits
    \begin{displaymath}
      \norm[1]{S^{*}}=\norm[1]{S}
    \end{displaymath}
    und andererseits die Abschätzungen
    \begin{equation}\label{eq:spklasse-ideal}
      \norm[1]{ST}\leq\norm[1]{S}\norm{T}\quad\textnormal{sowie}\quad\norm[1]{TS}\leq\norm{T}\norm[1]{S}
    \end{equation}
    gelten.
  \end{enumerate}
\end{lemma}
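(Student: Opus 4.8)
The idea is to push everything through the Hilbert--Schmidt class $L^{2}(H)$, whose relevant properties follow at once from the basis-sum formula \eqref{eq:hs-norm}. First I would record: $L^{2}(H)$ is a $.^{*}$-invariant linear subspace of $L_{b}(H)$ on which $\norm[2]{\cdot}$ is a norm; one has $\norm[2]{S^{*}}=\norm[2]{S}$ (interchange the two basis sums in $\sum_{e}\norm{Se}^{2}=\sum_{e,f}\abs{(Se,f)}^{2}$, using Lemma~\ref{lem:hs-norm-spnorm-wohldef} and Parseval); and $\norm[2]{RS}\le\norm{R}\norm[2]{S}$ as well as $\norm[2]{SR}\le\norm{R}\norm[2]{S}$ for every $R\in L_{b}(H)$ (the first directly from $\norm{RSe}\le\norm{R}\norm{Se}$, the second by passing to adjoints). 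Subadditivity of $\norm[2]{\cdot}$ is Minkowski's inequality in $\ell^{2}(E)$ applied after the triangle inequality in $H$, and definiteness is clear.

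The key step is a factorization bridge: a bounded operator $S$ lies in $L^{1}(H)$ if and only if $S=AB$ with $A,B\in L^{2}(H)$, and then $\norm[1]{S}=\min\{\norm[2]{A}\norm[2]{B}:S=AB,\ A,B\in L^{2}(H)\}$. For one inclusion, given $A,B\in L^{2}(H)$ I would use the polar decomposition $AB=V\abs{AB}$, so that $\abs{AB}=V^{*}AB$ and, for an orthonormal basis $E$, the a priori possibly infinite sum satisfies $\sum_{e\in E}(\abs{AB}e,e)=\sum_{e\in E}(Be,A^{*}Ve)\le\norm[2]{B}\bigl(\sum_{e}\norm{A^{*}Ve}^{2}\bigr)^{1/2}=\norm[2]{B}\,\norm[2]{V^{*}A}\le\norm[2]{B}\norm[2]{A}$ by Cauchy--Schwarz over $e$ and the contraction estimate of the previous paragraph; hence it is finite, i.e. $AB\in L^{1}(H)$ with $\norm[1]{AB}\le\norm[2]{A}\norm[2]{B}$. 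Conversely, for $S\in L^{1}(H)$ the polar decomposition $S=U\abs{S}$ yields $S=(U\abs{S}^{1/2})\,\abs{S}^{1/2}$ with both factors Hilbert--Schmidt (because $\norm{U\abs{S}^{1/2}e}\le\norm{\abs{S}^{1/2}e}$), and $\norm[2]{U\abs{S}^{1/2}}\norm[2]{\abs{S}^{1/2}}\le\norm[2]{\abs{S}^{1/2}}^{2}=\norm[1]{S}$; combined with the estimate just proved this forces equality.

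Given the bridge, most of the lemma is mechanical. Homogeneity and definiteness of $\norm[1]{\cdot}$ follow from $\abs{\lambda S}=\abs{\lambda}\abs{S}$ and from $\norm[1]{S}=0\Rightarrow\abs{S}^{1/2}=0\Rightarrow\abs{S}=0\Rightarrow S=0$ (using $\norm{Sx}^{2}=(\abs{S}^{2}x,x)$). For the ideal property with its bounds, take an optimal factorization $S=AB$; then $RS=(RA)B$ and $SR=A(BR)$ with $RA,BR\in L^{2}(H)$, so $\norm[1]{RS}\le\norm[2]{RA}\norm[2]{B}\le\norm{R}\norm[1]{S}$ and likewise $\norm[1]{SR}\le\norm{R}\norm[1]{S}$. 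For adjoint invariance, $S^{*}=B^{*}A^{*}$ with $A^{*},B^{*}\in L^{2}(H)$ gives $S^{*}\in L^{1}(H)$ and $\norm[1]{S^{*}}\le\norm[2]{B^{*}}\norm[2]{A^{*}}=\norm[2]{A}\norm[2]{B}=\norm[1]{S}$; applying this to $S^{*}$ in place of $S$ yields the reverse inequality, hence $\norm[1]{S^{*}}=\norm[1]{S}$.

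The one genuinely non-routine point, which I regard as the main obstacle, is that $L^{1}(H)$ is closed under addition and that $\norm[1]{\cdot}$ is subadditive, since $\abs{S+T}$ cannot be compared directly with $\abs{S}$ and $\abs{T}$. I would handle this with a block trick on $H\oplus H$ (available via Bemerkung~\ref{bem:kart-prod-matrix-hr}): given $S,T\in L^{1}(H)$, pick optimal factorizations $S=A_{1}B_{1}$, $T=A_{2}B_{2}$, rescaled so that $\norm[2]{A_{1}}^{2}=\norm[2]{B_{1}}^{2}=\norm[1]{S}$ and $\norm[2]{A_{2}}^{2}=\norm[2]{B_{2}}^{2}=\norm[1]{T}$, and form the row operator $\Phi:H\oplus H\to H$, $(x,y)\mapsto A_{1}x+A_{2}y$, and the column operator $\Psi:H\to H\oplus H$, $x\mapsto(B_{1}x,B_{2}x)$. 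A one-line computation with the orthonormal basis of $H\oplus H$ built from one of $H$ gives $\norm[2]{\Phi}^{2}=\norm[2]{A_{1}}^{2}+\norm[2]{A_{2}}^{2}=\norm[1]{S}+\norm[1]{T}=\norm[2]{\Psi}^{2}$ and $\Phi\Psi=S+T$; the bridge lemma, which holds verbatim for operators between the Hilbert spaces $H$ and $H\oplus H$, then yields $S+T\in L^{1}(H)$ with $\norm[1]{S+T}\le\norm[2]{\Phi}\norm[2]{\Psi}=\norm[1]{S}+\norm[1]{T}$. (Alternatively one can first develop the trace on $L^{2}(H)\cdot L^{2}(H)$, establish its cyclicity $\tr(BC)=\tr(CB)$ and the estimate $\abs{\tr A}\le\norm[1]{A}$, and deduce the duality formula $\norm[1]{S}=\sup\{\abs{\tr(SR)}:R\in L_{b}(H),\ \norm{R}\le 1\}$, from which subadditivity and all the ideal bounds are transparent; this is conceptually cleaner but of comparable length.)
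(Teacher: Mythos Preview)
The paper does not supply its own proof here but simply refers to \cite[Theorem~2.4.15]{murphy:cstar}, so there is no in-house argument to compare against. Your plan via the factorization characterization $L^{1}(H)=L^{2}(H)\cdot L^{2}(H)$ with $\norm[1]{S}=\min\{\norm[2]{A}\norm[2]{B}:S=AB\}$ is a correct and standard route; the preliminary $L^{2}$-facts, the bridge lemma (both directions), and the derivation of the ideal estimates and of $\norm[1]{S^{*}}=\norm[1]{S}$ from it are all sound as written. The block trick on $H\oplus H$ for subadditivity is clean and works; the one point to make explicit is that the bridge inequality $\norm[1]{\Phi\Psi}\le\norm[2]{\Phi}\norm[2]{\Psi}$ is being used for Hilbert--Schmidt operators between \emph{different} Hilbert spaces, so the definitions of $\norm[2]{\cdot}$ and the polar-decomposition argument must be stated in that generality---this is routine (the same computation, with $V:H\to H$ and $\Phi^{*}V:H\to H\oplus H$), but should be said rather than left implicit. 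Your alternative via the duality formula $\norm[1]{S}=\sup_{\norm{R}\le 1}\abs{\tr(SR)}$ is equally valid and is in fact closer to how many textbooks, including Murphy, organize the argument.
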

\begin{proof}
  Siehe~\cite[Theorem~2.4.15]{murphy:cstar}.
\end{proof}
\begin{definition}\label{def:spur}
  Die \emph{Spur} eines Spurklasseoperators $S\in L^{1}(H)$ ist
  definiert durch
  \begin{displaymath}
    \tr(S):=\sum_{e\in E}(Se,e),
  \end{displaymath}
  wobei $E$ eine Orthonormalbasis von $H$ bezeichnet.
\end{definition}
Diese Definition ist aus mehreren Gründen ad hoc problematisch, es
gilt aber folgendes Lemma:
\begin{lemma}\label{lem:spur-wohldef+eig}
  Die Spur ist ein wohldefiniertes lineares Funktional auf $L^{1}(H)$,
  das nicht von der konkreten Wahl der Orthonormalbasis
  abhängt. Weiters gelten für $S\in L^{1}(H)$ folgende Aussagen:
  \begin{enumerate}[label=(\roman*),ref=\roman*]
  \item\label{item:spur-wohldef+eig-i} $\abs{\tr(S)}\leq\norm[1]{S}$
  \item\label{item:spur-wohldef+eig-ii} $\tr(S^{*})=\overline{\tr(S)}$
  \end{enumerate}
\end{lemma}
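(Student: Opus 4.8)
The plan is to establish absolute convergence of the defining series together with estimate (i) in a single step, then deduce independence of the chosen basis by reducing to positive operators, and finally read off (ii). First I would fix an orthonormal basis $E$ of $H$ and, for $S\in L^{1}(H)$, use the polar decomposition $S=U\abs{S}$ with a partial isometry $U$, $\norm{U}\le 1$; by Definition~\ref{def:hs-spklasse-op}~(\ref{item:hs-spklasse-op-ii}) the operator $\abs{S}^{1/2}$ is Hilbert--Schmidt with $\norm[2]{\abs{S}^{1/2}}^{2}=\norm[1]{S}$. Since $\abs{S}^{1/2}\abs{S}^{1/2}=\abs{S}$, I would write $(Se,e)=(U\abs{S}^{1/2}\abs{S}^{1/2}e,e)=(\abs{S}^{1/2}e,\abs{S}^{1/2}U^{*}e)$ and apply the Cauchy--Schwarz inequality first in $H$ and then in $\ell^{2}(E)$ to get
\[ \sum_{e\in E}\abs{(Se,e)}\le\left(\sum_{e\in E}\norm{\abs{S}^{1/2}e}^{2}\right)^{1/2}\left(\sum_{e\in E}\norm{\abs{S}^{1/2}U^{*}e}^{2}\right)^{1/2}=\norm[2]{\abs{S}^{1/2}}\cdot\norm[2]{\abs{S}^{1/2}U^{*}}\le\norm[2]{\abs{S}^{1/2}}^{2}=\norm[1]{S}, \]
where the last inequality uses the $L^{2}$-analogues of Lemma~\ref{lem:eig-spklasse}~(\ref{item:eig-spklasse-ii}) (the ideal estimate $\norm[2]{AB}\le\norm[2]{A}\norm{B}$, which rests on $*$-invariance of $\norm[2]{\cdot}$) together with $\norm{U^{*}}\le 1$. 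Hence $\sum_{e\in E}(Se,e)$ converges absolutely; this makes $\tr(S)$ a well-defined complex number for the fixed $E$, absolute convergence immediately gives linearity in $S$, and the displayed chain is exactly (i).

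Next I would show the value is independent of $E$. For a positive $S\ge 0$ one has $(Se,e)=\norm{S^{1/2}e}^{2}$, so $\sum_{e\in E}(Se,e)=\norm[2]{S^{1/2}}^{2}$, which is independent of $E$ by Lemma~\ref{lem:hs-norm-spnorm-wohldef} applied to $S^{1/2}$. For self-adjoint $T\in L^{1}(H)$, the continuous functional calculus applied to $t\mapsto\max(\pm t,0)$ produces positive operators $T_{+},T_{-}$ with $T=T_{+}-T_{-}$ and $0\le T_{\pm}\le\abs{T}$; then $\sum_{e\in E}(T_{\pm}e,e)\le\sum_{e\in E}(\abs{T}e,e)=\norm[2]{\abs{T}^{1/2}}^{2}=\norm[1]{T}<\infty$, and since $(T_{\pm}e,e)=\norm{T_{\pm}^{1/2}e}^{2}$ this forces $T_{\pm}^{1/2}$ to be Hilbert--Schmidt, i.e.\ $T_{\pm}\in L^{1}(H)$. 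Using linearity for the fixed $E$, $\sum_{e\in E}(Te,e)=\sum_{e\in E}(T_{+}e,e)-\sum_{e\in E}(T_{-}e,e)$, which is basis independent by the positive case. Finally, an arbitrary $S\in L^{1}(H)$ decomposes as $S=\re S+i\im S$ with $\re S,\im S$ self-adjoint and, because $L^{1}(H)$ is a $*$-closed subspace of $L_{b}(H)$ by Lemma~\ref{lem:eig-spklasse}~(\ref{item:eig-spklasse-ii}), both in $L^{1}(H)$; linearity for the fixed $E$ then shows $\tr(S)$ does not depend on $E$, so $\tr$ is a well-defined linear functional.

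Claim (ii) follows at once: for any orthonormal basis $E$ and $S\in L^{1}(H)$ one has $(S^{*}e,e)=(e,Se)=\overline{(Se,e)}$, whence $\tr(S^{*})=\sum_{e\in E}(S^{*}e,e)=\overline{\sum_{e\in E}(Se,e)}=\overline{\tr(S)}$ by absolute convergence.

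I expect the main obstacle to be the first paragraph: producing an absolutely convergent series with the \emph{sharp} bound $\norm[1]{S}$ hinges on combining the polar decomposition with the $L^{2}$-analogues of the ideal and $*$-invariance properties behind Lemma~\ref{lem:eig-spklasse}, and one should make sure these auxiliary facts about $\norm[2]{\cdot}$ are genuinely on record (the paper asserts such analogues exist but does not spell them out). Once absolute convergence and linearity are secured, the remaining steps are bookkeeping: basis independence reduces to Lemma~\ref{lem:hs-norm-spnorm-wohldef} via the decomposition into positive parts, and (ii) is a one-line computation.
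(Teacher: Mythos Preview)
Your argument is correct, but it is substantially more than what the paper itself does: the paper's proof of well-definedness, linearity, basis independence, and (\ref{item:spur-wohldef+eig-i}) is simply a reference to \cite[Lemma~2.4.12 -- Theorem~2.4.16]{murphy:cstar}, while only (\ref{item:spur-wohldef+eig-ii}) is proved in the text, via exactly the one-line computation you give in your third paragraph. So for (\ref{item:spur-wohldef+eig-ii}) you match the paper verbatim, and for the rest you supply a self-contained proof where the paper outsources.

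Two remarks on the ingredients you import. First, the polar decomposition $S=U\abs{S}$ with a partial isometry $U$ is not established anywhere in the paper prior to this point; it is of course standard, but strictly speaking it is an external input on the same footing as the Murphy citation you are trying to avoid. Second, you correctly flag that the estimate $\norm[2]{\abs{S}^{1/2}U^{*}}\le\norm[2]{\abs{S}^{1/2}}$ requires the $L^{2}$-ideal inequality $\norm[2]{AB}\le\norm[2]{A}\norm{B}$ and the $*$-invariance $\norm[2]{A^{*}}=\norm[2]{A}$; the paper only asserts in passing that such analogues exist, so if you want a truly self-contained treatment you should derive both (the first via $\norm[2]{AB}=\norm[2]{B^{*}A^{*}}\le\norm{B^{*}}\norm[2]{A^{*}}$, and $*$-invariance via Parseval as in the proof idea behind Lemma~\ref{lem:hs-norm-spnorm-wohldef}). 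With those two points made explicit, your route via polar decomposition, Cauchy--Schwarz twice, and reduction to positive parts is a clean and standard way to unpack what Murphy proves.
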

\begin{proof}
  Für die Wohldefiniertheit sowie die Aussage
  (\ref{item:spur-wohldef+eig-i}) sei
  auf~\cite[Lemma~2.4.12 - Theorem~2.4.16]{murphy:cstar} verwiesen. Ist
  $E$ eine beliebige Orthonormalbasis von $H$, so folgt die zweite
  Aussage durch die Rechnung
  \begin{displaymath}
    \tr(S^{*})=\sum_{e\in E}(S^{*}e,e)=\sum_{e\in
      E}(e,Se)=\overline{\sum_{e\in E}(Se,e)}=\overline{\tr(S)}.
  \end{displaymath}
\end{proof}
Die nächste Aussage werden wir anders als die letzten Resultate nicht
nur für Spurklasseoperatoren, sondern auch für
Hilbert-Schmidt-Operatoren benötigen.
\begin{lemma}\label{lem:tr-komm}
  Seien $S$ und $T$ beschränkte Operatoren auf $H$, wobei
  \begin{enumerate}[label=(\roman*),ref=\roman*]
  \item\label{item:tr-komm-i} beide Operatoren
    Hilbert-Schmidt-Operatoren sind\quad oder
  \item\label{item:tr-komm-ii} zumindest einer der beiden Operatoren
    zur Spurklasse gehört.
  \end{enumerate}
  Dann gilt\footnote{Man beachte, dass die Operatoren $ST$ und $TS$
    wegen Bemerkung~\ref{bem:motiv-notation-hs-spklasse}
    bzw. Lemma~\ref{lem:eig-spklasse}(\ref{item:eig-spklasse-ii}) in
    jedem Fall Spurklasseoperatoren sind. Somit sind die Spurausdrücke
    wohldefiniert.} $\tr(ST)=\tr(TS)$.
\end{lemma}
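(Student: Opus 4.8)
The plan is to dispose of the purely Hilbert--Schmidt case (\ref{item:tr-komm-i}) by an explicit double-sum computation, and then to reduce the trace-class case (\ref{item:tr-komm-ii}) to it by factoring a trace-class operator as a product of two Hilbert--Schmidt operators.

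For (\ref{item:tr-komm-i}) I would fix an orthonormal basis $(e_i)_{i\in I}$ of $H$. From $(STe_i,e_i)=(Te_i,S^{*}e_i)$ and expanding $Te_i=\sum_j(Te_i,e_j)e_j$ one gets $(Te_i,S^{*}e_i)=\sum_j(Te_i,e_j)(Se_j,e_i)$, hence
\begin{displaymath}
  \tr(ST)=\sum_{i\in I}\sum_{j\in I}(Te_i,e_j)(Se_j,e_i),
\end{displaymath}
and the same manipulation applied to $TS$ gives $\tr(TS)=\sum_{j\in I}\sum_{i\in I}(Te_i,e_j)(Se_j,e_i)$. Thus the claim comes down to interchanging the two sums, and this is precisely where the Hilbert--Schmidt hypothesis enters: the Cauchy--Schwarz inequality for the counting measure on $I\times I$ yields
\begin{displaymath}
  \sum_{i,j}\abs{(Te_i,e_j)}\abs{(Se_j,e_i)}\le\left(\sum_{i,j}\abs{(Te_i,e_j)}^{2}\right)^{1/2}\left(\sum_{i,j}\abs{(Se_j,e_i)}^{2}\right)^{1/2}=\norm[2]{T}\norm[2]{S}<\infty,
\end{displaymath}
because $\sum_{i,j}\abs{(Te_i,e_j)}^{2}=\sum_i\norm{Te_i}^{2}=\norm[2]{T}^{2}$ and likewise for $S$. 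Absolute convergence of the double series lets one apply Fubini's theorem for series and conclude $\tr(ST)=\tr(TS)$.

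For (\ref{item:tr-komm-ii}), since the assertion is symmetric in $S$ and $T$, I may assume $S\in L^{1}(H)$ and $T\in L_b(H)$. Using the polar decomposition $S=U\abs{S}$ I would write $S=AB$ with $A:=U\abs{S}^{1/2}$ and $B:=\abs{S}^{1/2}$; by the very definition of the trace class $B\in L^{2}(H)$, and $A=UB\in L^{2}(H)$ as well because $\norm[2]{UB}\le\norm{U}\norm[2]{B}$. Then $BT$ and $TA$ are again Hilbert--Schmidt, so part (\ref{item:tr-komm-i}), applied to the pairs $(A,BT)$ and $(B,TA)$, gives
\begin{displaymath}
  \tr(ST)=\tr\bigl(A(BT)\bigr)=\tr(BTA)=\tr\bigl((TA)B\bigr)=\tr(TS).
\end{displaymath}
That all operators occurring here are of trace class — so that every $\tr$ is defined — is exactly what the footnote to the statement records, via Remark~\ref{bem:motiv-notation-hs-spklasse} and Lemma~\ref{lem:eig-spklasse}(\ref{item:eig-spklasse-ii}).

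The only genuine obstacle is justifying the interchange of summation order in (\ref{item:tr-komm-i}); everything else is bookkeeping. Two small points deserve attention: one must use a single fixed orthonormal basis throughout a given trace computation, which is legitimate only because $\tr$ does not depend on it (Lemma~\ref{lem:spur-wohldef+eig}); and one needs the elementary facts $\norm[2]{CD}\le\norm{C}\norm[2]{D}$ for $C\in L_b(H)$, $D\in L^{2}(H)$, together with the polar decomposition of $S$, which are standard at this stage.
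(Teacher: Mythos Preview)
Your proof is correct. The paper does not prove this lemma itself but merely cites \cite[Theorem~2.4.14]{murphy:cstar}; your argument is precisely the standard one found there --- the Fubini-type interchange for the Hilbert--Schmidt case followed by the polar-decomposition factorisation $S=U\abs{S}^{1/2}\cdot\abs{S}^{1/2}$ to reduce the trace-class case to it.
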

\begin{proof}
  Siehe~\cite[Theorem~2.4.14]{murphy:cstar}.
\end{proof}

Zum Abschluss des Kapitels kommen wir nun zum vor
Lemma~\ref{lem:hs-norm-spnorm-wohldef} angekündigten Ergebnis.
\begin{satz}\label{satz:beschrop-spklasse}
  \hspace{0mm}
  \begin{enumerate}[label=(\roman*),ref=\roman*]
  \item\label{item:beschrop-spklasse-i} Versehen mit der
    Spurklassenorm $\norm[1]{\cdot}$ ist $L^{1}(H)$ ein Banachraum.
  \item\label{item:beschrop-spklasse-ii} Für $T\in L_{b}(H)$ ist das
    Funktional
    \begin{displaymath}
      \tr(.T):
      \begin{cases}
        L^{1}(H)&\to\C \\
        \hfill S&\mapsto\tr(ST)
      \end{cases}
    \end{displaymath}
    wohldefiniert, linear und beschränkt mit
    $\norm{\tr(.T)}\leq\norm{T}$.
  \item\label{item:beschrop-spklasse-iii} Die sogenannte
    \emph{kanonische Abbildung}
    \begin{displaymath}
      \theta:
      \begin{cases}
        L_{b}(H)&\to L^{1}(H)' \\
        \hfill T&\mapsto\tr(.T)
      \end{cases}
    \end{displaymath}
    ist eine isometrische, lineare Bijektion. Insbesondere gilt
    $\norm{\tr(.T)}=\norm{T}$.
  \end{enumerate}
\end{satz}
\begin{proof}
  \hspace{0mm}
  \begin{enumerate}[label=(\roman*),ref=\roman*]
  \item Siehe~\cite[Corollary~4.2.2]{murphy:cstar}.
  \item Da $L^{1}(H)$ ein Ideal bildet, ist $\tr(.T)$ wohldefiniert;
    die Linearität folgt aus der von $\tr$. Nach
    Lemma~\ref{lem:spur-wohldef+eig} gilt
    $\abs{\tr(ST)}\leq\norm[1]{ST}\stackrel{\eqref{eq:spklasse-ideal}}{\leq}\norm[1]{S}\norm{T}$,
    also $\norm{\tr(.T)}\leq\norm{T}$.
  \item Siehe~\cite[Theorem~4.2.3]{murphy:cstar}.
  \end{enumerate}
\end{proof}

\clearpage
\chapter{Einselement und Projektionen in $C^{*}$-Algebren}
\label{cha:eins-proj-cstar}
Das Ziel dieses Kapitels sind einige Aussagen, die im späteren Verlauf
der Arbeit erforderlich sein werden. Insbesondere sind das die
Charakterisierung von $C^{*}$-Algebren mit Einselement durch
Extremalpunkte der abgeschlossenen Einheitskugel und
Satz~\ref{satz:proj-dicht} über die Dichtheit von Projektionen.

\section{Positive Elemente und approximative Einselemente}
\label{sec:pos-approx-eins-elemente}
Bevor wir dazu kommen, benötigen wir eine Reihe an Tatsachen über
positive Elemente in $C^{*}$-Algebren.
\begin{notation}\label{not:cstar}
  \hspace{0mm}
  \begin{enumerate}[label=(\roman*),ref=\roman*]
  \item In diesem Kapitel sei $A$ stets eine $C^{*}$-Algebra und $S$
    ihre abgeschlossene Einheitskugel.
  \item Die Menge der selbstadjungierten Elemente von $A$, also jener
    Elemente mit $a^{*}=a$, wird mit $A_{sa}$ bezeichnet. Für die
    Menge der positiven Elemente von $A$, das sind die
    selbstadjungierten Elemente mit\footnote{Sollte $A$ kein
      Einselement enthalten, ist das Spektrum im Sinne von
      Definition~\ref{def:spek-resmenge} zu verstehen.}
    $\sigma(a)\subseteq[0,+\infty)$, schreiben wir $A^{+}$.
  \item In diesem und allen folgenden Kapiteln werden wir für
    Vielfache des Einselements $\lambda$ statt $\lambda e$ schreiben,
    wenn der Zusammenhang dies erlaubt. Insbesondere soll $1$ auch das
    Einselement einer $C^{*}$-Algebra bezeichnen.
  \item Die \emph{Projektionen} im Sinne der linearen Algebra
    bzw. Funktionalanalysis sind typischerweise die idempotenten
    linearen Abbildungen auf einem Vektorraum. In der Theorie von
    $C^{*}$-Algebren ist es üblich, nur jene idempotenten Elemente
    einer $C^{*}$-Algebra als Projektionen zu bezeichnen, die
    zusätzlich selbstadjungiert sind. Die \emph{Projektionen} in der
    $C^{*}$-Algebra $L_{b}(H)$ sind also genau die
    \emph{Orthogonalprojektionen} auf dem Hilbertraum $H$.
  \end{enumerate}
\end{notation}

Hat die $C^{*}$-Algebra $A$ ein Einselement, so existiert für jedes
positive Element $a$ ein eindeutiges positives Element $q\in A$ mit
$q^{2}=a$, wobei sogar $q\in C_{A}^{*}(a,1)$ gilt;
vgl.~\cite[Korollar~1.5.14]{kaltenb:fana2}. Wir verallgemeinern diese
wichtige Tatsache auf beliebige $C^{*}$-Algebren.
\begin{satz}\label{satz:quwurzel}
  Für ein Element $a\in A^{+}$ gibt es ein eindeutiges Element
  $q\in A^{+}$ mit $q^{2}=a$. Dabei gilt $q\in C_{A}^{*}(a)$.
\end{satz}
\begin{proof}
  Nach Definition ist $a$ auch in $\tilde{A}$ positiv, sodass wir aus
  dem bekannten Resultat ein eindeutiges positives
  $(q,\lambda)\in\tilde{A}$ mit\footnote{Man beachte, dass wir
    $a\in A$ mit $(a,0)\in\tilde{A}$ identifizieren.}
  $(q,\lambda)^{2}=(a,0)$ erhalten. Multiplizieren wir aus, so folgt
  \begin{displaymath}
    (q^{2}+2\lambda q,\lambda^{2})=(a,0),
  \end{displaymath}
  also $\lambda=0$ und $q^{2}=a$. Das Element $(q,\lambda)=(q,0)$ ist
  in $\tilde{A}$ positiv, folglich ist $q$ direkt nach Definition
  positiv in $A$. Ist $q'$ ein weiteres positives Element von $A$ mit
  $q'^{2}=a$, so ist $q'$ ein positives Element von $\tilde{A}$ mit
  $q'^{2}=a$. Aus der Eindeutigkeitsaussage für $\tilde{A}$ folgt
  $q=q'$.

  Es bleibt noch $q\in C_{A}^{*}(a)$ zu zeigen. Das Resultat in
  $\tilde{A}$ liefert $q\in C_{\tilde{A}}(a,1)$. Nach
  \eqref{eq:erz-unteralg-iii} aus Bemerkung~\ref{bem:erz-unteralg}
  gibt es Polynome $p_{n}\in\C[z]$ mit
  $(q,0)=\lim_{n\to\infty}p_{n}((a,0))$. Der konstante Term des
  Polynoms $r_{n}(z):=p_{n}(z)-p_{n}(0)$ verschwindet, also
  folgt\footnote{Es sei daran erinnert, dass eine komplexe Zahl in
    $\tilde{A}$ als entsprechende Vielfache des Einselements $(0,1)$
    zu interpretieren ist.}
  \begin{displaymath}
    p_{n}((a,0))=r_{n}((a,0))+p_{n}(0)\cdot (0,1)=(r_{n}(a),0)+(0,p_{n}(0))=(r_{n}(a),p_{n}(0)).
  \end{displaymath}
  Daraus erhalten wir, dass die komplexe Zahl $p_{n}(0)$ für
  $n\to\infty$ gegen $0$ konvergiert. Folglich gilt
  \begin{displaymath}
    (q,0)=\lim_{n\to\infty}p_{n}((a,0))=\lim_{n\to\infty}(r_{n}(a),0)
  \end{displaymath}
  oder anders formuliert $q=\lim_{n\to\infty}r_{n}(a)$. Wegen
  $r_{n}(0)=0$ ist dieses Element nach \eqref{eq:erz-unteralg-iii} aus
  Bemerkung~\ref{bem:erz-unteralg} in $C_{A}^{*}(a)$ enthalten.
\end{proof}
\begin{definition}\label{def:quwurzel}
  Für $a\in A^{+}$ ist die \emph{Quadratwurzel} von $A$ jenes
  eindeutige Element $q\in A^{+}$ mit $q^{2}=a$.
\end{definition}
\begin{lemma}\label{lem:char-pos-el}
  Habe $A$ ein Einselement und sei $a\in A$ selbstadjungiert. Gibt es
  eine reelle Zahl $t\geq 0$ mit
  \begin{equation}\label{eq:char-pos-el}
    \norm{a-t}\leq t,
  \end{equation}
  so ist $a$ positiv. Ist umgekehrt $a\geq 0$, dann gilt
  \eqref{eq:char-pos-el} für alle $t\geq\norm{a}$.
\end{lemma}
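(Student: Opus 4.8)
The plan is to translate both directions into elementary statements about the subset $\sigma(a)\subseteq\R$, exploiting that the norm of a selfadjoint element equals its spectral radius.

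Since $a$ is selfadjoint, so is $b:=a-t$ for every real $t\geq 0$ (as always, $t$ abbreviates $t$ times the unit), and $\sigma(a)\subseteq\R$ by \cite[Lemma~1.5.5]{kaltenb:fana2}. Subtracting a multiple of the unit merely shifts the spectrum, so $\sigma(a-t)=\set{\lambda-t}{\lambda\in\sigma(a)}$, which is immediate from the fact that $(\lambda-t)-(a-t)=\lambda-a$ is invertible precisely when $\lambda-a$ is. Because $a-t$ is selfadjoint, hence normal, its norm agrees with its spectral radius, whence
\[
  \norm{a-t}=r(a-t)=\sup_{\lambda\in\sigma(a)}\abs{\lambda-t}.
\]
If one prefers to stay self-contained, the same identity drops out of the Gelfand transform for unital commutative $C^{*}$-algebras, \cite[Satz~1.4.4]{kaltenb:fana2}, applied to $C_{A}^{*}(a,1)$: under the resulting isometric $*$-Isomorphismus onto $C(\sigma(a))$ the element $a$ corresponds to the inclusion $\lambda\mapsto\lambda$ and $a-t$ to $\lambda\mapsto\lambda-t$, so the norm becomes the supremum norm on $\sigma(a)$.

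Given this identity the lemma is immediate. For the first implication, $\norm{a-t}\leq t$ says $\abs{\lambda-t}\leq t$ for all $\lambda\in\sigma(a)$, i.e. $-t\leq\lambda-t\leq t$, i.e. $0\leq\lambda\leq 2t$; in particular $\sigma(a)\subseteq[0,+\infty)$, and since $a$ is selfadjoint this is exactly $a\in A^{+}$. Conversely, if $a\geq 0$ then $\sigma(a)\subseteq[0,\norm{a}]$ (using $r(a)=\norm{a}$ once more), so for $t\geq\norm{a}$ and $\lambda\in\sigma(a)$ we have $0\leq\lambda\leq t$ and hence $\abs{\lambda-t}=t-\lambda\leq t$; taking the supremum over $\sigma(a)$ yields $\norm{a-t}\leq t$.

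I do not anticipate a genuine obstacle: the only nonroutine ingredient is the coincidence of norm and spectral radius for selfadjoint elements (equivalently, unital commutative Gelfand theory), which is available from the prerequisites, while everything else reduces to the interval inequality $\abs{\lambda-t}\leq t\Leftrightarrow 0\leq\lambda\leq 2t$. The mild subtlety worth stating carefully is only that ``$a$ positiv'' per Notation~\ref{not:cstar} already presupposes selfadjointness, which we are handed in the hypothesis, so nothing more than $\sigma(a)\subseteq[0,+\infty)$ has to be verified.
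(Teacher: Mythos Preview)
Your proof is correct and follows essentially the same route as the paper: reduce to the identity $\norm{a-t}=\sup_{\lambda\in\sigma(a)}\abs{\lambda-t}$ via $r(a-t)=\norm{a-t}$ for the selfadjoint element $a-t$, then read off both implications from elementary inequalities on $\sigma(a)\subseteq\R$. The only cosmetic difference is that the paper argues the first direction by contradiction (a negative $\lambda\in\sigma(a)$ would give $\norm{a-t}\geq t-\lambda>t$), whereas you unpack $\abs{\lambda-t}\leq t$ directly into $0\leq\lambda\leq 2t$.
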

\begin{proof}
  Für $t\geq 0$ gilt nach dem Spektralabbildungssatz
  $\sigma(a-t)=\sigma(a)-t=\set{\lambda-t}{\lambda\in\sigma(a)}$;
  vgl.~\cite[Satz~1.1.7]{kaltenb:fana2}. Weiters ist $a-t$
  selbstadjungiert, insbesondere normal, sodass wir
  $r(a-t)=\norm{a-t}$ erhalten;
  siehe~\cite[Fakta~1.5.2.5]{kaltenb:fana2}.

  Sei zunächst \eqref{eq:char-pos-el} für ein $t\geq 0$
  angenommen. Wegen $a\in A_{sa}$ gilt $\sigma(a)\subseteq\R$. Gäbe es
  ein negatives $\lambda\in\sigma(a)$, dann erhielten wir den
  Widerspruch
  \begin{displaymath}
    \norm{a-t}=r(a-t)\geq\abs{\lambda-t}=t-\lambda>t.
  \end{displaymath}
  
  Sei umgekehrt $a$ positiv. Für $t\geq\norm{a}=r(a)$ gilt
  $t\geq\abs{\lambda}=\lambda$ für alle $\lambda\in\sigma(a)$. Es
  folgt
  \begin{displaymath}
    \norm{a-t}=r(a-t)=\sup_{\lambda\in\sigma(a)}\abs{\lambda-t}=\sup_{\lambda\in\sigma(a)}(t-\lambda)\leq t,
  \end{displaymath}
  also \eqref{eq:char-pos-el}.
\end{proof}

Als Nächstes wollen wir eine überaus praktische Konstruktion
einführen. Dazu sei $A$ eine beliebige $C^{*}$-Algebra und
$a\in A_{sa}$. Dann ist die von $a$ erzeugte $C^{*}$-Algebra
$C^{*}_{A}(a)$ kommutativ, wodurch die Gelfandtransformation
$\hat{.}:C^{*}_{A}(a)\to C_{0}(M)$ zur Verfügung steht. Dabei
bezeichnet $M$ den Gelfandraum von $C^{*}_{A}(a)$. Ist eine
reellwertige, also selbstadjungierte, Funktion $f\in C_{0}(M)$
gegeben, dann schreiben wir $f^{+}$ bzw. $f^{-}$ für den Positiv-
bzw. Negativteil von $f$, also $f^{+}=\max(f,0)$ bzw.
$f^{-}=-\min(f,0)$. Aus der Ungleichung
$\abs{f^{\pm}(m)}=f^{\pm}(m)\leq\abs{f(m)}$ folgt, dass $f^{+}$ und
$f^{-}$ ebenfalls im Unendlichen verschwinden. Infolge können wir
einen Positiv- und Negativteil von $a$ definieren:
\begin{definition}\label{def:pos-neg-cstar}
  Ist $a\in A$ selbstadjungiert, so heißen die Elemente
  \begin{displaymath}
    a^{+}:=(\widehat{\,.\,})^{-1}\left((\widehat{a})^{+}\right) \quad\text{und}\quad a^{-}:=(\widehat{\,.\,})^{-1}\left((\widehat{a})^{-}\right)
  \end{displaymath}
  der \emph{Positiv-} bzw. \emph{Negativteil} von $a$.
\end{definition}
\begin{bemerkung}\label{bem:pos-negteil-span}
  \hspace{0mm}
  \begin{enumerate}[label=(\roman*),ref=\roman*]
  \item\label{item:pos-negteil-span-i} Wegen
    $a^{+},a^{-}\in C^{*}_{A}(a)$ kommutieren $a^{+}$ und $a^{-}$ mit
    $a$. Aufgrund der entsprechenden Eigenschaften der Funktionen
    $(\widehat{a})^{+}$ und $(\widehat{a})^{-}$ gilt $a^{\pm}\geq 0$,
    $a=a^{+}-a^{-}$, $a^{+}a^{-}=0$ sowie
    $\norm{a^{\pm}}\leq\norm{a}$.

  \item\label{item:pos-negteil-span-ii} Kombiniert man die
    Definitionen~\ref{def:re-im-cstar} und \ref{def:pos-neg-cstar}, so
    kann man ein beliebiges Element $a\in A$ schreiben als
    \begin{equation}\label{eq:pos-negteil-span-i}
      a=\big((\re a)^{+}-(\re a)^{-}\big)+i\big((\im a)^{+}-(\im
      a)^{-}\big).
    \end{equation}
    Insbesondere ist $A=\spn A^{+}$. Ist zusätzlich $a\in S$, so gilt
    $\norm{(\re a)^{\pm}}\leq\norm{\re a}\leq\norm{a}\leq 1$ sowie die
    analoge Ungleichung für den Imaginärteil. Daraus folgt
    \begin{equation}\label{eq:pos-negteil-span-ii}
      S\subseteq (A^{+}\cap S-A^{+}\cap S)+i(A^{+}\cap S-A^{+}\cap S).
    \end{equation}
  \item\label{item:pos-negteil-span-iii} Enthält $A$ ein Einselement,
    so spannen auch die unitären Elemente ganz $A$ auf: Wegen
    Lemma~\ref{lem:eig-re-im} genügt es dafür, ein selbstadjungiertes
    Element $a$ aus der Einheitskugel als Linearkombination unitärer
    Elemente zu schreiben. Es gilt
    \begin{displaymath}
      \sigma(a^{2})\subseteq\left[-r(a^{2}),r(a^{2})\right]=\left[-\norm{a^{2}},\norm{a^{2}}\right]\subseteq[-1,1],
    \end{displaymath}
    sodass aus dem Spektralabbildungssatz
    $\sigma(1-a^{2})=1-\sigma(a^{2})\subseteq[0,2]$
    folgt. Insbesondere ist $1-a^{2}$ positiv und somit
    $u:=a+i(1-a^{2})^{1/2}$ wohldefiniert. Man rechnet unmittelbar
    nach, dass $uu^{*}=u^{*}u=1$ ist, sodass $u$ und $u^{*}$
    tatsächlich unitär sind. Die Gleichung $a=\frac{1}{2}(u+u^{*})$
    liefert die gewünschte Darstellung.

    Diese Konstruktion lässt sich sehr einfach motivieren, wenn man
    das Problem für die $C^{*}$-Algebra $\C$ betrachtet. In dieser
    Situation geht es darum, eine Zahl $a\in [-1,1]$ als
    Linearkombination von Elementen der Einheitskreislinie zu
    schreiben. Dazu wählt man jene beiden Punkte mit Betrag $1$, deren
    Realteil genau $a$ ist und bildet deren Mittelwert; diese Punkte
    sind genau $a\pm i\sqrt{1-a^{2}}$.
  \end{enumerate}
\end{bemerkung}
\begin{satz}\label{satz:eig-pos-el}
  Seien $a,b\in A^{+}$ und eine reelle Zahl $t\geq 0$ gegeben.
  \begin{enumerate}[label=(\roman*),ref=\roman*]
  \item\label{item:eig-pos-el-i} Es gilt $a+b\in A^{+}$ und
    $ta\in A^{+}$.
  \item\label{item:eig-pos-el-ii} $A^{+}$ ist konvex.
  \item\label{item:eig-pos-el-iii} Es gilt
    $A^{+}=\set{a^{*}a}{a\in A}$.
  \item\label{item:eig-pos-el-iv} $A^{+}$ ist abgeschlossen.
  \end{enumerate}
\end{satz}
\begin{proof}
  \hspace{0mm}
  \begin{enumerate}[label=(\roman*),ref=\roman*]
  \item Durch Übergang zu $\tilde{A}$ können wir annehmen, dass $A$
    ein Einselement hat. Offenbar sind $a+b$ und $ta$
    selbstadjungiert. Nach Lemma~\ref{lem:char-pos-el} gilt weiters
    $\norm{a-t}\leq t$ und $\norm{b-t}\leq t$ für
    $t=\max(\norm{a},\norm{b})$. Aus der Dreiecksungleichung folgt
    $\norm{(a+b)-2t}\leq 2t$, also -- wieder mit
    Lemma~\ref{lem:char-pos-el} -- der erste Teil der Aussage. Der
    zweite Teil ergibt sich unmittelbar aus
    $\sigma(ta)=t\sigma(a)=\set{t\lambda}{\lambda\in\sigma(a)}$.
  \item Folgt sofort aus (\ref{item:eig-pos-el-i}).
  \item Für $b\in A^{+}$ erfüllt die positive Quadratwurzel $q$
    offenbar $b=q^{*}q\in\set{a^{*}a}{a\in A}$.

    Umgekehrt ist zu zeigen, dass jedes Element der Form $a^{*}a$
    positiv ist, wobei die Selbstadjungiertheit klar ist. Auch hier
    können wir annehmen, dass $A$ ein Einselement hat.

    In einem ersten Schritt zeigen wir die Aussage, wenn $a$
    selbstadjungiert ist. Dazu betrachten wir $C^{*}_{A}(a,1)$, die
    von $a$ erzeugte $C^{*}$-Algebra mit Eins, und zeigen die
    Positivität in dieser $C^{*}$-Algebra\footnote{Man beachte, dass
      sich das Spektrum eines Elements beim Übergang zu einer
      $C^{*}$-Unteralgebra mit Eins nicht ändert;
      vgl.~\cite[Satz~1.5.6]{kaltenb:fana2}.}. Diese ist wegen der
    Normalität von $a$ kommutativ, sodass wir $A=C(K)$ mit einem
    kompakten Hausdorffraum $K$ annehmen können. Nach
    Beispiel~\ref{bsp:spektrum-ck} gilt dann $\sigma(a)=a(K)$ und, da
    $a$ selbstadjungiert ist, $a(K)\subseteq\R$. Klarerweise bildet
    $a^{2}=a^{*}a$ nur in die rechte Halbachse ab, ist also durch
    nochmalige Anwendung von Beispiel~\ref{bsp:spektrum-ck} positiv
    als Element von $C(K)$.

    Als zweiten Schritt zeigen wir, dass aus $-a^{*}a\geq 0$ schon
    $a=0$ folgt. Die bekannte Gleichung
    $\sigma(xy)\setminus\{0\}=\sigma(yx)\setminus\{0\}$ impliziert,
    dass mit $-a^{*}a$ auch $-aa^{*}$ positiv ist. Setzt man
    $b:=\re a$ und $c:=\im a$, dann gilt $a=b+ic$ sowie
    $a^{*}=b-ic$. Durch Ausmultiplizieren ergibt sich
    $a^{*}a+aa^{*}=2b^{2}+2c^{2}$, also
    $a^{*}a=2b^{*}b+2c^{*}c+(-aa^{*})\geq 0$ nach dem gerade
    Bewiesenen und (\ref{item:eig-pos-el-i}). Wegen $-a^{*}a\in A^{+}$
    folgt aus dem Spektralabbildungssatz
    $-\sigma(a^{*}a)=\sigma(-a^{*}a)\subseteq [0,+\infty)$, sodass
    $\sigma(a^{*}a)\subseteq [0,+\infty)\cap (-\infty,0]=\{0\}$ sein
    muss. Da das Spektrum jedes Elements nicht leer ist,
    siehe~\cite[Satz~1.1.14]{kaltenb:fana2}, erhalten wir
    $\sigma(a^{*}a)=\{0\}$. Aus
    $\norm{a}^{2}=\norm{a^{*}a}=r(a^{*}a)=0$ ergibt sich $a=0$.

    In einem letzten Schritt zeigen wir die allgemeine Aussage. Sei
    also $a\in A$ beliebig und $d:=a^{*}a$. Betrachten wir den
    Positiv- und Negativteil $d^{+}$ und $d^{-}$ sowie die
    Gelfandtransformation auf $C_{A}^{*}(d)$, so folgt
    \begin{equation}\label{eq:eig-pos-el}
      -(ad^{-})^{*}(ad^{-})=-d^{-}a^{*}ad^{-}=-d^{-}(d^{+}-d^{-})d^{-}=(d^{-})^{3}.
    \end{equation}
    Mit $\widehat{d^{-}}$ nimmt auch
    $\widehat{(d^{-})^{3}}=\left(\widehat{d^{-}}\right)^{3}$ nur
    nichtnegative Werte an, sodass $(d^{-})^{3}$ positiv ist. Aus
    \eqref{eq:eig-pos-el} und dem zweiten Beweisschritt folgt
    $ad^{-}=0$. Wir erhalten
    \begin{displaymath}
      0=ad^{-}=d^{+}d^{-}-(d^{-})^{2}=-(d^{-})^{*}d^{-},
    \end{displaymath}
    also $\norm{d^{-}}=\norm{-(d^{-})^{*}d^{-}}=0$. Dies liefert
    $a^{*}a=d=d^{+}\in A^{+}$.
  \item Sei $(a_{n})_{n\in\N}$ eine gegen $a$ konvergente Folge
    positiver Elemente. Zunächst ist $a$ wegen der Stetigkeit von
    $.^{*}$ selbstadjungiert. Weiters gibt es sicher ein $C>0$ mit
    $\norm{a_{n}}\leq C$ für alle $n\in\N$, sodass aus
    Lemma~\ref{lem:char-pos-el} die Ungleichung $\norm{a_{n}-C}\leq C$
    für alle $n\in\N$ folgt. Bilden wir hier den Grenzwert
    $n\to\infty$, so folgt $\norm{a-C}\leq C$ und wieder wegen
    Lemma~\ref{lem:char-pos-el} die Positivität von $a$.
  \end{enumerate}
\end{proof}
Mithilfe von Satz~\ref{satz:eig-pos-el}(\ref{item:eig-pos-el-iii})
können wir den Absolutbetrag eines Elements $a\in A$ definieren.
\begin{definition}\label{def:absbetrag}
  Für $a\in A$ ist der \emph{Absolutbetrag} die Quadratwurzel des
  positiven Elements $a^{*}a$, in Zeichen $\abs{a}:=(a^{*}a)^{1/2}\in
  C_{A}^{*}(a^{*}a)$.
\end{definition}
\begin{bemerkung}\label{bem:absbetrag}
  Für selbstadjungierte Elemente $a$ lässt sich mehr über den
  Absolutbetrag aussagen. Dann ist nämlich $C_{A}^{*}(a)$ ebenfalls
  eine kommutative $C^{*}$-Algebra und es gilt
  $\abs{a}\in C_{A}^{*}(a^{*}a)\leq C_{A}^{*}(a)$. Somit können wir
  die Gelfandtransformation in $C_{A}^{*}(a)$ auch auf $\abs{a}$
  anwenden und erhalten
  \begin{displaymath}
    \widehat{\abs{a}}=\widehat{(a^{*}a)^{1/2}}=\widehat{a^{*}a}^{1/2}=(\abs{\hat{a}}^{2})^{1/2}=\abs{\hat{a}}.
  \end{displaymath}
  Der für Funktionen $f\in C_{0}(M)$ offensichtliche Sachverhalt
  $\abs{f}=f^{+}+f^{-}$ impliziert somit
  \begin{displaymath}
    \widehat{\abs{a}}=\hat{a}^{+}+\hat{a}^{-}=\widehat{a^{+}}+\widehat{a^{-}}=\widehat{a^{+}+a^{-}}.
  \end{displaymath}
  Wir schließen auf $\abs{a}=a^{+}+a^{-}$ und weiter auf
  \begin{equation}\label{eq:absbetrag}
    a^{+}=\frac{1}{2}(\abs{a}+a) \qquad\text{sowie}\qquad a^{-}=\frac{1}{2}(\abs{a}-a).
  \end{equation}
\end{bemerkung}

Folgende Relation auf $A_{sa}$ stellt sich als Halbordnung heraus.
\begin{definition}\label{def:ho-sa}
  Für $a,b\in A_{sa}$ schreiben wir $a\leq b$, wenn $b-a$ positiv ist.
\end{definition}
\begin{lemma}\label{lem:ho-sa}
  Die Relation $\leq$ ist eine Halbordnung auf $A_{sa}$. Weiters ist
  $\leq$ translationsinvariant, d.~h. $a\leq b$ impliziert
  $a+c\leq b+c$.
\end{lemma}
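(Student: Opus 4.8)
The plan is to verify directly that $\leq$ satisfies the three axioms of a partial order on $A_{sa}$—reflexivity, antisymmetry, transitivity—together with translation invariance, using the basic closure properties of $A^{+}$ from Satz~\ref{satz:eig-pos-el}. Throughout, recall that $a\leq b$ means $b-a\in A^{+}$, and note that whenever $a,b\in A_{sa}$ the difference $b-a$ is again selfadjoint, so the definition is meaningful.

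Reflexivity is immediate: $a-a=0$, and $0\in A^{+}$ since $\sigma(0)=\{0\}\subseteq[0,+\infty)$. For translation invariance, if $a\leq b$ and $c\in A_{sa}$, then $(b+c)-(a+c)=b-a\in A^{+}$, hence $a+c\leq b+c$; this is purely formal. Transitivity follows from Satz~\ref{satz:eig-pos-el}(\ref{item:eig-pos-el-i}): if $a\leq b$ and $b\leq c$, then $b-a\in A^{+}$ and $c-b\in A^{+}$, so their sum $c-a=(c-b)+(b-a)$ lies in $A^{+}$, giving $a\leq c$.

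The only step requiring a genuine argument is antisymmetry: I must show that $a\leq b$ and $b\leq a$ force $a=b$. Equivalently, setting $x:=b-a\in A_{sa}$, I need that $x\in A^{+}$ and $-x\in A^{+}$ together imply $x=0$. This is essentially the statement that $A^{+}\cap(-A^{+})=\{0\}$, and it can be read off from the spectral picture: both $\sigma(x)\subseteq[0,+\infty)$ and, by the spectral mapping theorem (\cite[Satz~1.1.7]{kaltenb:fana2}), $\sigma(-x)=-\sigma(x)\subseteq[0,+\infty)$, so $\sigma(x)\subseteq[0,+\infty)\cap(-\infty,0]=\{0\}$. Since $x$ is selfadjoint, hence normal, $\norm{x}=r(x)=\sup_{\lambda\in\sigma(x)}\abs{\lambda}=0$ (using $r(x)=\norm{x}$ for normal elements, \cite[Fakta~1.5.2.5]{kaltenb:fana2}), whence $x=0$ and $a=b$. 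I expect this antisymmetry step to be the main—indeed the only—obstacle, but it is handled cleanly by passing through the spectrum exactly as in the proof of Satz~\ref{satz:eig-pos-el}(\ref{item:eig-pos-el-iii}); a small point to keep in mind is that if $A$ has no unit one works in $\tilde A$ as per Definition~\ref{def:spek-resmenge}, though the argument is unchanged.
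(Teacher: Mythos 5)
Your proposal is correct and follows essentially the same route as the paper: reflexivity and translation invariance are formal, transitivity comes from Satz~\ref{satz:eig-pos-el}(\ref{item:eig-pos-el-i}), and antisymmetry is obtained by noting that $\pm(b-a)$ positive forces $\sigma(b-a)=\{0\}$ and hence $\norm{b-a}=r(b-a)=0$. The remark about passing to $\tilde A$ in the non-unital case is a fine additional precaution, consistent with Bemerkung~\ref{bem:ho}(\ref{item:ho-iii}).
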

\begin{proof}
  Die Reflexivität und Translationsinvarianz sind klar. Für die
  Transitivität seien $a,b,c\in A_{sa}$ mit $a\leq b$ und $b\leq c$
  gegeben, also $b-a,c-b\in A^{+}$. Nach
  Satz~\ref{satz:eig-pos-el}(\ref{item:eig-pos-el-i}) ist auch
  $c-a=(c-b)+(b-a)$ positiv, womit $a\leq c$ ist. Um die Antisymmetrie
  zu zeigen, sei $a\leq b$ und $b\leq a$. Die Elemente $b-a$ und
  $a-b=-(b-a)$ sind dann positiv, sodass aus dem
  Spektralabbildungssatz $\sigma(b-a)=\{0\}$ folgt. Wir erhalten
  $\norm{b-a}=r(b-a)=0$ bzw. $a=b$.
\end{proof}
\begin{bemerkung}\label{bem:ho}
  \hspace{0mm}
  \begin{enumerate}[label=(\roman*),ref=\roman*]
  \item\label{item:ho-i} Man kann die Halbordnung $\leq$ als
    translationsinvariante Fortsetzung der Schreibweise $a\geq 0$ für
    positive Elemente auffassen.
  \item\label{item:ho-ii} Die Ungleichung $s\leq t$ für $s,t\in\R$
    kann man auf zweierlei Art interpretieren, wenn $A$ ein
    Einselement $1$ enthält, denn neben der gewöhnlichen Ordnung auf
    $\R$ wäre auch die Beziehung $s1\leq t1$ in $A_{sa}$ eine mögliche
    Lesart. Wegen $\sigma(\lambda 1)=\{\lambda\}$ sind beide Varianten
    aber äquivalent, sodass diese Doppeldeutigkeit kein Problem
    darstellt.
  \item\label{item:ho-iii} Weiteren Interpretationsspielraum bietet
    eine beliebige Ungleichung $a\leq b$ mit $a,b\in A_{sa}$, wenn $A$
    kein Einselement enthält. Durch die isomorphe Einbettung
    $A\to\tilde{A}$, $a\mapsto (a,0)$ können wir $A$ als Teilmenge von
    $\tilde{A}$ auffassen und die Ungleichung sowohl in $A$ als auch
    in $\tilde{A}$ verstehen. Nach Definition der positiven Elemente
    in $A$ ist allerdings $b-a$ genau dann in $A$ positiv, wenn es in
    $\tilde{A}$ positiv ist. Somit bleibt diese Uneindeutigkeit in der
    Notation folgenlos. In einigen Beweisen kann man sogar einen
    expliziten Nutzen daraus ziehen: Um $a\leq b$ in $A$ zu zeigen,
    genügt es, die Ungleichung in $\tilde{A}$ nachzuweisen. Für eine
    Anwendung dieser Überlegung sei auf den ersten Beweisschritt von
    Satz~\ref{satz:char-pos-fkt} verwiesen.
  \item\label{item:ho-iv} Für $A=C(K)$ mit kompaktem $K$ ist das
    Spektrum von $h\in A$ nach Beispiel~\ref{bsp:spektrum-ck} gegeben
    durch $\sigma(h)=h(K)$. Daher gilt $f\leq g$ für $f,g\in A$ genau
    dann, wenn $f(t)\leq g(t)$ für alle $t\in K$ ist. Hier fügt sich
    auch die bereits verwendete Tatsache ein, dass eine Funktion in
    $C(K)$ genau dann positiv ist, wenn sie ausschließlich
    nichtnegative Werte annimmt.
  \end{enumerate}
\end{bemerkung}
Im nächsten Lemma sind einige Eigenschaften dieser Halbordnung
zusammengefasst.
\begin{lemma}\label{lem:eig-ho}
  Seien $a,b\in A_{sa}$.
  \begin{enumerate}[label=(\roman*),ref=\roman*]
  \item\label{item:eig-ho-i} $a\leq b$ ist äquivalent zu $-b\leq -a$
    und auch zu $ta\leq tb$ für eine beliebige reelle Zahl $t>0$.
  \item\label{item:eig-ho-ii} Gilt $a\leq b$ und ist $c\in A$
    beliebig, dann folgt $c^{*}ac\leq c^{*}bc$.
  \item\label{item:eig-ho-iii} Hat $A$ ein Einselement, so gilt
    $a\leq\norm{a}$. 
  \item\label{item:eig-ho-iv} Aus $0\leq a\leq b$ folgt
    $\norm{a}\leq\norm{b}$.
  \item\label{item:eig-ho-v} Sind $a,b\in S$ und $a,b\geq 0$, dann
    gilt $\norm{a-b}\leq 1$.
  \item\label{item:eig-ho-vi} Hat $A$ ein Einselement und sind $a$ und
    $b$ positiv sowie invertierbar, dann impliziert $a\leq b$ schon
    $0\leq b^{-1}\leq a^{-1}$.
  \end{enumerate}
\end{lemma}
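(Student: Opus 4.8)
Der Plan ist, die sechs Punkte der Reihe nach abzuarbeiten, gestützt auf drei Hilfsmittel: Satz~\ref{satz:eig-pos-el} (Abgeschlossenheit von $A^{+}$ unter Addition und positiver Skalierung sowie die Identität $A^{+}=\set{a^{*}a}{a\in A}$), den Spektralabbildungssatz zusammen mit $r(h)=\norm{h}$ für selbstadjungiertes $h$, und -- wo ein Einselement benötigt wird -- die Möglichkeit, gemäß Bemerkung~\ref{bem:ho}(\ref{item:ho-iii}) zu $\tilde{A}$ überzugehen, ohne Normen oder die Ordnung zu verändern.

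Für (\ref{item:eig-ho-i}): Die Relation $-b\leq -a$ bedeutet ausgeschrieben $b-a\in A^{+}$, also genau dieselbe Aussage; und $ta\leq tb$ bedeutet $t(b-a)\in A^{+}$, was für $t>0$ nach Satz~\ref{satz:eig-pos-el}(\ref{item:eig-pos-el-i}), angewandt mit den Faktoren $t$ bzw.\ $1/t$, äquivalent zu $b-a\in A^{+}$ ist. Für (\ref{item:eig-ho-ii}) schreibt man $b-a=d^{*}d$ mit $d:=(b-a)^{1/2}$ gemäß Satz~\ref{satz:eig-pos-el}(\ref{item:eig-pos-el-iii}); dann ist $c^{*}bc-c^{*}ac=c^{*}(b-a)c=(dc)^{*}(dc)\in A^{+}$ nach derselben Charakterisierung. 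Für (\ref{item:eig-ho-iii}) (in unitalem $A$) ist $\norm{a}-a$ selbstadjungiert, und der Spektralabbildungssatz liefert $\sigma(\norm{a}-a)=\norm{a}-\sigma(a)$; wegen $\sigma(a)\subseteq[-r(a),r(a)]=[-\norm{a},\norm{a}]$ liegt diese Menge in $[0,2\norm{a}]\subseteq[0,+\infty)$, also ist $\norm{a}-a$ positiv. Für (\ref{item:eig-ho-iv}) gehe ich zu $\tilde{A}$ über; nach (\ref{item:eig-ho-iii}) und der Transitivität der Halbordnung gilt $a\leq b\leq\norm{b}$, also $\sigma(a)\subseteq(-\infty,\norm{b}]$, und zusammen mit $\sigma(a)\subseteq[0,+\infty)$ (da $a\geq 0$) folgt $\norm{a}=r(a)\leq\norm{b}$. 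Für (\ref{item:eig-ho-v}) gehe ich ebenfalls zu $\tilde{A}$ über; wegen $b\geq 0$ gilt $a-b\leq a\leq\norm{a}\leq 1$ und symmetrisch $b-a\leq 1$, also $-1\leq a-b\leq 1$; Ablesen der Spektren mittels des Spektralabbildungssatzes ergibt $\sigma(a-b)\subseteq[-1,1]$ und damit $\norm{a-b}=r(a-b)\leq 1$.

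Der eigentliche Kern ist (\ref{item:eig-ho-vi}), die Operatormonotonie der Inversion. Zunächst halte ich fest, dass $b^{-1}$ (und analog $a^{-1}$) selbstadjungiert ist, wegen $(b^{-1})^{*}=(b^{*})^{-1}=b^{-1}$, und sogar positiv: aus $b\geq 0$ und der Invertierbarkeit folgt $\sigma(b)\subseteq(0,+\infty)$, also $\sigma(b^{-1})\subseteq(0,+\infty)$ nach dem Spektralabbildungssatz. Da $a$ normal ist, ist $C^{*}_{A}(a,1)$ eine kommutative unitale $C^{*}$-Algebra, die -- weil sich das Spektrum beim Übergang zu unitalen $C^{*}$-Unteralgebren nicht ändert -- $a^{-1}$ und damit auch die positive Wurzel $a^{-1/2}:=(a^{-1})^{1/2}$ enthält (Satz~\ref{satz:quwurzel}); insbesondere kommutieren $a$, $a^{1/2}$ und $a^{-1/2}$ untereinander, und es gilt $a^{-1/2}aa^{-1/2}=1$ sowie $a^{-1/2}a^{1/2}=1$. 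Der entscheidende Schritt ist eine "`Kongruenz"': Wendet man (\ref{item:eig-ho-ii}) mit $c=a^{-1/2}$ auf $a\leq b$ an, so ergibt sich $1=a^{-1/2}aa^{-1/2}\leq a^{-1/2}ba^{-1/2}=:w$, wobei $w=(b^{1/2}a^{-1/2})^{*}(b^{1/2}a^{-1/2})$ positiv und zudem invertierbar ist mit $w^{-1}=a^{1/2}b^{-1}a^{1/2}$. Als Hilfsbehauptung zeige ich $w\geq 1\implies w^{-1}\leq 1$: aus $w\geq 1$ folgt $\sigma(w)\subseteq[1,+\infty)$, also $\sigma(w^{-1})\subseteq(0,1]$, also $\sigma(1-w^{-1})\subseteq[0,+\infty)$, wobei $1-w^{-1}$ selbstadjungiert ist. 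Schließlich wendet man (\ref{item:eig-ho-ii}) erneut mit $c=a^{-1/2}$ auf $w^{-1}\leq 1$ an und erhält $b^{-1}=a^{-1/2}w^{-1}a^{-1/2}\leq a^{-1/2}\cdot 1\cdot a^{-1/2}=a^{-1}$, was zusammen mit $b^{-1}\geq 0$ die Behauptung liefert.

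Die Haupthürde liegt bei (\ref{item:eig-ho-vi}): Neben dem Auffinden des Kongruenztricks muss man genau darauf achten, \emph{wo} die einzelnen Wurzeln leben, damit die Kürzungen $a^{-1/2}a^{1/2}=1$ und $a^{-1/2}aa^{-1/2}=1$ zulässig sind -- genau deshalb geht man zur kommutativen Algebra $C^{*}_{A}(a,1)$ über --, und man benötigt separat das kleine Umkehrlemma $w\geq 1\implies w^{-1}\leq 1$. Alles Übrige ist Buchführung mit dem Spektralabbildungssatz und der Identität $r(h)=\norm{h}$ für selbstadjungiertes $h$.
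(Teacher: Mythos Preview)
Dein Beweis ist korrekt und folgt in allen sechs Punkten im Wesentlichen demselben Weg wie die Arbeit: (\ref{item:eig-ho-i})--(\ref{item:eig-ho-v}) über den Spektralabbildungssatz und $r(h)=\norm{h}$, und (\ref{item:eig-ho-vi}) über denselben Kongruenztrick (Konjugation mit $a^{-1/2}$, Zwischenschritt $w\geq 1\Rightarrow w^{-1}\leq 1$, nochmalige Konjugation). Die einzigen Unterschiede sind kosmetisch: Die Arbeit formuliert das Hilfslemma $w\geq 1\Rightarrow w^{-1}\leq 1$ und die Positivität von $b^{-1}$ über die Gelfandtransformation statt direkt über Spektren, und sie schreibt $(a^{1/2})^{-1}$ statt $a^{-1/2}$ -- inhaltlich dasselbe.
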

\begin{proof}
  \hspace{0mm}
  \begin{enumerate}[label=(\roman*),ref=\roman*]
  \item Der erste Teil ist klar, der zweite folgt aus
    Satz~\ref{satz:eig-pos-el}(\ref{item:eig-pos-el-i}).
  \item Nach Voraussetzung gilt $b-a=q^{2}$ mit einem positiven
    $q$. Es folgt
    \begin{displaymath}
      c^{*}bc-c^{*}ac=c^{*}q^{2}c=(qc)^{*}(qc)\in A^{+}
    \end{displaymath}
    nach Satz~\ref{satz:eig-pos-el}(\ref{item:eig-pos-el-iii}).
  \item Für jedes $\lambda\in\sigma(a)$ gilt
    $\lambda=\abs{\lambda}\leq r(a)=\norm{a}$, sodass
    $\norm{a}-\lambda\geq 0$ ist. Diese Zahlen bilden das Spektrum des
    Elements $\norm{a}-a$, das somit als positiv nachgewiesen ist.
  \item Wir können annehmen, dass $A$ ein Einselement hat. Nach
    (\ref{item:eig-ho-iii}) gilt $b\leq\norm{b}$. Aus der
    Transitivität von $\leq$ folgt $0\leq a\leq\norm{b}$, also
    $\norm{b}-\sigma(a)\subseteq[0,+\infty)$. Anders formuliert gilt
    $\lambda=\abs{\lambda}\leq\norm{b}$ für jedes
    $\lambda\in\sigma(a)$. Daraus ergibt sich
    $\norm{a}=r(a)\leq\norm{b}$.
  \item Wieder nehmen wir an, dass $A$ ein Einselement hat. Erneut
    nach (\ref{item:eig-ho-iii}) gilt $a\leq\norm{a}\leq 1$ und analog
    $b\leq 1$. Mit (\ref{item:eig-ho-i}) folgt
    $-1\leq -b\leq a-b\leq a\leq 1$, sodass der Spektralabbildungssatz
    $\sigma(a-b)\subseteq [-1,1]$ liefert. Wir erhalten das Gewünschte
    aus $\norm{a-b}=r(a-b)\leq 1$.
  \item Die Ungleichung $0\leq b^{-1}$ folgt, wenn man die
    Gelfandtransformation in $C^{*}(b,1)$, der von $b$ erzeugten
    $C^{*}$-Unteralgebra mit Eins, verwendet, denn mit $\hat{b}$ nimmt
    auch $\widehat{b^{-1}}=\hat{b}^{-1}$ nur nichtnegative Werte
    an. Für die Ungleichung $b^{-1}\leq a^{-1}$ zeigen wir zunächst
    den Spezialfall, dass aus $b\geq 1$ schon $b^{-1}\leq 1$
    folgt. Die Ungleichung $b\geq 1$ bedeutet $\hat{b}(m)\geq 1$ für
    jedes $m\in M_{C^{*}(b,1)}$, was zu $1/\hat{b}(m)\leq 1$
    äquivalent ist. Es folgt
    $\widehat{b^{-1}}=\hat{b}^{-1}\leq \mathds{1}$, also
    $b^{-1}\leq 1$.

    Um den allgemeinen Fall zu beweisen, sei zunächst bemerkt, dass
    mit $a$ bzw. $b$ auch $a^{1/2}$ bzw. $b^{1/2}$ invertierbar sind,
    was aus $0\notin\sigma(a)=\sigma(a^{1/2})^{2}$ bzw. der analogen
    Tatsache für $b$ folgt. Mit (\ref{item:eig-ho-ii}) erhalten wir
    \begin{displaymath}
      1=(a^{1/2})^{-1}a(a^{1/2})^{-1}\leq (a^{1/2})^{-1}b(a^{1/2})^{-1}.
    \end{displaymath}
    Der erste Beweisteil liefert
    $((a^{1/2})^{-1}b(a^{1/2})^{-1})^{-1}\geq 1$, also
    $1\leq a^{1/2}b^{-1}a^{1/2}$.  Nochmals mit (\ref{item:eig-ho-ii})
    schließen wir auf
    \begin{displaymath}
      a^{-1}=(a^{1/2})^{-1}1(a^{1/2})^{-1}\leq
      (a^{1/2})^{-1}a^{1/2}b^{-1}a^{1/2}(a^{1/2})^{-1}=b^{-1}.
    \end{displaymath}
  \end{enumerate}
\end{proof}
Mit der Konstruktion aus Bemerkung~\ref{bem:b-alg-eins} und
Satz~\ref{satz:cstar-alg-eins} lässt sich zu jeder $C^{*}$-Algebra ein
Einselement adjungieren. In einigen Fällen ist dieses Vorgehen aber
nicht geeignet, da sich die algebraische Struktur der $C^{*}$-Algebra
drastisch ändern kann. Man kann sich dann mit einem anderen Konzept
behelfen.
\begin{definition}\label{def:approx-eins}
  Ein monoton wachsendes Netz $(u_{i})_{i\in I}$ positiver Elemente in
  $S$, aus $i\preccurlyeq j$ folgt also $0\leq u_{i}\leq u_{j}$, heißt
  \emph{approximatives Einselement}, wenn $a=\lim_{i\in I}au_{i}$ für
  alle $a\in A$ gilt.
\end{definition}
Da die $u_{i}$ aus dieser Definition selbstadjungiert sind und die
Operation $.^{*}$ stetig ist, ist eine äquivalente Bedingung gegeben
durch die Forderung $a=\lim_{i\in I}u_{i}a$ für alle $a\in A$.
\begin{satz}\label{satz:approx-eins}
  Sei $I:=\set{a\in U_{1}(0)}{a\geq 0}$.
  \begin{enumerate}[label=(\roman*),ref=\roman*]
  \item $I$ ist, versehen mit der Halbordnung aus
    Definition~\ref{def:ho-sa}, eine gerichtete Menge.
  \item Das Netz $(u_{a})_{a\in I}$, wobei $u_{a}:=a$, ist ein
    approximatives Einselement, das sogenannte \emph{kanonische
      approximative Einselement}.
  \end{enumerate}
\end{satz}
\begin{proof}
  \hspace{0mm}
  \begin{enumerate}[label=(\roman*),ref=\roman*]
  \item Nachzuprüfen ist nur die Richtungseigenschaft. Zu gegebenen
    $a,b\in I$ ist also ein $c\in I$ mit $a,b\leq c$ zu finden. Dazu
    zeigen wir die folgende Hilfsbehauptung\footnote{Für die rechte
      Ungleichung arbeiten wir in $\tilde{A}$. Die Inversen sind dabei
      wohldefiniert, da wegen $d_{k}\geq 0$ das Spektrum
      $\sigma(1+d_{k})=1+\sigma(d_{k})\subseteq[1,+\infty)$
      insbesondere $0$ nicht enthält.}:
    \begin{equation}\label{eq:approx-eins}
      0\leq d_{1}\leq d_{2}\Rightarrow d_{1}(1+d_{1})^{-1}\leq d_{2}(1+d_{2})^{-1}
    \end{equation}
    Klarerweise gilt $1+d_{1}\leq 1+d_{2}$, woraus mit
    Lemma~\ref{lem:eig-ho}(\ref{item:eig-ho-vi}) die Ungleichung
    $(1+d_{2})^{-1}\leq (1+d_{1})^{-1}$ folgt. Die zur trivialen
    Gleichung $(1+d_{k})(1+d_{k})^{-1}=1$ äquivalente Beziehung
    $d_{k}(1+d_{k})^{-1}=1-(1+d_{k})^{-1}$ ermöglicht in Kombination
    mit Lemma~\ref{lem:eig-ho}(\ref{item:eig-ho-i}) den Nachweis von
    \eqref{eq:approx-eins}:
    \begin{displaymath}
      d_{1}(1+d_{1})^{-1}=1-(1+d_{1})^{-1}\leq 1-(1+d_{2})^{-1}=d_{2}(1+d_{2})^{-1}.
    \end{displaymath}

    Seien weiterhin $a,b\in I$. Die Elemente $a':=a(1-a)^{-1}$ und
    $b':=b(1-b)^{-1}$ sind wohldefiniert, da
    \begin{displaymath}
      \sigma(1-a)=1-\sigma(a)\subseteq
      [1-r(a),1+r(a)]=[1-\norm{a},1+\norm{a}]\subseteq (0,2)
    \end{displaymath}
    die Zahl $0$ nicht enthält; Analoges gilt für $b$. Weiters liegen
    $a'$ und $b'$ in $A$ und nicht nur in $\tilde{A}$, da $A$ ein
    Ideal in $\tilde{A}$ ist. Zudem sind sie selbstadjungiert.
    
    Betrachten wir die Gelfandtransformation auf
    $C^{*}_{\tilde{A}}(a,1)$, so nimmt die Funktion
    $\widehat{a'}=\frac{\widehat{a}}{1-\widehat{a}}$ wegen
    $\widehat{a}(M_{C^{*}_{\tilde{A}}(a,1)})\subseteq [0,1)$ Werte in
    $[0,+\infty)$ an. Daraus folgt $a'\geq 0$ und auf analoge Weise
    $b'\geq 0$.

    Wegen $a'+b'\geq 0$ ist $c:=(a'+b')(1+a'+b')^{-1}$ wohldefiniert;
    da $A$ ein Ideal in $\tilde{A}$ ist, gilt sogar $c\in A$.
    Betrachten wir die Gelfandtransformation auf
    $C^{*}_{\tilde{A}}(a'+b',1)$, so ist einerseits
    $\hat{c}=\widehat{a'+b'}(1+\widehat{a'+b'})^{-1}$ das Produkt
    zweier Funktionen mit nichtnegativen Werten und hat daher dieselbe
    Eigenschaft, andererseits nimmt $\abs{\hat{c}}=\hat{c}$ ein
    Maximum an, also $\norm[\infty]{\hat{c}}=\lambda/(1+\lambda)<1$
    für ein nichtnegatives\footnote{Tatsächlich gilt wegen der
      Monotonie von $t\mapsto t/(1+t)$ genauer
      $\lambda=\norm[\infty]{\widehat{a'+b'}}$.} $\lambda$. Folglich
    gilt $c\geq 0$ sowie $\norm{c}<1$ und damit $c\in I$. Die Rechnung
    \begin{displaymath}
      (1+a')^{-1}=\big(\underbrace{(1-a)(1-a)^{-1}}_{=1}+a(1-a)^{-1}\big)^{-1}=\big((1-a+a)(1-a)^{-1}\big)^{-1}=1-a
    \end{displaymath}
    zeigt $a'(1+a')^{-1}=a(1-a)^{-1}(1-a)=a$; analog gilt
    $b'(1+b')^{-1}=b$. Die Hilfsbehauptung \eqref{eq:approx-eins} mit
    $d_{1}:=a'\leq a'+b'=:d_{2}$ liefert
    \begin{displaymath}
      a=d_{1}(1+d_{1})^{-1}\leq d_{2}(1+d_{2})^{-1}=c.
    \end{displaymath}
    Entsprechend zeigt man $b\leq c$, womit $I$ als gerichtete Menge
    identifiziert wurde.
  \item Wegen $A=\spn A^{+}$ genügt es, $b=\lim_{a\in I}u_{a}b$ für
    positive $b$ zu zeigen, wobei wir durch Skalieren zusätzlich
    $\norm{b}\leq 1$ annehmen können. Wir zeigen zunächst
    $\lim_{a\in I}bu_{a}b=b^{2}$ bzw. äquivalent dazu
    $\lim_{a\in I}b(1-u_{a})b=0$; man beachte, dass wir hier in
    $\tilde{A}$ rechnen.

    Aus $a\geq a_{0}$ mit $a,a_{0}\in I$ folgt mit
    Lemma~\ref{lem:eig-ho}(\ref{item:eig-ho-ii}) die Ungleichung
    \begin{displaymath}
      b(1-u_{a})b\leq b(1-u_{a_{0}})b
    \end{displaymath}
    und nach (\ref{item:eig-ho-iv}) aus demselben Lemma
    $\norm{b(1-u_{a})b}\leq\norm{b(1-u_{a_{0}})b}$. Der Beweis des
    Zwischenschritts reduziert sich also auf die Konstruktion eines
    $a_{0}\in I$ mit $\norm{b(1-u_{a_{0}})b}\leq\epsilon$ für
    gegebenes $\epsilon >0$, wobei wir ohne Beschränkung der
    Allgemeinheit $\epsilon<1$ annehmen. Setzt man zur Vereinfachung
    der Notation $f:=\hat{b}\in C_{0}(M)$, wobei $M$ den Gelfandraum
    von $C^{*}_{A}(b)$ bezeichnet, so ist
    $K:=\abs{f}^{-1}[\epsilon,\infty)$ kompakt. Nach dem Lemma von
    Urysohn für lokalkompakte Räume, siehe \cite[2.12 Urysohn's
    Lemma]{rudin:rca}, existiert eine stetige Funktion $g:M\to[0,1]$
    mit kompaktem Träger -- insbesondere gilt $g\in C_{0}(M)$ -- und
    $g(m)=1$ für alle $m\in K$. Wir wählen ein $\delta<1$ mit
    $1-\delta\leq\epsilon$. Für $m\in K$ gilt
    \begin{displaymath}
      \abs{f(m)-\delta g(m)f(m)}=(1-\delta)\abs{f(m)}\leq
      (1-\delta)\norm[\infty]{f}=(1-\delta)\norm{b}\leq 1-\delta\leq\epsilon,
    \end{displaymath}
    und für $m\in\compl{K}$
    \begin{displaymath}
      \abs{f(m)-\delta g(m)f(m)}=(1-\delta
      g(m))\abs{f(m)}\leq\abs{f(m)}<\epsilon.
    \end{displaymath}
    Insgesamt erhalten wir also
    $\norm[\infty]{f-\delta gf}\leq\epsilon$. Setzt man
    $a_{0}:=(\hat{.})^{-1}(\delta g)$, so ist $a_{0}$ positiv. Wegen
    $\norm{a_{0}}=\delta\norm[\infty]{g}\leq\delta<1$ gilt
    $a_{0}\in I$. Schließlich erhalten wir aus $\norm{b}\leq 1$ die
    Abschätzung
    \begin{displaymath}
      \norm{b(1-u_{a_{0}})b}\leq\norm{b-u_{a_{0}}b}=\norm[\infty]{\hat{b}-\widehat{a_{0}}\hat{b}}=\norm[\infty]{f-\delta
        gf}\leq\epsilon.
    \end{displaymath}
    
    Um das allgemeine Resultat zu zeigen, sei bemerkt, dass wegen der
    schon wiederholt aufgetretenen Überlegungen zum
    Spektralabbildungssatz die Elemente $1-u_{a}$ positiv sind mit
    $\norm{1-u_{a}}\leq 1$. Das bedeutet, dass $(1-u_{a})^{1/2}$
    wohldefiniert ist, wobei $\norm{(1-u_{a})^{1/2}}\leq 1$. Wegen
    \begin{displaymath}
      \norm{(1-u_{a})^{1/2}b}^{2}=\norm{\left((1-u_{a})^{1/2}b\right)^{*}(1-u_{a})^{1/2}b}=\norm{b(1-u_{a})b}\xrightarrow{a\in
        I} 0
    \end{displaymath}
    schließen wir auf
    \begin{displaymath}
      \norm{(1-u_{a})b}\leq\norm{(1-u_{a})^{1/2}}\cdot\norm{(1-u_{a})^{1/2}b}\leq\norm{(1-u_{a})^{1/2}b}\xrightarrow{a\in
        I} 0,
    \end{displaymath}
    und daher auf die gewünschte Beziehung $b=\lim_{a\in I}u_{a}b$.
  \end{enumerate}
\end{proof}

\section{Das Einselement als Extremalpunkt}
\label{sec:eins-und-extr}
Wir kommen zur schon angekündigten Behandlung von Extremalpunkten der
Einheitskugel. Primäres Ziel ist dabei die Tatsache, dass eine
$C^{*}$-Algebra genau dann ein Einselement hat, wenn die Einheitskugel
Extremalpunkte aufweist. Dazu erinnern wir an folgende Definition: Ist
$V$ ein Vektorraum und $M\subseteq V$ eine beliebige Teilmenge, dann
heißt $x\in M$ \emph{Extremalpunkt} von $M$, wenn aus
$tx_{1}+(1-t)x_{2}=x$ für $x_{1},x_{2}\in M$ und $t\in (0,1)$ schon
$x_{1}=x_{2}=x$ folgt.
\begin{bemerkung}\label{bem:char-extremalpkt}
  Ist $M$ konvex, so behaupten wir, dass $x\in M$ genau dann ein
  Extremalpunkt ist, wenn aus $\frac{1}{2}(x_{1}+x_{2})=x$ für
  $x_{1},x_{2}\in M$ bereits $x_{1}=x_{2}=x$ folgt; man kann sich also
  auf $t=1/2$ beschränken. Denn ist $tx_{1}+(1-t)x_{2}=x$, wobei wir
  ohne Beschränkung der Allgemeinheit $t\leq 1/2$ annehmen, dann ist
  wegen der Konvexität $\tilde{x}_{1}:=2tx_{1}+(1-2t)x_{2}$ ein
  Element von $M$ mit $x=\frac{1}{2}(\tilde{x}_{1}+x_{2})$. Aus der
  Voraussetzung folgt $x_{2}=x$, sodass wir
  $tx_{1}=x-(1-t)x_{2}=tx_{2}$ bzw. $x_{1}=x_{2}=x$ erhalten.
\end{bemerkung}
Zunächst betrachten wir kommutative $C^{*}$-Algebren, da dort die
Gelfandtransformation zur Verfügung steht.
\begin{lemma}\label{lem:komm-extremalpkt}
  Sei die $C^{*}$-Algebra $A\neq\{0\}$ kommutativ.
  \begin{enumerate}[label=(\roman*),ref=\roman*]
  \item\label{item:komm-extremalpkt-i} Die Extremalpunkte von $S$ sind
    genau die unitären Elemente von $A$. Insbesondere hat $S$ genau
    dann einen Extremalpunkt, wenn $A$ ein Einselement enthält.
  \item\label{item:komm-extremalpkt-ii} Die Extremalpunkte von
    $A_{sa}\cap S$ sind genau die selbstadjungierten, unitären
    Elemente von $A$.
  \item\label{item:komm-extremalpkt-iii} Die Extremalpunkte von
    $S\cap A^{+}$ sind genau die Projektionen -- also die
    selbstadjungierten idempotenten Elemente,
    vgl. Notation~\ref{not:cstar} -- von $A$. Ist $x\in S\cap A^{+}$
    nicht extrem, dann existiert sogar ein $a\in S\cap A^{+}$ mit
    \begin{equation}\label{eq:komm-extremalpkt-i}
      xa\neq 0\qquad \textnormal{und} \qquad x\pm xa\in S\cap A^{+}.
    \end{equation}
  \end{enumerate}
\end{lemma}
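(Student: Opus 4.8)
The plan is to transport everything to the Gelfand picture. By the classical Gelfand--Naimark theorem in the unital case and by Satz~\ref{satz:gelfand-trafo} in the non-unital case, $A$ is isometrically $*$-isomorphic to $C_0(M)$ for a locally compact Hausdorff space $M$, with $M$ compact precisely when $A$ has a unit (the constant function $\mathds 1$). Since all three claims concern only the $*$-algebra and the norm, I would simply assume $A=C_0(M)$. Using Beispiel~\ref{bsp:spektrum-ck} together with Bemerkung~\ref{bem:ho}\eqref{item:ho-iv} one reads off $S=\set{f\in C_0(M)}{\norm[\infty]{f}\le1}$, $A_{sa}\cap S=\set{f\in C_0(M)}{f(M)\subseteq[-1,1]}$ and $S\cap A^+=\set{f\in C_0(M)}{f(M)\subseteq[0,1]}$; and since the Gelfand transform is a $*$-isomorphism with $\widehat{a^*a}=\abs{\hat a}^2$, the unitaries, self-adjoint unitaries and projections of $A$ correspond exactly to the $f\in C_0(M)$ with $\abs f\equiv1$, with $f(M)\subseteq\{-1,1\}$, and with $f(M)\subseteq\{0,1\}$, respectively. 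A function of constant modulus $1$ can lie in $C_0(M)$ only for compact $M$, which already yields the supplement in~\eqref{item:komm-extremalpkt-i}.

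Next I would show that these ``extreme-valued'' functions really are extreme points, treating all three cases at once. Let $D$ denote the closed unit disk of $\C$, the interval $[-1,1]$, or $[0,1]$, and let $E$ be its extreme boundary -- the unit circle, $\{-1,1\}$, or $\{0,1\}$. Each of $S$, $A_{sa}\cap S$, $S\cap A^+$ is convex (the last by Satz~\ref{satz:eig-pos-el}\eqref{item:eig-pos-el-ii}), so by Bemerkung~\ref{bem:char-extremalpkt} it suffices to consider midpoints. If $f(M)\subseteq E$ and $f=\tfrac12(g+h)$ with $g,h$ the corresponding $D$-valued functions, then for every $m\in M$ the identity $f(m)=\tfrac12(g(m)+h(m))$ together with the strict convexity of $D$ forces $g(m)=h(m)=f(m)$; hence $g=h=f$, and $f$ is extreme.

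For the converse directions -- and simultaneously for the stronger statement~\eqref{eq:komm-extremalpkt-i} -- suppose $f$ does \emph{not} take all its values in $E$. Then there is $m_0\in M$ with $\abs{f(m_0)}<1$ in cases~\eqref{item:komm-extremalpkt-i} and~\eqref{item:komm-extremalpkt-ii}, and with $0<f(m_0)<1$ in case~\eqref{item:komm-extremalpkt-iii}. I would pick a compact neighbourhood $V$ of $m_0$ on which $1-\abs f\ge c$ for some $c\in(0,1]$ in the first two cases; in case~\eqref{item:komm-extremalpkt-iii} first shrink $V$ so that $f\ge\delta>0$ on $V$ and then bound the continuous function $(1-f)/f$ below by some $c\in(0,1]$ on $V$. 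By Urysohn's lemma for locally compact spaces, \cite[2.12 Urysohn's Lemma]{rudin:rca}, choose continuous $g\colon M\to[0,1]$ with compact support inside $V$ and $g(m_0)=1$. In cases~\eqref{item:komm-extremalpkt-i} and~\eqref{item:komm-extremalpkt-ii} put $b:=c\,g$, a real element of $C_0(M)$ with $0\le b\le c\le1-\abs f$ on $V$ and $b=0$ elsewhere; then $\abs{f\pm b}\le\abs f+b\le1$ everywhere, so $f\pm b$ again belong to $S$, resp.\ to $A_{sa}\cap S$, and $f=\tfrac12\big((f+b)+(f-b)\big)$ with $b(m_0)>0$ exhibits $f$ as a nontrivial midpoint, so $f$ is not extreme. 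In case~\eqref{item:komm-extremalpkt-iii} put $a:=c\,g\in S\cap A^+$; then $fa\neq0$ since $f(m_0)a(m_0)>0$, while $f-fa=f(1-a)$ satisfies $0\le f(1-a)\le f\le1$ and $f+fa=f(1+a)$ satisfies $0\le f(1+a)\le1$ -- the last inequality because on $V$ one has $a\le(1-f)/f$, so $f(1+a)\le f+(1-f)=1$, while off $V$ one has $a=0$. This is exactly~\eqref{eq:komm-extremalpkt-i}, and $f=\tfrac12\big((f+fa)+(f-fa)\big)$ with $f+fa\neq f-fa$ again shows $f$ is not extreme.

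The only genuinely delicate point is this last one: the constraints $f\pm b\in S$ are two-sided and symmetric, but $f+fa\le1$ is one-sided, which is precisely why $a$ must be supported on a set where $(1-f)/f$ stays bounded away from $0$; once the reduction to $C_0(M)$ and Urysohn's lemma are in hand, the rest is routine bookkeeping.
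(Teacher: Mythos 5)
Dein Beweis ist korrekt und verfolgt im Wesentlichen denselben Weg wie die Arbeit: Reduktion auf $C_{0}(M)$ mittels Gelfandtransformation, punktweises Extremalpunktargument in der Einheitsscheibe bzw. im Intervall für die eine Richtung und eine mit dem Lemma von Urysohn konstruierte Störung $cg$ für die Umkehrung samt \eqref{eq:komm-extremalpkt-i}. Die Vereinheitlichung der drei Hinrichtungen und die leicht andere Wahl der Konstanten (etwa $(1-f)/f\geq c$ statt des expliziten $\epsilon$) sind nur kosmetische Abweichungen.
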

\begin{proof}
  Wir können $A=C_{0}(M)$ mit einem lokalkompakten Hausdorffraum $M$
  annehmen.
  \begin{enumerate}[label=(\roman*),ref=\roman*]
  \item Sei zunächst $f\in S$ unitär, also $\abs{f(m)}=1$ für alle
    $m\in M$. Wegen Bemerkung~\ref{bem:char-extremalpkt} haben wir aus
    $f=\frac{1}{2}(f_{1}+f_{2})$ mit Elementen $f_{1},f_{2}\in S$ auf
    $f_{1}=f_{2}=f$ zu schließen. Für beliebiges $m\in M$ gilt
    $f(m)=\frac{1}{2}(f_{1}(m)+f_{2}(m))$ und
    $\abs{f_{1}(m)},\abs{f_{2}(m)}\leq 1$. Wenn wir zeigen, dass nur
    Punkte der Einheitskreislinie $\T$ Extremalpunkte der
    abgeschlossenen Einheitsscheibe in $\C$ sein können, so folgt
    $f_{1}(m)=f_{2}(m)=f(m)$ und wegen der Beliebigkeit von $m\in M$
    die Gleichung $f_{1}=f_{2}=f$.

    Sei $\T\ni\zeta=\frac{1}{2}(\zeta_{1}+\zeta_{2})$ mit
    $\abs{\zeta_{1}},\abs{\zeta_{2}}\leq 1$. Dann gilt
    $1=\abs{\zeta}\leq\frac{1}{2}(\abs{\zeta_{1}}+\abs{\zeta_{2}})\leq
    1$, also $\abs{\zeta_{1}}+\abs{\zeta_{2}}=2$. Wegen
    $\abs{\zeta_{1}},\abs{\zeta_{2}}\leq 1$ folgt daraus
    $\zeta_{1},\zeta_{2}\in\T$. Schreibt man
    $\zeta_{j}=e^{i\theta_{j}}$ mit $\theta_{j}\in[0,2\pi)$, wobei
    ohne Beschränkung der Allgemeinheit $\theta_{1}\leq\theta_{2}$
    sei, dann ergibt sich
    $\zeta=\frac{1}{2}e^{i\theta_{1}}(1+e^{i(\theta_{2}-\theta_{1})})$. Wir
    erhalten
    \begin{displaymath}
      1=\abs{\zeta}=\frac{1}{2}\abs{1+e^{i(\theta_{2}-\theta_{1})}}=\frac{1}{2}\sqrt{(1+\cos(\theta_{2}-\theta_{1}))^{2}+\sin(\theta_{2}-\theta_{1})^{2}}
    \end{displaymath}
    bzw.
    \begin{displaymath}
      4=(1+\cos(\theta_{2}-\theta_{1}))^{2}+\sin(\theta_{2}-\theta_{1})^{2}=2+2\cos(\theta_{2}-\theta_{1})
    \end{displaymath}
    und schließen auf $\cos(\theta_{2}-\theta_{1})=1$. Wegen
    $\theta_{2}-\theta_{1}\in[0,2\pi)$ folgt $\theta_{1}=\theta_{2}$
    und $\zeta_{1}=\zeta_{2}$. Somit ist die Hilfsbehauptung gezeigt.
    
    Sei umgekehrt $f\in S$ nicht unitär, also $\abs{f(m_{0})}<1$ für
    ein $m_{0}\in M$. Da $M$ lokalkompakt und $f$ stetig ist, gibt es
    eine offene Umgebung $U$ von $m_{0}$ mit kompaktem Abschluss und
    $\abs{f(m)}<\frac{1}{2}(1+\abs{f(m_{0})})$ für alle $m\in U$. Es
    folgt
    \begin{displaymath}
      \alpha:=\max\set{\abs{f(m)}}{m\in\cl{U}}\leq\frac{1}{2}(1+\abs{f(m_{0})})<1.
    \end{displaymath}
    
    Nach dem Lemma von Urysohn für lokalkompakte Räume, \cite[2.12
    Urysohn's Lemma]{rudin:rca}, gibt es eine stetige Funktion
    $g:M\to [0,1]$ mit $g(m_{0})=1$ und
    $\supp g\subseteq U\subseteq\cl{U}$. Somit hat $g$ kompakten
    Träger, inbesondere liegt $g$ in $C_{0}(M)$. Für
    $h:=(1-\alpha)g\in C_{0}(M)$ wollen wir $\abs{f(m)\pm h(m)}\leq 1$
    für alle $m\in M$ nachweisen. Dazu unterscheiden wir die Fälle
    $m\in U$ und $m\notin U$. Im ersten Fall gilt
    \begin{displaymath}
      \abs{f(m)\pm h(m)}\leq\abs{f(m)}+(1-\alpha)g(m)\leq \alpha+(1-\alpha)=1.
    \end{displaymath}
    Im zweiten Fall folgt aus $h(m)=0$ die Abschätzung
    \begin{displaymath}
      \abs{f(m)\pm h(m)}=\abs{f(m)}\leq\norm[\infty]{f}\leq 1.
    \end{displaymath}
    
    Wir erhalten $f\pm h\in S$. Wegen $h(m_{0})=1-\alpha>0$ gilt
    $h\neq 0$, sodass die Funktionen $f\pm h$ von $f$ verschieden
    sind. Folglich ist $f=\frac{1}{2}\big((f+h)+(f-h)\big)$ nicht
    extremal in $S$.

    Zur Existenz von Einselementen: Wenn ein Extremalpunkt
    $f\in S\subseteq C_{0}(M)$ existiert, dann gilt nach dem
    Bewiesenen $\abs{f(m)}=1$ für alle $m\in M$. Da die Funktion im
    Unendlichen verschwindet, ist das nur möglich, wenn $M$ kompakt
    ist. In diesem Fall gilt $C_{0}(M)=C(M)$ und
    $\mathds{1}\in C_{0}(M)$. Diese Funktion stellt das Einselement
    dar. Enthält umgekehrt $C_{0}(M)$ ein Einselement, so ist dieses
    wie in jeder $C^{*}$-Algebra unitär und daher ein Extremalpunkt
    von $S$.
  \item Ein selbstadjungiertes, unitäres $f$ liegt klarerweise in
    $A_{sa}\cap S$ und ist nach (\ref{item:komm-extremalpkt-i}) sogar
    ein Extremalpunkt von $S$, also insbesondere von $A_{sa}\cap S$.

    Ist $f\in A_{sa}\cap S$ nicht unitär, so können wir denselben
    Beweis wie in (\ref{item:komm-extremalpkt-i}) verwenden, um zu
    zeigen, dass $f$ kein Extremalpunkt ist. Eine Funktion in
    $C_{0}(M)$ ist genau dann selbstadjungiert, wenn sie reellwertig
    ist. Da $f$ selbstadjungiert, also reellwertig, und die oben
    konstruierte Funktion $h:M\to [0,1]$ ebenfalls reellwertig ist,
    gilt $f\pm h\in A_{sa}$. Wir erhalten $f\pm h\in A_{sa}\cap S$.
  \item Sei $p\in C_{0}(M)$ eine Projektion. Die Projektionen in
    $C_{0}(M)$ sind genau die Funktionen, die höchstens die Werte
    $0,1$ annehmen, da ja $p(m)^{2}=p(m)$ gelten muss. Daraus folgt
    wegen $\sigma(p)=p(M)\subseteq\{0,1\}$ einmal $p\in S\cap A^{+}$.
    Schreibt man $p=\frac{1}{2}(h_{1}+h_{2})$ mit
    $h_{1},h_{2}\in S\cap A^{+}$, also
    $h_{1}(M),h_{2}(M)\subseteq [0,1]$, dann folgt aus $p(m)=0$ schon
    $h_{1}(m)=h_{2}(m)=0$. Aus $p(m)=1$ folgt die analoge Bedingung
    $h_{1}(m)=h_{2}(m)=1$. Mit anderen Worten gilt $h_{1}=h_{2}=p$ und
    $p$ ist ein Extremalpunkt.

    Für die Umkehrung sei $x\in S\cap A^{+}$ keine Projektion. Dies
    ist äquivalent dazu, dass $x(m_{0})\in (0,1)$ für ein $m_{0}\in M$
    gilt. Analog zu (\ref{item:komm-extremalpkt-i}) gibt es eine
    offene Umgebung $U$ von $m_{0}$ mit kompaktem Abschluss $\cl{U}$
    und $x(m)\in \big(x(m_{0})/2,(1+x(m_{0}))/2\big)\subseteq (0,1)$
    für alle $m\in U$. Aus dem Lemma von Urysohn erhalten wir eine
    Funktion $g:M\to [0,1]$ mit $g(m_{0})=1$ und
    $\supp g\subseteq U\subseteq\cl{U}$, insbesondere $g\in
    C_{0}(M)$. Wegen $x(m_{0})g(m_{0})=x(m_{0})>0$ gilt $xg\neq
    0$. Setzt man
    $\epsilon:=\min\big(1,\frac{2}{1+x(m_{0})}-1\big)>0$, so folgt
    $1-\epsilon g(m)\geq 0$. Außerdem gilt
    $x(m)(1+\epsilon g(m))\leq 1$ für alle $m\in M$: Dafür
    unterscheiden wir die Fälle $m\in U$ und $m\notin U$. Im ersten
    Fall folgt aus $g(m)\leq 1$ die Abschätzung
    \begin{displaymath}
      x(m)(1+\epsilon
      g(m))\leq\frac{1+x(m_{0})}{2}\left(1+\left(\frac{2}{1+x(m_{0})}-1\right)\cdot
        1\right)=1.
    \end{displaymath}
    Im zweiten Fall führt die Überlegung
    \begin{displaymath}
      x(m)(1+\epsilon g(m))=x(m)\leq 1
    \end{displaymath}
    zum Ziel. Zusammen mit den offensichtlichen Tatsachen
    \begin{displaymath}
      1+\epsilon g(m)\geq 1\geq 0\quad\text{und}\quad x(m)(1-\epsilon
      g(m))\leq x(m)\leq 1\quad\text{für alle}\ m\in M
    \end{displaymath}
    erhalten wir $x\pm x\epsilon g\in S\cap A^{+}$. Die Wahl
    $a:=\epsilon g$ liefert somit ein Element, das
    \eqref{eq:komm-extremalpkt-i} erfüllt. Wegen
    $x=\frac{1}{2}\big((x+xa)+(x-xa)\big)$ kann $x$ kein Extremalpunkt
    sein.
  \end{enumerate}
\end{proof}
Damit können wir die oben angepeilte Charakterisierung von
$C^{*}$-Algebren mit Einselement auch im nichtkommutativen Fall
beweisen.
\begin{satz}\label{satz:extremalpkt}
  $S$ hat genau dann einen Extremalpunkt, wenn in $A$ ein Einselement
  existiert. In diesem Fall ist das Einselement extremal.
\end{satz}
\begin{proof}
  Ist $1$ das Einselement von $A$, dann hat $S$ den Extremalpunkt $1$:
  In der Tat folgt aus $1=\frac{1}{2}(a+b)$ auch
  $1=\re 1=\frac{1}{2}(\re a+\re b)$. Aus $\re b=2-\re a$ erhalten
  wir, dass $\re a$ und $\re b$ kommutieren. Da $1$ sicher unitär ist,
  folgt aus
  Lemma~\ref{lem:komm-extremalpkt}(\ref{item:komm-extremalpkt-i})
  angewandt in $C^{*}_{A}(\re a,\re b,1)$, dass $\re a=\re b=1$
  ist. Wir erhalten $1=\re a=\frac{1}{2}(a+a^{*})$ bzw. $a^{*}=2-a$
  und schließen auf die Normalität von $a$. Erneute Anwendung von
  Lemma~\ref{lem:komm-extremalpkt}(\ref{item:komm-extremalpkt-i}),
  diesmal in $C^{*}(a,1)$, liefert $a=1$; analog zeigt man $b=1$.

  Habe nun $S$ umgekehrt einen Extremalpunkt $x$. Wir zeigen zunächst,
  dass $x^{*}x$ eine Projektion ist, indem wir das Gegenteil auf einen
  Widerspruch führen. Ist $x^{*}x$ keine Projektion, dann klarerweise
  $\abs{x}=(x^{*}x)^{1/2}$ auch nicht. Aus
  Lemma~\ref{lem:komm-extremalpkt}(\ref{item:komm-extremalpkt-iii}),
  angewandt in $B:=C_{\tilde{A}}^{*}(\abs{x},1)$, folgt die Existenz
  eines $a\in S_{B}\cap B^{+}$ mit
  \begin{displaymath}
    \abs{x}a\neq 0\qquad\textnormal{und}\qquad\norm{\abs{x}(1\pm
      a)}\leq 1.
  \end{displaymath}
  Offenbar gilt $x=\frac{1}{2}\big(x(1+a)+x(1-a)\big)$ und, da $A$ ein
  Ideal in $\tilde{A}$ ist, auch $x(1\pm a)\in A$. Weiters berechnen
  wir
  \begin{align}\label{eq:extremalpkt-i}
    \begin{split}
      \norm{x(1\pm a)}^{2}&=\norm{(x(1\pm a))^{*}(x(1\pm
        a))}=\norm{(1\pm a)\abs{x}^{2}(1\pm a)}\\
      &=\norm{(\abs{x}(1\pm a))^{*}(\abs{x}(1\pm
        a))}=\norm{\abs{x}(1\pm a)}^{2}\leq 1,
    \end{split}
  \end{align}
  womit $x(1\pm a)\in S$ ist. Da $x$ extremal ist, folgt
  $x=x(1\pm a)$, also $xa=0$. Mit zu \eqref{eq:extremalpkt-i} analoger
  Rechnung erhalten wir dazu im Widerspruch
  \begin{equation}\label{eq:extremalpkt-ii}
    \norm{xa}^{2}=\norm{\abs{x}a}^{2}\neq 0.
  \end{equation}
  Also muss $p:=x^{*}x$ eine Projektion sein. Daraus folgt (wir
  rechnen weiter in $\tilde{A}$)
  \begin{displaymath}
    \norm{x(1-p)}^{2}=\norm{(1-p)x^{*}x(1-p)}=\norm{(1-p)p(1-p)}=0
  \end{displaymath}
  und somit $x=xp$. Mit $q:=xx^{*}$ schließen wir auf $q=(xp)x^{*}$
  und
  \begin{displaymath}
    q^{2}=xpx^{*}xpx^{*}=xp^{3}x^{*}=xpx^{*}=q,
  \end{displaymath}
  wodurch sich $q$ ebenfalls als Projektion herausstellt. Dabei gilt
  \begin{displaymath}
    \norm{x^{*}(1-q)}^{2}=\norm{(1-q)xx^{*}(1-q)}=\norm{(1-q)q(1-q)}=0,
  \end{displaymath}
  also $x^{*}=x^{*}q$ bzw. durch Adjungieren $x=qx$.

  Als Nächstes zeigen wir, dass die Menge $(1-q)S(1-p)$ nur aus dem
  Nullelement besteht. Für $a\in(1-q)S(1-p)$ gilt $a=(1-q)a(1-p)$ und
  \begin{align*}
    \norm{x\pm a}^{2}&=\norm{(x\pm a)^{*}(x\pm a)}=\norm{x^{*}x\pm
                       a^{*}x\pm x^{*}a+a^{*}a}\\
                     &=\|p\pm(1-p)a^{*}\underbrace{(1-q)x}_{=0}\pm
                       \underbrace{x^{*}(1-q)}_{=0}a(1-p)+(1-p)\underbrace{a^{*}(1-q)^{2}a}_{=a^{*}a,\
                       \text{da}\ (1-q)a=a}(1-p)\|\\
                     &=\norm{p+(1-p)a^{*}a(1-p)},
  \end{align*}
  was nach Lemma~\ref{lem:norm-summe} mit
  $\max\big(\norm{p},\norm{(1-p)a^{*}a(1-p)}\big)$ übereinstimmt. Als
  Projektion er\-füllt $p$ die Ungleichungen
  $\norm{p},\norm{1-p}\leq 1$. Daraus folgt
  \begin{displaymath}
    \norm{(1-p)a^{*}a(1-p)}\leq\norm{1-p}\norm{a^{*}a}\norm{1-p}\leq\norm{a}
    ^{2}\leq 1,
  \end{displaymath}
  sodass die Elemente $x\pm a$ in der Einheitskugel $S$ enthalten
  sind. Aus der trivialen Gleichung
  $x=\frac{1}{2}\big((x+a)+(x-a)\big)$ folgt schließlich $a=0$, da $x$
  ein Extremalpunkt ist.

  Ist $(u_{i})_{i\in I}$ ein approximatives Einselement in $A$ --
  nach Satz~\ref{satz:approx-eins} gibt es ein solches -- dann folgt
  $(1-q)u_{i}(1-p)\in (1-q)S(1-p)=\{0\}$
  bzw. $u_{i}=qu_{i}+u_{i}p-qu_{i}p$. Die rechte Seite dieser
  Gleichung konvergiert nach Definition eines approximativen
  Einselements gegen $e:=q+p-qp$. Somit konvergiert das Netz
  $(u_{i})_{i\in I}$ gegen $e$. Der Grenzwert eines approximativen
  Einselements ist, wenn er existiert, das Einselement der
  $C^{*}$-Algebra, denn für beliebiges $a\in A$ gilt
  $ea=(\lim_{i\in I}u_{i})a=\lim_{i\in I}u_{i}a=a$. Die Gleichung
  $ae=a$ zeigt man analog.
\end{proof}

Schränken wir uns auf die selbstadjungierten Elemente ein, so können
wir die Aussage von
Lemma~\ref{lem:komm-extremalpkt}(\ref{item:komm-extremalpkt-ii}) auf
nicht notwendigerweise kommutative $C^{*}$-Algebren
verallgemeinern. Es steht über die übliche Technik der erzeugten
$C^{*}$-Unteralgebren nämlich die in kommutativen $C^{*}$-Algebren
entwickelte Theorie zur Verfügung.
\begin{satz}\label{satz:extremalpkt-selbstadj}
  Hat $A$ ein Einselement, so sind die Extremalpunkte von $A_{sa}\cap S$
  genau die selbstadjungierten, unitären Elemente von $A$.
\end{satz}
\begin{proof}
  Sei zunächst $u$ selbstadjungiert und unitär, d.~h. $u^{2}=1$. Die
  lineare Abbildung $T:A\to A$ definiert durch $Tx:=ux$ ist zu sich
  selbst invers und somit ein Isomorphismus. Die Ungleichungskette
  \begin{equation}\label{eq:bew-extremalpkt-selbstadj}
    \norm{x}=\norm{u^{2}x}\leq\norm{u}\cdot\norm{ux}=\norm{ux}\leq\norm{x}
  \end{equation}
  zeigt, dass $T$ zusätzlich isometrisch ist. Als isometrischer
  Isomorphismus\footnote{Hier betrachten wir $A$ als Banachraum.}
  bildet $T$ Extremalpunkte von $S$ auf ebensolche ab. Nach
  Satz~\ref{satz:extremalpkt} ist $1$ extremal in $S$, sodass $u=T1$
  ebenfalls ein Extremalpunkt von $S$ ist. Insbesondere ist $u$
  extremal in $A_{sa}\cap S$.

  Für die Umkehrung sei $u$ ein Extremalpunkt von $A_{sa}\cap
  S$. Zunächst einmal ist $u$ jedenfalls selbstadjungiert. Setzen wir
  $B:=C^{*}(u,1)$, so ist $u$ offenbar auch ein Extremalpunkt von
  $B_{sa}\cap
  S_{B}$. Lemma~\ref{lem:komm-extremalpkt}(\ref{item:komm-extremalpkt-ii}),
  angewandt in $B$, liefert, dass $u$ unitär in $B$ und daher unitär
  in $A$ ist.
\end{proof}
\begin{bemerkung}
  Der erste Teil des Beweises zeigt, dass unitäre Elemente in
  beliebigen $C^{*}$-Algebren Extremalpunkte der Einheitskugel
  sind. Die Zusatzvoraussetzung wird für die Umkehrung benötigt.
\end{bemerkung}

\section{Dichtheit von Projektionen}
\label{sec:dichth-von-proj}
Als Letztes soll in diesem Kapitel ein Satz bewiesen werden, der eine
hinreichende Bedingung dafür angibt, dass die lineare Hülle aller
Projektionen in einer kommutativen $C^{*}$-Algebra mit Einselement
dicht ist. Wegen der Gelfandtransformation genügt es, Algebren der
Form $C(K)$ mit einem kompakten Hausdorffraum $K$ zu betrachten. Dass
dieses Problem alles andere als trivial ist, zeigt das folgende
Beispiel, das zugleich die daran anschließende Definition motiviert.
\begin{beispiel}\label{bsp:proj-dicht}
  Sei $K=[0,1]$ versehen mit der euklidischen Topologie. Die
  Projektionen in $C(K)$ sind diejenigen stetigen reellwertigen
  Funktionen, die nur die Werte $0$ und $1$ annehmen. Da $K$
  zusammenhängend ist, muss für eine Projektion $p\in C(K)$ auch
  $p(K)\subseteq\{0,1\}$ zusammenhängend sein, womit $p\equiv 0$ oder
  $p\equiv 1$ gilt. Ihre lineare Hülle, der Raum aller konstanten
  Funktionen, ist schon abgeschlossen und daher sicher nicht dicht in
  $C(K)$.
\end{beispiel}
\begin{bemerkung}\label{bem:proj-dicht}
  Diese Argumentation zeigt auch, dass eine notwendige Bedingung für
  die Dichtheit von
  $\FF:=\spn\set{p\in C(K)}{p^{*}=p\ \text{und}\ p^{2}=p}$ in $C(K)$
  gegeben ist durch die Forderung, dass $K$ \emph{total
    unzusammenhängend} ist. Das bedeutet, dass alle
  Zusammenhangskomponenten einpunktig sind. In der Tat muss eine
  Projektion auf zusammenhängenden Mengen konstant sein. Damit sind
  auch alle Funktionen aus $\FF$ und folglich aus $\cl{\FF}$ auf
  zusammenhängenden Mengen konstant. Enthält eine
  Zusammenhangskomponente mindestens zwei Punkte, so kann man diese
  nach dem Lemma von Urysohn funktional trennen. Die erhaltene stetige
  Funktion kann somit nicht in $\cl{\FF}$ enthalten sein.
\end{bemerkung}
Wir fordern jedoch eine stärkere Eigenschaft.
\begin{definition}\label{def:extr-unzsh-stonesch}
  Ein topologischer Raum heißt \emph{extremal unzusammenhängend}, wenn
  der Abschluss jeder offenen Menge wieder offen und damit clopen
  ist. Ein kompakter, extremal unzusammenhängender Hausdorffraum heißt
  \emph{stonesch}.
\end{definition}
\begin{bemerkung}
  Ein extremal unzusammenhängender Hausdorffraum ist auch total
  unzusammenhängend. Denn wäre $C$ eine zusammenhängende Menge mit
  $x,y\in C$ und $x\neq y$, dann würden offene Mengen $O_{x},O_{y}$
  existieren mit $x\in O_{x}$, $y\in O_{y}$ und
  $O_{x}\cap O_{y}=\emptyset$. Insbesondere gilt
  $y\notin\cl{O_{x}}$. Der Abschluss $\cl{O_{x}}$ ist nach
  Voraussetzung clopen, womit $\cl{O_{x}}$ und $\compl{\cl{O_{x}}}$
  getrennt sind und den ganzen Raum überdecken. Da $C$ zusammenhängend
  ist, gilt $C\subseteq\cl{O_{x}}$ oder
  $C\subseteq\compl{\cl{O_{x}}}$. Beide Varianten führen aber auf
  einen Widerspruch, da $x\in\cl{O_{x}}$ und $y\in\compl{\cl{O_{x}}}$.
\end{bemerkung}

Zunächst zeigen wir eine Charakterisierung stonescher Räume. Dazu
müssen wir an das Konzept der Baire'schen Kategorien erinnern. Eine
Teilmenge $M$ eines topologischen Raums heißt \emph{nirgends dicht},
wenn $\intr{\cl{M}}=\emptyset$ ist. Abzählbare Vereinigungen nirgends
dichter Mengen werden \emph{von erster Kategorie} genannt, alle
anderen Teilmengen \emph{von zweiter Kategorie}.
\begin{satz}\label{satz:char-stonesch}
  Sei $K$ ein kompakter Hausdorffraum und $C(K,\R)$ die Menge der
  stetigen, reellwertigen Funktionen auf $K$. Dann sind die folgenden
  vier Aussagen äquivalent:
  \begin{enumerate}[label=(\roman*),ref=\roman*]
  \item\label{item:char-stonesch-i} $K$ ist extremal
    unzusammenhängend, also stonesch.
  \item\label{item:char-stonesch-ii} Jede beschränkte und bezüglich
    der punktweisen natürlichen Ordnung gerichtete Teilmenge von
    $C(K,\R)$ hat ein Supremum\footnote{Der Vollständigkeit halber sei
      bemerkt, dass das punktweise Supremum einer Familie stetiger
      Funktionen im Allgemeinen nicht mehr stetig ist.} in $C(K,\R)$.
  \item\label{item:char-stonesch-iii} Jede beschränkte Teilmenge von
    $C(K,\R)$ hat ein Supremum in $C(K,\R)$ bezüglich der punktweisen
    natürlichen Ordnung.
  \item\label{item:char-stonesch-iv} Jede beschränkte, reellwertige
    und von unten halbstetige Funktion\footnote{Eine Funktion
      $f:K\to\R$ heißt von unten halbstetig, wenn für alle $t\in\R$
      das Urbild $f^{-1}(t,+\infty)$ offen bzw. $f^{-1}(-\infty,t]$
      abgeschlossen ist.} auf $K$ stimmt bis auf eine Menge von erster
    Kategorie mit einer stetigen Funktion überein.
  \end{enumerate}
\end{satz}
\begin{proof}
  Der Beweis verläuft zyklisch:

  (\ref{item:char-stonesch-i})~$\Rightarrow$~(\ref{item:char-stonesch-iv}):
  Sei $f$ eine beschränkte, reellwertige und von unten halbstetige
  Funktion. Für die zu zeigende Behauptung können wir durch etwaiges
  Addieren einer positiven Konstanten und anschließendes Skalieren
  $0\leq f\leq 1$ annehmen. Wir definieren für $t\in\R$ die Mengen
  $F(t):=f^{-1}(-\infty,t]$ und $G(t):=\intr{F(t)}$. Dann sind die
  $F(t)$ wegen der Halbstetigkeit abgeschlossen und somit
  $\cl{\compl{F(t)}}$ clopen, da $K$ als extremal unzusammenhängend
  vorausgesetzt ist. Die Mengen $G(t)$ sind wegen
  $\intr{F(t)}=\intr{\big(\compl{(\compl{F(t)})}\big)}=\compl{\cl{\compl{F(t)}}}$
  ebenfalls clopen. Folglich sind die Indikatorfunktionen
  $\mathds{1}_{G(t)}$ stetig. Für $n\in\N$ definieren wir die stetige
  Funktion
  \begin{equation}\label{eq:bew-char-stonesch-i}
    f_{n}:=\sum_{k=1}^{2^{n}}\frac{k}{2^{n}}(\mathds{1}_{G(k/2^{n})}-\mathds{1}_{G((k-1)/2^{n})}).
  \end{equation}
  Sei ein beliebiges $x\in\R$ fixiert. Da die $G(t)$ monoton wachsend
  sind mit $G(1)=K$, gilt $f_{n}(x)=\frac{k}{2^{n}}$ für die Zahl
  \begin{equation}\label{eq:bew-char-stonesch-ii}
    k=\min\set{j\in\{0,\dots,2^{n}\}}{x\in G\left(\frac{j}{2^{n}}\right)}.
  \end{equation}
  Wegen $x\in G(\frac{k}{2^{n}})=G(\frac{2k}{2^{n+1}})$ gilt auch
  $f_{n+1}(x)\leq\frac{2k}{2^{n+1}}$. Wäre
  $f_{n+1}(x)\leq\frac{2k-2}{2^{n+1}}$, also
  $x\in G(\frac{2k-2}{2^{n+1}})=G(\frac{k-1}{2^{n}})$, so hätten wir
  einen Widerspruch zur Minimalität von $k$ in
  \eqref{eq:bew-char-stonesch-ii}. Wir erhalten
  $f_{n+1}(x)\in\{\frac{2k-1}{2^{n+1}},\frac{2k}{2^{n+1}}\}$ und
  daraus die Abschätzung
  $\norm[\infty]{f_{n+1}-f_{n}}\leq\frac{1}{2^{n+1}}$. Für $m\leq n$
  ergibt sich
  \begin{displaymath}
    \norm[\infty]{f_{n}-f_{m}}\leq\sum_{k=m}^{n-1}\norm[\infty]{f_{k+1}-f_{k}}\leq\sum_{k=m}^{n-1}\frac{1}{2^{k+1}}\leq\frac{1}{2^{m}},
  \end{displaymath}
  womit $(f_{n})_{n\in\N}$ eine Cauchy-Folge in $C(K,\R)$ ist und
  infolge gegen eine Funktion $f_{0}\in C(K,\R)$ konvergiert. Wir
  behaupten, dass $f$ und $f_{0}$ bis auf eine Menge von erster
  Kategorie überein\-stimmen. Dazu setzen wir
  \begin{displaymath}
    M:=\bigcup_{n=1}^{\infty}\bigcup_{k=1}^{2^{n}}F\left(\frac{k}{2^{n}}\right)\setminus G\left(\frac{k}{2^{n}}\right).
  \end{displaymath}
  Die Mengen
  $F\big(\frac{k}{2^{n}}\big)\setminus G\big(\frac{k}{2^{n}}\big)$
  sind abgeschlossen und haben leeres Inneres, da jede offene
  Teilmenge von $F\big(\frac{k}{2^{n}}\big)$ auch in
  $G\big(\frac{k}{2^{n}}\big)$ enthalten ist. Damit ist $M$ von erster
  Kategorie in $K$.

  Sind $x\in\compl{M}$ sowie $n\in\N$ fest und $k$ wie in
  \eqref{eq:bew-char-stonesch-ii}, so sind zwei Fälle zu
  unterscheiden: Ist $k=0$, so gilt $x\in G(0)$, also $f(x)=0$ und
  $f_{m}(x)=0$ für beliebiges $m\in\N$, folglich
  $f(x)=0=f_{0}(x)$. Für $k\geq 1$ hingegen gilt
  $x\notin G\big(\frac{k-1}{2^{n}}\big)$, womit wegen $x\in\compl{M}$
  auch $x\notin F\big(\frac{k-1}{2^{n}}\big)$ folgt. Zusammen mit
  $x\in G\big(\frac{k}{2^{n}}\big)\subseteq
  F\big(\frac{k}{2^{n}}\big)$ ergibt sich die Abschätzung
  $\frac{k-1}{2^{n}}<f(x)\leq\frac{k}{2^{n}}=f_{n}(x)$. Wir schließen
  auf $\abs{f_{n}(x)-f(x)}<\frac{1}{2^{n}}$ und erhalten tatsächlich
  $f(x)=f_{0}(x)$.

  \vspace{\baselineskip}
  (\ref{item:char-stonesch-iv})~$\Rightarrow$~(\ref{item:char-stonesch-iii}):
  Sei $\FF\subseteq C(K,\R)$ eine beschränkte Teilmenge. Definiert man
  $g(x):=\sup_{\phi\in\FF}\phi(x)$, so ist bekannt, dass $g$ von unten
  halbstetig ist. Aufgrund der Beschränktheit ist $g$ auch
  reellwertig, sodass wegen (\ref{item:char-stonesch-iv}) eine stetige
  Funktion $f:K\to\R$ existiert, die mit $g$ zumindest außerhalb einer
  Menge $M$ von erster Kategorie übereinstimmt. Da $g-f$ als Summe von
  unten halbstetiger Funktionen ebenfalls von unten halbstetig ist,
  ist die Menge $G:=\set{x\in K}{g(x)-f(x)>0}$ offen. Klarerweise ist
  $G$ auch in $M$ enthalten und daher selbst von erster Kategorie in
  $K$. Als Nächstes zeigen wir, dass $G$ auch als Teilmenge von sich
  von erster Kategorie ist. Dazu schreibt man
  \begin{displaymath}
    G=\bigcup_{n\in\N}G_{n}
  \end{displaymath}
  mit in $K$ nirgends dichten Mengen $G_{n}$, also
  $\left(\cl{G_{n}}^{K}\right)^{o,K}=\emptyset$. Es folgt
  \begin{equation}\label{eq:bew-char-stonesch-iii}
    \left(\cl{G_{n}}^{G}\right)^{o,K}=\left(\cl{G_{n}}^{K}\cap
      G\right)^{o,K}=\emptyset.
  \end{equation}
  Ist $O$ eine in $G$ offene, nichtleere Menge, so ist $O$ auch in $K$
  offen, da $G$ selbst offen ist. Nach
  \eqref{eq:bew-char-stonesch-iii} kann daher $O$ nicht in
  $\cl{G_{n}}^{G}$ enthalten sein und wir erhalten die gewünschte
  Tatsache
  \begin{displaymath}
    \left(\cl{G_{n}}^{G}\right)^{o,G}=\emptyset.
  \end{displaymath}
  Nach dem Satz von Baire für lokalkompakte Hausdorffräume, siehe
  \cite[2.2 Baire's theorem]{rudin:fana}, ist ein nichtleerer
  lokalkompakter Hausdorffraum niemals in sich selbst von erster
  Kategorie, sodass nur $G=\emptyset$ möglich ist. Wir schließen auf
  $g\leq f$. Folglich ist $f$ eine obere Schranke aller Funktionen aus
  $\FF$.  Ist umgekehrt $h\in C(K,\R)$ eine obere Schranke, so gilt
  $h(x)\geq g(x)=f(x)$ für $x\in\compl{M}$. Schreibt man
  $M=\bigcup_{n\in\N}M_{n}$ mit nirgends dichten Mengen $M_{n}$, dann
  gilt
  \begin{displaymath}
    \compl{M}=\bigcap_{n\in\N}\compl{M_{n}}\supseteq\bigcap_{n\in\N}\compl{\cl{M_{n}}}.
  \end{displaymath}
  Die Mengen $\compl{\cl{M_{n}}}$ sind offen und wegen
  $\cl{\compl{\cl{M_{n}}}}=\compl{\left(\intr{\cl{M_{n}}}\right)}=K$
  dicht. Nach dem Satz von Baire für lokalkompakte Räume, hier
  angewandt im kompakten Raum $K$, ist auch der Schnitt
  $\bigcap_{n\in\N}\compl{\cl{M_{n}}}$ und infolge $\compl{M}$ dicht
  in $K$. Also gilt die Ungleichung $h(x)\geq f(x)$ für alle $x\in
  K$. Wir erhalten, dass $f$ tatsächlich das Supremum von $\FF$ in
  $C(K,\R)$ ist.

  \vspace{\baselineskip}
  (\ref{item:char-stonesch-iii})~$\Rightarrow$~(\ref{item:char-stonesch-ii}):
  Klar.

  \vspace{\baselineskip}
  (\ref{item:char-stonesch-ii})~$\Rightarrow$~(\ref{item:char-stonesch-i}):
  Sei $G\subseteq K$ offen. Um zu beweisen, dass $\cl{G}$ clopen ist,
  zeigen wir die Stetigkeit der Indikatorfunktion
  $\mathds{1}_{\cl{G}}$.

  Die Menge $\FF:=\set{f\in C(K,\R)}{0\leq f\leq\mathds{1}_{G}}$ ist
  klarerweise beschränkt und nach oben gerichtet, da für
  $f_{1},f_{2}\in\FF$ auch $\max(f_{1},f_{2})$ in $\FF$ enthalten
  ist. Für $x\in G$ sind $\{x\}$ und $\compl{G}$ zwei disjunkte,
  abgeschlossene Mengen, sodass es nach dem Lemma von Urysohn eine
  stetige Funktion $f_{x}:K\to[0,1]$ gibt mit $f_{x}(x)=1$ und
  $f_{x}(\compl{G})\subseteq\{0\}$. Wegen $f_{x}\in\FF$ folgt
  sofort\footnote{Man beachte, dass hier noch kein Abschluss gebildet
    wird.}  $\mathds{1}_{G}(y)=\sup_{f\in\FF}f(y)$ für $y\in
  K$. Bezeichnet $g$ das nach Voraussetzung existierende Supremum von
  $\FF$ in $C(K,\R)$, so gilt einerseits $g\leq 1$, da die konstante
  Einsfunktion eine obere Schranke von $\FF$ ist, und andererseits
  $\mathds{1}_{G}\leq g$, da $g(y)$ eine obere Schranke aller $f(y)$
  für festes $y\in K$ ist. Die Funktion $g$ nimmt auf $G$ folglich den
  Wert $1$ an und wegen der Stetigkeit damit auch auf $\cl{G}$. Wir
  schließen auf die Ungleichung $\mathds{1}_{\cl{G}}\leq g$. Um
  $\mathds{1}_{\cl{G}}=g$ und damit die gewünschte Stetigkeit zu
  zeigen, müssen wir $g(x_{0})>0$ für ein $x_{0}\in\compl{\cl{G}}$ auf
  einen Widerspruch führen. In der Tat folgt aus dem Lemma von Urysohn
  die Existenz einer stetigen Funktion $h:K\to[0,1]$ mit $h(x_{0})=0$
  und $h\left(\cl{G}\right)\subseteq\{1\}$. Für die offenbar ebenfalls
  stetige und reellwertige Funktion $\tilde{g}:=gh$ und beliebige
  $f\in\FF$ und $y\in K$ gilt
  \begin{displaymath}
    \tilde{g}(y)\geq\mathds{1}_{\cl{G}}(y)\geq\mathds{1}_{G}(y)\geq f(y),
  \end{displaymath}
  sodass die stetige Funktion $\tilde{g}$ eine obere Schranke von
  $\FF$ ist. Da $g$ das Supremum ist, muss $g\leq\tilde{g}$ gelten,
  was zu dem Widerspruch $0<g(x_{0})\leq\tilde{g}(x_{0})=0$ führt.
\end{proof}
Nun kommen wir zum Dichtheitsresultat. Dessen Beweis ist relativ kurz,
da der Großteil der Arbeit schon im Beweis von
Satz~\ref{satz:char-stonesch} steckt.
\begin{satz}\label{satz:proj-dicht}
  Sei $K$ ein stonescher Raum und $A=C(K)$. Dann ist die lineare Hülle
  der Projektionen dicht in der $C^{*}$-Algebra $A$. Nimmt $f\in C(K)$
  nur nichtnegative Werte an, so ist $f$ sogar der Grenzwert einer
  Folge von Funktionen der Form
  $f_{n}:=\sum_{k=1}^{n}\gamma_{k,n}g_{k,n}$, wobei die $g_{k,n}$
  Projektionen und die Zahlen $\gamma_{k,n}$ nichtnegativ sind.
\end{satz}
\begin{proof}
  Sei $f\in C(K)$ beliebig. Durch Betrachten von Real- und
  Imaginärteil können wir annehmen, dass $f$ reellwertig ist. Da $f$
  beschränkt ist und die konstanten Funktionen in der linearen Hülle
  der Projektionen enthalten sind, können wir uns zusätzlich auf
  $0\leq f\leq 1$ einschränken. Wir führen die Konstruktion aus dem
  Beweisschritt
  (\ref{item:char-stonesch-i})~$\Rightarrow$~(\ref{item:char-stonesch-iv})
  in Satz~\ref{satz:char-stonesch} nochmals durch und erhalten, dass
  $f$ mit der Funktion $f_{0}=\lim_{n\to\infty}f_{n}$ überall außer
  möglicherweise auf
  \begin{displaymath}
    M=\bigcup_{n=1}^{\infty}\bigcup_{k=1}^{2^{n}}F\left(\frac{k}{2^{n}}\right)\setminus G\left(\frac{k}{2^{n}}\right)
  \end{displaymath}
  übereinstimmt, wenn die $f_{n}$ wie in
  \eqref{eq:bew-char-stonesch-i} definiert sind. Die Projektionen in
  $C(K)$ sind genau die stetigen Indikatorfunktionen, also
  Indikatorfunktionen von Mengen, die clopen sind. Somit sind die
  Funktionen $f_{n}$ Linearkombinationen von Projektionen und $f_{0}$
  ist in der abgeschlossenen linearen Hülle der Projektionen
  enthalten. Mit der Information, dass $f$ nicht nur von unten
  halbstetig sondern sogar stetig ist, können wir $f=f_{0}$ zeigen,
  was den Beweis der ersten Aussage abschließt. Wegen der Stetigkeit
  sind die Urbilder $f^{-1}(-\infty,t)$ für $t\in\R$ offen und daher
  in $G(t)$ enthalten. Weiters gilt
  $F(t)\subseteq f^{-1}(-\infty,t+\epsilon)\subseteq G(t+\epsilon)$
  für beliebiges $\epsilon>0$, sodass wir $F(t)\subseteq G(s)$ für
  $t<s$ erhalten. Für
  $x\in F\big(\frac{k}{2^{n}}\big)\setminus
  G\big(\frac{k}{2^{n}}\big)$ gilt daher einerseits
  $f(x)=\frac{k}{2^{n}}$ und andererseits
  $x\in G\big(\frac{k+1}{2^{n}}\big)\setminus
  G\big(\frac{k}{2^{n}}\big)$, also
  $f_{n}(x)=\frac{k+1}{2^{n}}$. Somit gilt auch auf $M$ die
  Abschätzung $\abs{f_{n}(x)-f(x)}\leq\frac{1}{2^{n}}$ und infolge wie
  behauptet $f=f_{0}$.

  Für die Zusatzaussage sei bemerkt, dass wir im Falle $f\geq 0$ nur
  um eine positive Konstante skalieren müssen, um $0\leq f\leq 1$ zu
  erhalten. Somit genügt es zu zeigen, dass diese spezielleren
  Funktionen als Grenzwert von Funktionen der im Satz angegebenen Form
  dargestellt werden können. Betrachtet man die im obigen Beweis für
  diese Situation konstruierten Funktionen $f_{n}$ aus
  \eqref{eq:bew-char-stonesch-i} und löst die Teleskopsumme auf, so
  ergibt sich
  \begin{displaymath}
    f_{n}=-\frac{1}{2^{n}}\mathds{1}_{G(0)}-\sum_{k=1}^{2^{n}-1}\frac{1}{2^{n}}\mathds{1}_{G(k/2^{n})}+\frac{2^{n}}{2^{n}}\mathds{1}_{G(2^{n}/2^{n})}=-\sum_{k=0}^{2^{n}-1}\frac{1}{2^{n}}\mathds{1}_{G(k/2^{n})}+1,
  \end{displaymath}
  wobei die zweite Gleichheit daraus folgt, dass wegen $f\leq 1$ schon
  $G(1)=K$ gilt. Betrachten wir die Komplemente $\compl{G(k/2^{n})}$,
  die ebenfalls clopen sind, so erhalten wir aus
  $\mathds{1}_{G(k/2^{n})}=1-\mathds{1}_{\compl{G(k/2^{n})}}$ die
  Darstellung
  \begin{displaymath}
    f_{n}=-\sum_{k=0}^{2^{n}-1}\frac{1}{2^{n}}\left(1-\mathds{1}_{\compl{G(k/2^{n})}}\right)+1=\sum_{k=0}^{2^{n}-1}\frac{1}{2^{n}}\mathds{1}_{\compl{G(k/2^{n})}}.
  \end{displaymath}
  Folglich haben die Funktionen $f_{n}$ die gewünschte Form.
\end{proof}

\clearpage
\chapter{Abstrakte $C^{*}$-Algebren}
\label{cha:abstrakte-c-algebren}
Es ist eine bekannte Tatsache, dass einerseits $L_{b}(H)$ eine
$C^{*}$-Algebra ist und andererseits normabgeschlossene
$*$-Unteralgebren von $C^{*}$-Algebren wieder $C^{*}$-Algebren
sind. Damit sind $\norm{\cdot}$-abgeschlossene $*$-Unteralgebren von
$L_{b}(H)$ schon $C^{*}$-Algebren. Das Hauptziel dieses Kapitels ist
mit dem Satz von Gelfand-Naimark die stärkstmögliche Umkehrung dieses
Sachverhalts. Dieses Resultat ist ohne Zweifel eines der wichtigsten
Ergebnisse der Theorie von $C^{*}$-Algebren.

\section{Positive Funktionale}
\label{sec:positive-funktionale}
\begin{notation}
  Sei weiterhin $A$ eine $C^{*}$-Algebra mit Einheitskugel $S$,
  selbstadjungierten Elementen $A_{sa}$ und positiven Elementen
  $A^{+}$.
\end{notation}
Im letzten Kapitel waren die positiven Elemente von $A$ das
wesentliche Hilfsmittel. Hier betrachten wir die mit diesen Elementen
verträglichen linearen Funktionale.
\begin{definition}\label{def:pos-fkt}
  Ein lineares Funktional $\varphi:A\to\C$ heißt \emph{positiv}, wenn
  $\varphi(A^{+})\subseteq[0,+\infty)$. Falls zusätzlich
  $\norm{\varphi}=1$ gilt, so nennen wir $\varphi$ einen
  \emph{Zustand}. Die Menge aller Zustände bezeichnen wir mit
  $\mathcal{S}(A)$.
\end{definition}
Zunächst fassen wir einige einfache Eigenschaften zusammen.
\begin{lemma}\label{lem:eig-pos-fkt}
  Sei $\varphi$ ein positives Funktional auf $A$.
  \begin{enumerate}[label=(\roman*),ref=\roman*]
  \item\label{item:eig-pos-fkt-i} Das Funktional $\varphi$ ist
    beschränkt.
  \item\label{item:eig-pos-fkt-ii} Für beliebige $a,b\in A$ gilt
    $\varphi(b^{*}a)=\overline{\varphi(a^{*}b)}$.
  \item\label{item:eig-pos-fkt-iii} (Cauchy-Schwarz'sche Ungleichung)
    Sind $a,b\in A$ gegeben, so folgt
    \begin{displaymath}
      \abs{\varphi(b^{*}a)}\leq\varphi(a^{*}a)^{1/2}\varphi(b^{*}b)^{1/2}.
    \end{displaymath}
  \item\label{item:eig-pos-fkt-iv} Die Funktion
    $a\mapsto\varphi(a^{*}a)^{1/2}$ ist eine Seminorm auf $A$.
  \end{enumerate}
\end{lemma}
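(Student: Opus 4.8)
Der Plan ist, zunächst (\ref{item:eig-pos-fkt-ii}) zu beweisen und daraus (\ref{item:eig-pos-fkt-iii}) sowie (\ref{item:eig-pos-fkt-iv}) abzuleiten; die Beschränktheit (\ref{item:eig-pos-fkt-i}) wird davon unabhängig mit einem Summenargument behandelt und stellt die eigentliche Hürde dar.

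Für (\ref{item:eig-pos-fkt-ii}) würde ich zuerst feststellen, dass $\varphi$ auf selbstadjungierten Elementen reell ist: Ist $c=c^*$, so zerlegt man gemäß Bemerkung~\ref{bem:pos-negteil-span}(\ref{item:pos-negteil-span-i}) in $c=c^+-c^-$ mit $c^\pm\in A^+$, also $\varphi(c)=\varphi(c^+)-\varphi(c^-)\in\R$. Mit der Darstellung $c=\re c+i\im c$ aus Lemma~\ref{lem:eig-re-im} folgt dann $\varphi(c^*)=\varphi(\re c)-i\varphi(\im c)=\overline{\varphi(c)}$ für beliebiges $c\in A$, und wegen $(a^*b)^*=b^*a$ ergibt sich $\varphi(b^*a)=\overline{\varphi(a^*b)}$.

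Punkt~(\ref{item:eig-pos-fkt-iii}) behandle ich über die durch $\langle a,b\rangle:=\varphi(b^*a)$ definierte Sesquilinearform, die wegen $a^*a\in A^+$ positiv semidefinit und nach (\ref{item:eig-pos-fkt-ii}) hermitesch ist; die übliche Cauchy-Schwarz-Rechnung -- Einsetzen von $a-\lambda b$ und geschickte Wahl von $\lambda$, mit einer kleinen Fallunterscheidung für $\varphi(b^*b)=0$ -- liefert die Ungleichung. Für (\ref{item:eig-pos-fkt-iv}) sind Nichtnegativität und die Homogenität $\varphi((\lambda a)^*\lambda a)^{1/2}=\abs{\lambda}\,\varphi(a^*a)^{1/2}$ klar; die Dreiecksungleichung folgt aus $\varphi((a+b)^*(a+b))=\varphi(a^*a)+2\re\varphi(b^*a)+\varphi(b^*b)$ zusammen mit (\ref{item:eig-pos-fkt-iii}).

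Der Knackpunkt ist (\ref{item:eig-pos-fkt-i}). Hier plane ich, zuerst die Endlichkeit von $M:=\sup\{\varphi(a):a\in A^+\cap S\}$ zu zeigen. Wäre $M=\infty$, so gäbe es positive $a_n$ mit $\norm{a_n}\le 1$ und $\varphi(a_n)\ge 4^n$. Wegen der Vollständigkeit von $A$ konvergiert $a:=\sum_{n}2^{-n}a_n$, und da $A^+$ nach Satz~\ref{satz:eig-pos-el}(\ref{item:eig-pos-el-i}),(\ref{item:eig-pos-el-iv}) unter den nötigen Operationen abgeschlossen ist, gilt $a\in A^+$ sowie $a-2^{-N}a_N=\sum_{k\neq N}2^{-k}a_k\in A^+$ für jedes $N$; mit der Positivität von $\varphi$ folgt $\varphi(a)\ge 2^{-N}\varphi(a_N)\ge 2^N$, ein Widerspruch. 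Für beliebiges $a\in S$ liefert dann \eqref{eq:pos-negteil-span-ii} eine Darstellung $a=(b_1-b_2)+i(b_3-b_4)$ mit $b_j\in A^+\cap S$, woraus $\abs{\varphi(a)}\le\sum_{j=1}^{4}\varphi(b_j)\le 4M$ und damit $\norm{\varphi}\le 4M<\infty$ folgt. Die Hauptschwierigkeit liegt also im Nachweis der Endlichkeit von $M$; der Rest ist Routine.
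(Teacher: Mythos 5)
Ihr Vorschlag ist korrekt und deckt sich in den Punkten~(\ref{item:eig-pos-fkt-i}), (\ref{item:eig-pos-fkt-iii}) und (\ref{item:eig-pos-fkt-iv}) im Wesentlichen mit dem Beweis der Arbeit: Auch dort wird die Endlichkeit von $\sup\{\varphi(a):a\in A^{+},\,\norm{a}\le 1\}$ über die absolut konvergente Reihe $\sum_{n}2^{-n}a_{n}$ und die Abgeschlossenheit von $A^{+}$ zum Widerspruch geführt, und anschließend wird via \eqref{eq:pos-negteil-span-ii} auf $\norm{\varphi}\le 4C$ geschlossen; (\ref{item:eig-pos-fkt-iii}) und (\ref{item:eig-pos-fkt-iv}) werden ebenfalls als allgemeine Eigenschaften hermitescher, positiv semidefiniter Sesquilinearformen abgehandelt. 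Der einzige echte Unterschied liegt in (\ref{item:eig-pos-fkt-ii}): Die Arbeit gewinnt die Hermitizität von $(a,b)\mapsto\varphi(b^{*}a)$ aus der Polarisationsformel, also rein innerhalb des Sesquilinearform-Kalküls, während Sie zuerst über die Zerlegung $c=c^{+}-c^{-}$ die Realität von $\varphi$ auf $A_{sa}$ und daraus $\varphi(c^{*})=\overline{\varphi(c)}$ für beliebiges $c$ herleiten. Ihr Weg benutzt stärker die $C^{*}$-Struktur, liefert dafür aber nebenbei bereits die Aussage von Lemma~\ref{lem:pos-fkt-konj}, ohne dass approximative Einselemente nötig wären; der Polarisationsweg der Arbeit kommt umgekehrt ohne Positiv-/Negativteil aus. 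Beide Varianten sind vollständig tragfähig.
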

\begin{proof}
  Wir zeigen zunächst
  $C:=\sup_{a\in A^{+},\norm{a}\leq 1}\varphi(a)<+\infty$.  Wäre diese
  Behauptung falsch, gäbe es zu jedem $n\in\N$ ein $a_{n}\in A^{+}$
  mit $\norm{a_{n}}\leq 1$ und $\varphi(a_{n})\geq 2^{n}$. Wegen der
  Unterpunkte (\ref{item:eig-pos-el-i}) und (\ref{item:eig-pos-el-iv})
  von Satz~\ref{satz:eig-pos-el} gilt für die absolut konvergente
  Reihe $a:=\sum_{n=1}^{\infty}\frac{1}{2^{n}}a_{n}$ und jedes
  $N\in\N$
  \begin{displaymath}
    a-\sum_{n=1}^{N}\frac{1}{2^{n}}a_{n}=\lim_{\stackrel{M\to\infty,}{M\geq
        N+1}}\,\,\sum_{n=N+1}^{M}\frac{1}{2^{n}}a_{n}\geq 0.
  \end{displaymath}
  Daraus folgt
  \begin{displaymath}
    \varphi(a)=\underbrace{\varphi\left(a-\sum_{n=1}^{N}\frac{1}{2^{n}}a_{n}\right)}_{\geq
      0, \,\text{da $\varphi$ positiv ist}}+\varphi\left(\sum_{n=1}^{N}\frac{1}{2^{n}}a_{n}\right)\geq\sum_{n=1}^{N}\underbrace{\varphi\left(\frac{1}{2^{n}}a_{n}\right)}_{\geq
      1}\geq N,
  \end{displaymath}
  was wegen der Beliebigkeit von $N$ einen Widerspruch darstellt. Für
  beliebiges $a\in S$ schreiben wir gemäß der
  Definitionen~\ref{def:re-im-cstar} und \ref{def:pos-neg-cstar}
  \begin{displaymath}
    a=(\re(a)^{+}-\re(a)^{-})+i(\im(a)^{+}-\im(a)^{-}).
  \end{displaymath}
  Wegen $\norm{\re(a)^{\pm}},\norm{\im(a)^{\pm}}\leq\norm{a}\leq 1$
  erhalten wir die Abschätzung
  \begin{displaymath}
    \abs{\varphi(a)}\leq\varphi(\re(a)^{+})+\varphi(\re(a)^{-})+\varphi(\im(a)^{+})+\varphi(\im(a)^{-})\leq
    4C.
  \end{displaymath}
  Daraus folgt $\norm{\varphi}\leq 4C$ und (\ref{item:eig-pos-fkt-i})
  ist gezeigt.

  Die Abbildung $\sigma:(a,b)\mapsto\varphi(b^{*}a)$ ist
  offensichtlich eine Sesquilinearform, die wegen der vorausgesetzten
  Positivität von $\varphi$ auch positiv semidefinit ist. Aus der
  Polarisationsformel
  \begin{displaymath}
    \sigma(a,b)=\frac{1}{4}\sum_{j=0}^{3}i^{j}\sigma(a+i^{j}b,a+i^{j}b)
  \end{displaymath}
  folgt die Hermitizität von $\sigma$, da die Ausdrücke
  $\sigma(a+i^{j}b,a+i^{j}b)$ nichtnegativ und insbesondere reell
  sind. Anders formuliert gilt $\sigma(a,b)=\overline{\sigma(b,a)}$
  und somit (\ref{item:eig-pos-fkt-ii}).

  Die Eigenschaften (\ref{item:eig-pos-fkt-iii}) und
  (\ref{item:eig-pos-fkt-iv}) gelten allgemein für hermitesche und
  positiv semidefinite Sesquilinearformen.
\end{proof}
Wir wollen ein im letzten Beweis implizit aufgetretenes Argument
explizit hervorheben: Ist $\varphi$ positiv, so folgt aus $a\leq b$
schon $\varphi(a)\leq\varphi(b)$, denn $\varphi$ bildet das Element
$b-a\geq 0$ auf eine nichtnegative reelle Zahl ab.

Wenn $A$ ein Einselement enthält, kann man
Lemma~\ref{lem:eig-pos-fkt}(\ref{item:eig-pos-fkt-ii}) prägnanter
formulieren, indem man $b=1$ setzt. Mithilfe der Konstruktion des
approximativen Einselements können wir dies aber auch für beliebige
$C^{*}$-Algebren erreichen.
\begin{lemma}\label{lem:pos-fkt-konj}
  Ist $\varphi$ ein positives Funktional auf $A$ und $a\in A$, so gilt
  $\varphi(a^{*})=\overline{\varphi(a)}$. Insbesondere ist
  $\varphi(a)$ für selbstadjungiertes $a$ reell.
\end{lemma}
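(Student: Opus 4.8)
The plan is to reduce everything to Lemma~\ref{lem:eig-pos-fkt}(\ref{item:eig-pos-fkt-ii}), which already supplies the relation $\varphi(b^{*}a)=\overline{\varphi(a^{*}b)}$ for all $a,b\in A$, and then to eliminate the auxiliary element $b$ by letting it run through an approximate identity. Concretely, let $(u_{i})_{i\in I}$ be the canonical approximate identity provided by Satz~\ref{satz:approx-eins}. Since every $u_{i}$ is self-adjoint, substituting $b=u_{i}$ in Lemma~\ref{lem:eig-pos-fkt}(\ref{item:eig-pos-fkt-ii}) yields
\[
  \varphi(u_{i}a)=\varphi(u_{i}^{*}a)=\overline{\varphi(a^{*}u_{i})}\qquad\text{for all }i\in I.
\]

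Next I would pass to the limit along the net $i\in I$ on both sides, using that $\varphi$ is continuous by Lemma~\ref{lem:eig-pos-fkt}(\ref{item:eig-pos-fkt-i}). On the left-hand side, the defining property of an approximate identity in the form $x=\lim_{i\in I}u_{i}x$ (noted immediately after Definition~\ref{def:approx-eins}), applied with $x=a$, gives $u_{i}a\to a$, hence $\varphi(u_{i}a)\to\varphi(a)$. On the right-hand side, the approximate-identity property in the form $x=\lim_{i\in I}xu_{i}$ with $x=a^{*}$ gives $a^{*}u_{i}\to a^{*}$, hence $\varphi(a^{*}u_{i})\to\varphi(a^{*})$ and therefore $\overline{\varphi(a^{*}u_{i})}\to\overline{\varphi(a^{*})}$ by continuity of complex conjugation. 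Combining the two limits yields $\varphi(a)=\overline{\varphi(a^{*})}$, which is equivalent to the claimed identity $\varphi(a^{*})=\overline{\varphi(a)}$.

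Finally, the additional assertion is immediate: if $a$ is self-adjoint, then $\varphi(a)=\overline{\varphi(a)}$, so $\varphi(a)\in\R$. There is no genuine obstacle here; the only points requiring a moment's care are that an approximate identity is indeed available for an arbitrary $C^{*}$-algebra — which is exactly the content of Satz~\ref{satz:approx-eins} — and that one may interchange the limit with the application of $\varphi$, which is guaranteed by the boundedness established in Lemma~\ref{lem:eig-pos-fkt}(\ref{item:eig-pos-fkt-i}). (If $A$ happens to possess a unit $1$, one can bypass the net entirely and simply take $b=1$ in Lemma~\ref{lem:eig-pos-fkt}(\ref{item:eig-pos-fkt-ii}).)
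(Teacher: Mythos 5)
Your proposal is correct and is essentially identical to the paper's proof: both substitute an approximate identity into Lemma~\ref{lem:eig-pos-fkt}(\ref{item:eig-pos-fkt-ii}) and pass to the limit using the boundedness of $\varphi$. The paper merely writes the same chain of equalities in the opposite direction, starting from $\varphi(a^{*})=\lim_{i\in I}\varphi(a^{*}u_{i})$.
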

\begin{proof}
  Sei $(u_{i})_{i\in I}$ ein approximatives Einselement. Dann gilt
  \begin{displaymath}
    \varphi(a^{*})=\lim_{i\in I}\varphi(a^{*}u_{i})=\lim_{i\in I}\overline{\varphi(u_{i}^{*}a)}=\lim_{i\in I}\overline{\varphi(u_{i}a)}=\overline{\varphi(a)},
  \end{displaymath}
  wobei die erste und letzte Gleichung aus der Beschränktheit von
  $\varphi$ und die zweite Gleichung aus
  Lemma~\ref{lem:eig-pos-fkt}(\ref{item:eig-pos-fkt-ii}) folgt.
\end{proof}

Auch die folgende Charakterisierung positiver Funktionale baut auf
approximativen Einselementen auf.
\begin{satz}\label{satz:char-pos-fkt}
  Für ein beschränktes lineares Funktional $\varphi$ auf $A$ sind die
  folgenden Aussagen äquivalent:
  \begin{enumerate}[label=(\roman*),ref=\roman*]
  \item\label{item:char-pos-fkt-i} $\varphi$ ist positiv.
  \item\label{item:char-pos-fkt-ii} Für \emph{jedes} approximative
    Einselement $(u_{i})_{i\in I}$ gilt
    $\norm{\varphi}=\lim_{i\in I}\varphi(u_{i})$.
  \item\label{item:char-pos-fkt-iii} Für \emph{ein} approximatives
    Einselement $(u_{i})_{i\in I}$ gilt
    $\norm{\varphi}=\lim_{i\in I}\varphi(u_{i})$.
  \end{enumerate}
\end{satz}
\begin{proof}
  Für $\varphi=0$ gelten trivialerweise alle Bedingungen, sodass wir
  durch Normieren $\norm{\varphi}=1$ annehmen können. Der Beweis
  verläuft zyklisch:
  
  (\ref{item:char-pos-fkt-i})~$\Rightarrow$~(\ref{item:char-pos-fkt-ii}):
  Sei $\varphi$ positiv und $(u_{i})_{i\in I}$ ein approximatives
  Einselement. Das Netz $(\varphi(u_{i}))_{i\in I}$ nichtnegativer
  reeller Zahlen ist monoton wachsend, durch $1$ beschränkt und daher
  gegen sein Supremum konvergent, wobei
  $\lim_{i\in I}\varphi(u_{i})\leq 1=\norm{\varphi}$. Für $a\in A$
  gilt nach der Cauchy-Schwarz'schen Ungleichung,
  Lemma~\ref{lem:eig-pos-fkt}(\ref{item:eig-pos-fkt-iii}),
  \begin{displaymath}
    \abs{\varphi(u_{i}a)}=\abs{\varphi(u_{i}^{*}a)}\leq\varphi(u_{i}^{2})^{1/2}\varphi(a^{*}a)^{1/2}.
  \end{displaymath}
  In $\tilde{A}$ (bzw. in $A$, wenn $A$ ein Einselement enthält) gilt
  $u_{i}\leq\norm{u_{i}}\leq 1$ aufgrund von
  Lemma~\ref{lem:eig-ho}(\ref{item:eig-ho-iii}), womit wegen
  Lemma~\ref{lem:eig-ho}(\ref{item:eig-ho-ii})
  \begin{displaymath}
    u_{i}^{2}=\left(u_{i}^{1/2}\right)^{*}u_{i}u_{i}^{1/2}\leq\left(u_{i}^{1/2}\right)^{*}1u_{i}^{1/2}=u_{i}
  \end{displaymath}
  folgt. Nach Bemerkung~\ref{bem:ho} gilt die Ungleichung
  $u_{i}^{2}\leq u_{i}$ auch in $A$. Daraus folgt
  $\varphi(u_{i}^{2})^{1/2}\leq\varphi(u_{i})^{1/2}$, sodass sich
  \begin{displaymath}
    \abs{\varphi(u_{i}a)}\leq\varphi(u_{i})^{1/2}\varphi(a^{*}a)^{1/2}\leq\left(\sup_{j\in
        I}\varphi(u_{j})\right)^{1/2}\underbrace{\norm{\varphi}^{1/2}}_{=1}\norm{a^{*}a}^{1/2}=\left(\lim_{j\in
        I}\varphi(u_{j})\right)^{1/2}\norm{a}
  \end{displaymath}
  ergibt. Bilden wir in dieser Ungleichung den Grenzwert $i\in I$, so
  schließen wir auf
  $\abs{\varphi(a)}\leq (\lim_{j\in I}\varphi(u_{j}))^{1/2}\norm{a}$
  und erhalten wegen der Beliebigkeit von $a$ die Abschätzung
  $1=\norm{\varphi}\leq(\lim_{j\in I}\varphi(u_{j}))^{1/2}$. Somit ist
  (\ref{item:char-pos-fkt-ii}) gezeigt.

  \vspace{\baselineskip}
  (\ref{item:char-pos-fkt-ii})~$\Rightarrow$~(\ref{item:char-pos-fkt-iii})
  folgt unmittelbar, wenn man beachtet, dass nach
  Satz~\ref{satz:approx-eins} überhaupt approximative Einselemente
  existieren.

  \vspace{\baselineskip}
  (\ref{item:char-pos-fkt-iii})~$\Rightarrow$~(\ref{item:char-pos-fkt-i}):
  Zunächst zeigen wir, dass $\varphi(a)$ für $a\in A_{sa}$ reell
  ist. Angenommen, es gilt $\varphi(a)=\alpha+i\beta$ mit
  $\beta\neq 0$, wobei wir durch etwaiges Übergehen zu $-a$ sicher
  $\beta<0$ und durch Normieren auch $\norm{a}\leq 1$ annehmen
  können. Für jedes $n\in\N$ und $j\in I$ berechnen wir
  \begin{align*}
    \norm{a-inu_{j}}^{2}&=\norm{(a-inu_{j})^{*}(a-inu_{j})}=\norm{a^{2}+n^{2}u_{j}^{2}-in(au_{j}-u_{j}a)}\\
                        &\leq\norm{a^{2}+n^{2}u_{j}^{2}}+n\norm{au_{j}-u_{j}a}\leq
                          1+n^{2}+n\norm{au_{j}-u_{j}a},
  \end{align*}
  sodass wir wegen $\norm{\varphi}=1$ auf
  $\abs{\varphi(a-inu_{j})}^{2}\leq 1+n^{2}+n\norm{au_{j}-u_{j}a}$
  schließen. Bilden wir den Grenzwert $j\in I$, so folgt
  $\varphi(a-inu_{j})\to \varphi(a)-in=\alpha+i(\beta-n)$ wegen der
  Voraussetzung (\ref{item:char-pos-fkt-iii}). Nach Definition eines
  approximativen Einselements gilt weiters
  $\norm{au_{j}-u_{j}a}\to 0$, sodass wir aus obiger Ungleichung
  \begin{displaymath}
    \alpha^{2}+\beta^{2}-2n\beta+n^{2}=\abs{\alpha+i(\beta-n)}^{2}\leq 1+n^{2}
  \end{displaymath}
  bzw. $-2n\beta\leq 1-\alpha^{2}-\beta^{2}$ erhalten. Wegen $\beta<0$
  ist die linke Seite in $n$ nach oben unbeschränkt, was zu einem
  Widerspruch führt. Also gilt tatsächlich $\varphi(a)\in\R$. Für die
  Positivität von $\varphi$ sei $a$ positiv, wobei wir
  $\norm{a}\leq 1$ annehmen können. Da auch $u_{j}-a$ selbstadjungiert
  ist, folgt
  $\varphi(u_{j}-a)\in\R$. Lemma~\ref{lem:eig-ho}(\ref{item:eig-ho-v})
  liefert $\norm{u_{j}-a}\leq 1$, womit wir
  $\varphi(u_{j}-a)\leq\abs{\varphi(u_{j}-a)}\leq\norm{\varphi}$ und
  nach dem Grenzübergang $j\in I$ schließlich
  $\norm{\varphi}-\varphi(a)\leq\norm{\varphi}$ bzw.
  $\varphi(a)\geq 0$ erhalten.
\end{proof}

\begin{korollar}\label{kor:char-pos-fkt}
  Hat $A$ ein Einselement, so ist ein beschränktes lineares Funktional
  $\varphi$ genau dann positiv, wenn $\varphi(1)=\norm{\varphi}$.
\end{korollar}
\begin{proof}
  Das konstante Netz $u_{i}:=1$, $i\in I$ ist für eine beliebige
  gerichtete Menge $I$ ein approximatives Einselement, sodass die
  Aussage aus Satz~\ref{satz:char-pos-fkt} folgt.
\end{proof}

Das nächste Lemma enthält eine der Cauchy-Schwarz'schen Ungleichung
stark ähnelnde Bedingung, die wir später benötigen werden.
\begin{lemma}\label{lem:kern-pos-fkt-unglg}
  Sei $\varphi:A\to\C$ ein positives Funktional.
  \begin{enumerate}[label=(\roman*),ref=\roman*]
  \item\label{item:kern-pos-fkt-unglg-i} Ist $c\in A$, so gilt
    $\varphi(c^{*}c)=0$ genau dann, wenn $\varphi(dc)=0$ für alle
    $d\in A$ ist.
  \item\label{item:kern-pos-fkt-unglg-ii} Für beliebige $a,b\in A$
    gilt $0\leq\varphi(b^{*}a^{*}ab)\leq\norm{a^{*}a}\varphi(b^{*}b)$.
  \end{enumerate}
\end{lemma}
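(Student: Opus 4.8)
Für \ref{item:kern-pos-fkt-unglg-i} ist die Implikation "`$\Leftarrow$"' trivial: Gilt $\varphi(dc)=0$ für alle $d\in A$, so liefert die Wahl $d:=c^{*}$ sofort $\varphi(c^{*}c)=0$. Für die Umkehrung nehmen wir $\varphi(c^{*}c)=0$ an und wenden auf beliebiges $d\in A$ die Cauchy-Schwarz'sche Ungleichung, Lemma~\ref{lem:eig-pos-fkt}(\ref{item:eig-pos-fkt-iii}), mit $a:=c$ und $b:=d^{*}$ an. Wegen $b^{*}a=dc$ erhalten wir
\begin{displaymath}
  \abs{\varphi(dc)}\leq\varphi(c^{*}c)^{1/2}\varphi(dd^{*})^{1/2}=0,
\end{displaymath}
also $\varphi(dc)=0$ für alle $d\in A$.

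Die erste Ungleichung in \ref{item:kern-pos-fkt-unglg-ii} ist klar, da $b^{*}a^{*}ab=(ab)^{*}(ab)$ nach Satz~\ref{satz:eig-pos-el}(\ref{item:eig-pos-el-iii}) ein positives Element von $A$ ist und $\varphi$ positiv ist. Für die zweite Ungleichung arbeiten wir zunächst in $\tilde{A}$. Dort gilt nach Lemma~\ref{lem:eig-ho}(\ref{item:eig-ho-iii}) die Abschätzung $a^{*}a\leq\norm{a^{*}a}\cdot 1$, also mit Lemma~\ref{lem:eig-ho}(\ref{item:eig-ho-ii}), angewandt mit $c:=b$,
\begin{displaymath}
  b^{*}a^{*}ab\leq b^{*}\big(\norm{a^{*}a}\cdot 1\big)b=\norm{a^{*}a}\,b^{*}b.
\end{displaymath}
Sowohl $b^{*}a^{*}ab$ als auch $b^{*}b$ liegen in $A$, und das Differenzelement $\norm{a^{*}a}\,b^{*}b-b^{*}a^{*}ab\in A$ ist in $\tilde{A}$ positiv; nach Bemerkung~\ref{bem:ho}(\ref{item:ho-iii}) ist es damit auch in $A$ positiv, d.~h. die Ungleichung $b^{*}a^{*}ab\leq\norm{a^{*}a}\,b^{*}b$ gilt bereits in $A$.

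Da $\varphi$ positiv ist, folgt schließlich mit der unmittelbar nach Lemma~\ref{lem:eig-pos-fkt} festgehaltenen Monotonie-Eigenschaft (aus $x\leq y$ folgt $\varphi(x)\leq\varphi(y)$)
\begin{displaymath}
  \varphi(b^{*}a^{*}ab)\leq\varphi\big(\norm{a^{*}a}\,b^{*}b\big)=\norm{a^{*}a}\,\varphi(b^{*}b),
\end{displaymath}
was zusammen mit dem ersten Teil die Behauptung liefert. Der einzige Punkt, der Sorgfalt erfordert, ist die Buchführung beim Übergang zu $\tilde{A}$ -- das Funktional $\varphi$ ist nur auf $A$ definiert, weshalb die Ordnungsungleichung zuerst in $\tilde{A}$ hergeleitet und dann mit Bemerkung~\ref{bem:ho}(\ref{item:ho-iii}) nach $A$ zurückgeholt werden muss; inhaltlich ist der Beweis ansonsten eine direkte Kombination der Cauchy-Schwarz'schen Ungleichung mit den bereits etablierten Eigenschaften der Halbordnung.
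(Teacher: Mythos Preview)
Your proof of part~(\ref{item:kern-pos-fkt-unglg-i}) is identical to the paper's. For part~(\ref{item:kern-pos-fkt-unglg-ii}) you take a genuinely different and more elementary route: you establish the operator inequality $b^{*}a^{*}ab\leq\norm{a^{*}a}\,b^{*}b$ directly in $\tilde{A}$ via Lemma~\ref{lem:eig-ho}(\ref{item:eig-ho-iii}) and~(\ref{item:eig-ho-ii}), pull it back to $A$ with Bemerkung~\ref{bem:ho}(\ref{item:ho-iii}), and then simply apply the monotonicity of $\varphi$. The paper instead forms the auxiliary positive functional $\psi(c):=\varphi(b^{*}cb)/\varphi(b^{*}b)$ (after disposing of the case $\varphi(b^{*}b)=0$ via part~(\ref{item:kern-pos-fkt-unglg-i})), computes $\norm{\psi}=1$ using an approximative Einselement and Satz~\ref{satz:char-pos-fkt}, and concludes from $\psi(a^{*}a)\leq\norm{a^{*}a}$. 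Your argument is shorter, avoids the case distinction, and bypasses the machinery of approximative Einselemente entirely; the paper's approach has the mild conceptual advantage of exhibiting $\psi$ as a state-like object, a viewpoint that recurs later, but for the bare inequality your proof is cleaner.
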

\begin{proof}
  \hspace{0mm}
  \begin{enumerate}[label=(\roman*),ref=\roman*]
  \item Wegen der Cauchy-Schwarz'schen Ungleichung
    $\abs{\varphi(b^{*}a)}\leq\varphi(a^{*}a)^{1/2}\varphi(b^{*}b)^{1/2}$
    mit $a=c$ und $b=d^{*}$ folgt aus $\varphi(c^{*}c)=0$ schon
    $\varphi(dc)=0$ für alle $d\in A$. Die Umkehrung ist trivial.
  \item Die erste Ungleichung folgt aus
    $b^{*}a^{*}ab=(ab)^{*}(ab)\geq 0$. Die zweite Ungleichung ist nach
    (\ref{item:kern-pos-fkt-unglg-i}) angewandt auf $c=b$ und
    $d=b^{*}a^{*}a$ trivial, wenn $\varphi(b^{*}b)=0$ ist. Ansonsten
    ist
    \begin{displaymath}
      \psi:
      \begin{cases}
        \hfill A&\to\C\\
        \hfill c&\mapsto \frac{\varphi(b^{*}cb)}{\varphi(b^{*}b)}
      \end{cases}
    \end{displaymath}
    ein wohldefiniertes lineares Funktional, das offensichtlich
    positiv und nach
    Lemma~\ref{lem:eig-pos-fkt}(\ref{item:eig-pos-fkt-i}) beschränkt
    ist. Wählt man ein approximatives Einselement $(u_{i})_{i\in I}$,
    so liefert Satz~\ref{satz:char-pos-fkt}
    \begin{displaymath}
      \norm{\psi}=\lim_{i\in I}\psi(u_{i})=\lim_{i\in
        I}\frac{\varphi(b^{*}u_{i}b)}{\varphi(b^{*}b)}=\frac{\varphi(b^{*}b)}{\varphi(b^{*}b)}=1.
    \end{displaymath}
    Daraus folgt die gewünschte Ungleichung
    $\varphi(b^{*}a^{*}ab)=\psi(a^{*}a)\varphi(b^{*}b)\leq\norm{a^{*}a}\varphi(b^{*}b)$.
  \end{enumerate}
\end{proof}

Der folgende Satz zeigt, dass es ausreichend viele positive
Funktionale gibt, um sie zu Strukturanalysen heranziehen zu können.
\begin{satz}\label{satz:pos-fkt-trennend}
  Sei $a\neq 0$ ein normales Element von $A$. Dann gibt es einen
  Zustand $\varphi$ auf $A$ mit $\abs{\varphi(a)}=\norm{a}\neq
  0$.
  Insbesondere existieren auf $A$ Zustände, wenn $A$ nichttrivial ist.
\end{satz}
\begin{proof}
  Wir betrachten die von $a$ in $\tilde{A}$ erzeugte
  $C^{*}$-Unteralgebra mit Eins, $B:=C^{*}_{\tilde{A}}(a,1)$. Dann ist
  $B$ kommutativ und der Gelfandraum $M$ von $B$ kompakt. Da die
  Gelfandtransformierte $\hat{a}$ stetig ist, folgt
  \begin{displaymath}
    \norm{a}=\norm[\infty]{\hat{a}}=\sup_{m\in
      M}\abs{\hat{a}(m)}=\max_{m\in M}\abs{\hat{a}(m)}.
  \end{displaymath}
  Es gibt also ein multiplikatives Funktional $m_{0}$ mit
  $\norm{a}=\abs{\hat{a}(m_{0})}=\abs{m_{0}(a)}$. Nach dem Satz von
  Hahn-Banach existiert eine lineare Fortsetzung $f$ auf $\tilde{A}$
  mit $\norm{f}=\norm{m_{0}}=1$. Wegen $f(1)=m_{0}(1)=1=\norm{f}$
  folgt aus Korollar~\ref{kor:char-pos-fkt} die Positivität von
  $f$. Bezeichnen wir die Einschränkung von $f$ auf $A$ mit $\varphi$,
  so gilt $\norm{\varphi}\leq\norm{f}=1$ und
  $\norm{a}=\abs{m_{0}(a)}=\abs{\varphi(a)}\leq\norm{\varphi}\norm{a}$,
  sodass wir insgesamt $\norm{\varphi}=1$ erhalten. Das Funktional
  $\varphi$ ist positiv, da ein positives Element von $A$ nach
  Bemerkung~\ref{bem:ho}(\ref{item:ho-iii}) auch in $\tilde{A}$
  positiv ist. Somit ist $\varphi$ ein normiertes, positives
  Funktional, also ein Zustand.

  Für die Zusatzbehauptung sei $a\in A$, $a\neq 0$. Dann ist $a^{*}a$
  insbesondere normal mit $\norm{a^{*}a}=\norm{a}^{2}\neq 0$. Nach dem
  oben Bewiesenen gibt es einen Zustand $\varphi$ mit
  $\abs{\varphi(a^{*}a)}=\norm{a^{*}a}$.
\end{proof}

\section{Der Satz von Gelfand-Naimark}
\label{sec:gelfand-naimark}
Um das Wechselspiel zwischen $C^{*}$-Algebren und Räumen der Form
$L_{b}(H)$ analysieren zu kön\-nen, führen wir einen Begriff ein.
\begin{definition}\label{def:darstellung}
  Ein $*$-Algebrenhomomorphismus $\Phi:A\to L_{b}(H)$ heißt
  \emph{Darstellung}. Ist $\Phi$ injektiv, so nennen wir die
  Darstellung \emph{treu}.
\end{definition}

Zunächst zeigen wir (unter anderem), dass eine treue Darstellung
$\Phi$ ein isometrischer $*$-Al\-ge\-bren\-isomorphismus auf
$\ran\Phi$ ist. Man beachte, dass dies nicht durch Anwendung der
klassischen Tatsache~\cite[Satz~1.5.4]{kaltenb:fana2} folgt, dass
Isomorphismen \emph{zwischen} $C^{*}$-Algebren automatisch isometrisch
sind, denn es wäre zunächst auf andere Weise zu zeigen, dass
$\ran\Phi$ für sich eine $C^{*}$-Algebra ist.

Ein korrektes Argument macht Gebrauch vom
Funktionalkalkül in $C^{*}$-Algebren mit Einselement.
\begin{bemerkung}\label{bem:fktkalkuel}
  Wir erinnern an die Definition und einige Eigenschaften des
  Funktionalkalküls; vgl.~\cite[Satz~1.5.13]{kaltenb:fana2}. Ist $A$
  eine kommutative $C^{*}$-Algebra mit Einselement, $a\in A$ und $f$
  eine stetige Funktion auf $\sigma(a)$, so ist $f(a)$ das
  \emph{eindeutige} Element von $A$ mit $m(f(a))=f(m(a))$ für alle
  multiplikativen Funktionale $m$ auf $A$. Ist $A$ nicht kommutativ,
  kann man $f(a)$ immer noch definieren, wenn $a$ normal ist, indem
  man zu der von $a$ in $A$ erzeugten $C^{*}$-Unteralgebra mit
  Einselement übergeht, also zu $C^{*}(a,1)$. In dieser kommutativen
  $C^{*}$-Algebra liegt $f(a)$ für jede Funktion $f\in
  C(\sigma(a))$. Insbesondere kommutieren alle $f(a)$
  miteinander. Außerdem ist die Funktion $f\mapsto f(a)$ ein
  $C^{*}$-Algebrenisomorphismus von $C(\sigma(a))$ nach $C^{*}(a,1)$
  und daher nach dem bekannten Resultat isometrisch, also gilt
  $\norm{f(a)}=\norm[\infty]{f}$. Für ein Polynom
  $p(x)=\sum_{i=0}^{n}c_{i}x^{i}$ stimmt dabei $p(a)$ mit
  $\sum_{i=0}^{n}c_{i}a^{i}$ überein.

  Für den späteren Gebrauch behandeln wir an dieser Stelle noch den
  Fall, dass $A$ \emph{kein} Einselement enthält. In dieser Situation
  ist der Funktionalkalkül über die $C^{*}$-Algebra $\tilde{A}$
  definiert: Für ein normales Element $a\in A$ und eine stetige
  Funktion $f$ auf $\sigma_{A}(a)=\sigma_{\tilde{A}}(a)$ ist $a$ auch
  in $\tilde{A}$ normal. Somit können wir den obigen Funktionalkalkül
  in $\tilde{A}$ bzw. genauer in $C_{\tilde{A}}^{*}(a,1)$ verwenden,
  um das Element $f(a)\in\tilde{A}$ zu erhalten. Es stellt sich die
  Frage, wann $f(a)$ in $A$ enthalten ist. Als Motivation betrachten
  wir ein Polynom $p(x)=\sum_{i=0}^{n}c_{i}x^{i}$. Aus\footnote{Siehe
    auch den Beweis von Satz~\ref{satz:quwurzel}.}
  \begin{displaymath}
    p(a)=\sum_{i=0}^{n}c_{i}a^{i}=\left(\sum_{i=1}^{n}c_{i}a^{i},c_{0}\right)
  \end{displaymath}
  folgt, dass $p(a)\in A$ zum Verschwinden des konstanten
  Koeffizienten $c_{0}$ äquivalent ist, also zu $p(0)=0$. Dieselbe
  Aussage stimmt für allgemeine Funktionen $f$: Wir verwenden das
  multiplikative Funktional $m_{0}':\tilde{A}\to\C$,
  $m_{0}'((b,\lambda)):=\lambda$. Ein Element
  $(b,\lambda)\in\tilde{A}$ ist genau dann in $A$ enthalten, wenn
  $m_{0}'((b,\lambda))=0$ gilt. Wegen
  \begin{displaymath}
    m_{0}'(f(a))=f(m_{0}'(a))=f(0)
  \end{displaymath}
  ist also $f(a)\in A$ zu $f(0)=0$ äquivalent.
\end{bemerkung}
\begin{satz}\label{satz:inj-homo-iso}
  Ist $B$ eine weitere $C^{*}$-Algebra, so ist ein injektiver
  $*$-Algebrenhomomorphismus $\Phi:A\to B$ isometrisch. Insbesondere
  ist $\ran\Phi$ eine $C^{*}$-Unteralgebra von $B$ und
  $\Phi:A\to \ran\Phi$ ein isometrischer $*$-Algebrenisomorphismus.
\end{satz}
\begin{proof}
  Als Erstes sei angenommen, dass $A$ und $B$ kommutative
  $C^{*}$-Algebren mit Einselement sind, wobei wir noch
  $\Phi(1_{A})=1_{B}$ fordern. Für $\lambda\notin\sigma_{A}(a)$
  identifiziert man $\Phi((a-\lambda 1_{A})^{-1})$ leicht als Inverse
  von $\Phi(a)-\lambda 1_{B}$. Daraus folgt
  $\lambda\notin\sigma_{B}(\Phi(a))$ und somit
  \begin{displaymath}
    \sigma_{B}(\Phi(a))\subseteq\sigma_{A}(a).
  \end{displaymath}
  Unter der (bisher noch nicht verwendeten) Voraussetzung der
  Injektivität gilt hier sogar Gleichheit. Wäre dies nämlich nicht der
  Fall und $\lambda\in\sigma_{A}(a)\setminus\sigma_{B}(\Phi(a))$, so
  gäbe es nach dem Lemma von Urysohn eine stetige Funktion
  $f:\sigma_{A}(a)\to [0,1]$ mit $f(\lambda)=1$ und
  $f\big(\sigma_{B}(\Phi(a))\big)\subseteq\{0\}$. Ist $m$ ein
  multiplikatives Funktional auf $B$, so ist $m\circ\Phi$ ein solches
  auf $A$, sodass wir mit dem Funktionalkalkül in $A$
  \begin{displaymath}
    m(\Phi(f(a)))=(m\circ\Phi)(f(a))=f\big((m\circ\Phi)(a)\big)=f\big(\underbrace{m(\Phi(a))}_{\in\sigma_{B}(\Phi(a))}\big)=0
  \end{displaymath}
  erhalten. Damit ist die Gelfandtransformierte von $\Phi(f(a))$ die
  Nullfunktion, was uns wegen der Injektivität von $\hat{.}$ und
  $\Phi$ auf $f(a)=0$ schließen lässt. Wir erhalten den Widerspruch
  $0=\norm{f(a)}=\norm[\infty]{f}=1$ und haben somit
  $\sigma_{B}(\Phi(a))=\sigma_{A}(a)$ und infolge
  $r_{B}(\Phi(a))=r_{A}(a)$ gezeigt. Diese Tatsache angewandt auf
  $a^{*}a$ für beliebiges $a\in A$ liefert die Isometrieeigenschaft:
  \begin{displaymath}
    \norm{a}=\norm{a^{*}a}^{1/2}=r_{A}(a^{*}a)^{1/2}=r_{B}(\Phi(a^{*}a))^{1/2}=r_{B}(\Phi(a)^{*}\Phi(a))^{1/2}=\norm{\Phi(a)^{*}\Phi(a)}^{1/2}=\norm{\Phi(a)}
  \end{displaymath}
  
  Jetzt nehmen wir weiterhin an, dass $A$ und $B$ kommutativ sind,
  lassen aber die Forderung nach der Existenz eines Einselements
  weg. Wir betrachten die $C^{*}$-Algebren mit Einselement $\tilde{A}$
  und $\tilde{B}$ und setzen $\Phi$ gemäß
  $(a,\lambda)\mapsto (\Phi(a),\lambda)$ zu einem $*$-Homomorphismus
  $\widetilde{\Phi}$ fort. Diese Fortsetzung ist ebenfalls injektiv
  und bildet $1_{\tilde{A}}=(0_{A},1)$ auf $1_{\tilde{B}}=(0_{B},1)$
  ab. Nach dem ersten Teil ist $\widetilde{\Phi}$ isometrisch, also
  gilt
  \begin{displaymath}
    \norm{a}=\norm[\tilde{A}]{(a,0)}=\norm[\tilde{B}]{\widetilde{\Phi}((a,0))}=\norm[\tilde{B}]{(\Phi(a),0)}=\norm{\Phi(a)}.
  \end{displaymath}

  Um den allgemeinen Fall zu beweisen, beschränken wir uns zunächst
  auf $a\in A_{sa}$. Klarerweise ist dann $\Phi(a)\in B_{sa}$. Wir
  betrachten $C^{*}_{A}(a)$ sowie $C^{*}_{B}(\Phi(a))$. Das Urbild
  $\Phi^{-1}\big(C^{*}_{B}(\Phi(a))\big)$ ist eine $a$ enthaltende
  $*$-Unteralgebra von $A$. Da $\Phi$ als $*$-Homomorphismus zwischen
  $C^{*}$-Algebren beschränkt ist, ist
  $\Phi^{-1}\big(C^{*}_{B}(\Phi(a))\big)$ auch abgeschlossen, also
  eine $C^{*}$-Unteralgebra von $A$. Daraus folgt
  $C^{*}_{A}(a)\subseteq\Phi^{-1}\big(C^{*}_{B}(\Phi(a))\big)$ bzw.
  \begin{displaymath}
    \Phi(C^{*}_{A}(a))\subseteq C^{*}_{B}(\Phi(b)).
  \end{displaymath}
  Wir können also den offenbar injektiven $*$-Homomorphismus
  $\Phi|_{C^{*}_{A}(a)}:C^{*}_{A}(a)\to C^{*}_{B}(\Phi(b))$ zwischen
  kommutativen $C^{*}$-Algebren betrachten, der nach dem schon
  Bewiesenen isometrisch ist. Wir erhalten
  $\norm{a}=\norm{\Phi|_{C^{*}_{A}(a)}(a)}=\norm{\Phi(a)}$. Ist
  schließlich $a\in A$ beliebig, so ergibt sich
  \begin{displaymath}
    \norm{a}=\norm{a^{*}a}^{1/2}=\norm{\Phi(a^{*}a)}^{1/2}=\norm{\Phi(a)^{*}\Phi(a)}^{1/2}=\norm{\Phi(a)}.
  \end{displaymath}

  Nun ist die Hauptarbeit getan, denn als isometrisches Bild des
  vollständigen Raums $A$ ist auch $\ran\Phi$ vollständig und daher
  abgeschlossen. Dass $\Phi:A\to\ran\Phi$ ein isometrischer
  $*$-Algebrenisomorphismus ist, ist nur eine Umformulierung bereits
  bewiesener Tatsachen.
\end{proof}
Um die eingangs behauptete isometrische $*$-Algebrenisomorphie von $A$
und einer Unteralgebra von $L_{b}(H)$ zu zeigen, reicht es nach
Satz~\ref{satz:inj-homo-iso}, eine treue Darstellung zu konstruieren.

Zunächst induziert jedes positive Funktional $\varphi$ auf $A$ eine
Darstellung: Definiert man $N_{\varphi}$ als den isotropen Anteil der
Sesquilinearform $(a,b)\mapsto\varphi(b^{*}a)$, also
$N_{\varphi}=\set{a\in A}{\varphi(a^{*}a)=0}$, so folgt aus
Lemma~\ref{lem:kern-pos-fkt-unglg}(\ref{item:kern-pos-fkt-unglg-i})
\begin{equation}\label{eq:gns-darst-i}
  N_{\varphi}=\set{a\in A}{\varphi(b^{*}a)=0\ \text{für alle}\ b\in A},  
\end{equation}
womit $N_{\varphi}$ ein Unterraum ist. Nach
Lemma~\ref{lem:kern-pos-fkt-unglg}(\ref{item:kern-pos-fkt-unglg-ii})
mit vertauschten Rollen von $a$ und $b$ folgt aus $a\in N_{\varphi}$
und $b\in A$ schon $ba\in N_{\varphi}$, sodass der isotrope Anteil ein
Linksideal ist. Damit ist
\begin{equation}\label{eq:gns-darst-ii}
  (.,.)_{A/N_{\varphi}}:\begin{cases}
    \hfill(A/N_{\varphi})^{2}&\to\C\\
    (a+N_{\varphi},b+N_{\varphi})&\mapsto\varphi(b^{*}a)
  \end{cases}
\end{equation}
eine wohldefinierte und positiv semidefinite Sesquilinearform. Nach
Konstruktion ist $(.,.)_{A/N_{\varphi}}$ sogar positiv definit. Somit
ist $A/N_{\varphi}$ ein Prähilbertraum, der bekanntlich eine
Hilbertraumver\-vollständigung $H_{\varphi}$ hat. Da $N_{\varphi}$ ein
Linksideal ist, ist für jedes $a\in A$ die Abbildung
\begin{displaymath}
  \Phi_{\varphi}(a):\begin{cases}
    \hfill A/N_{\varphi}&\to A/N_{\varphi}\\
    b+N_{\varphi}&\mapsto ab+N_{\varphi}
  \end{cases}
\end{displaymath}
wohldefiniert. Klarerweise ist $\Phi_{\varphi}(a)$ linear und es gilt
nach
Lemma~\ref{lem:kern-pos-fkt-unglg}(\ref{item:kern-pos-fkt-unglg-ii})
\begin{displaymath}
  \norm[A/N_{\varphi}]{ab+N_{\varphi}}^{2}=\varphi(b^{*}a^{*}ab)\leq\norm{a^{*}a}\varphi(b^{*}b)=\norm{a}^{2}\norm[N_{\varphi}]{b+N_{\varphi}}^{2}.
\end{displaymath}
Damit ist $\Phi_{\varphi}(a)$ auch beschränkt mit
$\norm{\Phi_{\varphi}(a)}\leq\norm{a}$. Folglich existiert eine
eindeutige beschränkte Fortsetzung auf $H_{\varphi}$, die wir
ebenfalls mit $\Phi_{\varphi}(a)$ bezeichnen. Es bleibt nachzuprüfen,
dass $\Phi_{\varphi}:A\to L_{b}(H_{\varphi})$ tatsächlich ein
$*$-Homomorphismus ist. Da man die jeweiligen Bedingungen wegen der
Stetigkeit immer nur für Elemente der dichten Teilmenge
$A/N_{\varphi}\subseteq H_{\varphi}$ prüfen muss, sind die Linearität
und die Multiplikativität klar. Die Verträglichkeit mit $.^{*}$ folgt
aus
\begin{align*}
  \big(c+N_{\varphi},\Phi_{\varphi}(a)(b+N_{\varphi})\big)_{H_{\varphi}}&=(c+N_{\varphi},ab+N_{\varphi})_{H_{\varphi}}=(c+N_{\varphi},ab+N_{\varphi})_{A/N_{\varphi}}\\
                                                                        &=\varphi((ab)^{*}c)=\varphi(b^{*}(a^{*}c))\\
                                                                        &=(a^{*}c+N_{\varphi},b+N_{\varphi})_{A/N_{\varphi}}=\big(\Phi_{\varphi}(a^{*})(c+N_{\varphi}),b+N_{\varphi}\big)_{A/N_{\varphi}}.
\end{align*}
Wir fassen unsere Ergebnisse zusammen.
\begin{lemma}\label{lem:gns-darst}
  Ist $\varphi$ ein positives Funktional auf $A$ und
  $N_{\varphi}=\set{a\in A}{\varphi(a^{*}a)=0}$ das zugeordnete
  Linksideal, so ist die in \eqref{eq:gns-darst-ii} definierte
  Sesquilinearform positiv definit. Bezeichnet man die
  Hilbertraumvervollständigung von $A/N_{\varphi}$ mit $H_{\varphi}$,
  dann ist $\Phi_{\varphi}:A\to L_{b}(H_{\varphi})$ eine Darstellung,
  wobei $\Phi_{\varphi}(a)$ die stetige Fortsetzung des Operators
  $b+N_{\varphi}\mapsto ab+N_{\varphi}$ auf $H_{\varphi}$ ist. Diese
  Darstellung wird die \emph{von $\varphi$ induzierte GNS\footnote{Die
      Abkürzung steht für Gelfand, Naimark und Segal.}-Darstellung}
  genannt.
\end{lemma}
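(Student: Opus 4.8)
Der Plan ist, die bereits in den Überlegungen vor der Behauptung
etablierten Tatsachen zusammenzutragen; der Beweis ist also im
Wesentlichen eine Frage der Organisation. Zuerst würde ich
festhalten, dass $N_{\varphi}$ ein Linksideal ist: Nach
Lemma~\ref{lem:kern-pos-fkt-unglg}(\ref{item:kern-pos-fkt-unglg-i})
gilt die alternative Beschreibung
$N_{\varphi}=\set{a\in A}{\varphi(b^{*}a)=0\ \text{für alle}\ b\in A}$,
woraus sofort folgt, dass $N_{\varphi}$ ein Unterraum ist;
Lemma~\ref{lem:kern-pos-fkt-unglg}(\ref{item:kern-pos-fkt-unglg-ii})
mit vertauschten Rollen von $a$ und $b$ zeigt, dass aus $a\in
N_{\varphi}$ und $b\in A$ schon $ba\in N_{\varphi}$ folgt.

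Als Nächstes würde ich nachweisen, dass \eqref{eq:gns-darst-ii}
wohldefiniert ist: Gilt $a-a'\in N_{\varphi}$ und $b-b'\in
N_{\varphi}$, so liefert die eben genannte Beschreibung von
$N_{\varphi}$ zusammen mit
Lemma~\ref{lem:eig-pos-fkt}(\ref{item:eig-pos-fkt-ii}) die Gleichung
$\varphi(b^{*}a)=\varphi(b'^{*}a')$. Die positive Semidefinitheit
vererbt sich aus der Positivität von $\varphi$, und da $N_{\varphi}$
nach Konstruktion genau der isotrope Anteil der Sesquilinearform
$(a,b)\mapsto\varphi(b^{*}a)$ ist, ist die induzierte Form auf dem
Quotienten positiv definit. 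Folglich ist $A/N_{\varphi}$ ein
Prähilbertraum und besitzt eine Hilbertraumvervollständigung
$H_{\varphi}$.

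Dann würde ich mich den Operatoren zuwenden. Für festes $a\in A$ ist
die Abbildung $b+N_{\varphi}\mapsto ab+N_{\varphi}$ wohldefiniert,
weil $N_{\varphi}$ ein Linksideal ist, offensichtlich linear, und die
aus Lemma~\ref{lem:kern-pos-fkt-unglg}(\ref{item:kern-pos-fkt-unglg-ii})
folgende Ungleichung
$\norm[A/N_{\varphi}]{ab+N_{\varphi}}^{2}=\varphi(b^{*}a^{*}ab)\leq\norm{a^{*}a}\varphi(b^{*}b)=\norm{a}^{2}\norm[A/N_{\varphi}]{b+N_{\varphi}}^{2}$
zeigt, dass sie beschränkt mit Norm höchstens $\norm{a}$ ist; somit
besitzt sie eine eindeutige beschränkte Fortsetzung
$\Phi_{\varphi}(a)$ auf $H_{\varphi}$. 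Schließlich würde ich
überprüfen, dass $\Phi_{\varphi}$ ein $*$-Homomorphismus ist. Da
$A/N_{\varphi}$ dicht in $H_{\varphi}$ liegt und alle beteiligten
Operatoren stetig sind, genügt es, die definierenden Identitäten an
Vektoren der Form $b+N_{\varphi}$ zu testen; Linearität in $a$ und
Multiplikativität $\Phi_{\varphi}(ab)=\Phi_{\varphi}(a)\Phi_{\varphi}(b)$
liest man dann direkt aus der Formel $b+N_{\varphi}\mapsto ab+N_{\varphi}$
ab, und die Verträglichkeit mit $.^{*}$ folgt aus der Rechnung
$\big(c+N_{\varphi},\Phi_{\varphi}(a)(b+N_{\varphi})\big)=\varphi\big((ab)^{*}c\big)=\varphi\big(b^{*}(a^{*}c)\big)=\big(\Phi_{\varphi}(a^{*})(c+N_{\varphi}),b+N_{\varphi}\big)$.

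Ein echtes Hindernis gibt es hier nicht; das gesamte analytische
Gewicht steckt in Lemma~\ref{lem:kern-pos-fkt-unglg}, dessen Teil
(\ref{item:kern-pos-fkt-unglg-ii}) sowohl $N_{\varphi}$ als Linksideal
erzwingt als auch die entscheidende Abschätzung
$\norm{\Phi_{\varphi}(a)}\leq\norm{a}$ liefert. Die einzigen Stellen,
die ein wenig Sorgfalt erfordern, sind die Wohldefiniertheitsaussagen
für die Form und für $\Phi_{\varphi}(a)$, und beide lassen sich auf
die Umformulierung von $N_{\varphi}$ in \eqref{eq:gns-darst-i}
zurückführen.
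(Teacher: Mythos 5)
Dein Vorschlag ist korrekt und folgt im Wesentlichen genau der Argumentation der Arbeit: dieselbe Umformulierung von $N_{\varphi}$ über Lemma~\ref{lem:kern-pos-fkt-unglg}, dieselbe Abschätzung $\norm{\Phi_{\varphi}(a)}\leq\norm{a}$ und dieselbe Rechnung für die Verträglichkeit mit $.^{*}$. Die etwas ausführlichere Begründung der Wohldefiniertheit der Form über Lemma~\ref{lem:eig-pos-fkt}(\ref{item:eig-pos-fkt-ii}) ist eine zulässige Präzisierung dessen, was die Arbeit nur knapp behauptet.
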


Nun können wir die gesuchte treue Darstellung von $A$ angeben. Dazu
betrachten wir die GNS-Darstellung
$\Phi_{\varphi}:A\to L_{b}(H_{\varphi})$ für jeden Zustand
$\varphi\in\mathcal{S}(A)$ und die direkte Summe
$H:=\bigoplus_{\varphi\in\mathcal{S}(A)}H_{\varphi}$. Der Operator
\begin{equation}\label{eq:univ-darst}
  \Phi(a):
  \begin{cases}
    \hfill H&\to H\\
    (x_{\varphi})_{\varphi\in\mathcal{S}(A)}&\mapsto
    (\Phi_{\varphi}(a)x_{\varphi})_{\varphi\in\mathcal{S}(A)}
  \end{cases}
\end{equation}
bildet tatsächlich nach $H$ ab, da $\Phi_{\varphi}$ als
$*$-Homomorphismus beschränkt mit $\norm{\Phi_{\varphi}}\leq 1$ ist.
Folglich gilt
$\norm[H_{\varphi}]{\Phi_{\varphi}(a)x_{\varphi}}^{2}\leq
\norm{a}^{2}\norm[H_{\varphi}]{x_{\varphi}}^{2}$ und aus der
Quadratsummierbarkeit von
$\left(\norm[H_{\varphi}]{x_{\varphi}}\right)_{\varphi\in\mathcal{S}(A)}$
erhalten wir die Quadratsummierbarkeit von
$\left(\norm[H_{\varphi}]{\Phi_{\varphi}(a)x_{\varphi}}\right)_{\varphi\in\mathcal{S}(A)}$. Direktes
Nachrechnen zeigt, dass $\Phi:A\to L_{b}(H)$ eine Darstellung ist. Man
spricht von der \emph{universellen Darstellung}. Damit lässt sich der
Satz von Gelfand-Naimark in folgender Form formulieren:
\begin{satz}[Gelfand-Naimark]\label{satz:gelfand-naimark}
  Die universelle Darstellung $\Phi$ von $A$ ist treu. Insbesondere
  ist $A$ isometrisch isomorph zu einer $C^{*}$-Unteralgebra von
  $L_{b}(H)$ für einen geeigneten Hilbertraum $H$.
\end{satz}
\begin{proof}
  Wäre $\Phi$ nicht treu, so gäbe es ein $a\in A$ mit $a\neq 0$ und
  $\Phi(a)=0$, also $\Phi_{\varphi}(a)=0$ für alle
  $\varphi\in\mathcal{S}(A)$. Da mit $\Phi(a)$ klarerweise auch
  $\Phi(a^{*}a)=\Phi(a)^{*}\Phi(a)$ der Nulloperator ist, können wir
  durch etwaigen Übergang zu $a^{*}a$ annehmen, dass $a$ positiv
  ist. Nach Satz~\ref{satz:pos-fkt-trennend} gibt es einen Zustand
  $\varphi\in\mathcal{S}(A)$ mit $\varphi(a)=\norm{a}$. Gemäß
  Satz~\ref{satz:quwurzel} existiert $b:=a^{1/4}=(a^{1/2})^{1/2}$. Das
  Element $\Phi_{\varphi}(b)$ ist positiv, denn mit $c=b^{1/2}$ gilt
  $\Phi_{\varphi}(b)=\Phi_{\varphi}(c)^{*}\Phi_{\varphi}(c)$. Außerdem
  folgt
  $\left(\Phi_{\varphi}(b)^{2}\right)^{2}=\Phi_{\varphi}(b^{4})=\Phi_{\varphi}(a)=0$
  und wegen der Eindeutigkeit der positiven Quadratwurzel
  $\Phi_{\varphi}(b)^{2}=0$. Mit demselben Argument ergibt sich
  $\Phi_{\varphi}(b)=0$. Daraus folgt der Widerspruch
  \begin{align*}
    \norm{a}&=\varphi(a)=\varphi\left(b^{4}\right)=\varphi\left(\left(b^{2}\right)^{*}b^{2}\right)=\left(b^{2}+N_{\varphi},b^{2}+N_{\varphi}\right)_{H_{\varphi}}=\norm[H_{\varphi}]{b^{2}+N_{\varphi}}^{2}\\
            &=\norm[H_{\varphi}]{\Phi_{\varphi}(b)(b+N_{\varphi})}^{2}=0.
  \end{align*}
  Somit ist $\Phi$ treu.

  Der Rest des Satzes folgt aus Satz~\ref{satz:inj-homo-iso}.
\end{proof}
\begin{bemerkung}\label{bem:gns-sichtweise-1}
  Abschließend sei noch bemerkt, dass $A$ in die Konstruktion der
  universellen Darstellung auf zwei verschiedene Arten eingeht:
  Einerseits bauen Tupel von Äquivalenzklassen den Hilbertraum $H$
  auf, andererseits -- diese Sichtweise stellt für den Satz von
  Gelfand-Naimark das Hauptinteresse dar -- wirkt $A$ vermöge $\Phi$
  auf dem Hilbertraum $H$.
\end{bemerkung}

\clearpage
\chapter{Topologien auf $L_{b}(H)$ und Von-Neumann-Algebren}
\label{cha:optop-von-neumann-algebren}
Im Mittelpunkt dieses Kapitels steht der Raum $L_{b}(H)$ von
Operatoren auf einem gegebenen Hilbertraum $H$. Von großem Interesse
sollen insbesondere mehrere kanonische Topologien sein, mit denen man
$L_{b}(H)$ versehen kann. Darauf aufbauend stellt sich auch die Frage,
unter welchen Bedingungen die Abgeschlossenheit einer Teilmenge
bezüglich all dieser Topologien äquivalent ist.

\section{Operatortopologien}
\label{sec:operatortopologien}
Wir beginnen mit der Definition der Topologien.  Dazu sei daran
erinnert, dass eine \emph{separierende} Menge $P$ von
Seminormen\footnote{Das bedeutet nach Definition, dass $p(x)=0$ für
  alle $p\in P$ schon $x=0$ impliziert.} auf einem Vektorraum $X$ eine
lokalkonvexe Vektorraum-Topologie auf $X$ induziert. Die Konvergenz
eines Netzes $(x_{i})_{i\in I}$ gegen einen Vektor $x\in X$ lässt sich
dabei charakterisieren durch
\begin{equation}\label{eq:konv-snormen-ind-top}
  \lim_{i\in I}x_{i}=x\Leftrightarrow \lim_{i\in I}p(x_{i}-x)=0\ \text{für alle}\ p\in P.
\end{equation}
\begin{definition}\label{def:optop}
  \hspace{0mm}
  \begin{enumerate}[label=(\roman*),ref=\roman*]
  \item Die von $P_{s}:=\set{(T\mapsto\norm{Tx})}{x\in H}$ auf
    $L_{b}(H)$ induzierte Topologie wird \emph{starke
      Operatortopologie} genannt und mit $\TT_{s}$ bezeichnet.
  \item Die von $P_{w}:=\set{(T\mapsto \abs{(Tx,y)})}{x,y\in H}$ auf
    $L_{b}(H)$ induzierte Topologie wird \emph{schwache
      Operatortopologie} genannt und mit $\TT_{w}$ bezeichnet.
  \item Die von $P_{uw}:=\set{(T\mapsto\abs{\tr(ST)})}{S\in L^{1}(H)}$
    auf $L_{b}(H)$ induzierte Topologie wird \emph{ultraschwache
      Operatortopologie} genannt und mit $\TT_{uw}$ bezeichnet.
  \end{enumerate}
\end{definition}
Nach \eqref{eq:konv-snormen-ind-top} bedeutet $T_{i}\to T$ bezüglich
$\TT_{s}$ genau $(T_{i}-T)x\to 0$ für alle $x\in H$ und $T_{i}\to T$
bezüglich $\TT_{w}$ genau $((T_{i}-T)x,y)\to 0$ für alle $x,y\in
H$. Somit ist $\TT_{s}$ die Topologie der punktweisen Konvergenz und
$\TT_{w}$ die Topologie der punktweise schwachen Konvergenz in $H$.

Der nächste Satz fasst einige einfache Aussagen über diese Topologien
zusammen.
\begin{satz}\label{satz:eig-optop}
  \hspace{0mm}
  \begin{enumerate}[label=(\roman*),ref=\roman*]
  \item\label{item:eig-optop-i} Die Mengen $P_{s},P_{w},P_{uw}$ sind
    jeweils separierend. Damit bildet $L_{b}(H)$ mit jeder der drei
    Topologien $\TT_{s},\TT_{w},\TT_{uw}$ tatsächlich einen
    lokalkonvexen topologischen Vektorraum.
  \item\label{item:eig-optop-ii} Mit der Bezeichnung
    $\TT(\norm{\cdot})$ für die von der Abbildungsnorm induzierte
    Topologie ist $\TT_{w}$ die gröbste und $\TT(\norm{\cdot})$ die
    feinste der hier diskutierten Topologien. Insbesondere gilt
    $\TT_{w}\subseteq\TT_{uw}$.
  \end{enumerate}
\end{satz}
\begin{bemerkung}
  Es sei explizit darauf hingewiesen, dass die leider übliche
  Bezeichnung \emph{ultraschwache Operatortopologie} irreführend ist,
  da sie tatsächlich feiner und damit stärker als die schwache
  Operatortopologie ist.
\end{bemerkung}
\begin{proof}[Beweis von Satz~\ref{satz:eig-optop}]
  \hspace{0mm}
  \begin{enumerate}[label=(\roman*),ref=\roman*]
  \item Aus $\norm{Tx}=0$ für alle $x\in H$ folgt offenbar $T=0$,
    womit $P_{s}$ als separierend nachgewiesen ist. Für $P_{w}$ ist
    die Überlegung ebenso einfach: Die Forderung $\abs{(Tx,y)}=0$ für
    alle $x,y\in H$ bedingt in einem ersten Schritt $Tx=0$ für alle
    $x\in H$ und folglich $T=0$. Für $P_{uw}$ genügt es,
    $P_{uw}\supseteq P_{w}$ zu zeigen. Dazu definieren wir einen
    Operator $S_{x,y}$ durch $S_{x,y}a:=(a,y)x$, wobei $x,y\in H$
    beliebig sind. Man rechnet unmittelbar nach, dass
    $S_{x,y}^{*}b=(b,x)y=S_{y,x}b$ gilt, womit wir
    \begin{equation}\label{eq:bew-eig-optop-i}
      S_{x,y}^{*}S_{x,y}a=(a,y)(x,x)y=\norm{x}^{2}(a,y)y=\norm{x}^{2}S_{y,y}a
    \end{equation}
    erhalten. Da man $x\neq 0$ wählen kann, schließen wir mit
    Satz~\ref{satz:eig-pos-el}(\ref{item:eig-pos-el-iii})
    insbesondere, dass der Operator $S_{y,y}$ für $y\in H$ positiv
    ist. Wegen $S_{y,y}^{*}=S_{y,y}$ folgt aus
    \eqref{eq:bew-eig-optop-i} für $x=y$ die Gleichung
    $S_{y,y}^{2}=\norm{y}^{2}S_{y,y}$. Sind $x,y\neq 0$, so erhalten
    wir
    $\left(\frac{\norm{x}}{\norm{y}}S_{y,y}\right)^{2}=\norm{x}^{2}S_{y,y}=S_{x,y}^{*}S_{x,y}$,
    also $\abs{S_{x,y}}=\frac{\norm{x}}{\norm{y}}S_{y,y}$. Erweitert
    man $\frac{1}{\norm{y}}y$ zu einer Orthonormalbasis $E$ von $H$,
    ergibt sich
    \begin{displaymath}
      \sum_{e\in E}(|S_{x,y}|(e),e)\stackrel{(*)}{=}\frac{\norm{x}}{\norm{y}}\left(S_{y,y}\frac{1}{\norm{y}}y,\frac{1}{\norm{y}}y\right)=\frac{\norm{x}}{\norm{y}}\left(\norm{y}y,\frac{1}{\norm{y}}y\right)=\norm{x}\norm{y}<+\infty,
    \end{displaymath}
    womit $S_{x,y}\in L^{1}(H)$ ist. Die Gleichheit $(*)$ folgt dabei
    daraus, dass $S_{y,y}$ nach $\spn y$ abbildet und daher die
    Summanden für $e\in E\setminus\{\frac{1}{\norm{y}}y\}$
    wegfallen. Für $x,y\neq 0$ und eine Orthonormalbasis $E$, die
    $\frac{1}{\norm{x}}x$ erweitert, rechnen wir
    \begin{align}\label{eq:bew-eig-optop-ii}
      \begin{split}
        \tr(S_{x,y}T)&=\sum_{e\in
          E}(S_{x,y}Te,e)\stackrel{(**)}{=}\left(S_{x,y}T\left(\frac{1}{\norm{x}}x\right),\frac{1}{\norm{x}}x\right)\\
        &=\left(T\left(\frac{1}{\norm{x}}x\right),y\right)\left(x,\frac{1}{\norm{x}}x\right)=(Tx,y),
      \end{split}
    \end{align}
    wobei die Gleichheit $(**)$ analog zu $(*)$ aus
    $\ran S_{x,y}\leq\spn x$ folgt. Für $x=0$ oder $y=0$ gilt
    klarerweise $\tr(S_{x,y}T)=0=(Tx,y)$. Damit ist
    $P_{uw}\supseteq P_{w}$ nachgewiesen.
  \item Für Topologien $\TT_{1},\TT_{2}$ auf einer Menge $M$ gilt
    $\TT_{1}\subseteq\TT_{2}$ genau dann, wenn jedes bezüglich
    $\TT_{2}$ gegen ein $m\in M$ konvergente Netz $(m_{i})_{i\in I}$
    auch bezüglich $\TT_{1}$ gegen $m$ konvergiert.

    Wegen $\abs{((T_{i}-T)x,y)}\leq\norm{(T_{i}-T)x}\norm{y}$ und
    $\norm{(T_{i}-T)x}\leq\norm{T_{i}-T}\norm{x}$ können wir nach
    \eqref{eq:konv-snormen-ind-top} auf
    $\TT_{w}\subseteq\TT_{s}\subseteq\TT(\norm{\cdot})$ schließen. Die
    aus Gleichung \eqref{eq:bew-eig-optop-ii} folgende Inklusion der
    induzierenden Seminormen liefert
    $\TT_{w}\subseteq\TT_{uw}$. Schließlich ergibt sich
    $\TT_{uw}\subseteq\TT(\norm{\cdot})$ ebenfalls aus
    \eqref{eq:konv-snormen-ind-top}, wenn wir
    $\tr(S(T_{i}-T))\leq\norm[1]{S}\norm{T_{i}-T}$ berücksichtigen;
    siehe
    Lemma~\ref{lem:spur-wohldef+eig}(\ref{item:spur-wohldef+eig-i})
    und \eqref{eq:spklasse-ideal}.
  \end{enumerate}
\end{proof}

Entscheidend für den weiteren Verlauf dieser Arbeit wird der nächste Satz sein.
\begin{satz}\label{satz:uw-optop-wstar}
  Der Raum $L_{b}(H)$ versehen mit $\TT_{uw}$ ist linear
  homöomorph\footnote{Das bedeutet, dass es eine lineare und in beide
    Richtungen stetige Bijektion gibt.} zu $L^{1}(H)'$ versehen mit
  der schwach-*-Topologie $\sigma(L^{1}(H)',L^{1}(H))$. Ein linearer
  Homöomorphismus ist dabei gegeben durch die kanonische Abbildung
  \begin{displaymath}
    \theta:
    \begin{cases}
      L_{b}(H)&\to L^{1}(H)' \\
      \hfill T&\mapsto\tr(.T)
    \end{cases}
  \end{displaymath}
  aus
  Satz~\ref{satz:beschrop-spklasse}(\ref{item:beschrop-spklasse-iii}).
\end{satz}
\begin{proof}
  Es ist nur mehr nachzuprüfen, dass $\theta$ und $\theta^{-1}$ stetig
  sind. Dazu verwenden wir die Grenzwertcharakterisierung der
  Stetigkeit und zeigen
  \begin{displaymath}
    T_{j}\xrightarrow{\TT_{uw}}T \Leftrightarrow\theta(T_{j})\xrightarrow{\sigma(L^{1}(H)',L^{1}(H))}\theta(T).
  \end{displaymath}
  Mit der Bezeichung $\TT_{\C}$ für die euklidische Topologie auf $\C$
  ergibt sich das aus der folgenden Äquivalenzenkette:
  \begin{align}\label{eq:bew-uw-optop-wstar}
    \begin{split}
      T_{j}\xrightarrow{\TT_{uw}}T &\Leftrightarrow\forall S\in L^{1}(H):
                                   \abs{\tr(S(T_{j}-T))}\xrightarrow{\TT_{\C}}
                                   0\\
                                 &\Leftrightarrow\forall S\in L^{1}(H):
                                   \tr(S(T_{j}-T))\xrightarrow{\TT_{\C}} 0\\
                                 &\Leftrightarrow\forall S\in
                                   L^{1}(H): \chevr{S}{\theta(T_{j}-T)}\xrightarrow{\TT_{\C}} 0\\
                                 &\Leftrightarrow\theta(T_{j}-T)\xrightarrow{\sigma(L^{1}(H)',L^{1}(H))} 0\\
                                 &\Leftrightarrow\theta(T_{j})\xrightarrow{\sigma(L^{1}(H)',L^{1}(H))}\theta(T)
    \end{split}
  \end{align}
\end{proof}
Auf beschränkten Mengen fallen die schwache und die ultraschwache
Topologie zusammen:
\begin{korollar}\label{kor:uw-optop-wstar}
  \hspace{0mm}
  \begin{enumerate}[label=(\roman*),ref=\roman*]
  \item\label{item:uw-optop-wstar-i} Die Einheitskugel $S$ von
    $L_{b}(H)$ ist $\TT_{uw}$-kompakt.
  \item\label{item:uw-optop-wstar-ii} Für jedes $r>0$, insbesondere
    für $r=1$, sind die Spurtopologien $(\TT_{uw})|_{rS}$ und
    $(\TT_{w})|_{rS}$ gleich. Insbesondere ist $rS$ auch bezüglich der
    schwachen Operatortopologie kompakt.
  \item\label{item:uw-optop-wstar-iii} Ist $M\subseteq L_{b}(H)$ eine
    $\norm{\cdot}$-beschränkte Menge, so stimmen die Spurtopologien
    $(\TT_{uw})|_{M}$ und $(\TT_{w})|_{M}$ überein.
  \end{enumerate}
\end{korollar}
\begin{proof}
  \hspace{0mm}
  \begin{enumerate}[label=(\roman*),ref=\roman*]
  \item Da $\theta^{-1}$ isometrisch ist, gilt
    $\theta^{-1}(S_{L^{1}(H)'})=S$. Nach dem Satz von Banach-Alaoglu
    ist die Einheitskugel in $L^{1}(H)'$ schwach-*-kompakt, sodass die
    Aussage aus Satz~\ref{satz:uw-optop-wstar} folgt.
  \item Da $\TT_{uw}$ feiner als $\TT_{w}$ ist, ist die Identität eine
    stetige und bijektive Abbildung vom kompakten topologischen Raum
    $(rS,(\TT_{uw})|_{rS})$ in den Hausdorffraum
    $(rS,(\TT_{w})|_{rS})$ und damit ein Homöomorphismus.
  \item Sei $M$ beschränkt durch $r>0$, d.~h. $M\subseteq rS$.

    Schränken wir den Homöomorphismus
    $\id_{rS}:(rS,(\TT_{uw})|_{rS})\to (rS,(\TT_{w})|_{rS})$ aus
    (\ref{item:uw-optop-wstar-ii}) auf $M$ ein, so erhalten wir, dass
    die Identität auf $M$ ein Homöomorphismus zwischen den
    Spurtopologien von $\TT_{uw}$ und $\TT_{w}$ ist.
  \end{enumerate}
\end{proof}

Als Nächstes untersuchen wir, welche über die Struktur eines
topologischen Vektorraums hinausgehenden Operationen auf $L_{b}(H)$
bezüglich welcher der hier betrachteten Topologien stetig
sind. Genauer soll es um die Adjungiertenbildung $.^{*}$ und die
Multiplikation von Operatoren gehen, wobei wir uns nur für den Fall
eines unendlichdimensionalen Hilbertraums $H$
interessieren\footnote{Bei endlicher Dimension sind alle betrachteten
  Abbildungen stetig.}. Dabei werden wir sehen, dass die
Multiplikation $(S,T)\mapsto ST$ einzig für die Normtopologie simultan
stetig ist, sodass wir zusätzlich separate Stetigkeit betrachten. Mit
anderen Worten analysieren wir die Stetigkeit der \emph{Translationen}
$T\mapsto ST$ bzw. $S\mapsto ST$ bei festgehaltenen Operatoren $S$
bzw. $T$.

\begin{satz}\label{satz:adj-mult-stetig-optop}
  Für einen unendlichdimensionalen Hilbertraum $H$ gelten folgende
  Stetigkeitsaussagen:
  \begin{enumerate}[label=(\roman*),ref=\roman*]
  \item\label{item:adj-mult-stetig-optop-i} Bezüglich der
    Normtopologie $\TT(\norm{\cdot})$ sind
    \begin{enumerate}[label=(\alph*),ref=(\theenumi)(\alph*)]
    \item\label{item:adj-mult-stetig-optop-i-a} die
      Adjungiertenbildung $.^{*}:T\mapsto T^{*}$ \textbf{stetig},
    \item\label{item:adj-mult-stetig-optop-i-b} die Multiplikation
      $(S,T)\mapsto ST$ \textbf{stetig},
    \item\label{item:adj-mult-stetig-optop-i-c} die Translationen
      $T\mapsto ST$ bzw. $S\mapsto ST$ für feste Operatoren
      $S\in L_{b}(H)$ bzw. $T\in L_{b}(H)$ \textbf{stetig}.
    \end{enumerate}
  \item\label{item:adj-mult-stetig-optop-ii} Bezüglich der starken
    Operatortopologie $\TT_{s}$ sind
    \begin{enumerate}[label=(\alph*),ref=(\theenumi)(\alph*)]
    \item\label{item:adj-mult-stetig-optop-ii-a} die
      Adjungiertenbildung $.^{*}:T\mapsto T^{*}$ \textbf{nicht
        stetig},
    \item\label{item:adj-mult-stetig-optop-ii-b} die Multiplikation
      $(S,T)\mapsto ST$ \textbf{nicht stetig},
    \item\label{item:adj-mult-stetig-optop-ii-c} die Translationen
      $T\mapsto ST$ bzw. $S\mapsto ST$ für feste Operatoren
      $S\in L_{b}(H)$ bzw. $T\in L_{b}(H)$ \textbf{stetig}.
    \end{enumerate}
  \item\label{item:adj-mult-stetig-optop-iii} Bezüglich der schwachen
    Operatortopologie $\TT_{w}$ sind
    \begin{enumerate}[label=(\alph*),ref=(\theenumi)(\alph*)]
    \item\label{item:adj-mult-stetig-optop-iii-a} die
      Adjungiertenbildung $.^{*}:T\mapsto T^{*}$ \textbf{stetig},
    \item\label{item:adj-mult-stetig-optop-iii-b} die Multiplikation
      $(S,T)\mapsto ST$ \textbf{nicht stetig},
    \item\label{item:adj-mult-stetig-optop-iii-c} die Translationen
      $T\mapsto ST$ bzw. $S\mapsto ST$ für feste Operatoren
      $S\in L_{b}(H)$ bzw. $T\in L_{b}(H)$ \textbf{stetig}.
    \end{enumerate}
  \item\label{item:adj-mult-stetig-optop-iv} Bezüglich der
    ultraschwachen Operatortopologie $\TT_{uw}$ sind
    \begin{enumerate}[label=(\alph*),ref=(\theenumi)(\alph*)]
    \item\label{item:adj-mult-stetig-optop-iv-a} die
      Adjungiertenbildung $.^{*}:T\mapsto T^{*}$ \textbf{stetig},
    \item\label{item:adj-mult-stetig-optop-iv-b} die Multiplikation
      $(S,T)\mapsto ST$ \textbf{nicht stetig},
    \item\label{item:adj-mult-stetig-optop-iv-c} die Translationen
      $T\mapsto ST$ bzw. $S\mapsto ST$ für feste Operatoren
      $S\in L_{b}(H)$ bzw. $T\in L_{b}(H)$ \textbf{stetig}.
    \end{enumerate}
  \end{enumerate}
\end{satz}
\begin{proof}
  Im gesamten Beweis sei $\set{e_{n}}{n\in\N}$ ein abzählbar
  unendliches Orthonormalsystem in $H$.
  \begin{enumerate}[label=(\roman*)]
  \item Der Beweis ist hinlänglich bekannt.
  \item
    \begin{enumerate}[label=(\alph*)]
    \item Wir verwenden die Notation aus dem Beweis von
      Satz~\ref{satz:eig-optop} und betrachten die Operatoren
      $S_{e_{1},e_{n}}$. Wegen $S_{e_{1},e_{n}}x=(x,e_{n})e_{1}$ gilt
      $\norm{S_{e_{1},e_{n}}x}=\abs{(x,e_{n})}\to 0$, da
      $(\abs{(x,e_{n})})_{n\in\N}$ sogar quadratsummierbar ist. Daher
      konvergiert $S_{e_{1},e_{n}}$ bezüglich der starken
      Operatortopologie gegen den Nulloperator. Für den adjungierten
      Operator gilt dies aber nicht, weil wegen
      $S_{e_{1},e_{n}}^{*}=S_{e_{n},e_{1}}$ der Ausdruck
      $\norm{S_{e_{1},e_{n}}^{*}e_{1}}=(e_{1},e_{1})=1$ nicht gegen
      $0$ konvergiert.
    \item Sei $V$ der linksseitige Shiftoperator auf dem
      Orthonormalsystem $\set{e_{n}}{n\in\N}$, also\footnote{Man
        beachte, dass diese Reihe wegen
        $((x,e_{n}))_{n\in\N}\in\ell^{2}(\N)$ konvergiert.}
      \begin{displaymath}
        Vx:=\sum_{n\in\N}(x,e_{n+1})e_{n}.
      \end{displaymath}
      Bekanntermaßen gilt $\norm{V}\leq 1$. Aus
      \begin{displaymath}
        V^{m}x=\sum_{n\in\N}(x,e_{n+m})e_{n}
      \end{displaymath}
      für beliebiges $m\in\N$ erhalten wir
      \begin{displaymath}
        \norm{V^{m}x}^{2}=\sum_{n\in\N}\abs{(x,e_{n+m})}^{2}=\sum_{n=m+1}^{\infty}\abs{(x,e_{n})}^{2}\to
        0\ \text{für}\ m\to\infty.
      \end{displaymath}
      Also konvergiert die Folge $(V^{m})_{m\in\N}$ bezüglich der
      starken Operatortopologie gegen den Nulloperator. Für eine
      \emph{feste} natürliche Zahl $k$ konvergiert daher auch
      $(kV^{m})_{m\in\N}$ gegen $0$.

      Diese Tatsache verwenden wir, um zwei Netze von Operatoren zu
      konstruieren, die beide bezüglich $\TT_{s}$ gegen den
      Nulloperator konvergieren, für die das Netz der Produkte aber
      konstant und ungleich $0$ ist.

      Dazu sei $\FrU^{\TT_{s}}(0)$ der Nullumgebungsfilter bezüglich
      der starken Operatortopologie und
      \begin{displaymath}
        I:=\set{(k,m,U)\in\N\times\N\times\FrU^{\TT_{s}}(0)}{kV^{m}\in U}.
      \end{displaymath}
      Auf $I$ definieren wir die Relation $\preccurlyeq$ durch
      \begin{displaymath}
        (k,m,U)\preccurlyeq (k',m',U'):\Leftrightarrow k\leq k'\
        \text{und}\ U'\subseteq U.
      \end{displaymath}

      Die gerade gezeigte Konvergenz impliziert, dass es für jede
      Umgebung $U\in\FrU^{\TT_{s}}(0)$ und jede natürliche Zahl
      $k\in\N$ ein $m\in\N$ gibt\footnote{Es würde genügen, dass der
        Nulloperator ein Häufungspunkt von $(V^{m})_{m\in\N}$ ist.}
      mit $(k,m,U)\in I$.

      Klarerweise ist $\preccurlyeq$ eine reflexive und transitive
      Relation. Für zwei vorgegebene Tripel
      $(k_{1},m_{1},U_{1}),(k_{2},m_{2},U_{2})\in I$ setzen wir
      $U:=U_{1}\cap U_{2}$ sowie $k:=\max(k_{1},k_{2})$ und wählen
      eine natürliche Zahl $m$ mit $(k,m,U)\in I$. Dieses Tripel
      erfüllt
      \begin{displaymath}
        (k_{1},m_{1},U_{1}),(k_{2},m_{2},U_{2})\preccurlyeq (k,m,U),
      \end{displaymath}
      womit $I$ eine gerichtete Menge ist.

      Weiters definieren wir Netze $(S_{(k,m,U)})_{(k,m,U)\in I}$ und
      $(T_{(k,m,U)})_{(k,m,U)\in I}$ durch
      \begin{displaymath}
        S_{(k,m,U)}:=kV^{m}\quad\text{und}\quad T_{(k,m,U)}:=\frac{1}{k}(V^{*})^{m}.
      \end{displaymath}
      Beide Netze konvergieren bezüglich der starken Operatortopologie
      gegen den Nulloperator: Sei eine Umgebung
      $U\in\FrU^{\TT_{s}}(0)$ gegeben. Wählen wir die natürliche Zahl
      $m$ so, dass $(1,m,U)\in I$, so gilt für alle Tripel
      $(k',m',U')\succcurlyeq (1,m,U)$
      \begin{displaymath}
        S_{(k',m',U')}=k'V^{m'}\in U'\subseteq U.
      \end{displaymath}
      Für das zweite Netz können wir sogar Konvergenz bezüglich der
      Abbildungsnorm zeigen. Dazu sei $\epsilon>0$ und $k\in\N$ mit
      $1/k<\epsilon$. Es gilt $(k,1,L_{b}(H))\in I$ und für
      $(k',m',U')\succcurlyeq (k,1,L_{b}(H))$ folgt wegen
      $\norm{(V^{*})^{m}}\leq 1$
      \begin{displaymath}
        \norm{T_{(k',m',U')}}=\frac{1}{k'}\norm{(V^{*})^{m}}\leq\frac{1}{k'}\leq\frac{1}{k}<\epsilon.
      \end{displaymath}
      Da $V^{*}$ der rechtsseitige Shiftoperator auf
      $\set{e_{n}}{n\in\N}$ ist, also
      \begin{displaymath}
        V^{*}x=\sum_{n\in\N}(x,e_{n})e_{n+1},
      \end{displaymath}
      ergibt sich
      \begin{displaymath}
        (V^{*})^{m}x=\sum_{n\in\N}(x,e_{n})e_{n+m}.
      \end{displaymath}
      Man prüft leicht nach, dass dann
      \begin{displaymath}
        V^{m}(V^{*})^{m}x=\sum_{n\in\N}(x,e_{n})e_{n}=Px
      \end{displaymath}
      gilt, wobei $P$ die Orthogonalprojektion auf
      $\overline{\spn\set{e_{n}}{n\in\N}}$ bezeichnet. Daraus folgt
      \begin{displaymath}
        S_{(k,m,U)}T_{(k,m,U)}=k\frac{1}{k}V^{m}(V^{*})^{m}=P.
      \end{displaymath}
      Wie oben angekündigt erhalten wir bezüglich der starken
      Operatortopologie
      \begin{displaymath}
        S_{(k,m,U)}\to 0,\quad T_{(k,m,U)}\to 0,\quad
        S_{(k,m,U)}T_{(k,m,U)}\not\to 0.
      \end{displaymath}
      Somit ist die Multiplikation nicht stetig.
    \item Da die Translationen linear sind, müssen wir nur zeigen,
      dass sie bei $0$ stetig sind.

      Sei zunächst $S\in L_{b}(H)$ fest und $(T_{i})_{i\in I}$ ein
      bezüglich der starken Operatortopologie gegen $0$ konvergentes
      Netz. Für jedes $x\in H$ gilt also $\norm{T_{i}x}^{2}\to 0$
      bzw. $T_{i}x\to 0$. Da $S$ beschränkt ist, folgt $ST_{i}x\to 0$
      und wir erhalten $ST_{i}\xrightarrow{\TT_{s}} 0$.

      Ist andererseits $(S_{i})_{i\in I}$ ein gegen $0$ konvergentes
      Netz und $T\in L_{b}(H)$ fest, so gilt wie oben $S_{i}x\to 0$
      für alle $x\in H$. Wir setzen speziell $x=Ty$ für $y\in H$ und
      erhalten $S_{i}Ty\to 0$ für alle $y\in H$. Daraus folgt
      $S_{i}T\xrightarrow{\TT_{s}} 0$.
    \end{enumerate}
  \item
    \begin{enumerate}[label=(\alph*)]
    \item Sei $(T_{i})_{i\in I}$ ein Netz in $L_{b}(H)$, das bezüglich
      der schwachen Operatortopologie gegen $T$ konvergiert. Da
      $.^{*}$ eine konjugiert lineare Abbildung ist, konvergiert
      $T_{i}^{*}$ genau dann gegen $T^{*}$, wenn
      $((T_{i}-T)^{*})_{i\in I}=(T_{i}^{*}-T^{*})_{i\in I}$ gegen $0$
      konvergiert. Somit können wir ohne Beschränkung der
      Allgemeinheit $T=0$ annehmen und müssen nur zeigen, dass
      $(T_{i}^{*})_{i\in I}$ bezüglich $\TT_{w}$ gegen den
      Nulloperator konvergiert, wenn $(T_{i})_{i\in I}$ es tut. Dies
      folgt unmittelbar aus
      \begin{displaymath}
        \abs{(T_{i}^{*}x,y)}=\abs{(x,T_{i}y)}=\abs{(T_{i}y,x)}\to 0
      \end{displaymath}
      für alle $x,y\in H$.
    \item Da die starke Operatortopologie feiner als die schwache
      Operatortopologie ist, konvergiert die Folge $(V^{m})_{m\in\N}$
      aus dem Beweis von \ref{item:adj-mult-stetig-optop-ii-b} auch
      bezüglich $\TT_{w}$ gegen den Nulloperator. Daher kann man die
      gleiche Konstruktion mit $\FrU^{\TT_{w}}(0)$ anstelle von
      $\FrU^{\TT_{s}}(0)$ durchführen, um Netze
      $(S_{(k,m,U)})_{(k,m,U)\in I}$ und
      $(T_{(k,m,U)})_{(k,m,U)\in I}$ mit
      \begin{displaymath}
        S_{(k,m,U)}\to 0,\quad T_{(k,m,U)}\to 0,\quad
        S_{(k,m,U)}T_{(k,m,U)}\not\to 0
      \end{displaymath}
      bezüglich der schwachen Operatortopologie zu erhalten.
    \item Sei $S\in L_{b}(H)$ fest und $(T_{i})_{i\in I}$ ein
      bezüglich der schwachen Operatortopologie gegen $0$ konvergentes
      Netz, also $\abs{(T_{i}x,y)}\to 0$ für alle $x,y\in H$. Wegen
      \begin{displaymath}
        \abs{(ST_{i}x,y)}=\abs{(T_{i}x,S^{*}y)}\to 0
      \end{displaymath}
      erhalten wir $ST_{i}\xrightarrow{\TT_{w}} 0$.

      Den Beweis der Stetigkeit von $S\mapsto ST$ kann man ähnlich
      führen; kürzer kann man diese Translation auch als Verkettung
      von $.^{*}$ und der bereits als stetig nachgewiesenen
      Translation schreiben. Aus $ST=(T^{*}S^{*})^{*}$ folgt nämlich
      \begin{equation}\label{eq:bew-adj-mult-stetig-optop}
        (S\mapsto ST)=.^{*}\circ (S\mapsto T^{*}S)\circ .^{*}
      \end{equation}
    \end{enumerate}
  \item
    \begin{enumerate}[label=(\alph*)]
    \item Wie in \ref{item:adj-mult-stetig-optop-iii-a} können wir uns
      auf ein gegen $0$ konvergentes Netz $(T_{i})_{i\in I}$
      beschränken. Es gelte also $\abs{\tr(RT_{i})}\to 0$ für jeden
      Spurklasseoperator $R\in L^{1}(H)$. Aus
      Lemma~\ref{lem:spur-wohldef+eig}(\ref{item:spur-wohldef+eig-ii})
      und Lemma~\ref{lem:tr-komm} folgt
      \begin{displaymath}
        \abs{\tr(RT_{i}^{*})}=\abs{\overline{\tr(RT_{i}^{*})}}\stackrel{\ref{lem:spur-wohldef+eig}}{=}\abs{\tr((RT_{i}^{*})^{*})}=\abs{\tr(T_{i}R^{*})}\stackrel{\ref{lem:tr-komm}}{=}\abs{\tr(R^{*}T_{i})}\to 0,
      \end{displaymath}
      da $R^{*}$ nach
      Lemma~\ref{lem:eig-spklasse}(\ref{item:eig-spklasse-ii}) ein
      Spurklasseoperator ist.
    \item Auch für die ultraschwache Operatortopologie funkioniert die
      Konstruktion aus dem Beweis von
      \ref{item:adj-mult-stetig-optop-ii-b}, wenn wir
      $\FrU^{\TT_{s}}(0)$ durch $\FrU^{\TT_{uw}}(0)$ ersetzen. Wir
      müssen nämlich nur nachweisen, dass die Folge $(V^{m})_{m\in\N}$
      auch bezüglich $\TT_{uw}$ gegen $0$ konvergiert. Dies erhalten
      wir aus
      Korollar~\ref{kor:uw-optop-wstar}(\ref{item:uw-optop-wstar-ii}),
      denn die Operatoren $V^{m}$ sind in $S$ enthalten und
      konvergieren bezüglich $\TT_{w}$ gegen $0$. Somit konvergieren
      sie bezüglich der Spurtopologie $(\TT_{w})|_{S}=(\TT_{uw})|_{S}$
      und folglich bezüglich $\TT_{uw}$.
    \item Sei wieder $S\in L_{b}(H)$ fest und $(T_{i})_{i\in I}$ ein
      Netz, das bezüglich der ultraschwachen Operatortopologie gegen
      den Nulloperator konvergiert. Nach
      Lemma~\ref{lem:eig-spklasse}(\ref{item:eig-spklasse-ii}) liegt
      für jeden Spurklasseoperator $R\in L^{1}(H)$ auch $RS$ in
      $L^{1}(H)$. Daraus folgt
      \begin{displaymath}
        \abs{\tr(R(ST_{i}))}=\abs{\tr((RS)T_{i})}\to 0
      \end{displaymath}
      und wir erhalten $ST_{i}\xrightarrow{\TT_{uw}} 0$.

      Die Stetigkeit der Translation $S\mapsto ST$ folgt analog zu
      \ref{item:adj-mult-stetig-optop-iii-c} aus
      \eqref{eq:bew-adj-mult-stetig-optop} und dem schon Bewiesenen.
    \end{enumerate}
  \end{enumerate}
\end{proof}

Mithilfe des letzten Satzes können wir auf einen Blick ablesen, dass
für einen unendlichdimensionalen Hilbertraum beispielsweise die
Normtopologie niemals mit einer der drei Operatortopologien
zusammenfällt. Tatsächlich gilt:

\begin{satz}\label{satz:optop-versch}
  Wenn $H$ unendlichdimensional ist, sind die Normtopologie sowie die
  starke, schwache und ultraschwache Operatortopologie paarweise
  verschieden.
\end{satz}
\begin{proof}
  Die Multiplikation ist stetig bezüglich der Normtopologie, aber
  nicht bezüglich der starken, schwachen oder ultraschwachen
  Operatortopologie, woraus wir
  \begin{displaymath}
    \TT(\norm{\cdot})\neq\TT_{s}, \TT_{w}, \TT_{uw}
  \end{displaymath}
  schließen. Analog folgt
  \begin{displaymath}
    \TT_{s}\neq\TT(\norm{\cdot}),\TT_{w},\TT_{uw}
  \end{displaymath}
  durch Betrachten der Adjungiertenbildung $.^{*}$. Somit bleibt nur
  noch
  \begin{displaymath}
    \TT_{w}\neq\TT_{uw}
  \end{displaymath}
  zu zeigen. Dazu werden wir ein unbeschränktes\footnote{Nach
    Korollar~\ref{kor:uw-optop-wstar} stimmen $\TT_{w}$ und $\TT_{uw}$
    auf beschränkten Mengen ja überein. Aus diesem Grund müssen wir
    auch zwangsläufig ein Netz konstruieren: Eine
    $\TT_{w}$-konvergente \emph{Folge} ist immer beschränkt, wie man
    mit dem Satz von Banach-Steinhaus zeigen kann, sodass eine Folge
    von Operatoren genau dann bezüglich der schwachen
    Operatortopologie konvergent ist, wenn sie bezüglich der
    ultraschwachen Operatortopologie konvergiert.} Netz angeben, das
  bezüglich der schwachen Operatortopologie gegen den Nulloperator
  konvergiert, nicht aber bezüglich der ultraschwachen
  Operatortopologie.

  Wir betrachten die Menge $\Sub_{\text{fin}}(H)$ der
  endlichdimensionalen Unterräume von $H$. Versehen mit der Inklusion
  von Unterräumen, die wir mit $\leq$ notieren, wird
  $\Sub_{\text{fin}}(H)$ offenbar zu einer gerichteten
  Menge\footnote{$(\Sub_{\text{fin}}(H),\leq)$ ist sogar eine
    Verbandshalbordnung.}. Für einen endlichdimensionalen Unterraum
  $U\in\Sub_{\text{fin}}(H)$ sei $P_{\orth{U}}$ die
  Orthogonalprojektion auf den Orthogonalraum $\orth{U}$ und
  \begin{displaymath}
    T_{U}:=\dim(U)\cdot P_{\orth{U}}\in L_{b}(H).
  \end{displaymath}
  Wir behaupten, dass das Netz $(T_{U})_{U\in\Sub_{\text{fin}}(H)}$
  ein $\TT_{w}$-, aber kein $\TT_{uw}$-Nullnetz ist.

  Die Konvergenz $T_{U}\xrightarrow{\TT_{w}} 0$ folgt daraus, dass für
  $U\geq U_{0}:=\spn x$ sicher $T_{U}x=0$ und daher
  $\abs{(T_{U}x,y)}=0$ gilt.

  Wenn wir einen Spurklasseoperator $S\in L^{1}(H)$ finden können, für
  den $\tr(ST_{U})$ nicht gegen $0$ konvergiert, so haben wir
  $T_{U}\not\to 0$ bezüglich $\TT_{uw}$ gezeigt. Dazu wählen wir ein
  abzählbar unendliches Orthonormalsystem $\set{e_{n}}{n\in\N}$ in $H$
  und definieren den Operator
  \begin{displaymath}
    Sx:=\sum_{n=1}^{\infty}\frac{1}{n^{2}}(x,e_{2n})e_{2n},
  \end{displaymath}
  der wegen $(\frac{1}{n^{2}}(x,e_{n}))_{n\in\N}\in\ell^{2}(\N)$
  wohldefiniert ist. Diese Reihe konvergiert sogar bezüglich der
  Operatortopologie, da aus
  \begin{align*}
    \norm{Sx-\sum_{n=1}^{N}\frac{1}{n^{2}}(x,e_{2n})e_{2n}}^{2}&=\norm{\sum_{n=N+1}^{\infty}\frac{1}{n^{2}}(x,e_{2n})e_{2n}}^{2}\\
                                                               &=\sum_{n=N+1}^{\infty}\norm{\frac{1}{n^{2}}(x,e_{2n})e_{2n}}^{2}=\sum_{n=N+1}^{\infty}\frac{1}{n^{4
                                                                }}\abs{(x,e_{2n})}^{2}\\
                                                               &\leq\frac{1}{(N+1)^{4}}\sum_{n=N+1}^{\infty}\abs{(x,e_{2n})}^{2}\leq\frac{1}{(N+1)^{4}}\norm{x}^{2}
  \end{align*}
  sofort
  \begin{displaymath}
    \norm{S-\sum_{n=1}^{N}\frac{1}{n^{2}}(.,e_{2n})e_{2n}}\leq\frac{1}{(N+1)^{2}}
  \end{displaymath}
  folgt. Mit der Notation aus dem Beweis von Satz~\ref{satz:eig-optop}
  erhalten wir also
  \begin{displaymath}
    S=\lim_{N\to\infty}\sum_{n=1}^{N}\frac{1}{n^{2}}S_{e_{2n},e_{2n}}.
  \end{displaymath}
  In besagtem Beweis haben wir gezeigt, dass $S_{y,y}$ für jedes
  $y\in H$ positiv ist. Nach
  Satz~\ref{satz:eig-pos-el}(\ref{item:eig-pos-el-i}) und
  (\ref{item:eig-pos-el-iv}) ist somit auch $S$ positiv. Aus
  $Se_{2n-1}=0$ folgt
  \begin{displaymath}
    \sum_{e\in E}(|S|e,e)=\sum_{e\in E}(Se,e)=\sum_{n=1}^{\infty}(Se_{2n},e_{2n})=\sum_{n=1}^{\infty}\frac{1}{n^{2}}<\infty,
  \end{displaymath}
  also $S\in L^{1}(H)$.

  Angenommen, das Netz $(\tr(ST_{U}))_{U\in\Sub_{\text{fin}}(H)}$
  konvergiert gegen $0$. Dann gibt es einen endlichdimensionalen
  Unterraum $U_{0}$ mit
  \begin{equation}\label{eq:bew-optop-versch-i}
    \abs{\tr(ST_{U})}\leq 1\quad\text{für alle}\ U\geq U_{0}.
  \end{equation}
  Da $U_{0}$ endlichdimensional ist, gibt es ein $n_{0}\in\N$ mit
  $e_{2n_{0}}\notin U_{0}=\ker P_{\orth{U_{0}}}$ bzw.
  $P_{\orth{U_{0}}}e_{2n_{0}}\neq 0$. Aus
  \begin{displaymath}
    (SP_{\orth{U_{0}}}e_{2n},e_{2n})=\left(\sum_{m=1}^{\infty}\frac{1}{m^{2}}(P_{\orth{U_{0}}}e_{2n},e_{2m})e_{2m},e_{2n}\right)=\frac{1}{n^{2}}(P_{\orth{U_{0}}}e_{2n},e_{2n})=\frac{1}{n^{2}}\norm{P_{\orth{U_{0}}}e_{2n}}^{2}
  \end{displaymath}
  folgt
  \begin{align}
    \begin{split}
      \label{eq:bew-optop-versch-ii}
      \tr(SP_{\orth{U_{0}}})&=\sum_{e\in
        E}(SP_{\orth{U_{0}}}e,e)\stackrel{(*)}{=}\sum_{n=1}^{\infty}(SP_{\orth{U_{0}}}e_{2n},e_{2n})\\
      &=\sum_{n=1}^{\infty}\frac{1}{n^{2}}\norm{P_{\orth{U_{0}}}e_{2n}}^{2}\geq\frac{1}{n_{0}^{2}}\norm{P_{\orth{U_{0}}}e_{2n_{0}}}^{2}>0,
    \end{split}
  \end{align}
  wobei sich die Gleichung $(*)$ aus
  \begin{displaymath}
    \ran S\leq\overline{\spn\set{e_{2n}}{n\in\N}}\leq\orth{\big(\spn(E\setminus\set{e_{2n}}{n\in\N})\big)}
  \end{displaymath}
  ergibt. Insbesondere ist $\abs{\tr(SP_{\orth{U_{0}}})}\neq 0$. Wenn
  wir Unterräume $U\geq U_{0}$ mit beliebig großer endlicher Dimension
  finden können, für die $\tr(SP_{\orth{U}})=\tr(SP_{\orth{U_{0}}})$
  gilt, so erhalten wir für einen Unterraum mit
  $\dim(U)>\frac{1}{\abs{\tr(SP_{\orth{U_{0}}})}}$ die Ungleichung
  \begin{displaymath}
    \abs{\tr(ST_{U})}=\dim(U)\abs{\tr(SP_{\orth{U}})}=\dim(U)\abs{\tr(SP_{\orth{U_{0}}})}>1,
  \end{displaymath}
  im Widerspruch zu \eqref{eq:bew-optop-versch-i}.

  Analog zu \eqref{eq:bew-optop-versch-ii} zeigt man mithilfe von
  $(*)$
  \begin{displaymath}
    \tr(SP_{\orth{U}})=\sum_{n=1}^{\infty}\frac{1}{n^{2}}\norm{P_{\orth{U}}e_{2n}}^{2},
  \end{displaymath}
  sodass es für den Beweis von
  $\tr(SP_{\orth{U}})=\tr(SP_{\orth{U_{0}}})$ reicht,
  $P_{\orth{U}}e_{2n}=P_{\orth{U_{0}}}e_{2n}$ für alle $n\in\N$ zu
  garantieren. Der Orthogonalraum
  \begin{displaymath}
    W:=\orth{(U_{0}+\spn\set{e_{2n}}{n\in\N})}
  \end{displaymath}
  hat unendliche Dimension, denn wegen
  \begin{align*}
    H&=\big(U_{0}+\spn\set{e_{2n}}{n\in\N}\big)+\orth{(U_{0}+\spn\set{e_{2n}}{n\in\N})}\\
     &=\spn\set{e_{2n}}{n\in\N}+\big(U_{0}+\orth{(U_{0}+\spn\set{e_{2n}}{n\in\N})}\big)
  \end{align*}
  hätte sonst $\spn\set{e_{2n}}{n\in\N}$ endliche Kodimension. Dies
  würde
  \begin{displaymath}
    \spn\set{e_{2n}}{n\in\N}\cap\spn\set{e_{2n-1}}{n\in\N}=\{0\}
  \end{displaymath}
  widersprechen.
  
  Sei $\set{f_{n}}{n\in\N}$ eine abzählbar unendliche und linear
  unabhängige Teilmenge von $W$. Wir definieren
  \begin{displaymath}
    U_{k}:=U_{0}+\spn\{f_{1},\dots,f_{k}\}
  \end{displaymath}
  für $k\in\N$ und zeigen
  $P_{\orth{U_{k}}}e_{2n}=P_{\orth{U_{0}}}e_{2n}$. Dazu müssen wir
  \begin{equation}\label{eq:bew-optop-versch-iii}
    P_{\orth{U_{0}}}e_{2n}\in\orth{U_{k}}
  \end{equation}
  und
  \begin{equation}\label{eq:bew-optop-versch-iv}
    e_{2n}-P_{\orth{U_{0}}}e_{2n}\in U_{k}
  \end{equation}
  nachweisen.

  Die Eigenschaft \eqref{eq:bew-optop-versch-iv} folgt aus
  \begin{displaymath}
    e_{2n}-P_{\orth{U_{0}}}e_{2n}\in U_{0}\leq U_{k}.
  \end{displaymath}
  Für \eqref{eq:bew-optop-versch-iii} bemerken wir
  \begin{displaymath}
    \orth{U_{k}}=\orth{U_{0}}\cap\orth{\spn\{f_{1},\dots,f_{k}\}},
  \end{displaymath}
  wobei $P_{\orth{U_{0}}}e_{2n}\in\orth{U_{0}}$ wieder unmittelbar
  klar ist. Aus $e_{2n}\in\orth{W}$ und
  $e_{2n}-P_{\orth{U_{0}}}e_{2n}\in U_{0}\leq\orth{W}$ folgt
  \begin{displaymath}
    P_{U_{0}}e_{2n}=e_{2n}-(e_{2n}-P_{\orth{U_{0}}}e_{2n})\in\orth{W}\leq\orth{\spn\{f_{1},\dots,f_{k}\}},
  \end{displaymath}
  sodass wir insgesamt \eqref{eq:bew-optop-versch-iii} erhalten. Da
  $\dim(U_{k})=\dim(U_{0})+k$ in $k$ unbeschränkt ist, haben wir unser
  Ziel erreicht und den Beweis abgeschlossen.
\end{proof}

\section{Abgeschlossenheit von $*$-Unteralgebren}
\label{sec:abg-star-unteralg}
Wir können nun das Hauptresultat dieses Kapitels formulieren, das wir
anschließend in einigen Schritten beweisen
werden. Satz~\ref{satz:optop-versch} einerseits und
Korollar~\ref{kor:uw-optop-wstar} andererseits zeigen, dass die hier
behandelten Operatortopologien für $\dim H=\infty$ zwar alle
verschieden, in mancherlei Hinsicht aber auch sehr ähnlich sind. Der
nächste Satz kreist ebenfalls um die Frage, wann die verschiedenen
Operatortopologien für bestimmte Teilmengen in gewisser Weise gleich
sind. Dabei soll "`Gleichheit"' bedeuten, dass die Topologien den
gleichen Begriff von Abgeschlossenheit für gewisse Mengen liefern. Es
zeigt sich, dass für $*$-Unteralgebren die stärkstmögliche Aussage
gilt.
\begin{satz}\label{satz:optop-abg-unteralg}
  Für eine $*$-Unteralgebra $A$ von $L_{b}(H)$ sind folgende Aussagen
  äquivalent:
  \begin{enumerate}[label=(\roman*),ref=\roman*]
  \item\label{item:optop-abg-unteralg-i} $A$ ist
    $\TT_{s}$-abgeschlossen.
  \item\label{item:optop-abg-unteralg-ii} $A$ ist
    $\TT_{w}$-abgeschlossen.
  \item\label{item:optop-abg-unteralg-iii} $A$ ist
    $\TT_{uw}$-abgeschlossen.
  \end{enumerate}
\end{satz}
Zunächst wollen wir die Äquivalenz von
(\ref{item:optop-abg-unteralg-i}) und
(\ref{item:optop-abg-unteralg-ii}) zeigen. Dabei werden wir beweisen,
dass diese Aussage sogar für beliebige konvexe Mengen gilt, indem wir
die Gleichheit der Dualräume bezüglich $\TT_{s}$ und $\TT_{w}$
zeigen. Als Vorarbeit für Kapitel~\ref{cha:eindeutigkeit}
identifizieren wir zusätzlich den Dualraum bezüglich der
ultraschwachen Operatortopologie.
\begin{lemma}\label{lem:optop-dualraeume}
  Für die Dualräume von $L_{b}(H)$ versehen mit der starken, schwachen
  bzw. ultraschwachen Operatortopologie gilt
  \begin{subequations}
    \begin{align}\label{eq:optop-dualraeume-a}
      \big(L_{b}(H),\TT_{s}\big)'=\big(L_{b}(H),\TT_{w}\big)'&=\set{\left(T\mapsto\sum_{k=1}^{n}(Tx_{k},y_{k})\right)}{n\in\N,x_{k},y_{k}\in
                                   H}\\\label{eq:optop-dualraeume-b}
      \big(L_{b}(H),\TT_{uw}\big)'&=\set{(T\mapsto\tr(ST))}{S\in
                                    L^{1}(H)}
    \end{align}
  \end{subequations}
\end{lemma}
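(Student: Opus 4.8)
Der Plan ist, zuerst \eqref{eq:optop-dualraeume-b} zu zeigen, da diese Gleichung fast unmittelbar aus den bereits bereitgestellten Hilfsmitteln folgt, und anschließend \eqref{eq:optop-dualraeume-a} über eine Kette von Inklusionen zu beweisen. Für den ultraschwachen Fall erinnere ich an Satz~\ref{satz:uw-optop-wstar}: Die kanonische Abbildung $\theta$ ist ein linearer Homöomorphismus von $(L_{b}(H),\TT_{uw})$ nach $(L^{1}(H)',\sigma(L^{1}(H)',L^{1}(H)))$. Da für einen punktetrennenden Raum $Y\leq X^{*}$ die schwache Topologie $\sigma(X,Y)$ bekanntlich als topologischen Dualraum gerade $Y$ besitzt -- angewandt mit $X=L^{1}(H)'$ und $Y=L^{1}(H)$ vermöge der kanonischen Einbettung $L^{1}(H)\to L^{1}(H)''$ --, ist der Dualraum der rechten Seite gleich $L^{1}(H)$, wobei $S\in L^{1}(H)$ durch Auswertung wirkt. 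Zieht man dies mittels $\theta$ zurück, so ist ein Funktional auf $(L_{b}(H),\TT_{uw})$ genau dann stetig, wenn es die Gestalt $T\mapsto\theta(T)(S)=\tr(ST)$ für ein $S\in L^{1}(H)$ hat; das ist genau \eqref{eq:optop-dualraeume-b}.

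Für \eqref{eq:optop-dualraeume-a} bezeichne $\FF$ die rechts stehende Menge. Zuerst würde ich $\FF\subseteq(L_{b}(H),\TT_{w})'$ nachweisen: Jedes Funktional $T\mapsto\sum_{k=1}^{n}(Tx_{k},y_{k})$ ist betragsmäßig durch $n\max_{k}\abs{(Tx_{k},y_{k})}$, also durch das Maximum endlich vieler der $\TT_{w}$ erzeugenden Seminormen beschränkt und damit $\TT_{w}$-stetig. Wegen $\TT_{w}\subseteq\TT_{s}$ nach Satz~\ref{satz:eig-optop} ist jedes $\TT_{w}$-stetige Funktional auch $\TT_{s}$-stetig, also $\FF\subseteq(L_{b}(H),\TT_{w})'\subseteq(L_{b}(H),\TT_{s})'$. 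Es bleibt $(L_{b}(H),\TT_{s})'\subseteq\FF$ zu zeigen, womit sich der Kreis schließt und alle drei Räume übereinstimmen.

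Der Kern des Beweises ist somit: Zu einem $\TT_{s}$-stetigen linearen Funktional $\varphi$ sind Vektoren $x_{k},y_{k}$ mit $\varphi(T)=\sum_{k}(Tx_{k},y_{k})$ zu konstruieren. Nach der Charakterisierung der Stetigkeit über Seminormen gibt es $x_{1},\dots,x_{n}\in H$ und $C>0$ mit $\abs{\varphi(T)}\leq C\max_{k}\norm{Tx_{k}}\leq C\bigl(\sum_{k}\norm{Tx_{k}}^{2}\bigr)^{1/2}$ für alle $T\in L_{b}(H)$. Ich würde dann die lineare Abbildung $L_{b}(H)\to H^{n}$, $T\mapsto(Tx_{1},\dots,Tx_{n})$ betrachten und ihren Bildraum $V$ bilden. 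Auf $V$ ist die Zuordnung $(Tx_{1},\dots,Tx_{n})\mapsto\varphi(T)$ wohldefiniert -- stimmen die Tupel überein, so gilt $(T-T')x_{k}=0$ für alle $k$ und damit $\varphi(T-T')=0$ -- und bezüglich der Hilbertraumnorm auf $H^{n}$ aus Bemerkung~\ref{bem:kart-prod-matrix-hr} beschränkt. Eine Fortsetzung mittels Hahn-Banach auf ganz $H^{n}$ und der Rieszsche Darstellungssatz im Hilbertraum $H^{n}$ liefern $y_{1},\dots,y_{n}\in H$ mit $\varphi(T)=\sum_{k=1}^{n}(Tx_{k},y_{k})$.

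Die einzigen etwas heiklen Punkte sind die Buchführung in der Seminormabschätzung (das Ersetzen des Maximums durch die $\ell^{2}$-Summe, was nur eine Konstante kostet) und die Behandlung von $H^{n}$ als echter Hilbertraum, damit der Darstellungssatz anwendbar ist; beides ist Routine. Der eigentliche Gehalt steckt in der Beobachtung, dass sich $\TT_{s}$-Stetigkeit automatisch über die Auswertungsabbildung $T\mapsto(Tx_{1},\dots,Tx_{n})$ faktorisiert, wodurch das Problem auf lineare Funktionale auf einem Unterraum von $H^{n}$ zurückgeführt wird.
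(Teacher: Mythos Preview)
Your proposal is correct and follows essentially the same approach as the paper: the identification of $(L_{b}(H),\TT_{uw})'$ via the homeomorphism $\theta$ and the known dual of a weak-*-topology, and the factorisation of a $\TT_{s}$-continuous functional through the map $T\mapsto(Tx_{1},\dots,Tx_{n})\in H^{n}$ followed by the Riesz representation theorem. The only cosmetic differences are the order of the arguments and that the paper extends by continuity to the closure $\overline{V}\leq H^{n}$ and applies Riesz there, whereas you use Hahn--Banach to reach all of $H^{n}$; both variants work equally well.
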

\begin{proof}
  Wir führen den Beweis in drei Schritten.
  \begin{enumerate}[label=(\roman*),ref=\roman*]
  \item Zunächst zeigen wir
    \begin{equation}\label{eq:bew-optop-dualraeume}
      \big(L_{b}(H),\TT_{w}\big)'=\set{\left(T\mapsto\sum_{k=1}^{n}(Tx_{k},y_{k})\right)}{n\in\N,x_{k},y_{k}\in
        H}.
    \end{equation}
    Ist $Y$ ein punktetrennender Unterraum des Raums der linearen
    Funktionale auf einem Vektorraum $X$, so kann man die schwache
    Topologie $\sigma(X,Y)$ definieren. Bekanntermaßen wird
    $\sigma(X,Y)$ auch durch die Menge $P=\set{p_{f}}{f\in Y}$ von
    Seminormen erzeugt, wenn wir $p_{f}(z):=\abs{\chevr{z}{f}}$ für
    $z\in X$ setzen. Die rechte Seite von
    \eqref{eq:bew-optop-dualraeume} ist ein Unterraum des
    algebraischen Dualraums von $L_{b}(H)$, der nach dem Beweis von
    Satz~\ref{satz:eig-optop}(\ref{item:eig-optop-i}) punktetrennend
    ist. Somit ist die schwache Topologie bezüglich dieser rechten
    Seite wohldefiniert. Sei nun ein Funktional
    $\chevr{T}{\varphi}:=\sum_{k=1}^{n}(Tx_{k},y_{k})$ und ein Netz
    $(T_{i})_{i\in I}$ von beschränkten Operatoren gegeben. Definieren
    wir die Funktionale $\varphi_{k}$ durch
    $\chevr{T}{\varphi_{k}}:=(Tx_{k},y_{k})$, so folgt
    \begin{displaymath}
      p_{\varphi}(T_{i})=\abs{\sum_{k=1}^{n}(Tx_{k},y_{k})}\leq\sum_{k=1}^{n}\abs{(Tx_{k},y_{k})}=\sum_{k=1}^{n}p_{\varphi_{k}}(T_{i})
    \end{displaymath}
    und daraus, dass $p_{\varphi}(T_{i})$ für $i\in I$ gegen Null
    konvergiert, wenn alle $p_{\varphi_{k}}(T_{i})$ gegen Null
    konvergieren. Betrachten wir die Mengen
    \begin{align*}
      P_{w}&:=\set{(T\mapsto \abs{(Tx,y)})}{x,y\in H}
             \quad\text{und}\\
      P&:=\set{\left(T\mapsto\abs{\sum_{k=1}^{n}(Tx_{k},y_{k})}\right)}{n\in\N,x_{k},y_{k}\in
         H}
    \end{align*}
    von Seminormen, so zeigt die Grenzwertcharakterisierung
    \eqref{eq:konv-snormen-ind-top}, dass in den von $P_{w}$ bzw. $P$
    erzeugten Topologien die gleichen Netze gegen Null
    konvergieren. Somit sind die Topologien gleich. Die erste
    Topologie ist genau die schwache Operatortopologie, die zweite
    Topologie stimmt mit der schwachen Topologie bezüglich der rechten
    Seite von \eqref{eq:bew-optop-dualraeume} überein. Die
    wohlbekannte Tatsache $\big(X,\sigma(X,Y)\big)'=Y$ schließt den
    Beweis von \eqref{eq:bew-optop-dualraeume} ab.
  \item Als Nächstes beweisen wir
    $(L_{b}(H),\TT_{s})'=(L_{b}(H),\TT_{w})'$.

    Aus der Inklusion $\TT_{w}\subseteq\TT_{s}$ folgt sofort
    $(L_{b}(H),\TT_{w})'\subseteq(L_{b}(H),\TT_{s})'$.

    Sei umgekehrt $f\in L_{b}(H)^{*}$ stetig bezüglich der starken
    Operatortopologie. Dann ist $f$ auf einer $\TT_{s}$-Nullumgebung
    $U$ durch eine Konstante $D>0$ beschränkt, wobei wir durch
    Skalieren annehmen können, dass $D=1$ ist. Außerdem können wir $U$
    als Element einer vorgegebenen Nullumgebungsbasis wählen. Eine
    Nullumgebungsbasis einer von Seminormen $p\in P$ induzierten
    Topologie ist gegeben durch die Mengen
    \begin{displaymath}
      \set{x\in X}{p_{k}(x)<\epsilon\ \text{für alle}\
        k=1,\dots,n},
    \end{displaymath}
    wobei $\epsilon>0$, $n\in\N$ und $p_{1},\dots,p_{n}\in P$ beliebig
    gewählt werden können. In unserem Fall bedeutet das
    \begin{displaymath}
      U=\set{T\in L_{b}(H)}{\norm{Tx_{k}}<\epsilon\ \text{für alle}\ k=1,\dots,n}
    \end{displaymath}
    für ein positives $\epsilon$ und Vektoren
    $x_{1},\dots,x_{n}\in H$.  \newline Ist $T\in L_{b}(H)$ beliebig
    und $C>\max(\norm{Tx_{1}}\,\dots,\norm{Tx_{n}})$, so gilt
    $\frac{\epsilon}{C}T\in U$, sodass
    $\abs{\chevr{\frac{\epsilon}{C}T}{f}}\leq 1$
    bzw. $\abs{\chevr{T}{f}}\leq\frac{C}{\epsilon}$ folgt. Da $C$
    beliebig war, schließen wir auf
    \begin{equation}\label{eq:w-s-optop-funkt-i}
      \abs{\chevr{T}{f}}\leq\frac{1}{\epsilon}\max(\norm{Tx_{1}},\dots,\norm{Tx_{n}})
    \end{equation}
    für jedes $T\in L_{b}(H)$.

    Wir betrachten nun das kartesische Produkt $H^{n}$ und dessen
    Unterraum
    \begin{displaymath}
      Y:=\set{(Tx_{1},\dots,Tx_{n})^{T}\in H^{n}}{T\in L_{b}(H)}.
    \end{displaymath}
    Die Abbildung
    \begin{displaymath}
      \varphi:
      \begin{cases}
        \hfill Y&\to \C\\
        (Tx_{1},\dots,Tx_{n})^{T}&\mapsto\chevr{T}{f}
      \end{cases}
    \end{displaymath}
    ist wohldefiniert, da aus
    $(Tx_{1},\dots,Tx_{n})^{T}=(Sx_{1},\dots,Sx_{n})^{T}$ wegen
    \eqref{eq:w-s-optop-funkt-i} angewandt auf $T-S$ die Gleichheit
    $\chevr{T}{f}=\chevr{S}{f}$ folgt. Klarerweise ist $\varphi$ ein
    lineares Funktional und erfüllt wegen \eqref{eq:w-s-optop-funkt-i}
    die Ungleichung
    \begin{displaymath}
      \abs{\chevr{(Tx_{1},\dots,Tx_{n})^{T}}{\varphi}}\leq\frac{1}{\epsilon}\max(\norm{Tx_{1}},\dots,\norm{Tx_{n}})\leq\frac{1}{\epsilon}\norm[H^{n}]{(Tx_{1},\dots,Tx_{n})^{T}}.
    \end{displaymath}
    Das Funktional $\varphi$ ist also beschränkt und hat folglich eine
    -- ebenfalls mit $\varphi$ bezeichnete -- beschränkte Fortsetzung
    auf den Hilbertraum $\cl{Y}\leq H^{n}$. Der Darstellungssatz von
    Riesz in $\cl{Y}$ liefert die Existenz eines Elements
    $(y_{1},\dots,y_{n})\in\cl{Y}$ mit
    \begin{displaymath}
      \chevr{T}{f}=\chevr{(Tx_{1},\dots,Tx_{n})^{T}}{\varphi}=\left((Tx_{1},\dots,Tx_{n})^{T},(y_{1},\dots,y_{n})^{T}\right)_{H^{n}}=\sum_{k=1}^{n}(Tx_{k},y_{k}).
    \end{displaymath}
    Nach \eqref{eq:bew-optop-dualraeume} ist $f$ stetig bezüglich der
    schwachen Operatortopologie, was den Beweis von
    \eqref{eq:optop-dualraeume-a} vollendet.
  \item Zuletzt folgt \eqref{eq:optop-dualraeume-b} analog (aber auf
    direktere Weise) zu \eqref{eq:bew-optop-dualraeume}; siehe auch
    \eqref{eq:bew-uw-optop-wstar}.
  \end{enumerate}
\end{proof}

Insbesondere sind also die Dualräume bezüglich der starken und der
schwachen Operatortopologie gleich. Mit der Folgerung aus dem Satz von
Hahn-Banach, dass der Abschluss einer konvexen Menge nur vom
topologischen Dualraum und nicht von der genauen Topologie abhängt,
erhalten wir aus diesem Resultat sofort:
\begin{korollar}\label{kor:optop-konv-abg}
  Eine konvexe Teilmenge -- insbesondere eine $*$-Unteralgebra -- von
  $L_{b}(H)$ ist genau dann $\TT_{s}$-abgeschlossen, wenn sie
  $\TT_{w}$-abgeschlossen ist.
\end{korollar}

Nach
Satz~\ref{satz:adj-mult-stetig-optop}(\ref{item:adj-mult-stetig-optop-ii})
sind weder die Adjungiertenbildung auf $L_{b}(H)$ noch die
Multiplikation von Operatoren $\TT_{s}$-stetig, wenn $H$
unendlichdimensional ist. Um den Beweis von
Satz~\ref{satz:optop-abg-unteralg} vervollständigen zu können,
benötigen wir Voraussetzungen, unter denen man sehr wohl die
Adjungiertenbildung bzw. die Multiplikation mit Grenzwerten bezüglich
$\TT_{s}$ vertauschen kann.
\begin{lemma}\label{lem:tech-st-stetig}
  \hspace{0mm}
  \begin{enumerate}[label=(\roman*),ref=\roman*]
  \item\label{item:tech-st-stetig-i} Die Adjungiertenbildung $.^{*}$
    ist auf den normalen Operatoren in $L_{b}(H)$ stetig bezüglich der
    starken Operatortopologie.
  \item\label{item:tech-st-stetig-ii} Ist $M\subseteq L_{b}(H)$ eine
    $\norm{\cdot}$-beschränkte Menge, so ist die Muliplikation
    $(S,T)\mapsto ST$ auf $M\times L_{b}(H)$ stetig bezüglich der
    starken Operatortopologie.
  \end{enumerate}
\end{lemma}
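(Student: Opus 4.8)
For part~(\ref{item:tech-st-stetig-i}), the key observation is that on the set of normal operators the strong and the \emph{strong-$*$} behaviour are linked through the norm identity. Concretely, for a normal $T$ one has $\norm{Tx}=\norm{T^{*}x}$ for every $x\in H$, since $\norm{Tx}^{2}=(T^{*}Tx,x)=(TT^{*}x,x)=\norm{T^{*}x}^{2}$. The plan is to let $(T_{i})_{i\in I}$ be a net of normal operators converging $\TT_{s}$ to a normal $T$, reduce to the case $T=0$ by passing to $T_{i}-T$ (which need \emph{not} be normal, so this reduction has to be handled carefully), and then argue directly: for fixed $x\in H$ we want $\norm{T_{i}^{*}x}\to 0$. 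The honest route avoids the difference trick: instead use that $\norm{T_{i}^{*}x - T^{*}x}$ cannot in general be controlled, so one works with the polarization/parallelogram-type estimate. A clean way is to invoke that for normal operators $\norm{(T_{i}-T)^{*}x}$ is controlled by $\norm{(T_{i}-T)x}$ \emph{when $T_{i}-T$ is normal}; since $T_{i}$ and $T$ commute with their adjoints but not necessarily with each other, the correct argument is: $\norm{(T_i^*-T^*)x} \le \norm{(T_i - T)x} + \norm{(T_i^* - T^*)x - \text{(normal part)}}$ is not available, so one instead uses the standard trick $\norm{(T_i-T)^*x}^2 = \norm{T_i^*x}^2 - 2\re(T_i^*x, T^*x) + \norm{T^*x}^2 = \norm{T_ix}^2 - 2\re(x, T_iT^*x) + \norm{T^*x}^2$, and since $T_i \to T$ strongly implies $T_iT^*x \to TT^*x$ (by strong continuity of the translation $S\mapsto ST^*$, Satz~\ref{satz:adj-mult-stetig-optop}(\ref{item:adj-mult-stetig-optop-ii-c})) and $\norm{T_ix}\to\norm{Tx}=\norm{T^*x}$, all three terms converge so that the sum tends to $\norm{T^*x}^2 - 2\re(x, TT^*x) + \norm{T^*x}^2 = 2\norm{T^*x}^2 - 2\norm{TT^*x\text{-pairing}}$; one checks this limit is $0$ using normality of $T$. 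This limit computation is the technical heart and I would carry it out explicitly.

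For part~(\ref{item:tech-st-stetig-ii}), the idea is the familiar $\varepsilon/2$-splitting for bilinear maps restricted to a bounded factor. Suppose $M$ is bounded by $C>0$, i.e.\ $\norm{S}\le C$ for all $S\in M$. Let $((S_{i},T_{i}))_{i\in I}$ be a net in $M\times L_{b}(H)$ converging $\TT_{s}\times\TT_{s}$ to $(S,T)\in M\times L_{b}(H)$. For a fixed $x\in H$ write
\begin{displaymath}
  S_{i}T_{i}x - STx = S_{i}(T_{i}x - Tx) + (S_{i} - S)(Tx).
\end{displaymath}
The first summand has norm $\le \norm{S_{i}}\,\norm{T_{i}x - Tx} \le C\,\norm{(T_{i}-T)x} \to 0$ by strong convergence of $T_{i}$; the second summand is $(S_{i}-S)y$ with $y:=Tx$ fixed, which tends to $0$ by strong convergence of $S_{i}$. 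Hence $S_{i}T_{i}x \to STx$ for every $x$, which is exactly $\TT_{s}$-convergence of the products. The boundedness of $M$ is used precisely to keep the factor $\norm{S_{i}}$ under control in the first term; without it the standard counterexample in the proof of Satz~\ref{satz:adj-mult-stetig-optop}(\ref{item:adj-mult-stetig-optop-ii-b}) shows simultaneous continuity fails.

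The main obstacle is part~(\ref{item:tech-st-stetig-i}): unlike part~(ii), one cannot reduce to a translation that is already known to be continuous, because $.^{*}$ is genuinely discontinuous in general and the subtraction $T_{i}-T$ destroys normality. The resolution is to expand $\norm{(T_{i}-T)^{*}x}^{2}$ by bilinearity and observe that each resulting inner-product term converges (using strong continuity of one-sided multiplication and the norm identity $\norm{Tx}=\norm{T^{*}x}$ for the normal limit), so the whole expression tends to $0$; I would write this computation out in full, as it is the only nonroutine step.
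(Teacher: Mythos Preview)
Your proposal is correct and follows essentially the same route as the paper. For part~(\ref{item:tech-st-stetig-ii}) your splitting $S_iT_ix - STx = S_i(T_ix - Tx) + (S_i-S)(Tx)$ is exactly the paper's argument; for part~(\ref{item:tech-st-stetig-i}) the paper likewise expands $\norm{(T^{*}-S^{*})x}^{2}$ bilinearly, uses normality of both operators to convert $\norm{T^{*}x}^{2}$ into $\norm{Tx}^{2}$ (and similarly for $S$), and exploits that the limit's adjoint applied to $x$ is a \emph{fixed} vector, obtaining after Cauchy--Schwarz the bound $\norm{(T^{*}-S^{*})x}^{2}\le\norm{Tx}^{2}-\norm{Sx}^{2}+2\norm{x}\,\norm{(S-T)S^{*}x}$ --- the same mechanism as your term-by-term limit, just grouped differently.
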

\begin{proof}
  \hspace{0mm}
  \begin{enumerate}[label=(\roman*),ref=\roman*]
  \item Seien $S,T\in L_{b}(H)$ normale Operatoren. Für einen Vektor
    $x\in H$ gilt
    \begin{align*}
      \norm{(T^{*}-S^{*})x}^{2}&=(T^{*}x-S^{*}x,T^{*}x-S^{*}x)\\
                               &=(TT^{*}x,x)-(TS^{*}x,x)-(x,TS^{*}x)+(SS^{*}x,x)\\
                               &\stackrel{\mathclap{(*)}}{=}(T^{*}Tx,x)+((S-T)S^{*}x,x)-(x,TS^{*}x)\\
                               &\stackrel{\mathclap{(**)}}{=}(T^{*}Tx,x)+((S-T)S^{*}x,x)-(x,TS^{*}x)+\underbrace{(x,SS^{*}x)-(x,S^{*}Sx)}_{=\,0}\\
                               &=(Tx,Tx)+((S-T)S^{*}x,x)+(x,(S-T)S^{*}x)-(Sx,Sx)\\
                               &=\norm{Tx}^{2}-\norm{Sx}^{2}+((S-T)S^{*}x,x)+(x,(S-T)S^{*}x),
    \end{align*}
    wobei in $(*)$ bzw. $(**)$ die Normalität von $T$ bzw. $S$
    eingeht. Diesen Ausdruck schätzen wir mit der Cauchy-Schwarz'schen
    Ungleichung ab und erhalten
    \begin{displaymath}
      \norm{(T^{*}-S^{*})x}^{2}\leq\norm{Tx}^{2}-\norm{Sx}^{2}+2\norm{x}\norm{(S-T)S^{*}x}.
    \end{displaymath}
    Setzen wir in diese Gleichung $T=S_{i}$ ein für ein Netz
    $(S_{i})_{i\in I}$, das bezüglich $\TT_{s}$ gegen $S$ konvergiert,
    so folgt, dass $S_{i}^{*}$ gegen $S^{*}$ konvergiert. Hierbei ist
    zu beachten, dass $S^{*}x$ ein fester Vektor ist, womit
    $(S-S_{i})S^{*}x$ gegen $0$ konvergiert.
  \item Seien $x\in H$ und
    $(S_{1},T_{1}),(S_{2},T_{2})\in M\times L_{b}(H)$ gegeben. Ist $M$
    beschränkt durch $C$, dann gilt
    \begin{align*}
      \norm{(S_{1}T_{1}-S_{2}T_{2})x}&\leq\norm{S_{1}(T_{1}-T_{2})x}+\norm{(S_{1}-S_{2})T_{2}x}\\
                                     &\leq C\norm{(T_{1}-T_{2})x}+\norm{(S_{1}-S_{2})T_{2}x}.
    \end{align*}
    Aus dieser Ungleichung folgt, dass $(S_{1,i}T_{1,i})_{i\in I}$
    gegen $ST$ konvergiert, wenn $S_{1,i}$ bzw. $T_{1,i}$ gegen
    $S_{2}:=S$ bzw. $T_{2}:=T$ konvergiert.
  \end{enumerate}
\end{proof}

Mithilfe des letzten Lemmas können wir eine -- leider etwas technische
-- Stetigkeitsbedingung beweisen. Dazu sei bemerkt, dass trotz der
fehlenden $\TT_{s}$-Stetigkeit von $.^{*}$ der $\TT_{s}$-Grenzwert von
selbstadjungierten Operatoren wieder selbstadjungiert ist: Ein
$\TT_{s}$-konvergentes Netz von Operatoren ist auch bezüglich
$\TT_{w}$ konvergent mit demselben Grenzwert, sodass die Behauptung
aus der $\TT_{w}$-Stetigkeit von $.^{*}$ folgt.

\begin{lemma}\label{lem:st-stetig-fkt}
  Sei $f:\R\to\C$ eine beschränkte und stetige Funktion. Für alle
  Netze $(S_{i})_{i\in I}$ selbstadjungierter Operatoren auf $H$, die
  bezüglich $\TT_{s}$ gegen $S$ konvergieren, konvergieren die nach
  dem Funktionalkalkül definierten Ausdrücke $f(S_{i})$ bezüglich
  $\TT_{s}$ gegen $f(S)$.
\end{lemma}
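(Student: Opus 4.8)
Sei $(S_i)_{i\in I}$ ein bezüglich $\TT_s$ gegen $S$ konvergentes Netz selbstadjungierter Operatoren. Wie im Text vor dem Lemma bemerkt, ist dann auch $S$ selbstadjungiert, sodass $g(S)$ für $g\in C_b(\R)$ wohldefiniert ist. Der Plan ist zu zeigen, dass die Menge
\[
  \mathcal A:=\set{g\in C_b(\R)}{g(S_i)\to g(S)\text{ bezüglich }\TT_s}
\]
ganz $C_b(\R)$ ist; da das Netz beliebig gewählt wurde, folgt daraus die Behauptung. Zuerst würde ich nachweisen, dass $\mathcal A$ eine das Einselement $\mathds 1$ enthaltende, abgeschlossene $*$-Unteralgebra von $C_b(\R)$ ist. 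Entscheidend ist dabei, dass der Funktionalkalkül in $C^{*}(T,1)$ für selbstadjungiertes $T$ isometrisch ist, also $\norm{g(T)}=\norm[\infty]{g|_{\sigma(T)}}\le\norm[\infty]{g}$ gilt; damit ist $\set{g(S_i)}{i\in I}$ für jedes $g$ eine $\norm{\cdot}$-beschränkte Menge, und jedes $g(S_i)$ ist als Element der kommutativen $C^{*}$-Algebra $C^{*}(S_i,1)$ normal. Die Abgeschlossenheit unter Summen und Skalarmultiplikation ist klar, da $\TT_s$ eine Vektorraum-Topologie ist. Die Abgeschlossenheit unter Produkten ergibt sich aus der Multiplikativität des Funktionalkalküls und Lemma~\ref{lem:tech-st-stetig}(\ref{item:tech-st-stetig-ii}), angewandt mit der beschränkten Menge $\set{g(S_i)}{i\in I}\cup\{g(S)\}$; die Abgeschlossenheit unter komplexer Konjugation aus $\overline{g}(S_i)=g(S_i)^{*}$ und Lemma~\ref{lem:tech-st-stetig}(\ref{item:tech-st-stetig-i}); und die Abgeschlossenheit unter gleichmäßiger Konvergenz aus einem $\epsilon/3$-Argument mithilfe der genannten Normabschätzung.

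Als Nächstes würde ich zeigen, dass $\mathcal A$ die Funktionen $r_\lambda(t):=(t-\lambda)^{-1}$ für $\lambda\in\C\setminus\R$ enthält. Wegen $\sigma(S_i),\sigma(S)\subseteq\R$ sind $S_i-\lambda$ und $S-\lambda$ invertierbar, und die Resolventengleichung liefert
\[
  (S_i-\lambda)^{-1}-(S-\lambda)^{-1}=(S_i-\lambda)^{-1}(S-S_i)(S-\lambda)^{-1}.
\]
Mit der bekannten Abschätzung $\norm{(S_i-\lambda)^{-1}}\le\abs{\im\lambda}^{-1}$ folgt für festes $x\in H$
\[
  \norm{\big((S_i-\lambda)^{-1}-(S-\lambda)^{-1}\big)x}\le\frac{1}{\abs{\im\lambda}}\,\norm{(S-S_i)(S-\lambda)^{-1}x},
\]
und da $(S-\lambda)^{-1}x$ ein fester Vektor ist, konvergiert die rechte Seite wegen $S_i\to S$ bezüglich $\TT_s$ gegen $0$. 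Also ist $r_\lambda\in\mathcal A$. Die Funktionen $r_\lambda$ liegen in $C_0(\R)$, trennen die Punkte von $\R$ und verschwinden an keinem Punkt; nach dem Satz von Stone-Weierstraß für lokalkompakte Räume ist die von ihnen erzeugte abgeschlossene $*$-Unteralgebra von $C_0(\R)$ daher ganz $C_0(\R)$. Da $\mathcal A\cap C_0(\R)$ nach dem ersten Schritt eine solche abgeschlossene $*$-Unteralgebra ist, erhalte ich $C_0(\R)\subseteq\mathcal A$.

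Die Hauptschwierigkeit ist der allgemeine Fall $f\in C_b(\R)$, den ich durch ein Abschneideargument auf $C_0(\R)$ zurückführen würde. Zu $R>0$ wähle man eine stetige Funktion $\chi_R:\R\to[0,1]$ mit kompaktem Träger und $\chi_R\equiv 1$ auf $[-R,R]$; dann gilt $\chi_R,f\chi_R\in C_0(\R)\subseteq\mathcal A$. Für $R>\norm{S}$ ist $\sigma(S)\subseteq[-R,R]$, also $\chi_R(S)=I$ und $(f\chi_R)(S)=f(S)$. Für $x\in H$ schätzt man mithilfe der Multiplikativität des Funktionalkalküls ab
\[
  \norm{f(S_i)x-f(S)x}\le\norm{f(S_i)(1-\chi_R)(S_i)x}+\norm{(f\chi_R)(S_i)x-f(S)x}\le\norm[\infty]{f}\,\norm{(1-\chi_R)(S_i)x}+\norm{(f\chi_R)(S_i)x-f(S)x}.
\]
Da $0\le 1-\chi_R\le 1$ punktweise gilt und der Funktionalkalkül mit der Ordnung verträglich ist, folgt $(1-\chi_R)^{2}(S_i)\le(1-\chi_R)(S_i)$, also
\[
  \norm{(1-\chi_R)(S_i)x}^{2}=\big((1-\chi_R)^{2}(S_i)x,x\big)\le\big((1-\chi_R)(S_i)x,x\big)=\norm{x}^{2}-\big(\chi_R(S_i)x,x\big).
\]
Wegen $\chi_R(S_i)\to\chi_R(S)=I$ und $(f\chi_R)(S_i)\to(f\chi_R)(S)=f(S)$ bezüglich $\TT_s$ konvergieren $\big(\chi_R(S_i)x,x\big)\to\norm{x}^{2}$ und $(f\chi_R)(S_i)x\to f(S)x$. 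Zu gegebenem $\epsilon>0$ fixiere man daher $R>\norm{S}$ und wähle $i_0\in I$ so, dass für $i\succcurlyeq i_0$ die Größen $\norm{x}^{2}-\big(\chi_R(S_i)x,x\big)$ und $\norm{(f\chi_R)(S_i)x-f(S)x}$ klein genug sind; die obige Abschätzung liefert dann $\norm{f(S_i)x-f(S)x}<\epsilon$. Da $x\in H$ beliebig war, folgt $f(S_i)\to f(S)$ bezüglich $\TT_s$, also $f\in\mathcal A=C_b(\R)$. Der springende Punkt ist genau dieser letzte Schritt: da das Netz $(S_i)_{i\in I}$ nicht als gleichmäßig beschränkt vorausgesetzt ist, kann man sich nicht auf ein festes $[-M,M]$ zurückziehen und dort $f$ durch Polynome approximieren, und die gleichmäßige (in $i$) Kontrolle des Fehlerterms $(1-\chi_R)(S_i)x$ über die obige Positivitätsabschätzung ist das eigentliche Argument.
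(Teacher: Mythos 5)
Ihr Beweis ist korrekt und folgt demselben Grundgerüst wie der Beweis in der Arbeit (Nachweis, dass die Menge der "`guten"' Funktionen eine unter Konjugation und gleichmäßiger Konvergenz abgeschlossene Algebra ist, Stone-Weierstraß für $C_{0}(\R)$, anschließend Ausdehnung auf $C_{b}(\R)$), weicht aber an zwei Stellen inhaltlich ab. Erstens erzeugen Sie $C_{0}(\R)$ aus den Resolventen $r_{\lambda}(t)=(t-\lambda)^{-1}$ mit der Resolventengleichung und der Schranke $\norm{(S_{i}-\lambda)^{-1}}\leq\abs{\im\lambda}^{-1}$, während die Arbeit konkret $t\mapsto t/(1+t^{2})$ und $t\mapsto 1/(1+t^{2})$ behandelt -- das ist im Wesentlichen dieselbe Rechnung in anderer Verpackung. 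Zweitens, und das ist der eigentliche Unterschied: Für den Übergang von $C_{0}(\R)$ zu $C_{b}(\R)$ verwendet die Arbeit die Klasse $\FF$ \emph{aller} stetigen, auch unbeschränkter Funktionen mit der Konvergenzeigenschaft und die algebraische Identität $h=h\cdot\tfrac{1}{1+t^{2}}+t\cdot\big(t\cdot\tfrac{h}{1+t^{2}}\big)$, wobei entscheidend eingeht, dass $\id=(t\mapsto t)\in\FF$ trivialerweise gilt und $\FF$ unter Multiplikation mit beschränkten Mitgliedern abgeschlossen ist. Sie argumentieren stattdessen mit einer Abschneidefunktion $\chi_{R}$ und der Positivitätsabschätzung $\norm{(1-\chi_{R})(S_{i})x}^{2}\leq\norm{x}^{2}-\big(\chi_{R}(S_{i})x,x\big)\to 0$; das ist das klassische Argument, mit dem man aus starker Resolventenkonvergenz die starke Konvergenz von $f(S_{i})$ für $f\in C_{b}(\R)$ gewinnt. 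Beide Wege sind vollständig: Ihr Zugang kommt ohne unbeschränkte Funktionen in der Hilfsklasse aus und macht die Quelle der gleichmäßigen Kontrolle (die Ordnungsverträglichkeit des Funktionalkalküls) explizit sichtbar; der Zugang der Arbeit vermeidet dafür jede quadratische Abschätzung und erledigt die Ausdehnung rein algebraisch.
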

\begin{proof}
  Sei $\FF$ die Menge aller stetigen, aber nicht notwendigerweise
  beschränkten Funktionen $f:\R\to\C$, die die gewünschte
  Eigenschaft\footnote{Für einen selbstadjungierten Operator
    $S\in L_{b}(H)$ ist $f|_{\sigma(S)}$ wegen der Kompaktheit von
    $\sigma(S)$ beschränkt. Somit ist $f(S):=f|_{\sigma(S)}(S)$
    wohldefiniert und es gilt
    $\norm{f(S)}=\norm[\infty]{f|_{\sigma(S)}}$.}  haben. Zu zeigen
  ist $C_{b}(\R)\subseteq\FF$. Im gesamten Beweis bezeichne
  $(S_{i})_{i\in I}$ ein bezüglich der starken Operatortopologie
  konvergentes Netz selbstadjungierter Operatoren mit Grenzwert
  $S=S^{*}$ und $T$ einen weiteren selbstadjungierten Operator.

  Klarerweise ist $\FF$ ein Unterraum von $C(\R)$. Für Funktionen
  $f,g\in\FF$, wobei $g$ beschränkt ist, gilt $gf=fg\in\FF$ nach
  Lemma~\ref{lem:tech-st-stetig}(\ref{item:tech-st-stetig-ii}), da die
  Operatoren $g(S_{i})$ gleichmäßig in $i$ durch $\norm[\infty]{g}$
  beschränkt sind. Nach Bemerkung~\ref{bem:fktkalkuel} kommutieren
  $f(S)$ und $\overline{f}(S)=f(S)^{*}$, sodass $f(S)$ normal
  ist. Daraus folgt mit
  Lemma~\ref{lem:tech-st-stetig}(\ref{item:tech-st-stetig-i})
  \begin{displaymath}
    \overline{f}(S_{i})=f(S_{i})^{*}\xrightarrow{\TT_{s}}f(S)^{*}=\overline{f}(S).
  \end{displaymath}
  Somit ist $\FF$ auch bezüglich der Konjugation abgeschlossen.

  Wir betrachten nun die nach diesen Argumenten bezüglich der
  Konjugation abgeschlossene Funktionenalgebra
  $\FF_{0}:=\FF\cap C_{0}(\R)$. Diese Algebra ist bezüglich der
  Supremumsnorm $\norm[\infty]{\cdot}$ auf $C_{0}(\R)$ abgeschlossen:
  Sei $(f_{n})_{n\in\N}$ eine Folge aus $\FF_{0}$, die bezüglich
  $\norm[\infty]{\cdot}$ gegen $f\in C_{0}(\R)$ konvergiert. Für jedes
  $n\in\N$, $i\in I$ und $x\in H$ gilt
  \begin{align*}
    \norm{(f(S_{i})-f(S))(x)}&\leq\norm{(f(S_{i})-f_{n}(S_{i}))(x)}+\norm{(f_{n}(S_{i})-f_{n}(S))(x)}+\norm{(f_{n}(S)-f(S))(x)}\\
                             &=\norm{(f-f_{n})(S_{i})(x)}+\norm{(f_{n}(S_{i})-f_{n}(S))(x)}+\norm{(f_{n}-f)(S)(x)}\\
                             &\leq\norm{(f-f_{n})(S_{i})}\norm{x}+\norm{(f_{n}(S_{i})-f_{n}(S))(x)}+\norm{(f_{n}-f)(S)}\norm{x} \\
                             &=\norm[\infty]{(f-f_{n})|_{\sigma(S_{i})}}\norm{x}+\norm{(f_{n}(S_{i})-f_{n}(S))(x)}+\norm[\infty]{(f_{n}-f)|_{\sigma(S)}}\norm{x}\\
                             &\leq 2\norm[\infty]{f-f_{n}}\norm{x}+\norm{(f_{n}(S_{i})-f_{n}(S))(x)}.
  \end{align*}
  Zu gegebenem $\epsilon>0$ wählt man einen Index $n_{0}$ mit
  $\norm[\infty]{f-f_{n_{0}}}\norm{x}<\epsilon$ und dazu $i_{0}\in I$
  mit $\norm{(f_{n_{0}}(S_{i})-f_{n_{0}}(S))(x)}<\epsilon$ für alle
  $i\succcurlyeq i_{0}$. Obige Abschätzung für $n:=n_{0}$ liefert
  $\norm{(f(S_{i})-f(S))(x)}<3\epsilon$ für alle
  $i\succcurlyeq i_{0}$, sodass die Konvergenz $f(S_{i})\to f(S)$
  bezüglich der starken Operatortopologie bewiesen ist.

  Als Nächstes zeigen wir $\FF_{0}=C_{0}(\R)$, also
  $C_{0}(\R)\subseteq\FF$. Wegen der Abgeschlossenheit von $\FF_{0}$
  ist dieses Ziel erreicht, wenn wir zusätzlich $\FF_{0}$ als dicht in
  $C_{0}(\R)$ nachweisen. Nach dem Satz von Stone-Weierstraß für
  lokalkompakte Räume genügt es zu überprüfen, dass $\FF_{0}$
  punktetrennend und nirgends identisch verschwindend ist. Dazu zeigen
  wir, dass die injektive Funktion $g(t):=\frac{t}{1+t^{2}}$ und die
  nullstellenfreie Funktion $f(t):=\frac{1}{1+t^{2}}$ in $\FF_{0}$
  liegen, wobei $f,g\in C_{0}(\R)$ klar ist. Mit den Rechenregeln für
  den Funktionalkalkül gilt
  \begin{equation}\label{eq:st-stetig-fkt-i}
    g(T)(1+T^{2})=(t\mapsto g(t)(1+t^{2}))(T)=(t\mapsto t)(T)=T.
  \end{equation}
  Nach Satz~\ref{satz:eig-pos-el}(\ref{item:eig-pos-el-iii}) ist
  $T^{2}$ positiv, sodass $\sigma(1+T^{2})\subseteq [1,+\infty)$
  insbesondere Null nicht enthält. Multipliziert man beide Seiten von
  \eqref{eq:st-stetig-fkt-i} mit der daher existierenden Inversen von
  $1+T^{2}$, so folgt $g(T)=T(1+T^{2})^{-1}$. Analog zeigt man
  $g(T)=(1+T^{2})^{-1}T$ und $f(T)=(1+T^{2})^{-1}$. Damit berechnen
  wir
  \begin{align*}
    g(S_{i})-g(S)&=(1+S_{i}^{2})^{-1}S_{i}-S(1+S^{2})^{-1}\\
                 &=(1+S_{i}^{2})^{-1}\big(S_{i}(1+S^{2})-(1+S_{i}^{2})S\big)(1+S^{2})^{-1}\\
                 &=(1+S_{i}^{2})^{-1}\big((S_{i}-S)+S_{i}(S-S_{i})S\big)(1+S^{2})^{-1}.
  \end{align*}
  Wegen $\norm{f(S_{i})}\leq\norm[\infty]{f}\leq 1$ und
  $\norm{g(S_{i})}\leq\norm[\infty]{g}\leq 1$ folgt
  \begin{align*}
    \|(g(S_{i})-g&(S))(x)\|\\
                 &\leq\big\|\underbrace{(1+S_{i}^{2})^{-1}}_{=f(S_{i})}\big(S_{i}-S\big)(1+S^{2})^{-1}x\big\|+\big\|\underbrace{(1+S_{i}^{2})^{-1}S_{i}}_{=g(S_{i})}(S-S_{i})S(1+S^{2})^{-1}x\|\\
                 &\leq\norm{f(S_{i})}\norm{(S_{i}-S)(1+S^{2})^{-1}x}+\norm{g(S_{i})}\norm{(S-S_{i})S(1+S^{2})^{-1}x}\\
                 &\leq\norm{(S_{i}-S)(1+S^{2})^{-1}x}+\norm{(S-S_{i})S(1+S^{2})^{-1}x}.
  \end{align*}
  Da $(1+S^{2})^{-1}x$ sowie $S(1+S^{2})^{-1}x$ feste Vektoren sind,
  ergibt sich daraus $g\in\FF$. Wir schließen, dass auch
  $f=(t\mapsto 1-tg(t))$ in $\FF$ enthalten ist, da sowohl
  $\mathds{1}$ als auch $(t\mapsto t)$ in $\FF$ liegen und $g$
  beschränkt ist.

  Nun können wir den Beweis abschließen. Ist $h\in C_{b}(\R)$
  beliebig, so sind die Funktionen $h_{1}:=hf$ und $h_{2}:=hg$ in
  $C_{0}(\R)\subseteq\FF$ enthalten. Da $h_{2}$ beschränkt ist, gilt
  auch $h_{3}:=(t\mapsto th_{2}(t))\in\FF$. Daraus folgt das
  gewünschte Resultat $h=h_{1}+h_{3}\in\FF$.
\end{proof}

Um die ultraschwache Topologie ins Spiel bringen zu können, benötigen
wir -- obwohl auf den ersten Blick nicht ersichtlich -- den nützlichen
und auch für sich ästhetisch sehr ansprechenden \emph{Dichtesatz von
  Kaplansky}. Man kann es nicht schöner ausdrücken als G. Pedersen in
\cite[2.3.4. Notes and remarks.]{pedersen:autgrps}:
\begin{quote}
  The density theorem is Kaplansky's great gift to
mankind [...]. It can be used every day, and twice on Sundays.
\end{quote}

\begin{satz}[Kaplansky]\label{satz:kaplansky}
  Sei $A$ eine $C^{*}$-Unteralgebra von $L_{b}(H)$ und $B$ ihr
  $\TT_{s}$-Abschluss. Dann gelten die folgenden drei Aussagen:
  \begin{enumerate}[label=(\roman*),ref=\roman*]
  \item\label{item:kaplansky-i} $\cl{A_{sa}}^{\TT_{s}}=B_{sa}$
  \item\label{item:kaplansky-ii}
    $\cl{S_{A_{sa}}}^{\TT_{s}}=S_{B_{sa}}$
  \item\label{item:kaplansky-iii} $\cl{S_{A}}^{\TT_{s}}=S_{B}$
  \end{enumerate}
\end{satz}
\begin{proof}
  \hspace{0mm}
  \begin{enumerate}[label=(\roman*),ref=\roman*]
  \item Sei $S\in B_{sa}$ beliebig. Nach Definition gibt es ein Netz
    $(S_{i})_{i\in I}$ aus $A$, das bezüglich $\TT_{s}$ und damit auch
    bezüglich $\TT_{w}$ gegen $S$ konvergiert. Wegen der
    $\TT_{w}$-Stetigkeit von $.^{*}$ konvergiert
    $A_{sa}\ni\re(S_{i})=\frac{1}{2}(S_{i}+S_{i}^{*})$ gegen
    $\re(S)=S$ bezüglich der schwachen Operatortopologie. Der Operator
    $S$ liegt daher im $\TT_{w}$-Abschluss von $A_{sa}$. Da $A_{sa}$
    konvex ist, stimmen $\TT_{w}$- und $\TT_{s}$-Abschluss nach
    Korollar~\ref{kor:optop-konv-abg} aber überein.

    Umgekehrt ist ein Element von
    $\cl{A_{sa}}^{\TT_{s}}=\cl{A_{sa}}^{\TT_{w}}$ einerseits in $B$
    enthalten und andererseits selbstadjungiert, da $.^{*}$ bezüglich
    der schwachen Operatortopologie stetig ist.
  \item Sei $S$ beliebig in der Einheitskugel $S_{B_{sa}}$. Nach
    (\ref{item:kaplansky-i}) existiert ein Netz $(S_{i})_{i\in I}$ in
    $A_{sa}$, das bezüglich der starken Operatortopologie gegen $S$
    konvergiert. Die Funktion $f:\R\to\C$ definiert durch
    \begin{displaymath}
      f(t):=
      \begin{cases}
        -1\quad &t<-1, \\
        t\quad &t\in [-1,1], \\
        1\quad &t>1
      \end{cases}
    \end{displaymath}
    ist stetig und beschränkt mit $\norm[\infty]{f}=1$. Nach
    Lemma~\ref{lem:st-stetig-fkt} konvergiert das Netz
    $(f(S_{i}))_{i\in I}$ gegen $f(S)$. Wegen $\norm{S}\leq 1$ gilt
    $\sigma(S)\subseteq [-1,1]$ und somit
    \begin{displaymath}
      f(S)=f|_{\sigma(S)}(S)=(t\mapsto t)|_{\sigma(S)}(S)=S.
    \end{displaymath}
    Verbleibt noch zu zeigen, dass die Operatoren $f(S_{i})$ alle in
    $S_{A_{sa}}$ liegen. Wegen $f(0)=0$ gilt nach
    Bemerkung~\ref{bem:fktkalkuel} einmal $f(S_{i})\in A$. Da $f$
    reellwertig ist, sind die Operatoren $f(S_{i})$ auch
    selbstadjungiert. Schließlich gilt
    $\norm{f(S_{i})}\leq\norm[\infty]{f}=1$.

    Ist umgekehrt $S$ ein Element von $\cl{S_{A_{sa}}}^{\TT_{s}}$, so
    gilt analog zum letzten Punkt $S\in B_{sa}$. Weiters liegt $S$ in
    der Einheitskugel: Es existiert ein Netz $(S_{i})_{i\in I}$ aus
    $S_{A_{sa}}$, das bezüglich $\TT_{s}$ gegen $S$ konvergiert. Für
    jedes $i\in I$ und $x\in H$ gilt
    $\norm{S_{i}x}\leq\norm{x}$. Bildet man den Grenzwert $i\in I$, so
    folgt $\norm{Sx}\leq\norm{x}$, also $\norm{S}\leq 1$.
  \item Wir betrachten die Matrizenalgebren $M_{2}(A)$ und $M_{2}(B)$
    aus Bemerkung~\ref{bem:kart-prod-matrix-hr} und zeigen zunächst,
    dass $M_{2}(B)$ mit dem Abschluss von $M_{2}(A)$ bezüglich der
    starken Operatortopologie $\TT_{s,H^{2}}$ in $L_{b}(H^{2})$
    übereinstimmt. Für $\left(
      \begin{smallmatrix}
        T_{11} & T_{12}\\
        T_{21} & T_{22}
      \end{smallmatrix}
    \right)\in M_{2}(B)$ gibt es ein Netz $(T_{11,i})_{i\in I}$ aus
    $A$ mit $T_{11,i}\to T_{11}$ bezüglich $\TT_{s,H}$. Klarerweise
    konvergiert dann $\left(
      \begin{smallmatrix}
        T_{11,i} & T_{12}\\
        T_{21} & T_{22}
      \end{smallmatrix}
    \right)$ bezüglich $\TT_{s,H^{2}}$ gegen $\left(
      \begin{smallmatrix}
        T_{11} & T_{12}\\
        T_{21} & T_{22}
      \end{smallmatrix}
    \right)$, sodass es ausreicht zu zeigen, dass die Matrizen $\left(
      \begin{smallmatrix}
        T_{11,i} & T_{12}\\
        T_{21} & T_{22}
      \end{smallmatrix}
    \right)$ im $\TT_{s,H^{2}}$-Abschluss von $M_{2}(A)$ liegen. Nun
    wählt man ein gegen $T_{12}$ konvergentes Netz und argumentiert
    analog; nach zwei weiteren Schritten erhält man, dass es
    ausreicht,
    \begin{displaymath}
      \begin{pmatrix}
        T_{11,i} & T_{12,j}\\
        T_{21,k} & T_{22,l}
      \end{pmatrix}
      \in\cl{M_{2}(A)}^{\TT_{s,H^{2}}}
    \end{displaymath}
    für $T_{11,i},T_{12,j},T_{21,k},T_{22,l}\in A$ zu zeigen. Dies ist
    trivialerweise richtig. Ist umgekehrt $\left(
      \begin{smallmatrix}
        T_{11} & T_{12}\\
        T_{21} & T_{22}
      \end{smallmatrix}
    \right)$ im Abschluss von $M_{2}(A)$ enthalten, so sei
    $\left(\left(
        \begin{smallmatrix}
          T_{11,k} & T_{12,k}\\
          T_{21,k} & T_{22,k}
        \end{smallmatrix}
      \right)\right)_{k\in I}$ ein Netz aus $M_{2}(A)$, das bezüglich
    $\TT_{s,H^{2}}$ gegen diese Matrix konvergiert. Indem man die
    Vektoren $(x,0)^{T}$ bzw. $(0,y)^{T}$ aus $H^{2}$ einsetzt, sieht
    man, dass die Einträge $T_{ij,k}$ bezüglich der starken
    Operatortopologie $\TT_{s,H}$ gegen $T_{ij}$ konvergieren, womit
    $T_{ij}\in B$ folgt.

    Jetzt können wir die Aussage (\ref{item:kaplansky-iii})
    beweisen. Dazu sei $T\in S_{B}$ gegeben. Die Matrix $\left(
      \begin{smallmatrix}
        0 & T\\
        T^{*} & 0
      \end{smallmatrix}
    \right)$ liegt in $M_{2}(B)_{sa}$ und ist dort sogar in der
    Einheitskugel\footnote{Es sei explizit darauf hingewiesen, dass
      wir nicht mit \eqref{eq:kart-prod-matrix-hr-ii} argumentieren
      können.} enthalten:
    \begin{align*}
      \norm{\begin{pmatrix}
          0 & T \\
          T^{*} & 0
        \end{pmatrix}
                  \begin{pmatrix}
                    x \\
                    y
                  \end{pmatrix}}^{2}&= \norm{\begin{pmatrix}
                    Ty \\
                    T^{*}x
                  \end{pmatrix}}^{2}=\norm{Ty}^{2}+\norm{T^{*}x}^{2}\leq\norm{T}^{2}\norm{x}^{2}+\norm{T^{*}}^{2}\norm{y}^{2}\\
            &\leq\norm{x}^{2}+\norm{y}^{2}=\norm{\begin{pmatrix}
                x \\
                y
              \end{pmatrix}}^{2}.
    \end{align*}
    Nach (\ref{item:kaplansky-ii}), angewandt auf die Algebren
    $M_{2}(A)$ und $M_{2}(B)$, existiert ein Netz von Matrizen
    $\left(\left(
        \begin{smallmatrix}
          T_{11,k} & T_{12,k}\\
          T_{21,k} & T_{22,k}
        \end{smallmatrix}
      \right)\right)_{k\in I}$ aus der Einheitskugel von
    $M_{2}(A)_{sa}$, das gegen $\left(
      \begin{smallmatrix}
        0 & T\\
        T^{*} & 0
      \end{smallmatrix}
    \right)$ konvergiert. Mit der ersten Ungleichung in
    \eqref{eq:kart-prod-matrix-hr-ii} folgt $\norm{T_{12,k}}\leq 1$,
    also $T_{12,k}\in S_{A}$. Durch Einsetzen von $(x,0)^{T}$ ergibt
    sich, dass $(T_{12,k})_{k\in I}$ bezüglich $\TT_{s}$ gegen $T$
    konvergiert, womit $T\in \cl{S_{A}}^{\TT_{s}}$ gezeigt ist.

    Ist umgekehrt $T\in \cl{S_{A}}^{\TT_{s}}$, so folgt $T\in S_{B}$
    wie in (\ref{item:kaplansky-ii}).
  \end{enumerate}
\end{proof}

Nach all diesen Überlegungen können wir
Satz~\ref{satz:optop-abg-unteralg} beweisen.
\begin{proof}[Beweis (von Satz~\ref{satz:optop-abg-unteralg}).]
  Die Äquivalenz von (\ref{item:optop-abg-unteralg-i}) und
  (\ref{item:optop-abg-unteralg-ii}) folgt aus
  Korollar~\ref{kor:optop-konv-abg}. Wegen $\TT_{w}\subseteq\TT_{uw}$
  ist die Implikation
  (\ref{item:optop-abg-unteralg-ii})~$\Rightarrow$~(\ref{item:optop-abg-unteralg-iii})
  klar.

  Bleibt also
  (\ref{item:optop-abg-unteralg-iii})~$\Rightarrow$~(\ref{item:optop-abg-unteralg-ii})
  zu zeigen. Sei dazu $A$ abgeschlossen bezüglich $\TT_{uw}$. Wir
  setzen $B:=\cl{A}^{\TT_{w}}$ und schließen mit
  Korollar~\ref{kor:optop-konv-abg} auf $B=\cl{A}^{\TT_{s}}$. Da die
  ultraschwache Operatortopologie gröber ist als die Normtopologie,
  ist $A$ eine $C^{*}$-Unteralgebra von $L_{b}(H)$. Der Dichtesatz von
  Kaplansky liefert somit $\cl{S_{A}}^{\TT_{s}}=S_{B}$. Aus
  Korollar~\ref{kor:uw-optop-wstar}(\ref{item:uw-optop-wstar-i})
  folgt, dass die Einheitskugel $S$ von $L_{b}(H)$ bezüglich
  $\TT_{uw}$ kompakt und daher abgeschlossen ist. Damit ist auch
  $S_{A}=A\cap S$ bezüglich $\TT_{uw}$ abgeschlossen und als Teilmenge
  von $S$ folglich $\TT_{w}$-abgeschlossen;
  vgl. Korollar~\ref{kor:uw-optop-wstar}(\ref{item:uw-optop-wstar-ii}). Erneute
  Anwendung von Korollar~\ref{kor:optop-konv-abg} liefert die
  Abgeschlossenheit von $S_{A}$ bzgl. $\TT_{s}$. Daraus schließen wir
  $S_{A}=S_{B}$ und in weiterer Folge $A=B$, was genau bedeutet, dass
  $A$ bezüglich $\TT_{w}$ abgeschlossen ist.
\end{proof}

Wir schließen das Kapitel mit einer Definition ab.
\begin{definition}\label{def:vn-alg}
  Eine $*$-Unteralgebra von $L_{b}(H)$ heißt
  \emph{Von-Neumann-Algebra}, wenn sie be\-züglich einer und damit
  aller der drei Operatortopologien $\TT_{s},\TT_{w},\TT_{uw}$
  abgeschlossen ist.
\end{definition}
Es sei betont, dass Von-Neumann-Algebren immer $C^{*}$-Unteralgebren
von $L_{b}(H)$ sind. Die Stärke und Schönheit der Theorie der
Von-Neumann-Algebren liegt darin, dass man in vielen Fällen beliebig
zwischen den drei Operatortopologien wechseln kann, je nachdem, welche
in der aktuellen Situation die nützlichsten Eigenschaften hat. Im
Beweis des Dichtesatzes von Kaplansky kann man dieses Wechselspiel gut
beobachten.

\clearpage
\chapter{$W^{*}$-Algebren}
\label{cha:w-algebren}
Im vorliegenden Kapitel wird mit den \emph{$W^{*}$-Algebren} die
hilbertraumfreie Axiomatisierung der Von-Neumann-Algebren eingeführt
sowie einige ihrer Eigenschaften bewiesen. Beispielsweise haben
$W^{*}$-Algebren stets ein Einselement. Als wesentliche neue
Definition ist die schwach-*-Topologie auf einer $W^{*}$-Algebra zu
nennen, die als kanonische Topologie eine entscheidende Rolle in der
restlichen Arbeit spielen wird. Den Großteil dieses Kapitels machen
Aussagen zur schwach-*-Stetigkeit der Algebrenoperationen aus.

\section{Definition}
\label{sec:def}
\begin{bemerkung}\label{bem:motiv-wstar}
  Um das Axiom, das $W^{*}$-Algebren unter $C^{*}$-Algebren
  auszeichnet, zu motivieren, betrachten wir eine Von-Neumann-Algebra
  $A\leq L_{b}(H)$. Der Raum $L_{b}(H)$ ist nach
  Satz~\ref{satz:beschrop-spklasse} isometrisch isomorph zum Dualraum
  $L^{1}(H)'$. Als Von-Neumann-Algebra ist $A$ abgeschlossen bezüglich
  $\TT_{uw}$. Betrachtet man den linearen Homöomorphismus
  \begin{displaymath}
    \theta:(L_{b}(H),\TT_{uw})\to\big(L^{1}(H)',\sigma(L^{1}(H)',L^{1}(H))\big)
  \end{displaymath}
  aus Satz~\ref{satz:uw-optop-wstar}, so stellt sich $\theta(A)$ als
  schwach-*-abgeschlossen in $L^{1}(H)'$ heraus. Als Linksannihilator
  bezüglich $(L^{1}(H),L^{1}(H)')$ ist
  \begin{align*}
    M:=\lanh{(\theta(A))}&=\set{S\in L^{1}(H)}{\chevr{S}{\theta(T)}=0\
                           \text{für alle}\ T\in A}\\
                         &=\set{S\in L^{1}(H)}{\tr(ST)=0\
                           \text{für alle}\ T\in A}
  \end{align*}
  abgeschlossen bezüglich der schwachen Topologie
  $\sigma\left(L^{1}(H),L^{1}(H)'\right)$ und daher bezüglich der
  Normtopologie auf $L^{1}(H)$. Bekanntermaßen ist in dieser Situation
  \begin{equation}\label{eq:motiv-wstar}
    \tau:
    \begin{cases}
      \left(L^{1}(H)/M\right)'&\to\ranh{M} \\
      \hfill f&\mapsto f\circ\pi
    \end{cases}
  \end{equation}
  ein isometrischer Isomorphismus, wobei $\pi:L^{1}(H)\to L^{1}(H)/M$
  die Faktorisierungsabbildung bezeichnet und der Dualraum
  $\left(L^{1}(H)/M\right)'$ bezüglich der Faktorraum-Norm auf
  $L^{1}(H)/M$ gebildet wird. Nun gilt
  \begin{displaymath}
    \ranh{M}=\ranh{\left(\lanh{\theta(A)}\right)}=\cl{\spn\theta(A)}^{\sigma\left(L^{1}(H)',L^{1}(H)\right)}=\theta(A),
  \end{displaymath}
  da $\theta(A)$ schwach-*-abgeschlossen ist. Wir erhalten, dass
  $\tau^{-1}\circ\theta|_{A}$ ein Isomorphismus von $A$ auf
  $\left(L^{1}(H)/M\right)'$ ist, der wegen
  Satz~\ref{satz:beschrop-spklasse}(\ref{item:beschrop-spklasse-iii})
  sogar isometrisch ist. Also ist $A$ bis auf isometrische Isomorphie
  der Dualraum eines Banachraums.
\end{bemerkung}

Genau diese Eigenschaft ist nun das gesuchte Axiom, um
Von-Neumann-Algebren zu beschreiben.

\begin{definition}\label{def:wstar-alg}
  Eine $C^{*}$-Algebra $A$ heißt \emph{$W^{*}$-Algebra}, wenn es einen
  Banachraum $\pred{A}$ und einen isometrischen Isomorphismus
  $j:A\to\pred{A}'$ gibt. In diesem Fall heißt $\pred{A}$ ein
  \emph{Prädualraum} von $A$.
\end{definition}
\begin{bemerkung}
  Unsere Notation $\pred{A}$ ist etwas problematisch, da zumindest ad
  hoc nicht klar ist, ob $\pred{A}$ in geeignetem Sinne eindeutig
  bestimmt ist. Andernfalls wäre der Prädual\-raum nicht in Abhängigkeit
  von $A$ wohldefiniert. Tatsächlich ist der Prädualraum einer
  $W^{*}$-Algebra bis auf isometrische Isomorphie eindeutig, das
  werden wir aber erst in Satz~\ref{satz:pd-wstar-top-eindeut}
  beweisen können.
\end{bemerkung}
Das Axiom aus Definition~\ref{def:wstar-alg} ermöglicht die
Konstruktion einer Topologie auf der $W^{*}$-Algebra $A$, nämlich die
initiale Topologie bezüglich $j$ als Abbildung von $A$ nach
$\pred{A}'$ versehen mit der schwach-*-Topologie
$\sigma\big(\pred{A}',\pred{A}\big)$.

\begin{definition}\label{def:wstar-top-wstar}
  Das Mengensystem
  $\TT_{w^{*},A}:=\set{j^{-1}(O)}{O\in\sigma\big(\pred{A}',\pred{A}\big)}$
  bezeichnet man als \emph{schwach-*-Topologie auf $A$}. Wenn keine
  Verwechslungsgefahr besteht, werden wir zur einfacheren Notation nur
  $\TT_{w^{*}}$ schreiben.
\end{definition}

\begin{notation}
  Sei $A$ im Rest dieses Kapitels stets eine $W^{*}$-Algebra und $S$
  ihre Einheitskugel. Wenn nichts Näheres spezifiziert wird, beziehen
  sich sämtliche topologische Aussagen dabei auf die
  schwach-*-Topologie $\TT_{w^{*}}$.
\end{notation}

\begin{bemerkung}\label{bem:wstar-top}
  \hspace{0mm}
  \begin{enumerate}[label=(\roman*),ref=\roman*]
  \item\label{item:wstar-top-i} Mit der Notation aus
    Definition~\ref{def:wstar-alg} kann man $\pred{L_{b}(H)}=L^{1}(H)$
    wählen. Für diesen Prädualraum, gemeinsam mit dem isometrischen
    Isomorphismus $j:=\theta:L_{b}(H)\to L^{1}(H)'$ aus
    Satz~\ref{satz:beschrop-spklasse}(\ref{item:beschrop-spklasse-iii}),
    stimmt die schwach-*-Topologie $\TT_{w^{*},L_{b}(H)}$ nach
    Satz~\ref{satz:uw-optop-wstar} mit der ultraschwachen
    Operatortopologie $\TT_{uw}$ überein.
  \item\label{item:wstar-top-ii} Da $j$ bijektiv ist, ist
    \begin{displaymath}
      j:(A,\TT_{w^{*}})\to
      \left(\pred{A}',\sigma\big(\pred{A}',\pred{A}\big)\right)
    \end{displaymath}
    ein linearer Homöomorphismus. Somit erfüllt die
    schwach-*-Topologie $\TT_{w^{*}}$ alle Eigenschaften, die ein
    Banachraum als topologischer Vektorraum versehen mit der bekannten
    schwach-*-Topologie hat. Beispielsweise gilt der Satz von
    Krein-Milman sowie Lemma~\ref{lem:proj-wstar-stetig} über die
    schwach-*-Stetigkeit von Projektionen, außerdem ist $\TT_{w^{*}}$
    die initiale Topologie bezüglich aller $\TT_{w^{*}}$-stetigen
    linearen Funktionale auf $A$. Da $j$ isometrisch ist, bleiben auch
    alle Eigenschaften, die zusätzlich mit Einheitskugeln operieren,
    erhalten. Hervorzuheben sind hier einerseits der Satz von
    Banach-Alaoglu und andererseits die Sätze von Krein-Smulian und
    Banach-Dieudonné, Satz~\ref{satz:krein-sm}
    bzw. Satz~\ref{satz:banach-dieud}.
  \item\label{item:wstar-top-iii} Da
    \begin{displaymath}
      j:(A,\TT(\norm[A]{\cdot}))\to\big(\pred{A}',\TT(\norm[\pred{A}']{\cdot})\big)
    \end{displaymath}
    als Isometrie stetig ist, ist wegen
    $\sigma\big(\pred{A}',\pred{A}\big)\subseteq\TT(\norm[\pred{A}']{\cdot})$
    auch
    \begin{displaymath}
      j:(A,\TT(\norm[A]{\cdot}))\to\big(\pred{A}',\sigma\big(\pred{A}',\pred{A}\big)\big)
    \end{displaymath}
    stetig. Folglich gilt $\TT_{w^{*}}\subseteq\TT(\norm[A]{\cdot})$
    und ein schwach-*-stetiges Funktional $\varphi:A\to\C$ ist
    automatisch beschränkt.
  \end{enumerate}
\end{bemerkung}

\section{Einselement}
\label{sec:einselement}
Lässt man einige Sätze der bisherigen Arbeit zusammenwirken und
kombiniert sie mit klassischen Resultaten der Funktionalanalysis, so
erhalten wir folgende Aussage.
\begin{satz}\label{satz:wstar_eins}
  Eine $W^{*}$-Algebra hat stets ein Einselement.
\end{satz}
\begin{proof}
  Nach dem Satz von Banach-Alaoglu ist die Einheitskugel $S$ kompakt
  bezüglich der schwach-*-Topologie. Wir können daher den Satz von
  Krein-Milman im Vektorraum $(A,\TT_{w^{*}})$ auf die kompakte und
  konvexe Menge $S$ anwenden und erhalten, dass $S$ die abgeschlossene
  konvexe Hülle ihrer Extremalpunkte ist. Insbesondere hat $S$
  Extremalpunkte, sodass Satz~\ref{satz:extremalpkt} die gewünschte
  Aussage liefert.
\end{proof}

\section{Stetigkeit der Operationen}
\label{sec:stetigkeit-operationen}
Aus
Satz~\ref{satz:adj-mult-stetig-optop}(\ref{item:adj-mult-stetig-optop-iv})
wissen wir, dass die Adjungiertenbildung sowie die Translationen
stetig bezüglich $\TT_{uw}$ sind. Betrachten wir $L_{b}(H)$ als
$W^{*}$-Algebra, so ist die ultraschwache Operatortopologie genau die
schwach-*-Topologie. Daher ist es nicht abwegig zu erwarten, dass
diese algebraischen Operationen auch in einer allgemeinen
$W^{*}$-Algebra $A$ stetig bezüglich $\TT_{w^{*}}$ sind. Das restliche
Kapitel ist dem Beweis dieser Aussagen gewidmet. Dabei sind der
Satz~\ref{satz:banach-dieud} von Banach-Dieudonné und
Lemma~\ref{lem:proj-wstar-stetig} über die Stetigkeit von Projektionen
die wesentlichen Hilfsmittel.

Wir beginnen mit der Adjungiertenbildung.
\begin{lemma}\label{lem:sa-pos-wstar-abg}
  $A_{sa}$ und $A^{+}$ sind $\TT_{w^{*}}$-abgeschlossen.
\end{lemma}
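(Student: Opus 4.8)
The plan is to prove the assertion for $A_{sa}$ first and to deduce the one for $A^{+}$ from it. Since $A_{sa}$ is convex and closed under multiplication by positive scalars, the theorem of Banach--Dieudonné (Satz~\ref{satz:banach-dieud}), applied in $(A,\TT_{w^{*}})$ — which by Bemerkung~\ref{bem:wstar-top} behaves exactly like a dual Banach space equipped with its weak-*-topology — reduces the claim to showing that $A_{sa}\cap S$ is $\TT_{w^{*}}$-closed. The point of this reduction is that a net lying in $A_{sa}\cap S$ is automatically norm-bounded, which is essential for the estimate below.

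For this core step I would take a net $(a_{i})_{i\in I}$ in $A_{sa}\cap S$ converging to $a\in A$ with respect to $\TT_{w^{*}}$ and write $a=h+ik$ with $h:=\re a$, $k:=\im a\in A_{sa}$. The involution is not (yet) known to be $\TT_{w^{*}}$-continuous, so one cannot pass $a^{*}=a$ to the limit directly; instead the idea is to exploit the $C^{*}$-identity together with the $\TT_{w^{*}}$-lower semicontinuity of the norm (the norm is the supremum over $\varphi$ in the predual unit ball of the $\TT_{w^{*}}$-continuous functions $a\mapsto\abs{\varphi(a)}$). For each $t\in\R$, since $(a_{i}+it)^{*}(a_{i}+it)=a_{i}^{2}+t^{2}$ and $\norm{a_{i}}\leq 1$, the convergence $a_{i}+it\to a+it$ gives
\[
  \norm{a+it}^{2}\leq\liminf_{i\in I}\norm{a_{i}+it}^{2}=\liminf_{i\in I}\norm{a_{i}^{2}+t^{2}}\leq\liminf_{i\in I}\big(\norm{a_{i}}^{2}+t^{2}\big)\leq 1+t^{2}.
\]
On the other hand $(a+it)^{*}(a+it)=h^{2}+(k+t)^{2}+i(hk-kh)$ because $t\cdot 1$ commutes with $h$, so the triangle inequality yields
\[
  \norm{k+t}^{2}=\norm{(k+t)^{2}}\leq\norm{a+it}^{2}+\norm{h}^{2}+\norm{hk-kh}\leq t^{2}+C
\]
with the finite constant $C:=1+\norm{h}^{2}+\norm{hk-kh}$. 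Since $A$ has a unit (Satz~\ref{satz:wstar_eins}) and $k$ is self-adjoint, $\norm{k+t}=r(k+t)\geq\abs{\lambda+t}$ for every $\lambda\in\sigma(k)$; hence $(\lambda+t)^{2}\leq t^{2}+C$, i.e. $2\lambda t\leq C-\lambda^{2}$, for all $t\in\R$, which forces $\lambda=0$. Therefore $\sigma(k)=\{0\}$, so $\norm{k}=r(k)=0$ and $a=h\in A_{sa}$; moreover $\norm{a}\leq\liminf_{i\in I}\norm{a_{i}}\leq 1$, so $a\in A_{sa}\cap S$. Thus $A_{sa}\cap S$, and by Banach--Dieudonné also $A_{sa}$, is $\TT_{w^{*}}$-closed.

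For $A^{+}$ I would use Lemma~\ref{lem:char-pos-el}: for $b\in A$ one has $b\in A^{+}\cap S$ exactly when $b\in A_{sa}\cap S$ and $\norm{b-1}\leq 1$ (if $b\geq 0$ with $\norm{b}\leq 1$, take $t=1\geq\norm{b}$; conversely $b=b^{*}$ with $\norm{b-1}\leq 1$ forces $b\geq 0$). Hence
\[
  A^{+}\cap S=(A_{sa}\cap S)\cap(1+S),
\]
which is $\TT_{w^{*}}$-closed as an intersection of $\TT_{w^{*}}$-closed sets: $A_{sa}\cap S$ by the previous step, $S$ by Banach--Alaoglu, and $1+S$ as a translate of the $\TT_{w^{*}}$-compact unit ball. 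Since $A^{+}$ is convex (Satz~\ref{satz:eig-pos-el}) and a cone, Banach--Dieudonné again gives that $A^{+}$ is $\TT_{w^{*}}$-closed. The genuinely hard part is the core step: lower semicontinuity of the norm together with the $C^{*}$-identity is what lets self-adjointness survive the weak-*-limit, and the descent to a bounded net via Banach--Dieudonné is precisely what makes that spectral estimate available.
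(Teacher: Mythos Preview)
Your proof is correct and follows essentially the same strategy as the paper's: reduce via Banach--Dieudonn\'e to $A_{sa}\cap S$, then exploit that for self-adjoint $a_{i}$ the element $a_{i}+it$ has norm at most $(1+t^{2})^{1/2}$, which forces the imaginary part of the limit to have spectrum $\{0\}$; and characterise $A^{+}\cap S$ inside $A_{sa}\cap S$ via the condition $\norm{b-1}\leq 1$. The only noteworthy difference is that the paper bypasses your explicit computation of $(a+it)^{*}(a+it)$ and the commutator term $hk-kh$: it uses $\norm{\im(x+in)}\leq\norm{x+in}$ (Lemma~\ref{lem:eig-re-im}) to obtain $\norm{k+n}\leq\norm{a+in}\leq(1+n^{2})^{1/2}$ directly, and phrases the conclusion via closedness of the ball $(1+n^{2})^{1/2}S$ rather than lower semicontinuity of the norm --- two viewpoints that are of course equivalent.
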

\begin{proof}
  Nach dem Satz von Banach-Dieudonné, Satz~\ref{satz:banach-dieud},
  genügt es, die $\TT_{w^{*}}$-Abgeschlossenheit von $A_{sa}\cap S$
  und $A^{+}\cap S$ zu zeigen.

  Angenommen, $A_{sa}\cap S$ ist nicht abgeschlossen. Dann existiert
  ein Netz $(x_{j})_{j\in I}$ selbstadjungierter Elemente in $S$, das
  bezüglich $\TT_{w^{*}}$ gegen $x\notin A_{sa}\cap S$ konvergiert. Da
  $S$ schwach-*-abgeschlossen ist, gilt $x\notin A_{sa}$. Schreibt man
  $x=a+ib$, $a,b\in A_{sa}$, so gilt $b\neq 0$ bzw. $r(b)>0$. Indem
  wir notfalls zu $(-x_{j})_{j\in I}$ und $-x$ übergehen, können wir
  annehmen, dass es ein positives $0<\lambda\in\sigma(b)$ gibt. Durch
  Quadrieren sieht man, dass für hinreichend großes $n\in\N$ die
  Ungleichung $(1+n^{2})^{1/2}<\lambda+n$ gilt, womit sich wegen
  $\|x_{j}^{2}\|\leq 1$ die Ungleichungskette
  \begin{align*}
    \norm{x_{j}+in}&=\norm{(x_{j}+in)^{*}(x_{j}+in)}^{1/2}=\norm{x_{j}^{2}+n^{2}}^{1/2}\leq
                     (1+n^{2})^{1/2}\\
                   &<\underbrace{\lambda+n}_{\in\sigma(b+n)}\leq
                     r(b+n)=\norm{b+n}\stackrel{(*)}{\leq}\norm{a+ib+in}
  \end{align*}
  ergibt. Die mit $(*)$ gekennzeichnete Ungleichung folgt dabei aus
  Lemma~\ref{lem:eig-re-im}. Wir erhalten also
  \begin{equation}\label{eq:sa-pos-wstar-abg-i}
    \norm{x_{j}+in}\leq (1+n^{2})^{1/2}<\norm{a+ib+in}.
  \end{equation}
  Das Netz $(x_{j}+in)_{j\in I}$ liegt wegen
  \eqref{eq:sa-pos-wstar-abg-i} in der kompakten und somit
  abgeschlossenen Menge $(1+n^{2})^{1/2}S$. Folglich ist auch
  $a+ib+in$ als Grenzwert des Netzes in $(1+n^{2})^{1/2}S$
  enthalten. Dies widerspricht aber dem zweiten Teil von
  \eqref{eq:sa-pos-wstar-abg-i}, womit $A_{sa}\cap S$ abgeschlossen
  sein muss.

  Für den zweiten Teil sei bemerkt, dass für $a\in S$ das Spektrum
  $\sigma(a)\subseteq K_{1}^{\C}(0)$ erfüllt. Somit ist $a$ genau dann
  positiv, wenn $\sigma(a)$ in $[0,1]$ enthalten ist. Nach dem
  Spektralabbildungssatz ist das äquivalent zu
  $\sigma(a-1)\subseteq[-1,0]$. Wegen $\norm{a}=r(a)$ sowie
  $\norm{a-1}=r(a-1)$ sind sowohl $a$ als auch $a-1$ im Falle der
  Positivität von $a$ daher in $A_{sa}\cap S$ enthalten. Gilt
  umgekehrt $a,a-1\in A_{sa}\cap S$, so folgt
  $\sigma(a)\subseteq [-1,1]\cap([-1,1]+1)=[0,1]$, womit $a$ positiv
  ist. Insgesamt gilt
  \begin{displaymath}
    A^{+}\cap S=(A_{sa}\cap S)\cap ((A_{sa}\cap S)+1),
  \end{displaymath}
  sodass die Abgeschlossenheit von $A^{+}\cap S$ aus dem ersten
  Beweisteil folgt.
\end{proof}
\begin{korollar}\label{kor:wstar-adj-stetig}
  Die Operation $.^{*}$ auf $A$ ist $\TT_{w^{*}}$-stetig.
\end{korollar}
\begin{proof}
  Wegen der Zerlegung in Real- und Imaginärteil gilt
  $A=A_{sa}+iA_{sa}$. Betrachten wir $A$ als reellen Vektorraum, so
  liegt eine Summe von Unterräumen vor. Diese Summe ist sogar direkt,
  denn sind $a,b$ selbstadjungiert mit $a=ib$, dann folgt
  $a=a^{*}=(ib)^{*}=-ib=-a$, also $a=0$. Die Funktion $x\mapsto\re(x)$
  ist dabei genau die Projektion auf die erste Komponente dieser
  direkten Zerlegung. Die Räume $\ran\re(.)=A_{sa}$ und
  $\ker\re(.)=iA_{sa}$ sind abgeschlossen, sodass $\re(.)$ nach
  Lemma~\ref{lem:proj-wstar-stetig} und Bemerkung~\ref{bem:reeller-br}
  stetig ist. Wegen $x^{*}=2\re(x)-x$ ist damit auch $.^{*}$ stetig.
\end{proof}
Aus Lemma~\ref{lem:sa-pos-wstar-abg} erhalten wir ein weiteres
Korollar, das zeigt, dass die schwach-*-Topologie auf $A$ mit der
$C^{*}$-Algebren-Struktur gut harmoniert. Es sei auf die enge Analogie
zu Satz~\ref{satz:pos-fkt-trennend} hingewiesen.
\begin{korollar}\label{kor:wstar-pos-fkt-trennend}
  \hspace{0mm}
  \begin{enumerate}[label=(\roman*),ref=\roman*]
  \item\label{item:wstar-pos-fkt-trennend-i} Ist $a$ ein
    selbstadjungiertes aber nicht positives Element von $A$, so gibt
    es ein $\TT_{w^{*}}$-stetiges, positives Funktional $\varphi$ mit
    $\varphi(a)<0$.
  \item\label{item:wstar-pos-fkt-trennend-ii} Ist $b\in A$ mit
    $\varphi(b)=0$ für alle $\TT_{w^{*}}$-stetigen, positiven
    Funktionale auf $A$, dann gilt $b=0$.
  \end{enumerate}
\end{korollar}
\begin{proof}
  \hspace{0mm}
  \begin{enumerate}[label=(\roman*),ref=\roman*]
  \item Die positiven Elemente $A^{+}$ bilden eine konvexe und
    abgeschlossene Teilmenge des lokalkonvexen, reellen Vektorraums
    $(A_{sa},\TT_{w^{*}}\cap A_{sa})$. Also folgt für
    $a\in A_{sa}\setminus A^{+}$ aus dem Satz von Hahn-Banach die
    Existenz eines $\lambda\in\R$ und eines bezüglich der
    Spurtopologie schwach-*-stetigen $\R$-linearen Funktionals $f$ auf
    $A_{sa}$ mit
    \begin{equation}\label{eq:wstar-pos-fkt-trennend-i}
      f(a)<\lambda<f(c)\quad\text{für alle}\,\, c\in A^{+}.
    \end{equation}
    Wäre $f(c_{0})<0$ für ein $c_{0}\in A^{+}$, so würde $f$ auf
    $A^{+}$ wegen $tc_{0}\in A^{+}$ für jedes $t>0$ beliebig kleine
    Werte annehmen, was \eqref{eq:wstar-pos-fkt-trennend-i}
    widerspricht. Es gilt also $f(c)\geq 0$ für alle $c\in A^{+}$.
    Zudem ist $0\in A^{+}$, womit wir $f(a)<\lambda<f(0)=0$
    erhalten. Definiert man nun $\varphi(x):=f(\re(x))+if(\im(x))$, so
    rechnet man nach, dass $\varphi$ ein $\C$-lineares Funktional auf
    $A$ ist. Wegen Korollar~\ref{kor:wstar-adj-stetig} ist $\varphi$
    auch $\TT_{w^{*}}$-stetig und wegen $\varphi(c)=f(c)\geq 0$ für
    $c\in A^{+}\subseteq A_{sa}$ positiv. Schließlich gilt
    $\varphi(a)=f(a)<0$.
  \item Ist $\varphi$ ein $\TT_{w^{*}}$-stetiges, positives
    Funktional, so gilt $\varphi(b^{*})=\overline{\varphi(b)}=0$ nach
    Lemma~\ref{lem:pos-fkt-konj}. Daraus folgt
    $\varphi(\pm\re(b))=\pm\frac{1}{2}(\varphi(b)+\varphi(b^{*}))=0$. Nach
    (\ref{item:wstar-pos-fkt-trennend-i}) müssen $\re(b)$ und
    $-\re(b)$ positiv sein, sodass sich $\re(b)=0$ ergibt. Für
    $\im(b)$ schließt man analog und erhält tatsächlich $b=0$.
  \end{enumerate}
\end{proof}

Bevor wir uns der Multiplikation zuwenden, zeigen wir, dass in einer
$W^{*}$-Algebra die lineare Hülle der Projektionen dicht bezüglich der
Normtopologie und damit auch bezüglich der schwach-*-Topologie
ist. Dazu verwenden wir ein Resultat, das gleichzeitig in Richtung
Stetigkeit der Multiplikation weist.
\begin{satz}\label{satz:wstar-sup-stetig}
  Ist $(x_{i})_{i\in I}$ ein monoton wachsendes und gleichmäßig
  beschränktes Netz in $A_{sa}$, so ist es bezüglich $\TT_{w^{*}}$
  konvergent und es gilt $\lim_{i\in I}x_{i}=\sup_{i\in
    I}x_{i}$. Insbesondere existiert dieses Supremum. Für beliebiges
  $c\in A$ gilt zudem
  \begin{displaymath}
    c^{*}(\sup_{i\in I}x_{i})c=c^{*}(\lim_{i\in I}x_{i})c=\lim_{i\in
      I}c^{*}x_{i}c=\sup_{i\in I}c^{*}x_{i}c.
  \end{displaymath}
\end{satz}
\begin{proof}
  Sei $E$ die lineare Hülle der $\TT_{w^{*}}$-stetigen, positiven
  Funktionale. Nach
  Korollar~\ref{kor:wstar-pos-fkt-trennend}(\ref{item:wstar-pos-fkt-trennend-ii})
  ist $E$ punktetrennend, womit die schwache Topologie $\sigma(A,E)$
  wohldefiniert ist. Klarerweise ist $\sigma(A,E)$ gröber als
  $\TT_{w^{*}}$, sodass die beiden Topologien analog zum Beweis von
  Korollar~\ref{kor:uw-optop-wstar}(\ref{item:uw-optop-wstar-ii}) auf
  $\TT_{w^{*}}$-Kompakta übereinstimmen\footnote{In
    Satz~\ref{satz:wstar-fkt-zerl} werden wir zeigen, dass $E$ mit der
    Menge \emph{aller} schwach-*-stetigen Funktionale
    übereinstimmt. Es wird sich außerdem herausstellen, dass
    $\sigma(A,E)$ genau die schwach-*-Topologie ist; siehe den Beweis
    von Satz~\ref{satz:pd-wstar-top-eindeut}.}. Insbesondere gilt dies
  für jenes hinreichend große Vielfache der Einheitskugel, in dem alle
  $x_{i}$ nach Voraussetzung enthalten sind. Um zu zeigen, dass
  $(x_{i})_{i\in I}$ bezüglich $\TT_{w^{*}}$-konvergent ist, genügt es
  wegen der Kompaktheit zu zeigen, dass es sich um ein
  $\TT_{w^{*}}$-Cauchy-Netz bzw. äquivalent um ein
  $\sigma(A,E)$-Cauchy-Netz handelt. Bekanntlich bilden die Mengen
  \begin{displaymath}
    \set{x\in A}{\abs{\chevr{x}{\psi_{k}}}<\epsilon\ \text{für alle}\ k=1,\dots,n},
  \end{displaymath}
  wobei $\epsilon>0$, $n\in\N$ und $\psi_{1},\dots,\psi_{n}\in E$
  beliebig zu wählen sind, eine Nullumgebungsbasis von
  $\sigma(A,E)$. Ist eine derartige Menge gegeben, so haben wir einen
  Index $i_{0}\in I$ zu finden mit
  \begin{equation}\label{eq:bew-wstar-sup-stetig-i}
    \abs{\chevr{x_{i}-x_{j}}{\psi_{k}}}<\epsilon \quad\text{für alle}\ i,j\succcurlyeq i_{0}\ \text{und alle}\ k=1,\dots,n.
  \end{equation}
  Es gilt
  $\psi_{k}=\sum_{\ell=1}^{m_{k}}\lambda_{\ell,k}\varphi_{\ell,k}$ für
  Konstanten $\lambda_{\ell,k}\in\C$ und $\TT_{w^{*}}$-stetige
  positive Funktionale $\varphi_{\ell,k}$. Dabei können wir
  $\lambda_{\ell,k}\neq 0$ und $m_{k}\geq 1$ annehmen, da das
  Nullfunktional stets $\abs{\chevr{x}{0}}=0<\epsilon$ erfüllt. Wegen
  \begin{displaymath}
    \bigcap_{k=1}^{n}\bigcap_{\ell=1}^{m_{k}}\set{x\in
      A}{\abs{\chevr{x}{\varphi_{\ell,k}}}<\frac{\epsilon}{m_{k}\abs{\lambda_{\ell,k}}}}\subseteq\set{x\in
      A}{\abs{\chevr{x}{\psi_{k}}}<\epsilon\ \text{für alle}\ k=1,\dots,n}
  \end{displaymath}
  genügt es, die Aussage für die $\varphi_{\ell,k}$ anstelle der
  $\psi_{k}$ zu zeigen. Anders formuliert können wir annehmen, dass
  die Funktionale $\psi_{k}$ bereits positiv sind. Für festes $k$ ist
  daher das reelle Netz $(\chevr{x_{i}}{\psi_{k}})_{i\in I}$ monoton
  wachsend, beschränkt und folglich konvergent. Insbesondere ist es
  ein Cauchy-Netz, sodass für jedes $k$ ein $i_{0,k}\in I$ existiert,
  für das \eqref{eq:bew-wstar-sup-stetig-i} sinngemäß gilt. Wählen wir
  nun gemäß der Richtungseigenschaft noch einen Index
  $i_{0}\succcurlyeq i_{0,1},\dots,i_{0,n}$, so folgt
  \eqref{eq:bew-wstar-sup-stetig-i}. Damit ist einmal die Existenz von
  $x:=\lim_{i\in I}x_{i}$ gezeigt. Wegen der
  $\TT_{w^{*}}$-Abgeschlossenheit von $A_{sa}$ ist $x$
  selbstadjungiert. Für beliebiges $i_{0}\in I$ gilt
  $x_{i}\geq x_{i_{0}}$ für $i\succcurlyeq i_{0}$, also
  $x_{i}-x_{i_{0}}\in A^{+}$. Daraus folgt
  $x-x_{i_{0}}=\displaystyle{\lim_{i\in I, i\succcurlyeq
      i_{0}}x_{i}-x_{i_{0}}}\in A^{+}$ aufgrund der Abgeschlossenheit
  von $A^{+}$, sodass wir auf $x\geq x_{i_{0}}$ schließen können. Der
  Grenzwert $x$ ist somit eine obere Schranke von $(x_{i})_{i\in
    I}$. Ist $y$ eine weitere obere Schranke, so gilt
  $y-x_{i}\in A^{+}$ für alle $i\in I$. Wie eben erhalten wir daraus
  $x\leq y$. Somit ist $x=\lim_{i\in I}x_{i}$ sogar die kleinste obere
  Schranke von $(x_{i})_{i\in I}$ und es folgt
  $\lim_{i\in I}x_{i}=\sup_{i\in I}x_{i}$.

  Sei jetzt $c\in A$ gegeben. Das Netz $(c^{*}x_{i}c)_{i\in I}$ ist
  wegen Lemma~\ref{lem:eig-ho}(\ref{item:eig-ho-ii}) monoton wachsend
  und offenbar gleichmäßig beschränkt. Aus dem ersten Teil des
  Beweises folgt die Konvergenz
  \begin{equation}\label{eq:bew-wstar-sup-stetig-ii}
    \lim_{i\in I}c^{*}x_{i}c=\sup_{i\in I}c^{*}x_{i}c.
  \end{equation}
  Der Beweis ist abgeschlossen, wenn wir
  \begin{displaymath}
    c^{*}(\sup_{i\in I}x_{i})c=\sup_{i\in
                                I}c^{*}x_{i}c
                              \qquad\textit{oder}\qquad
                              c^{*}(\lim_{i\in I}x_{i})c=\lim_{i\in
                                I}c^{*}x_{i}c
  \end{displaymath}
  zeigen können.

  Im Falle eines invertierbaren $c$ zeigen wir die Gleichheit der
  Supremumsausdrücke. Aus Lemma~\ref{lem:eig-ho}(\ref{item:eig-ho-ii})
  folgt die Ungleichung $c^{*}x_{i}c\leq c^{*}(\sup_{j\in I}x_{j})c$
  und damit
  \begin{equation}\label{eq:bew-wstar-sup-stetig-iii}
    \sup_{i\in I}c^{*}x_{i}c\leq c^{*}(\sup_{j\in I}x_{j})c.
  \end{equation}
  Analog erhalten wir
  \begin{displaymath}
    \sup_{i\in I}x_{i}=\sup_{i\in
      I}(c^{-1})^{*}(c^{*}x_{i}c)c^{-1}\leq (c^{-1})^{*}(\sup_{i\in I}c^{*}x_{i}c)c^{-1}.
  \end{displaymath}
  Eine weitere Anwendung von
  Lemma~\ref{lem:eig-ho}(\ref{item:eig-ho-ii}) liefert
  \begin{equation}\label{eq:bew-wstar-sup-stetig-iv}
    c^{*}(\sup_{i\in I}x_{i})c\leq c^{*}\left((c^{-1})^{*}(\sup_{i\in I}c^{*}x_{i}c)c^{-1}\right)c=\sup_{i\in I}c^{*}x_{i}c.
  \end{equation}
  Die Ungleichungen \eqref{eq:bew-wstar-sup-stetig-iii} und
  \eqref{eq:bew-wstar-sup-stetig-iv} ergeben das Resultat für diesen
  Spezialfall.

  Im Falle eines nicht invertierbaren $c$ beweisen wir die Gleichheit
  der Limesausdrücke. Für hinreichend großes $\lambda>0$ ist
  $c+\lambda$ invertierbar, beispielsweise für $\lambda=\norm{c}+1$.
  Aus dem Bisherigen folgt die Gleichung
  \begin{displaymath}
    \lim_{i\in
      I}(c+\lambda)^{*}x_{i}(c+\lambda)=\sup_{i\in
      I}(c+\lambda)^{*}x_{i}(c+\lambda)=(c+\lambda)^{*}(\sup_{i\in I}x_{i})(c+\lambda)=(c+\lambda)^{*}x(c+\lambda)
  \end{displaymath}
  bzw., wenn man den ersten und letzten Ausdruck ausmultipliziert,
  \begin{displaymath}
    \lim_{i\in
      I}c^{*}x_{i}c+\lambda(x_{i}c+c^{*}x_{i})+\lambda^{2}x_{i}=c^{*}xc+\lambda(xc+c^{*}x)+\lambda^{2}x.
  \end{displaymath}
  Der Term $\lambda^{2}x_{i}$ konvergiert gegen $\lambda^{2}x$, sodass
  die Aussage folgt, wenn wir $x_{i}c+c^{*}x_{i}\to xc+c^{*}x$ gezeigt
  haben. Wie am Anfang des Beweises genügt es, statt der
  schwach-*-Topologie $\TT_{w^{*}}$ die schwache Topologie
  $\sigma(A,E)$ zu betrachten, wobei wir auch diesmal
  $\varphi(x_{i}c+c^{*}x_{i})\to\varphi(xc+c^{*}x)$ nur für alle
  $\TT_{w^{*}}$-stetigen und positiven Funktionale $\varphi$ zeigen
  müssen. Wegen der Cauchy-Schwarz'schen Ungleichung,
  Lemma~\ref{lem:eig-pos-fkt}(\ref{item:eig-pos-fkt-iii}), gilt
  \begin{align*}
    \abs{\varphi(c^{*}(x-x_{i}))}&=\abs{\varphi\big(c^{*}(x-x_{i})^{1/2}(x-x_{i})^{1/2}\big)}=\abs{\varphi\big(((x-x_{i})^{1/2}c)^{*}(x-x_{i})^{1/2}\big)}\\
                                 &\leq\varphi\left(c^{*}(x-x_{i})^{1/2}(x-x_{i})^{1/2}c\right)^{1/2}\varphi\left(((x-x_{i})^{1/2})^{*}(x-x_{i})^{1/2}\right)^{1/2}\\
                                 &=\varphi\left(c^{*}(x-x_{i})c\right)^{1/2}\varphi(x-x_{i})^{1/2}\leq
                                   (\norm{\varphi}M)^{1/2}\varphi(x-x_{i})^{1/2}\to 0,
  \end{align*}
  weil $(c^{*}(x-x_{i})c)_{i\in I}$ gleichmäßig durch eine Konstante
  $M$ beschränkt und $\varphi$ schwach-*-stetig ist. Daraus folgt
  $\varphi(c^{*}x_{i})\to\varphi(c^{*}x)$. Konjugiert man beide
  Seiten, so ergibt sich wegen der Stetigkeit von $.^{*}$ kombiniert
  mit Lemma~\ref{lem:pos-fkt-konj} die Konvergenz
  $\varphi(x_{i}c)\to\varphi(xc)$ und damit die gewünschte Aussage.
\end{proof}
\begin{korollar}\label{kor:wstar-proj-dicht}
  Die lineare Hülle der Projektionen in $A$ ist dicht bezüglich
  $\TT(\norm{\cdot})$. Ist $a\in A$ positiv, so kann man $a$ sogar als
  $\norm{\cdot}$-Grenzwert von Elementen der Form
  $a_{n}:=\sum_{k=1}^{n}\gamma_{k,n}p_{k,n}$ darstellen, wobei die
  $p_{k,n}$ Projektionen und die Zahlen $\gamma_{k,n}$ nichtnegativ
  sind.
\end{korollar}
\begin{proof}
  Klarerweise genügt es zu zeigen, dass jedes selbstadjungierte
  Element $a\in A_{sa}$ in der $\TT(\norm{\cdot})$-abgeschlossenen
  linearen Hülle der Projektionen in $A$ enthalten ist. Sei dazu $C$
  eine bezüglich Mengeninklusion maximale kommutative
  $C^{*}$-Unteralgebra mit Einselement von $A$, die $a$ enthält. Diese
  existiert nach dem Lemma von Zorn: In der Tat ist die von $a$
  erzeugte $C^{*}$-Unteralgebra mit Einselement von $A$
  kommutativ. Folglich ist das Mengensystem $\mathcal{S}$ aller $a$
  enthaltenden kommutativen $C^{*}$-Unteralgebren mit Einselement
  nicht leer. Ist $\MM\subseteq\mathcal{S}$ eine Kette, so ist
  $\cl{\bigcup\MM}^{\TT(\norm{\cdot})}$ wieder in $\mathcal{S}$
  enthalten und klarerweise eine obere Schranke von $\MM$.

  Ist $\BB$ eine beschränkte und durch die kanonische Halbordnung
  $\leq$ gerichtete Teilmenge von $C_{sa}$, so sei
  $b_{0}:=\sup_{b\in\BB}b$ ihr Supremum; dieses existiert wegen
  Satz~\ref{satz:wstar-sup-stetig}. Für ein unitäres Element $u\in C$
  betrachten wir das Netz $(b)_{b\in\BB}$ und schließen aus
  Satz~\ref{satz:wstar-sup-stetig}
  \begin{displaymath}
    u^{*}b_{0}u=\sup_{b\in\BB}\underbrace{u^{*}bu}_{=u^{*}ub=b}=b_{0},
  \end{displaymath}
  da $u$ mit $b$ kommutiert. Anders formuliert erhalten wir
  $b_{0}u=ub_{0}$. Unitäre Elemente von $C$ kommutieren also mit
  $b_{0}$. Da die unitären Elemente nach
  Bemerkung~\ref{bem:pos-negteil-span}(\ref{item:pos-negteil-span-iii})
  ganz $C$ linear aufspannen, folgt, dass $b_{0}$ mit jedem Element
  von $C$ kommutiert. Somit ist auch $C^{*}_{A}(C\cup\{b_{0}\})$
  kommutativ. Wegen der Maximalität muss schon $b_{0}\in C$
  gelten. Übersetzt man diese Überlegung mit der Gelfandtransformation
  in den Gelfandraum $M$ von $C$, so folgt, dass jede beschränkte und
  bezüglich der punktweisen Ordnung gerichtete Teilmenge von $C(M,\R)$
  ein Supremum in $C(M,\R)$ hat. Man beachte hierzu, dass die Ordnung
  $\leq$ in $C_{sa}$ der punktweisen Ordnung im Raum $C(M,\R)$
  entspricht. Nach Satz~\ref{satz:char-stonesch} ist $M$ stonesch. Aus
  Satz~\ref{satz:proj-dicht} folgt, dass die lineare Hülle der
  Projektionen von $C(M,\R)$ dicht in $C(M,\R)$ bezüglich
  $\norm[\infty]{\cdot}$ ist. Zurück übersetzt ergibt sich die
  entsprechende Aussage in $C_{sa}$ mit der Normtopologie. Wir können
  also $a$ durch Linearkombinationen von Projektionen in $C$ beliebig
  genau annähern, und damit auch durch Linearkombinationen von
  Projektionen in $A$.

  Die Zusatzaussage folgt sofort aus dem obigen Beweis und der
  Zusatzaussage von Satz~\ref{satz:proj-dicht}, da ein $a\geq 0$ auch
  in $C$ positiv ist.
\end{proof}

Nach diesen Vorarbeiten kommen wir zur Multiplikation. Wir werden
zeigen, dass die Translationen $x\mapsto ax$ und $x\mapsto xa$
schwach-*-stetig sind.

Bevor wir zum technisch aufwendigen Beweis kommen,
skizzieren wir das Vorgehen in groben Zügen. Zunächst zeigen wir die
Stetigkeit der Abbildung $x\mapsto pxp$ für eine Projektion
$p\in A\setminus\{0,1\}$, indem wir die Abbildung als zur direkten
Summe $A=pAp\dotplus\big((1-p)Ap+A(1-p)\big)$ gehörige
Projektion\footnote{Man beachte, dass in diesen Überlegungen zwei
  verschiedene Arten von Projektionen eine Rolle spielen, nämlich
  selbstadjungierte und idempotente \emph{Elemente von} $A$ und
  idempotente \emph{Abbildungen auf} $A$.} deuten. Dazu müssen wir die
Abgeschlossenheit der direkten Summanden zeigen, wozu der Satz von
Banach-Dieudonné unverzichtbar sein wird. Anschließend führen wir
dieselben Überlegungen für die Abbildung $x\mapsto px(1-p)$ und die
Zerlegung $A=pA(1-p)\dotplus\big((1-p)A+pAp\big)$ durch. Summenbildung
der beiden Funktionen liefert die Stetigkeit von $x\mapsto px$, was
wir mithilfe der Dichtheit der linearen Hülle der Projektionen zur
Stetigkeit von $x\mapsto ax$ ausdehnen können. Die Ausnutzung der
Stetigkeit von $.^{*}$ liefert schließlich auch die Stetigkeit von
$x\mapsto xa$.

Wir starten mit einer Hilfsaussage.
\begin{lemma}\label{lem:mult-wstar-stetig}
  Ist $p\in A$ eine Projektion, so gilt $\cl{p(A_{sa}\cap
    S)(1-p)}^{\TT_{w^{*}}}\subseteq pS(1-p)$.
\end{lemma}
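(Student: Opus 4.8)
The plan is to show that whenever $(x_{i})_{i\in I}$ is a net in $A_{sa}\cap S$ with $px_{i}(1-p)\xrightarrow{\TT_{w^{*}}}y$, then $\|y\|\le 1$ and $pyq=y$, where $q:=1-p$; granting this, $y=pyq\in pSq=pS(1-p)$. The norm bound is immediate: each $px_{i}q$ lies in $S$, and $S$ is $\TT_{w^{*}}$-compact, hence $\TT_{w^{*}}$-closed, by the Banach--Alaoglu theorem available for $A$ (Bemerkung~\ref{bem:wstar-top}(\ref{item:wstar-top-ii})), so $y\in S$. Thus the whole issue is the identity $pyq=y$, i.e.\ $y\in pAq$.

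First I would record the normalisation $p(A_{sa}\cap S)q=pSq$. The inclusion ``$\subseteq$'' is trivial since $\|pxq\|\le\|x\|\le 1$. For ``$\supseteq$'', given $z\in S$ with $pzq=z$ (equivalently $z\in pAq\cap S$), set $w:=z+z^{*}\in A_{sa}$; then $pwq=pzq+pz^{*}q=z$, because $z^{*}=(pzq)^{*}=qz^{*}p$ and hence $pz^{*}q=p(qz^{*}p)q=0$, and $w^{2}=zz^{*}+z^{*}z$ since $z^{2}=(pzq)^{2}=0=(z^{*})^{2}$. As $zz^{*}=p(zz^{*})p$ and $z^{*}z=q(z^{*}z)q$ are orthogonal positive elements with product $0$, Lemma~\ref{lem:norm-summe} gives $\|w\|^{2}=\|w^{2}\|=\max(\|zz^{*}\|,\|z^{*}z\|)=\|z\|^{2}\le 1$, so $w\in A_{sa}\cap S$ and $z=pwq\in p(A_{sa}\cap S)q$. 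In particular $\cl{p(A_{sa}\cap S)q}^{\TT_{w^{*}}}=\cl{pSq}^{\TT_{w^{*}}}$, so the lemma amounts to proving that $pSq$ is $\TT_{w^{*}}$-closed.

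The core of the argument is to deduce this from the $\TT_{w^{*}}$-continuity of the corner map $\Psi\colon A\to A$, $\Psi(z):=pzq$. This $\Psi$ is linear and idempotent ($\Psi^{2}(z)=p(pzq)q=pzq$), hence a Banach-space projection on $A$, with $\ran\Psi=pAq$ and, via the Peirce decomposition $z=pzp+pzq+qzp+qzq$, $\ker\Psi=\{z\in A:pzq=0\}=qA+Ap$. By Lemma~\ref{lem:proj-wstar-stetig} (applicable to $A$ by Bemerkung~\ref{bem:wstar-top}(\ref{item:wstar-top-ii})) the map $\Psi$ is $\TT_{w^{*}}$-continuous precisely when $\ran\Psi$ and $\ker\Psi$ are $\TT_{w^{*}}$-closed; and once $\Psi$ is $\TT_{w^{*}}$-continuous, $pSq=\Psi(A_{sa}\cap S)$ is the continuous image of the $\TT_{w^{*}}$-compact set $A_{sa}\cap S$ (compact because $A_{sa}$ is $\TT_{w^{*}}$-closed, Lemma~\ref{lem:sa-pos-wstar-abg}, and $S$ is $\TT_{w^{*}}$-compact), hence $\TT_{w^{*}}$-compact and therefore $\TT_{w^{*}}$-closed, which completes the proof.

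The remaining task — and the step I expect to be the main obstacle — is to verify the $\TT_{w^{*}}$-closedness of the Peirce subspaces $pAq$ and $qA+Ap$. For each I would invoke the Banach--Dieudonn\'e theorem (Satz~\ref{satz:banach-dieud}, valid for $A$ by Bemerkung~\ref{bem:wstar-top}(\ref{item:wstar-top-ii})), which reduces the question to the $\TT_{w^{*}}$-closedness of the bounded sets $pAq\cap S=pSq$ and $(qA+Ap)\cap S$. Here the $x\mapsto x+x^{*}$ device from the second paragraph is used once more to represent these bounded pieces as intersections of a fixed multiple of $S$ with the solution sets of corner equations such as $pwp=0$ and $qwq=0$, which one then checks to be $\TT_{w^{*}}$-closed using the $\TT_{w^{*}}$-closedness of $A_{sa}\cap S$ and $A^{+}\cap S$ (Lemma~\ref{lem:sa-pos-wstar-abg}) together with the $\TT_{w^{*}}$-continuity of $.^{*}$ (Korollar~\ref{kor:wstar-adj-stetig}). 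Turning these algebraic corner relations into $\TT_{w^{*}}$-closed constraints without already presupposing the multiplication to be $\TT_{w^{*}}$-continuous is the delicate point on which the whole lemma hinges.
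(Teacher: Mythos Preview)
Your reduction to ``$pSq$ is $\TT_{w^{*}}$-closed'' in the second paragraph is correct and tidy, but the strategy you propose for that step is circular. To make $\Psi(z)=pzq$ weak-$*$-continuous via Lemma~\ref{lem:proj-wstar-stetig} you need $\ran\Psi=pAq$ to be $\TT_{w^{*}}$-closed; by Banach--Dieudonn\'e this is precisely the closedness of $pAq\cap S=pSq$, which is what you set out to prove. You acknowledge the difficulty in the last paragraph, but the suggested fix---encoding the Peirce pieces by conditions such as ``$pwp=0$'' and ``$qwq=0$'' and then checking these are closed using only Lemma~\ref{lem:sa-pos-wstar-abg} and Korollar~\ref{kor:wstar-adj-stetig}---does not go through: those conditions again involve two-sided multiplication by $p$ and $q$, and the order trick $p(A^{+}\cap S)p=A^{+}\cap(p-A^{+})\cap A_{sa}\cap S$ that later handles the diagonal corner $pAp$ has no analogue for the off-diagonal corner $pAq$. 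So the ``delicate point'' you flag is not merely delicate; it is the whole lemma, and nothing in your outline supplies it.

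The paper avoids this circularity by arguing directly with norm estimates and the $\TT_{w^{*}}$-closedness of balls. Given $px_{i}q\to y$ with $x_{i}\in A_{sa}\cap S$, one shows $pyp=0$, $qyq=0$ and $qyp=0$ separately. For example, from $(px_{i}q+np)(px_{i}q+np)^{*}=px_{i}qx_{i}p+n^{2}p$ one gets $\|px_{i}q+np\|\le(1+n^{2})^{1/2}$, hence $y+np\in(1+n^{2})^{1/2}S$; on the other hand, if $\re(pyp)\neq 0$ one finds (working in $C^{*}(\re(pyp),p,1)$) some $\lambda>0$ with $\|y+np\|\ge\lambda+n$, which for large $n$ exceeds $(1+n^{2})^{1/2}$, a contradiction. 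The vanishing of $qyp$ uses a similar estimate with $px_{i}q+n\,qyp$ and Lemma~\ref{lem:norm-summe}. This direct argument \emph{is} the lemma; the closedness of $pAq$, $pAp$, $qA+Ap$ and the continuity of the corner maps are then derived \emph{from} it in Satz~\ref{satz:mult-wstar-stetig}, in the opposite logical order to your proposal.
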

\begin{proof}
  Sei $(px_{i}(1-p))_{i\in I}$ ein bezüglich der schwach-*-Topologie
  gegen $x\in A$ konvergentes Netz, wobei $x_{i}$ in $A_{sa}\cap S$
  liege. Der Kern des Beweises ist es, $x=px(1-p)$ zu zeigen, indem
  wir zuerst $x=px(1-p)+(1-p)xp$ und danach $(1-p)xp=0$ beweisen. Ist
  $p$ eine triviale Projektion, also $p\in\{0,1\}$, so sind beide
  Aussagen klar. Daher nehmen wir im Folgenden $p\neq 0,1$ an.
  \begin{enumerate}[label=(\roman*),ref=\roman*]
  \item\label{item:bew-lem-mult-wstar-stetig-i}\emph{Es gilt
      $x=px(1-p)+(1-p)xp$}:

    Zunächst zeigen wir $pxp=(1-p)x(1-p)=0$. Dazu sei
    $\frac{1}{2}(pxp+px^{*}p)=\re(pxp)\neq 0$ angenommen. Indem wir
    gegebenenfalls zu $-px_{i}(1-p)$ und $-x$ übergehen, können wir
    annehmen, dass es ein positives $0<\lambda\in\sigma(\re(pxp))$
    gibt. Dann gilt für jedes $n\in\N$ und $i\in I$
    \begin{align}
      \begin{split}
        \norm{px_{i}(1-p)+np}&=\norm{(px_{i}(1-p)+np)^{*}}=\norm{(px_{i}(1-p)+np)(px_{i}(1-p)+np)^{*}}^{1/2}\\
        &=\norm{px_{i}(1-p)x_{i}p+n^{2}p}^{1/2}\leq(1+n^{2})^{1/2}.
      \end{split}
    \end{align}
    Somit ist $px_{i}(1-p)+np$ und folglich der Grenzwert $x+np$ in
    $(1+n^{2})^{1/2}S$ enthalten, da letztere Menge abgeschlossen
    ist. Andererseits erhalten wir aus $p\re(pxp)p=\re(pxp)$ die
    Abschätzung\footnote{Hier geht $p\neq 0$, folglich $\norm{p}=1$,
      ein.}
    \begin{align}
      \begin{split}\label{eq:bew-lem-mult-wstar-stetig-i}
        \norm{x+np}&=\norm{p}\norm{x+np}\norm{p}\geq\norm{p(x+np)p}\geq\norm{\re(p(x+np)p)}\\
        &=\norm{\re(pxp)+np}=\norm{p\re(pxp)p+np}.
      \end{split}
    \end{align}
    Die $C^{*}$-Algebra $C^{*}(\re(pxp),p,1)$ ist nach
    Bemerkung~\ref{bem:erz-unteralg} kommutativ, da die Erzeuger
    selbstadjungiert sind und miteinander kommutieren. Geht man mit
    der Gelfandtransformation zum Raum der stetigen Funktionen auf dem
    Gelfandraum $M$ dieser $C^{*}$-Algebra über, so gilt
    \begin{displaymath}
      \norm{p\re(pxp)p+np}=\norm[\infty]{\widehat{p}\widehat{\re(pxp)}\widehat{p}+n\widehat{p}}
    \end{displaymath}
    und
    \begin{displaymath}
      \lambda\in\sigma(\re(pxp))=\sigma(p\re(pxp)p)=\sigma\left(\widehat{p}\widehat{\re(pxp)}\widehat{p}\right)=\left(\widehat{p}\widehat{\re(pxp)}\widehat{p}\right)(M),
    \end{displaymath}
    weshalb es ein $m\in M$ mit
    $\lambda=\widehat{p}(m)\widehat{\re(pxp)}(m)\widehat{p}(m)$
    gibt. Die Funktion $\widehat{p}$ nimmt als Projektion nur die
    Werte $0,1$ an; wir setzen
    $E:=\widehat{p}^{-1}(\{1\})\subseteq M$. Mit dieser Notation gilt
    $m\in E$ wegen $\lambda>0$. Daraus folgt
    \begin{displaymath}
      \norm[\infty]{\widehat{p}\widehat{\re(pxp)}\widehat{p}+n\widehat{p}}=\norm[\infty,E]{\widehat{p}\widehat{\re(pxp)}\widehat{p}+n\widehat{p}}\geq\abs{\widehat{p}(m)\widehat{\re(pxp)}(m)\widehat{p}(m)+n\widehat{p}(m)}=\lambda+n.
    \end{displaymath}
    Mit \eqref{eq:bew-lem-mult-wstar-stetig-i} ergibt sich also
    \begin{equation}\label{eq:bew-lem-mult-wstar-stetig-ii}
      \norm{x+np}\geq\lambda+n.
    \end{equation}
    Wählt man $n$ so groß, dass $\lambda+n>(1+n^{2})^{1/2}$ ist, so
    erhalten wir aus \eqref{eq:bew-lem-mult-wstar-stetig-ii} den
    Widerspruch $x+np\notin (1+n^{2})^{1/2}S$.

    Die Annahme $\im(pxp)\neq 0$ führt man analog auf einen
    Widerspruch, indem man die Elemente $px_{i}(1-p)+nip$ und $x+nip$
    betrachtet und in \eqref{eq:bew-lem-mult-wstar-stetig-i} durch die
    Norm des Imaginärteils abschätzt. Also folgt tatsächlich
    $pxp=0$. Für $(1-p)x(1-p)=0$ wendet man das soeben Bewiesene auf
    $\tilde{p}:=1-p$ an: Es gilt
    $\tilde{p}x_{i}(1-\tilde{p})=(px_{i}(1-p))^{*}\to x^{*}$, sodass
    wir auf $\tilde{p}x^{*}\tilde{p}=0$ schließen, also auf
    $(1-p)x^{*}(1-p)=0$. Adjungieren dieser Gleichung liefert
    $(1-p)x(1-p)=0$.

    Insgesamt folgt
    \begin{displaymath}
      0=(1-p)x(1-p)=x-px-xp+pxp\,\underbrace{+pxp}_{=0}=x-px(1-p)-(1-p)xp,
    \end{displaymath}
    also $x=px(1-p)+(1-p)xp$.
  \item\label{item:bew-lem-mult-wstar-stetig-ii}\emph{Das Element
      $(1-p)xp$ verschwindet}:
    
    Für $n\in\N$ und $i\in I$ gilt
    \begin{align*}
      \norm{px_{i}(1-p)+n(1-p)xp}^{2}&=\norm{(px_{i}(1-p)+n(1-p)xp)^{*}(px_{i}(1-p)+n(1-p)xp)}\\
                                     &=\|\underbrace{(px_{i}(1-p))^{*}(px_{i}(1-p))}_{\quad
                                       =:a}+\underbrace{n^{2}((1-p)xp)^{*}((1-p)xp)}_{\quad
                                       =:b}\|,
    \end{align*}
    was nach Lemma~\ref{lem:norm-summe} mit
    \begin{align*}
      \max(\|a\|,\|b\|)&=\max\left(\norm{(px_{i}(1-p))^{*}(px_{i}(1-p))},n^{2}\norm{((1-p)xp)^{*}((1-p)xp)}\right)\\
                       &=\max\left(\norm{px_{i}(1-p)}^{2},n^{2}\norm{(1-p)xp}^{2}\right)
    \end{align*}
    übereinstimmt. Damit erhalten wir
    \begin{align}\label{eq:bew-lem-mult-wstar-stetig-iii}
      \begin{split}
        \norm{px_{i}(1-p)+n(1-p)xp}&=\max\big(\underbrace{\|px_{i}(1-p)\|}_{\leq
          1},n\|(1-p)xp\|\big)\\
        &\leq\max\big(1,n\|(1-p)xp\|\big)=:\alpha.
      \end{split}
    \end{align}
    Wegen $x=px(1-p)+(1-p)xp$ gilt ebenfalls für beliebiges $n\in\N$
    \begin{displaymath}
      \norm{x+n(1-p)xp}=\norm{px(1-p)+(n+1)(1-p)xp},
    \end{displaymath}
    was analog zur ersten Hälfte von
    \eqref{eq:bew-lem-mult-wstar-stetig-iii} die Gleichung
    \begin{equation}\label{eq:bew-lem-mult-wstar-stetig-iv}
      \norm{x+n(1-p)xp}=\max\big(\|px(1-p)\|,(n+1)\|(1-p)xp\|\big)
    \end{equation}
    nach sich zieht. Aus \eqref{eq:bew-lem-mult-wstar-stetig-iii} und
    der Abgeschlossenheit von $\alpha S$ folgt
    \begin{displaymath}
      \norm{x+n(1-p)xp}\leq\alpha=\max\big(1,n\|(1-p)xp\|\big),
    \end{displaymath}
    und wegen \eqref{eq:bew-lem-mult-wstar-stetig-iv}
    \begin{equation}\label{eq:bew-lem-mult-wstar-stetig-v}
      \max\big(\|px(1-p)\|,(n+1)\|(1-p)xp\|\big)\leq\max\big(1,n\|(1-p)xp\|\big).
    \end{equation}
    Im Falle $\norm{(1-p)xp}\neq 0$ gäbe es ein $n\in\N$ mit
    $(n+1)\|(1-p)xp\|\geq\norm{px(1-p)}$ und $n\|(1-p)xp\|\geq 1$,
    sodass aus \eqref{eq:bew-lem-mult-wstar-stetig-v} der Widerspruch
    \begin{displaymath}
      (n+1)\norm{(1-p)xp}\leq n\norm{(1-p)xp}
    \end{displaymath}
    folgte.
  \end{enumerate}
  Mit (\ref{item:bew-lem-mult-wstar-stetig-i}) und
  (\ref{item:bew-lem-mult-wstar-stetig-ii}) haben wir
  $x=px(1-p)\in pA(1-p)$ gezeigt. Weiters gilt
  \begin{displaymath}
    \norm{px_{i}(1-p)}\leq\norm{p}\norm{x_{i}}\norm{1-p}\leq 1,
  \end{displaymath}
  also $px_{i}(1-p)\in S$. Da die Einheitskugel abgeschlossen ist,
  erhalten wir $x\in S$ und schließlich $x=px(1-p)\in pS(1-p)$.
\end{proof}
\begin{satz}\label{satz:mult-wstar-stetig}
  Für ein $a\in A$ sind die Translationen $x\mapsto ax$ und
  $x\mapsto xa$ schwach-*-stetig.
\end{satz}
\begin{proof}
  Im gesamten Beweis bezeichne $p\in A\setminus\{0,1\}$ eine
  Projektion.
  \begin{enumerate}[label=(\roman*),ref=\roman*]
  \item\label{item:bew-mult-wstar-stetig-i} \emph{Die Mengen $pSp$ und
      $(1-p)S(1-p)$ sind schwach-*-kompakt und die Räume $pAp$ und
      $(1-p)A(1-p)$ sind schwach-*-abgeschlossen}:

    Zunächst gilt
    $p(A^{+}\cap S)p=\set{x\in A_{sa}\cap S}{0\leq x\leq p}$: Für
    $y\in A^{+}\cap S$ ist $x=pyp\in p(A^{+}\cap S)p$ selbstadjungiert
    und liegt wegen
    \begin{equation}\label{eq:bew-mult-wstar-stetig-i}
      \norm{x}=\norm{pyp}\leq\norm{y}\leq 1
    \end{equation}
    in $S$. Außerdem gelten wegen Lemma~\ref{lem:eig-ho} die beiden
    Ungleichungsketten $0\leq y\leq\norm{y}\leq 1$ und
    $0=p0p\leq pyp\leq p1p=p$. Aus $0\leq x\leq p$ für ein
    $x\in A_{sa}\cap S$ folgt umgekehrt
    $0\leq (1-p)x(1-p)\leq (1-p)p(1-p)=0$, also $(1-p)x(1-p)=0$
    bzw. $\norm{x^{1/2}(1-p)}^{2}=\norm{(1-p)x(1-p)}=0$. Wir erhalten
    $x(1-p)=x^{1/2}\left(x^{1/2}(1-p)\right)=0$ bzw. $x=xp$. Durch
    Adjungieren folgt $px=x$ und insgesamt $x=pxp\in p(A^{+}\cap S)p$.
    
    Die gerade nachgewiesene Gleichheit lässt sich auch durch
    \begin{displaymath}
      p(A^{+}\cap S)p=A^{+}\cap(p-A^{+})\cap A_{sa}\cap S
    \end{displaymath}
    ausdrücken, womit diese Menge abgeschlossen und, da in $S$
    enthalten, sogar kompakt ist. Daraus folgt, dass
    \begin{displaymath}
      M:=\left(p(A^{+}\cap S)p-p(A^{+}\cap S)p\right)+i\left(p(A^{+}\cap S)p-p(A^{+}\cap S)p\right)
    \end{displaymath}
    als Bild des Kompaktums
    $(p(A^{+}\cap S)p)\times (p(A^{+}\cap S)p)\times (p(A^{+}\cap
    S)p)\times (p(A^{+}\cap S)p)$ unter der stetigen Funktion
    \begin{displaymath}
      (x_{1},x_{2},x_{3},x_{4})^{T}\mapsto
      (x_{1}-x_{2})+i(x_{3}-x_{4})
    \end{displaymath}
    ebenfalls kompakt ist. Aus der Inklusion
    \eqref{eq:pos-negteil-span-ii} in
    Bemerkung~\ref{bem:pos-negteil-span}(\ref{item:pos-negteil-span-ii})
    folgt $pSp\subseteq M$. Umgekehrt gilt für $y\in M$ sicher $pyp=y$
    und folglich $M\subseteq pAp$. Außerdem gilt mit demselben
    Argument wie in \eqref{eq:bew-mult-wstar-stetig-i} die Inklusion
    $pSp\subseteq S$. Zusammen erhalten wir
    \begin{equation}\label{eq:bew-mult-wstar-stetig-ii}
      pSp=pSp\cap S\subseteq M\cap S\subseteq pAp\cap
      S\stackrel{(*)}{=}pSp;
    \end{equation}
    die Gleichheit $(*)$ folgt dabei aus
    \eqref{eq:bew-mult-wstar-stetig-i} und aus der Tatsache, dass man
    ein $pxp\in pAp\cap S$ auch als $p(pxp)p\in pSp$ schreiben kann.
    Damit ist $pSp=M\cap S$ kompakt. Folglich ist $pAp\cap S=pSp$
    insbesondere abgeschlossen, sodass sich $pAp$ nach dem Satz von
    Banach-Dieudonné ebenfalls als abgeschlossen herausstellt. Durch
    Betrachten der Projektion $\tilde{p}:=1-p$ ergibt sich der Rest
    aus dem schon Bewiesenen.
  \item\label{item:bew-mult-wstar-stetig-ii} \emph{Die Mengen
      $pS(1-p)$ und $(1-p)Sp$ sind schwach-*-kompakt und die Räume
      $pA(1-p)$ und $(1-p)Ap$ sind schwach-*-abgeschlossen}:

    Wir betrachten nur $pS(1-p)$ bzw. $pA(1-p)$, da die übrige Aussage
    daraus wieder durch Betrachten von $\tilde{p}:=1-p$ folgt.

    Genauso wie die Gleichheit $(*)$ in
    \eqref{eq:bew-mult-wstar-stetig-ii} zeigt man
    \begin{equation}\label{eq:bew-mult-wstar-stetig-iii}
      pA(1-p)\cap S=pS(1-p).
    \end{equation}
    Aufgrund des Satzes von Banach-Dieudonné genügt es für den zweiten
    Teil der Aussage daher, die Abgeschlossenheit von $pS(1-p)$
    nachzuweisen. Diese Abgeschlossenheit ist auch für die
    Kompaktheitsaussage ausreichend, da $pS(1-p)$ dann eine
    abgeschlossene Teilmenge des Kompaktums $S$ und somit selbst
    kompakt ist.

    Definiert man die Menge
    \begin{displaymath}
      N:=p(A_{sa}\cap S)(1-p)+ip(A_{sa}\cap S)(1-p),
    \end{displaymath}
    so zeigt man auf ähnliche Weise wie in
    \eqref{eq:bew-mult-wstar-stetig-ii} die Gleichung
    \begin{equation}\label{eq:bew-mult-wstar-stetig-iv}
      pS(1-p)=N\cap S.
    \end{equation}
    Daraus folgt
    \begin{align}
      \begin{split}\label{eq:bew-mult-wstar-stetig-v}
        \cl{pS(1-p)}\subseteq\cl{N}&=\cl{p(A_{sa}\cap
          S)(1-p)+ip(A_{sa}\cap
          S)(1-p)}\\
        &\subseteq\cl{(\cl{p(A_{sa}\cap S)(1-p)})+i(\cl{p(A_{sa}\cap
            S)(1-p)})}.
      \end{split}
    \end{align}
    
    Nach Lemma~\ref{lem:mult-wstar-stetig} und
    \eqref{eq:bew-mult-wstar-stetig-i} gilt
    \begin{displaymath}
      \cl{p(A_{sa}\cap S)(1-p)}\subseteq pS(1-p)\subseteq S,
    \end{displaymath}
    sodass die beiden Summanden in der zweiten Zeile von
    \eqref{eq:bew-mult-wstar-stetig-v} abgeschlossene Teilmengen von
    $S$ und daher kompakt sind. Somit ist auch ihre Summe kompakt und
    folglich abgeschlossen. Wir schließen auf
    \begin{displaymath}
      \cl{pS(1-p)}\subseteq(\cl{p(A_{sa}\cap
        S)(1-p)})+i(\cl{p(A_{sa}\cap S)(1-p)})\subseteq (pS(1-p))+i(pS(1-p)).
    \end{displaymath}
    Die Tatsache~\eqref{eq:bew-mult-wstar-stetig-iv} liefert weiters
    $\cl{pS(1-p)}\subseteq\cl{S}=S$, sodass wir
    \begin{displaymath}
      \cl{pS(1-p)}\subseteq\big(pS(1-p)+ipS(1-p)\big)\cap
      S=\big(p(S+iS)(1-p)\big)\cap S.
    \end{displaymath}
    erhalten. Eine analoge Überlegung zu $(*)$ in
    \eqref{eq:bew-mult-wstar-stetig-ii} liefert, dass diese Menge in
    $pS(1-p)$ enthalten ist. Also ist $pS(1-p)$ tatsächlich
    abgeschlossen.
  \item\label{item:bew-mult-wstar-stetig-iii} \emph{Die Mengen $pS$,
      $(1-p)S$, $Sp$ und $S(1-p)$ sind schwach-*-kompakt und die Räume
      $pA$, $(1-p)A$, $Ap$ und $A(1-p)$ sind schwach-*-abgeschlossen}:

    Wie in Beweisschritt~(\ref{item:bew-mult-wstar-stetig-ii}) haben
    wir nur die Abgeschlossenheit der Mengen $pS$, $(1-p)S$, $Sp$ und
    $S(1-p)$ zu zeigen. Zunächst betrachten wir $pS$.

    Wegen $px=pxp+px(1-p)$ für jedes $x\in S$ sowie der trivialen
    Inklusion\footnote{Siehe auch \eqref{eq:bew-mult-wstar-stetig-i}.}
    $pS\subseteq S$ gilt $pS\subseteq\big(pSp+pS(1-p)\big)\cap S$. Ist
    umgekehrt $x:=pyp+pz(1-p)\in S$ mit $y,z\in S$ gegeben, so gilt
    $x=px\in pS$ und daher $\big(pSp+pS(1-p)\big)\cap S\subseteq
    pS$. Wir erhalten insgesamt
    \begin{equation}\label{eq:bew-mult-wstar-stetig-vi}
      pS=\big(pSp+pS(1-p)\big)\cap S.
    \end{equation}
    Die beiden Summanden sind nach
    (\ref{item:bew-mult-wstar-stetig-i}) und
    (\ref{item:bew-mult-wstar-stetig-ii}) kompakt, sodass es auch ihre
    Summe ist. Aus \eqref{eq:bew-mult-wstar-stetig-vi} folgt die
    Abgeschlossenheit von $pS$.

    Die Operation $.^{*}$ ist als stetige Involution ein
    Homöomorphismus, sodass mit $pS$ auch $Sp=(pS)^{*}$ abgeschlossen
    ist. Die Abgeschlossenheit von $(1-p)S$ bzw. $S(1-p)$ folgt
    schließlich aus jener von $pS$ bzw. $Sp$ durch Betrachten von
    $\tilde{p}:=1-p$.
  \item\label{item:bew-mult-wstar-stetig-iv} \emph{Die Räume
      $(1-p)Ap+A(1-p)$ und $(1-p)A+pAp$ sind schwach-*-abgeschlossen}:

    Wie bisher reicht es, die Schnitte mit $S$ zu betrachten, wobei
    wir mit dem ersten Raum beginnen. Dazu behaupten wir zunächst
    \begin{equation}\label{eq:bew-mult-wstar-stetig-vii}
      \big((1-p)Ap+A(1-p)\big)\cap S=\big((1-p)Sp+S(1-p)\big)\cap S.
    \end{equation}
    Für die nichttriviale Inklusion sei $x:=(1-p)yp+z(1-p)\in S$ mit
    $y,z\in A$ gegeben. Es gilt $(1-p)xp=(1-p)yp$ sowie
    $x(1-p)=z(1-p)$ und daher
    \begin{displaymath}
      x=(1-p)xp+x(1-p)\in (1-p)Sp+S(1-p).
    \end{displaymath}
    Nach den Beweisschritten~(\ref{item:bew-mult-wstar-stetig-ii}) und
    (\ref{item:bew-mult-wstar-stetig-iii}) ist die rechte Seite von
    \eqref{eq:bew-mult-wstar-stetig-vii} als Kompaktum abgeschlossen,
    sodass eine erneute Anwendung des Satzes von Banach-Dieudonné die
    erste Aussage zeigt.

    Für die zweite Aussage beweist man ganz analog
    \begin{displaymath}
      \big((1-p)A+pAp\big)\cap S=\big((1-p)S+pSp\big)\cap S
    \end{displaymath}
    und argumentiert auf dieselbe Weise.
  \item\label{item:bew-mult-wstar-stetig-v} \emph{Die Funktionen
      $x\mapsto pxp$, $x\mapsto px(1-p)$ und $x\mapsto px$ sind
      schwach-*-stetig}:

    Wir beweisen zunächst die Zerlegung
    $A=pAp\dotplus\big((1-p)Ap+A(1-p)\big)$. Die Summe dieser beiden
    Unterräume ist sicher ganz $A$, denn es gilt
    $x=pxp+\big((1-p)xp+x(1-p)\big)$ für beliebiges $x\in A$. Für
    $x\in pAp\cap\big((1-p)Ap+A(1-p)\big)$ gilt
    \begin{displaymath}
      pyp=x=(1-p)z_{1}p+z_{2}(1-p)
    \end{displaymath}
    mit gewissen $y,z_{1},z_{2}\in A$. Daraus folgt
    \begin{displaymath}
      x=pyp=p(pyp)p=pxp=p\big((1-p)z_{1}p+z_{2}(1-p)\big)p=0.
    \end{displaymath}
    Nach den Beweisschritten (\ref{item:bew-mult-wstar-stetig-i}) und
    (\ref{item:bew-mult-wstar-stetig-iv}) können wir
    Lemma~\ref{lem:proj-wstar-stetig} anwenden, um die Stetigkeit der
    Projektion auf $pAp$ zu erhalten. Aus der expliziten Darstellung
    \begin{displaymath}
      x=pxp+\big((1-p)xp+x(1-p)\big)
    \end{displaymath}
    sehen wir, dass diese Projektion genau $x\mapsto pxp$ ist, womit
    die erste Aussage gezeigt ist.

    Weiters gilt $A=pA(1-p)\dotplus\big((1-p)A+pAp\big)$: Einerseits
    kann man jedes $x\in A$ in der Form
    \begin{displaymath}
      x=px(1-p)+\big((1-p)x+pxp\big)
    \end{displaymath}
    schreiben, andererseits folgt aus
    \begin{displaymath}
      py(1-p)=x=(1-p)z_{1}+pz_{2}p
    \end{displaymath}
    mit $y,z_{1},z_{2}\in A$ die Gleichung
    \begin{displaymath}
      x=py(1-p)=p\big(py(1-p)\big)(1-p)=px(1-p)=p\big((1-p)z_{1}+pz_{2}p\big)(1-p)=0.
    \end{displaymath}
    Wieder wegen Lemma~\ref{lem:proj-wstar-stetig} ist die Funktion
    $x\mapsto px(1-p)$ stetig, da sie die Projektion auf $pA(1-p)$
    bezüglich dieser Zerlegung ist.

    Schließlich folgt die Stetigkeit der Summe
    $x\mapsto pxp+px(1-p)=px$ dieser beiden Funktionen.
  \item\label{item:bew-mult-wstar-stetig-vi} \emph{Die Translationen
      $x\mapsto ax$ und $x\mapsto xa$ sind schwach-*-stetig}:

    Wir zeigen als Erstes, dass für jedes schwach-*-stetige Funktional
    $\varphi$ das ebenfalls lineare Funktional $\psi(x):=\varphi(ax)$
    schwach-*-stetig auf $S$ ist.

    Dazu sei $\epsilon>0$ gegeben. Nach
    Korollar~\ref{kor:wstar-proj-dicht} gibt es eine natürliche Zahl
    $n$, Projektionen $p_{j}$ und Skalare $\lambda_{j}\in\C$,
    $j=1,\dots,n$, mit
    $\norm{a-\sum_{j=1}^{n}\lambda_{j}p_{j}}\leq\epsilon$. Sei
    $(x_{i})_{i\in I}$ ein Netz aus $S$, das gegen $x$
    konvergiert. Wegen der Abgeschlossenheit von $S$ gilt $x\in S$.
    Nach Bemerkung~\ref{bem:wstar-top}(\ref{item:wstar-top-iii}) ist
    $\varphi$ ein beschränktes Funktional und es gilt
    \begin{align*}
      \abs{\varphi(a(x_{i}-x))}&\leq\abs{\varphi\left(\left(a-\sum_{j=1}^{n}\lambda_{j}p_{j}\right)(x_{i}-x)\right)}+\abs{\varphi\left(\left(\sum_{j=1}^{n}\lambda_{j}p_{j}\right)(x_{i}-x)\right)}\\
                               &\leq\norm{\varphi}\norm{a-\sum_{j=1}^{n}\lambda_{j}p_{j}}\underbrace{\norm{x_{i}-x}}_{\leq
                                 2}+\sum_{j=1}^{n}\abs{\lambda_{j}}\abs{\varphi(p_{j}(x_{i}-x))}\\
                               &\leq
                                 2\norm{\varphi}\epsilon+\sum_{j=1}^{n}\abs{\lambda_{j}}\abs{\varphi(p_{j}(x_{i}-x))}\longrightarrow
                                 2\norm{\varphi}\epsilon,
    \end{align*}
    da die Abbildungen $y\mapsto\varphi(p_{j}y)$ wegen
    (\ref{item:bew-mult-wstar-stetig-v}) stetig sind. Hier ist zu
    beachten, dass die Fälle $p_{j}=0,1$ durch den bisherigen Beweis
    nicht abgedeckt werden; diese sind aber trivial. Daraus folgt
    $\limsup_{i\in I}\abs{\varphi(a(x_{i}-x))}\leq
    2\norm{\varphi}\epsilon$.  Da $\epsilon$ beliebig war, gilt
    $\lim_{i\in I}\abs{\varphi(a(x_{i}-x))}=0$ bzw.
    \begin{displaymath}
      \lim_{i\in I}\psi(x_{i})=\lim_{i\in I}\varphi(ax_{i})=\varphi(ax)=\psi(x).
    \end{displaymath}
    Somit ist $\psi$ tatsächlich auf $S$ stetig. Nach
    Korollar~\ref{kor:banach-dieud}
    bzw. Bemerkung~\ref{bem:wstar-top}(\ref{item:wstar-top-ii}) ist
    $\psi$ auf ganz $A$ stetig. Da der Zielraum $A$ der Abbildung
    $x\mapsto ax$ die initiale Topologie bezüglich aller
    schwach-*-stetigen linearen Funktionale $\varphi$ trägt, erhalten
    wir die Stetigkeit von $x\mapsto ax$.  Wegen $xa=(a^{*}x^{*})^{*}$
    und der Stetigkeit von $.^{*}$ folgt daraus auch die zweite
    Aussage.
  \end{enumerate}
\end{proof}

\clearpage
\chapter{Der Satz von Sakai}
\label{cha:satz_sakai}
Dieses Kapitel enthält trotz seiner Kürze die Hauptaussage der
gesamten Arbeit, nämlich den Satz von Sakai. Dieser besagt, dass die
in Kapitel~\ref{cha:w-algebren} eingeführten $W^{*}$-Algebren
tatsächlich die in Definition~\ref{def:vn-alg} eingeführten
Von-Neumann-Algebren hilbertraumfrei axiomatisieren. Mit anderen
Worten ist jede $W^{*}$-Algebra isometrisch isomorph zu einer
Von-Neumann-Algebra. In \cite{sakai:cstar-wstar} wird gezeigt, dass
dieser isometrische Isomorphismus stetig bezüglich der
schwach-*-Topologien ist. Wir werden zusätzlich beweisen, dass auch
die Umkehrabbildung schwach-*-stetig ist, sodass der isometrische
Isomorphismus ein Homöomorphismus bezüglich der schwach-*-Topologien
ist. Somit können wir wie beim Satz von Gelfand-Naimark für
$C^{*}$-Algebren von einer \emph{vollständigen} Axiomatisierung
sprechen, da der Isomorphismus die komplette Struktur der
$W^{*}$-Algebra, nämlich die algebraische, die vom Prädualraum
induzierte topologische und die Normstruktur, auf die
Von-Neumann-Algebra überträgt. Wir werden ganz ähnlich wie in
Kapitel~\ref{cha:abstrakte-c-algebren} beim Satz von Gelfand-Naimark
vorgehen, indem wir eine sogenannte treue $W^{*}$-Darstellung
konstruieren. Es ist jedoch zu beachten, dass Stetigkeitsüberlegungen
bezüglich der schwach-*-Topologien eine wesentlich größere Rolle
einnehmen werden als in Kapitel~\ref{cha:abstrakte-c-algebren}, da wir
die Isometrie und damit die $\norm{\cdot}$-Stetigkeit aus der
algebraischen Strukturverträglichkeit ableiten konnten; siehe
Satz~\ref{satz:inj-homo-iso}.

\section{Darstellungen von $W^{*}$-Algebren}
\label{sec:darst-von-wstar}
\begin{notation}
  Bezeichne $A$ wie im letzten Kapitel eine $W^{*}$-Algebra und
  $\TT_{w^{*}}=\TT_{w^{*},A}$ die schwach-*-Topologie auf $A$.
\end{notation}

\begin{definition}\label{def:wstar-unteralg}
  Eine schwach-*-abgeschlossene $*$-Unteralgebra $B\leq A$ nennen wir
  $W^{*}$-Unter\-algebra.
\end{definition}

In Bemerkung~\ref{bem:motiv-wstar} kann man $L^{1}(H)$ durch
$\pred{A}$, den Raum $L_{b}(H)$ durch $A$ und $A$ durch $B$ sowie den
isometrischen Isomorphismus $\theta:L_{b}(H)\to L^{1}(H)'$ durch
$j_{A}:A\to\pred{A}'$ ersetzen, um zu zeigen, dass eine
$W^{*}$-Unteralgebra $B\leq A$ für sich genommen ebenfalls eine
$W^{*}$-Algebra ist\footnote{In dieser Situation folgt die
  $\sigma\left(\pred{A}',\pred{A}\right)$-Abgeschlossenheit von
  $j_{A}(B)$ nicht mithilfe eines Analogons von
  Satz~\ref{satz:uw-optop-wstar} aus der
  $\TT_{w^{*},A}$-Abgeschlossenheit von $B$, sondern schlicht aus der
  Definition der schwach-*-Topologie $\TT_{w^{*},A}$. Diese Topologie
  ist nämlich so konstruiert, dass $j_{A}$ ein
  schwach-*-Homöomorphismus wird.}. Dabei ist $\pred{B}$ gegeben durch
$\pred{A}/M$ mit dem Linksannihilator $M:=\lanh{(j_{A}(B))}$. Als
isometrischen Isomorphismus $j_{B}:B\to\pred{B}'$ kann man
$\tau^{-1}\circ j_{A}|_{B}$ wählen, mit der analog zu
\eqref{eq:motiv-wstar} definierten Abbildung $\tau$. Daher existieren
auf $B$ zwei kanonische $W^{*}$-Topologien, nämlich die
schwach-*-Topologie $\TT_{w^{*},B}$ und die Spurtopologie
$(\TT_{w^{*},A})|_{B}$. Es ist eine wesentliche Tatsache, dass diese
beiden Topologien gleich sind.

\begin{lemma}\label{lem:top-wstar-unteralg}
  Ist $B$ eine $W^{*}$-Unteralgebra von $A$, so gilt
  $\TT_{w^{*},B}=(\TT_{w^{*},A})|_{B}$.
\end{lemma}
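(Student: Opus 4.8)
Der Kern der Aussage ist, dass die Identität $\id_B:(B,\TT_{w^{*},B})\to(B,(\TT_{w^{*},A})|_{B})$ ein Homöomorphismus ist. Eine Richtung ist praktisch definitorisch: Da $j_B=\tau^{-1}\circ j_A|_B$ und $\tau$ ein isometrischer Isomorphismus zwischen $\pred{B}'=(\pred{A}/M)'$ und $\ranh{M}\leq\pred{A}'$ ist, der sich als $f\mapsto f\circ\pi$ schreiben lässt, überträgt $\tau$ die schwach-*-Topologie $\sigma(\pred{B}',\pred{B})$ in die Spurtopologie $\sigma(\pred{A}',\pred{A})|_{\ranh{M}}$. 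Das ist die Standardtatsache, dass der Quotient $\pred{A}/M$ als Prädualraum des Annihilators $\ranh{M}$ gerade die von $\pred{A}$ geerbte schwach-*-Topologie liefert. Folglich ist $\id_B:(B,\TT_{w^{*},B})\to(B,(\TT_{w^{*},A})|_{B})$ stetig, d.h. $(\TT_{w^{*},A})|_{B}\subseteq\TT_{w^{*},B}$. Diese Inklusion würde ich zuerst sauber aus der Konstruktion von $j_B$ und $\tau$ in der vorangehenden Diskussion extrahieren.

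Für die umgekehrte Inklusion $\TT_{w^{*},B}\subseteq(\TT_{w^{*},A})|_{B}$ ist mehr zu tun, da ein a priori auf $B$ schwach-*-stetiges Funktional nicht offensichtlich von einem auf ganz $A$ schwach-*-stetigen Funktional herrührt. Der natürliche Ansatz ist über den Satz von Banach-Dieudonné bzw. Krein-Smulian (Satz~\ref{satz:banach-dieud}, Satz~\ref{satz:krein-sm}), anwendbar dank Bemerkung~\ref{bem:wstar-top}(\ref{item:wstar-top-ii}): Es genügt zu zeigen, dass die Einheitskugel $S_B=B\cap S_A$ dieselbe Spurtopologie von $\TT_{w^{*},B}$ und von $\TT_{w^{*},A}$ trägt. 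Hier kommt die Kompaktheit ins Spiel: Nach dem Satz von Banach-Alaoglu (wieder Bemerkung~\ref{bem:wstar-top}(\ref{item:wstar-top-ii})) ist $S_B$ bezüglich $\TT_{w^{*},B}$ kompakt. Da $B$ als $W^{*}$-Unteralgebra $\TT_{w^{*},A}$-abgeschlossen und $S_A$ $\TT_{w^{*},A}$-kompakt ist, ist $S_B$ auch bezüglich $(\TT_{w^{*},A})|_B$ kompakt. Auf $S_B$ ist $(\TT_{w^{*},A})|_B\subseteq\TT_{w^{*},B}$ nach dem ersten Beweisteil; also ist $\id_{S_B}$ eine stetige Bijektion von einem kompakten Raum $(S_B,\TT_{w^{*},B})$ in einen Hausdorffraum $(S_B,(\TT_{w^{*},A})|_B)$ und damit ein Homöomorphismus. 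Genau dasselbe Argument wurde in Korollar~\ref{kor:uw-optop-wstar}(\ref{item:uw-optop-wstar-ii}) verwendet.

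Damit stimmen $\TT_{w^{*},B}$ und $(\TT_{w^{*},A})|_B$ auf $S_B$ überein, und da $\TT_{w^{*},B}$ eine Banachraum-schwach-*-Topologie ist, folgt mit Satz~\ref{satz:banach-dieud} (angewandt auf Unterräume $C\leq B$, insbesondere Kerne linearer Funktionale): Ein lineares Funktional $\varphi$ auf $B$ ist genau dann $\TT_{w^{*},B}$-stetig, wenn $\ker\varphi\cap S_B$ in $S_B$ abgeschlossen ist — und das ist wegen der Gleichheit der Spurtopologien auf $S_B$ äquivalent dazu, dass $\ker\varphi\cap S_B$ bezüglich $(\TT_{w^{*},A})|_B$ abgeschlossen ist, also (wieder Satz~\ref{satz:banach-dieud}, nun bezüglich $\TT_{w^{*},A}$ bzw. der zugehörigen Banachraum-Struktur von $B$ mit Prädual $\pred{A}/M$) dazu, dass $\varphi$ $(\TT_{w^{*},A})|_B$-stetig ist. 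Da beide Topologien initial bezüglich ihrer jeweiligen stetigen Funktionale sind (Bemerkung~\ref{bem:wstar-top}(\ref{item:wstar-top-ii})) und diese Funktionalmengen übereinstimmen, folgt $\TT_{w^{*},B}=(\TT_{w^{*},A})|_B$.

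\textbf{Hauptschwierigkeit.} Der erste Beweisteil — nämlich präzise aus der Konstruktion $j_B=\tau^{-1}\circ j_A|_B$ und aus $\ranh{M}=j_A(B)$ abzulesen, dass $\tau$ die schwach-*-Topologien ineinander überführt — ist technisch der heikelste Punkt, weil man sorgfältig mit der kanonischen Identifikation $(\pred{A}/M)'\cong\ranh{M}$ und der zugehörigen Bidualitätspaarung umgehen muss; intuitiv ist klar, dass ein Netz in $j_A(B)$ genau dann in $\pred{A}'$ schwach-* konvergiert, wenn es gegen jedes $\phi\in\pred{A}$, äquivalent gegen jede Klasse $\pi(\phi)\in\pred{A}/M$ konvergiert, aber die Ausformulierung erfordert Aufmerksamkeit. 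Der Rest ist dann eine reine Wiederholung bekannter Kompaktheits- und Banach-Dieudonné-Argumente.
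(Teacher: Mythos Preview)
Your first paragraph is essentially the paper's entire proof, but you only extract half of it. The observation that $\tau:(\pred{B}',\sigma(\pred{B}',\pred{B}))\to(\ranh{M},\sigma(\pred{A}',\pred{A})|_{\ranh{M}})$ is a homeomorphism --- which you yourself state as an ``iff'' in the Hauptschwierigkeit paragraph --- already yields the equality $\TT_{w^{*},B}=(\TT_{w^{*},A})|_B$ directly: since $j_B=\tau^{-1}\circ j_A|_B$ and $j_A|_B$ lands in $\ranh{M}$, the initial topology of $j_B$ equals that of $j_A|_B$. The paper makes exactly this computation explicit, writing out both topologies as initial with respect to the families $\{b\mapsto\chevr{\pi(\rho)}{j_B(b)}:\rho\in\pred{A}\}$ and $\{b\mapsto\chevr{\rho}{j_A(b)}:\rho\in\pred{A}\}$ and checking via $\tau(f)=f\circ\pi$ that these are literally the same set of maps $B\to\C$.

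Your compactness/Banach--Dieudonn\'e detour for the ``other'' inclusion is therefore unnecessary, and as written its final step is circular: you invoke Satz~\ref{satz:banach-dieud} for $(\TT_{w^{*},A})|_B$ via ``the Banach space structure of $B$ with predual $\pred{A}/M$'', but that predual produces $\TT_{w^{*},B}$, not $(\TT_{w^{*},A})|_B$ --- which is precisely what you are trying to prove. The detour \emph{can} be repaired by applying Krein--Smulian in $A$ instead (using that $B$ is $\TT_{w^{*},A}$-closed, so a convex $C\subseteq B$ is $(\TT_{w^{*},A})|_B$-closed iff each $C\cap rS_A=C\cap rS_B$ is $\TT_{w^{*},A}$-closed), but the simpler fix is to recognise that your first paragraph already finishes the job.
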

\begin{proof}
  Die Topologie $\TT_{w^{*},A}$ ist die initiale Topologie bezüglich
  $j_{A}:A\to\pred{A}'$, wenn der Zielraum $\pred{A}'$ die
  schwach-*-Topologie trägt. Somit ist die Spurtopologie
  $(\TT_{w^{*},A})|_{B}$ die initiale Topologie bezüglich
  $j_{A}\circ\iota_{B\to A}:B\to\pred{A}'$, wobei $\iota_{B\to A}$ die
  Inklusionsabbildung $B\to A$, $b\mapsto b$ bezeichnet. Mit den
  Bezeichnungen nach Definition~\ref{def:wstar-unteralg} ist weiters
  $\TT_{w^{*},B}$ die initiale Topologie bezüglich
  \begin{displaymath}
    j_{B}=\tau^{-1}\circ j_{A}|_{B}=\tau^{-1}\circ j_{A}\circ\iota_{B\to A}
  \end{displaymath}
  als Abbildung $B\to\pred{B}'=(\pred{A}/M)'$. Dabei ist
  $(\pred{A}/M)'$ ebenfalls mit der schwach-*-Topologie versehen. Die
  schwach-*-Topologie auf dem Dualraum eines Banachraums $X$ ist
  wieder als initiale Topologie definiert, nämlich bezüglich der Menge
  $\iota_{X}(X)$ von Funktionalen, wobei $\iota_{X}:X\to X''$ die
  kanonische Einbettung $x\mapsto (x'\mapsto\chevr{x}{x'})$
  bezeichnet. Wenden wir diese Tatsache auf $X=\pred{A}$ bzw.
  $X=\pred{B}$ an, so erhalten wir aufgrund der Assoziativität des
  Bildens initialer Topologien
  \begin{equation}\label{eq:bew-top-wstar-unteralg-i}
    (\TT_{w^{*},A})|_{B}=\TT_{\text{init}}\left(\set{\iota_{\pred{A}}(\rho)\circ
        j_{A}\circ\iota_{B\to A}}{\rho\in\pred{A}}\right)
  \end{equation}
  sowie
  \begin{align}
    \begin{split}\label{eq:bew-top-wstar-unteralg-ii}
      \TT_{w^{*},B}&=\TT_{\text{init}}\left(\set{\iota_{\pred{B}}(\rho_{B})\circ\tau^{-1}\circ
          j_{A}\circ\iota_{B\to A}}{\rho_{B}\in\pred{B}}\right) \\
      &=\TT_{\text{init}}\left(\set{\iota_{\pred{B}}(\pi(\rho))\circ\tau^{-1}\circ
          j_{A}\circ\iota_{B\to A}}{\rho\in\pred{A}}\right).
    \end{split}
  \end{align}
  Dabei liegen jeweils Mengen von Abbildungen von $B$ in die mit der
  euklidischen Topologie versehenen komplexen Zahlen $\C$
  vor. Außerdem bezeichnet $\pi$ die kanonische
  Faktorisierungsabbildung $\pred{A}\to\pred{A}/M=\pred{B}$. Um die
  Aussage des Lemmas zu zeigen, genügt es folglich, die Gleichheit der
  beiden Mengen skalarwertiger Abbildungen in
  \eqref{eq:bew-top-wstar-unteralg-i} und
  \eqref{eq:bew-top-wstar-unteralg-ii} nachzuweisen.

  Dazu sei zunächst an die explizite Definition von $\tau$ erinnert:
  \begin{displaymath}
    \tau:
    \begin{cases}
      (\pred{A}/M)'&\to j_{A}(B)\\
      \hfill f&\mapsto f\circ\pi
    \end{cases}
  \end{displaymath}
  Mit diesen Informationen und Definitionen berechnen wir für
  beliebige Elemente $\rho\in\pred{A}$ und $b\in B$
  \begin{align*}
    \chevr{b}{\iota_{\pred{B}}(\pi(\rho))\circ\tau^{-1}\circ
    j_{A}}&=\chevr{\tau^{-1}\circ j_{A}(b)}{\iota_{\pred{B}}(\pi(\rho))}=\chevr{\pi(\rho)}{\tau^{-1}(j_{A}(b))}\\
          &=\chevr{\rho}{\tau^{-1}(j_{A}(b))\circ\pi}=\chevr{\rho}{\tau(\tau^{-1}(j_{A}(b)))}\\
          &=\chevr{\rho}{j_{A}(b)}\\
          &=\chevr{j_{A}(b)}{\iota_{\pred{A}}(\rho)}=\chevr{b}{\iota_{\pred{A}}(\rho)\circ
            j_{A}}\\
          &=\chevr{b}{\iota_{\pred{A}}(\rho)\circ
            j_{A}\circ\iota_{B\to A}}.
  \end{align*}
  Daraus ergibt sich
  \begin{displaymath}
    \iota_{\pred{B}}(\pi(\rho))\circ\tau^{-1}\circ
    j_{A}=\iota_{\pred{A}}(\rho)\circ j_{A}\circ\iota_{B\to A}
  \end{displaymath}
  und infolge die Gleichheit der Mengen in
  \eqref{eq:bew-top-wstar-unteralg-i} und
  \eqref{eq:bew-top-wstar-unteralg-ii}.
\end{proof}

Die Begriffe eines $*$-Algebrenhomomorphismus und einer Darstellung
werden als Nächstes um eine Stetigkeitsbedingung erweitert. Wegen der
Unterscheidung zwischen der schwachen Operatortopologie $\TT_{w}$ und
der ultraschwachen Operatortopologie $\TT_{uw}=\TT_{w^{*},L_{b}(H)}$
geschieht dies auf zwei verschiedene Weisen.
\begin{definition}\label{def:wstar-homo-darst}
  \hspace{0mm}
  \begin{enumerate}[label=(\roman*),ref=\roman*]
  \item Sei $B$ eine weitere $W^{*}$-Algebra. Ein
    $*$-Algebrenhomomorphismus $\Phi:A\to B$ heißt
    \emph{$W^{*}$-Homomorphismus}, wenn er
    $\TT_{w^{*},A}|\TT_{w^{*},B}$-stetig ist. Ein bijektiver
    $W^{*}$-Homomorphismus, der zusätzlich ein Homöomorphismus
    bezüglich der schwach-*-Topologien ist, heißt
    \emph{$W^{*}$-Isomorphismus}.
  \item Einen $W^{*}$-Homomorphismus $\Phi:A\to L_{b}(H)$, also einen
    $\TT_{w^{*},A}|\TT_{uw}$-stetigen $*$-Algebren\-homomorphismus,
    nennt man \emph{$W^{*}$-Darstellung}. Dabei heißt $\Phi$
    \emph{treu}, wenn $\Phi$ injektiv ist.
  \item Ist $\Phi:A\to L_{b}(H)$ ein $*$-Homomorphismus, der
    $\TT_{w^{*},A}|\TT_{w}$-stetig ist, so heißt $\Phi$
    \emph{Von-Neumann-Darstellung}. Im Falle der Injektivität heißt
    $\Phi$ dabei \emph{treu}.
  \end{enumerate}
\end{definition}

Man beachte, dass der Begriff der Von-Neumann-Darstellung schwächer
als der einer $W^{*}$-Darstellung $A\to L_{b}(H)$ ist, da $\TT_{w}$
gröber als $\TT_{uw}$ ist.

Zunächst beweisen wir ein Analogon von Satz~\ref{satz:inj-homo-iso}.
\begin{satz}\label{satz:inj-wstar-homo-inv-wstar}
  Sei $B$
  \begin{enumerate}[label=(\roman*),ref=\roman*]
  \item\label{item:inj-wstar-homo-inv-wstar-i} eine $W^{*}$-Algebra
    \quad oder
  \item\label{item:inj-wstar-homo-inv-wstar-ii} $B=L_{b}(H)$
  \end{enumerate}
  und sei $\Phi:A\to B$ ein injektiver $*$-Homomorphismus, der
  \begin{enumerate}[label=(\roman*),ref=\roman*]
  \item $\TT_{w^{*},A}|\TT_{w^{*},B}$-stetig \quad oder
  \item $\TT_{w^{*},A}|\TT_{w}$-stetig (d.~h. eine treue
    Von-Neumann-Darstellung)
  \end{enumerate}
  ist.  Dann ist $\ran\Phi$ eine $W^{*}$-Unteralgebra von $B$, also im
  Fall (\ref{item:inj-wstar-homo-inv-wstar-ii}) eine
  Von-Neumann-Algebra. Außerdem ist $\Phi^{-1}:\ran\Phi\to A$
  jedenfalls ein $W^{*}$-Homomorphismus.
\end{satz}
\begin{proof}
  Wir setzen $C:=\ran\Phi$. Nach dem Satz von Banach-Dieudonné genügt
  es für die erste Aussage zu zeigen, dass $C\cap S_{B}$ abgeschlossen
  bezüglich $\TT_{w^{*},B}$ ist. Gemäß Satz~\ref{satz:inj-homo-iso}
  ist $\Phi$ isometrisch, woraus $C\cap S_{B}=\Phi(S_{A})$ folgt. In
  Fall (\ref{item:inj-wstar-homo-inv-wstar-i}) erhalten wir, dass
  $C\cap S_{B}$ als stetiges Bild eines Kompaktums
  $\TT_{w^{*},B}$-kompakt und daher $\TT_{w^{*},B}$-abgeschlossen
  ist. In Fall (\ref{item:inj-wstar-homo-inv-wstar-ii}) erhalten wir
  analog die $\TT_{w}$-Abgeschlossenheit von $\Phi(S_{A})$. Daraus
  folgt die Abgeschlossenheit bezüglich der Spurtopologie
  $(\TT_{w})|_{S_{B}}=(\TT_{uw})|_{S_{B}}$;
  vgl. Korollar~\ref{kor:uw-optop-wstar}(\ref{item:uw-optop-wstar-ii}). Aus
  der $\TT_{uw}$-Abgeschlossenheit von $S_{B}$ erhalten wir
  $\cl{\Phi(S_{A})}^{\TT_{uw}}\subseteq S_{B}$. Daraus ergibt sich die
  Beziehung
  \begin{equation}\label{eq:inj-wstar-homo-inv-wstar}
    \Phi(S_{A})=\cl{\Phi(S_{A})}^{(\TT_{uw})|_{S_{B}}}=\cl{\Phi(S_{A})}^{\TT_{uw}}\cap S_{B}=\cl{\Phi(S_{A})}^{\TT_{uw}},
  \end{equation}
  und somit die gewünschte Abgeschlossenheit bezüglich
  $\TT_{uw}=\TT_{w^{*},B}$.
  
  Für die zweite Aussage gilt zunächst einmal, dass $\Phi^{-1}:C\to A$
  genau dann $\TT_{w^{*},C}|\TT_{w^{*},A}$-stetig ist, wenn
  $j_{A}\circ\Phi^{-1}:C\to\pred{A}'$ stetig bezüglich
  $\TT_{w^{*},C}|\sigma(\pred{A}',\pred{A})$ ist. Dies wiederum ist --
  unter Verwendung der Notation aus Lemma~\ref{lem:top-wstar-unteralg}
  -- genau dann der Fall, wenn die Verkettungen
  $\iota_{A}(\rho)\circ j_{A}\circ\Phi^{-1}:C\to\C$ für alle
  $\rho\in\pred{A}$ stetig bezüglich $\TT_{w^{*},C}|\TT_{\C}$ sind. Da
  es sich dabei um lineare Funktionale handelt, haben wir nach einem
  bekannten Satz der Funktionalanalysis die
  $\TT_{w^{*},C}$-Abgeschlossenheit der Kerne
  \begin{displaymath}
    \ker\left(\iota_{A}(\rho)\circ
      j_{A}\circ\Phi^{-1}\right)=(j_{A}\circ\Phi^{-1})^{-1}(\ker\iota_{A}(\rho))=\Phi\left(j_{A}^{-1}(\ker\iota_{A}(\rho))\right)
  \end{displaymath}
  zu untersuchen. Dazu betrachten wir den Schnitt
  \begin{displaymath}
    \Phi\left(j_{A}^{-1}(\ker\iota_{A}(\rho))\right)\cap
    S_{B}=\Phi\left(j_{A}^{-1}(\ker\iota_{A}(\rho))\cap S_{A}\right).
  \end{displaymath}
  Der Kern $\ker\iota_{A}(\rho)$ ist trivialerweise
  $\sigma(\pred{A}',\pred{A})$-abgeschlossen, womit das
  $j_{A}^{-1}$-Bild davon wegen der Homöomorphie
  $\TT_{w^{*},A}$-abgeschlossen ist. Die Einheitskugel $S_{A}$ ist
  $\TT_{w^{*},A}$-kompakt, sodass
  $j_{A}^{-1}(\ker\iota_{A}(\rho))\cap S_{A}$ ebenfalls
  $\TT_{w^{*},A}$-kompakt ist. In Fall
  (\ref{item:inj-wstar-homo-inv-wstar-i}) folgt daraus die
  $\TT_{w^{*},B}$-Kompaktheit, insbesondere Abgeschlossenheit, von
  $\Phi\left(j_{A}^{-1}(\ker\iota_{A}(\rho))\cap S_{A}\right)$.  Eine
  Anwendung des Satzes von Banach-Dieudonné liefert die
  Abgeschlossenheit von
  $\Phi\left(j_{A}^{-1}(\ker\iota_{A}(\rho))\right)$ bezüglich
  $\TT_{w^{*},B}$. Da $C$ bezüglich $\TT_{w^{*},B}$ abgeschlossen ist,
  ist dies analog zu \eqref{eq:inj-wstar-homo-inv-wstar}
  gleichbedeutend zur Abgeschlossenheit bezüglich
  $(\TT_{w^{*},B})|_{C}$. Nach Lemma~\ref{lem:top-wstar-unteralg}
  stimmt jene Topologie mit $\TT_{w^{*},C}$ überein, sodass die
  behauptete Aussage unmittelbar folgt. In Fall
  (\ref{item:inj-wstar-homo-inv-wstar-ii}) erhalten wir auf analoge
  Weise die $\TT_{w}$-Abgeschlossenheit von
  $\Phi\left(j_{A}^{-1}(\ker\iota_{A}(\rho))\cap S_{A}\right)$. Aus
  der Inklusion $\TT_{uw}\supseteq\TT_{w}$ ergibt sich
  $\TT_{uw}$-Abgeschlossenheit. Da die ultraschwache Operatortopologie
  mit $\TT_{w^{*},B}$ übereinstimmt, erhalten wir wie in Fall
  (\ref{item:inj-wstar-homo-inv-wstar-i}) die Abgeschlossenheit von
  $\Phi\left(j_{A}^{-1}(\ker\iota_{A}(\rho))\right)$ bezüglich
  $(\TT_{w^{*},B})|_{C}=\TT_{w^{*},C}$.
\end{proof}
Formuliert man diesen Satz etwas um, so ergeben sich sehr prägnante
Aussagen über injektive $W^{*}$-Homomorphismen. Ein weiteres Argument
liefert einen interessanten Zusammenhang zwischen treuen
$W^{*}$-Darstellungen und Von-Neumann-Darstellungen.
\begin{korollar}\label{kor:inj-wstar-homo-inv-wstar}
  \hspace{0mm}
  \begin{enumerate}[label=(\roman*),ref=\roman*]
  \item\label{item:kor-inj-wstar-homo-inv-wstar-i} Ist $B$ eine
    $W^{*}$-Algebra und $\Phi:A\to B$ ein bijektiver
    $W^{*}$-Homomorphismus, so ist $\Phi$ auch ein Homöomorphismus,
    wenn man $A$ und $B$ mit den jeweiligen schwach-*-Topologien
    versieht. Mit anderen Worten ist $\Phi$ ein $W^{*}$-Isomorphismus.
  \item\label{item:kor-inj-wstar-homo-inv-wstar-ii} Ist $B$ eine
    $W^{*}$-Algebra und $\Phi:A\to B$ ein injektiver
    $W^{*}$-Homomorphismus, so ist $\ran\Phi$ eine
    $W^{*}$-Unteralgebra von $B$. Außerdem sind $A$ und $\ran\Phi$
    isomorph als $W^{*}$-Algebren (d.~h. es existiert ein
    $W^{*}$-Isomorphismus\footnote{Wegen Satz~\ref{satz:inj-homo-iso}
      ist dieser $*$-Algebrenisomorphismus automatisch isometrisch,
      sodass $A$ und $\ran\Phi$ auch isometrisch isomorph sind.} von
    $A$ auf $\ran\Phi$).
  \item\label{item:kor-inj-wstar-homo-inv-wstar-iii} Eine treue
    Von-Neumann-Darstellung $\Phi:A\to L_{b}(H)$ ist auch eine treue
    $W^{*}$-Darstellung. In diesem Fall ist $\ran\Phi$ eine zu $A$ als
    $W^{*}$-Algebra isomorphe Von-Neumann-Algebra.
  \end{enumerate}
\end{korollar}
\begin{proof}
  Die Aussage (\ref{item:kor-inj-wstar-homo-inv-wstar-i}) folgt
  unmittelbar aus
  Satz~\ref{satz:inj-wstar-homo-inv-wstar}(\ref{item:inj-wstar-homo-inv-wstar-i}),
  zum Beweis von (\ref{item:kor-inj-wstar-homo-inv-wstar-ii})
  kombinieren wir (\ref{item:kor-inj-wstar-homo-inv-wstar-i}) mit
  Satz~\ref{satz:inj-wstar-homo-inv-wstar}(\ref{item:inj-wstar-homo-inv-wstar-i}).

  Für Aussage (\ref{item:kor-inj-wstar-homo-inv-wstar-iii}) gilt nach
  Satz~\ref{satz:inj-wstar-homo-inv-wstar}(\ref{item:inj-wstar-homo-inv-wstar-ii}),
  dass $\ran\Phi$ eine Von-Neumann-Algebra ist, also für sich eine
  $W^{*}$-Algebra, wobei noch $\Phi^{-1}:\ran\Phi\to A$ ein
  $W^{*}$-Homomorphismus ist. Klarerweise ist $\Phi^{-1}$ injektiv,
  sodass
  Satz~\ref{satz:inj-wstar-homo-inv-wstar}(\ref{item:inj-wstar-homo-inv-wstar-i}),
  angewandt auf $\Phi^{-1}$, die
  $\TT_{w^{*},A}|\TT_{w^{*},\ran\Phi}$-Stetigkeit von
  $\left(\Phi^{-1}\right)^{-1}=\Phi$ liefert. Aus
  Lemma~\ref{lem:top-wstar-unteralg} folgt
  $\TT_{w^{*},\ran\Phi}=(\TT_{w^{*},L_{b}(H)})|_{\ran\Phi}$, womit
  wegen elementarer topologischer Sachverhalte die Abbildung
  $\Phi:A\to L_{b}(H)$ auch
  $\TT_{w^{*},A}|\TT_{w^{*},L_{b}(H)}$-stetig ist. Mit anderen Worten
  ist $\Phi:A\to L_{b}(H)$ eine treue $W^{*}$-Darstellung. Die
  Isomorphieaussage folgt aus
  (\ref{item:kor-inj-wstar-homo-inv-wstar-ii}).
\end{proof}

Damit können wir -- erneut in Analogie zu
Kapitel~\ref{cha:abstrakte-c-algebren} -- nach einer treuen
Von-Neumann-Darstellung fragen, um die in unserer Verschärfung des
Satzes von Sakai behauptete $W^{*}$-Isomorphie von $A$ und einer
Von-Neumann-Algebra zu beweisen. Es wäre sicherlich natürlicher, eine
treue $W^{*}$-Darstellung zu suchen; die Stetigkeit bezüglich der
schwachen Operatortopologie wird sich aber als wesentlich leichter
handhabbar herausstellen. Daher kann unser Vorgehen als weiteres
Beispiel für den freien Wechsel zwischen den drei Operatortopologien
auf $L_{b}(H)$ gesehen werden.

\section{Beweis des Satzes}
\label{sec:beweis-des-satzes}
Wir wollen erneut die GNS-Konstruktion für positive Funktionale
benützen. Wegen der zusätz\-lichen topologischen Struktur auf einer
$W^{*}$-Algebra ist relativ klar, dass wir uns dabei auf die
schwach-*-stetigen Funktionale beschränken müssen. Im Folgenden
verwenden wir die Notation aus Kapitel~\ref{cha:abstrakte-c-algebren}.

\begin{lemma}\label{lem:gns-darst-wstar}
  Die von einem schwach-*-stetigen, positiven Funktional $\varphi$
  induzierte GNS-Dar\-stellung $\Phi_{\varphi}:A\to L_{b}(H_{\varphi})$
  ist eine Von-Neumann-Darstellung.
\end{lemma}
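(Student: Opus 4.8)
Der Plan ist es, die bereits in Kapitel~\ref{cha:abstrakte-c-algebren} durchgeführte GNS-Konstruktion zu übernehmen und zusätzlich nachzuweisen, dass die entstehende Darstellung $\Phi_{\varphi}$ stetig bezüglich $\TT_{w^{*},A}|\TT_{w}$ ist. Da $\Phi_{\varphi}$ nach Lemma~\ref{lem:gns-darst} bereits als $*$-Algebrenhomomorphismus $A\to L_{b}(H_{\varphi})$ bekannt ist, bleibt nur die Stetigkeitsaussage zu zeigen. Weil die schwache Operatortopologie $\TT_{w}$ nach \eqref{eq:konv-snormen-ind-top} von den Seminormen $T\mapsto\abs{(Tx,y)}$ für $x,y\in H_{\varphi}$ erzeugt wird, genügt es, für jedes Paar $x,y\in H_{\varphi}$ die Stetigkeit der Abbildung $a\mapsto(\Phi_{\varphi}(a)x,y)$ bezüglich $\TT_{w^{*},A}$ nachzuweisen. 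Jede dieser Abbildungen ist ein lineares Funktional auf $A$, sodass es nach Korollar~\ref{kor:banach-dieud} bzw. Bemerkung~\ref{bem:wstar-top}(\ref{item:wstar-top-ii}) ausreicht, die Stetigkeit auf der Einheitskugel $S$ zu prüfen — das wird vermutlich der technische Kern des Beweises sein.

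Zunächst würde ich den Fall behandeln, dass $x$ und $y$ in der dichten Teilmenge $A/N_{\varphi}\subseteq H_{\varphi}$ liegen, etwa $x=b+N_{\varphi}$ und $y=c+N_{\varphi}$ mit $b,c\in A$. Dann gilt nach Konstruktion
\begin{displaymath}
  (\Phi_{\varphi}(a)(b+N_{\varphi}),c+N_{\varphi})_{H_{\varphi}}=(ab+N_{\varphi},c+N_{\varphi})_{A/N_{\varphi}}=\varphi(c^{*}ab).
\end{displaymath}
Die Abbildung $a\mapsto\varphi(c^{*}ab)$ ist aber gerade $x\mapsto\varphi(c^{*}ab)$, also die Komposition von $a\mapsto c^{*}ab$ mit $\varphi$; nach Satz~\ref{satz:mult-wstar-stetig} sind die Translationen $a\mapsto c^{*}a$ und $a\mapsto ab$ schwach-*-stetig, und $\varphi$ ist schwach-*-stetig nach Voraussetzung. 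Somit ist $a\mapsto\varphi(c^{*}ab)$ schwach-*-stetig für alle $b,c\in A$, d.~h. die Stetigkeit von $a\mapsto(\Phi_{\varphi}(a)x,y)$ ist für $x,y$ im dichten Unterraum $A/N_{\varphi}$ gesichert.

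Der verbleibende Schritt ist die Ausdehnung auf beliebige $x,y\in H_{\varphi}$ mittels eines Dichtheitsarguments auf der Einheitskugel. Hier würde ich folgendermaßen vorgehen: Sei $a\in S$ beliebig und sei $(a_{i})_{i\in I}$ ein Netz in $S$, das bezüglich $\TT_{w^{*},A}$ gegen $a$ konvergiert (wegen der Abgeschlossenheit von $S$ gilt $a\in S$). Für $x,y\in H_{\varphi}$ und $\epsilon>0$ wählt man $x_{0},y_{0}\in A/N_{\varphi}$ mit $\norm[H_{\varphi}]{x-x_{0}}<\epsilon$ und $\norm[H_{\varphi}]{y-y_{0}}<\epsilon$. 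Da $\norm{\Phi_{\varphi}(a_{i})}\leq 1$ und $\norm{\Phi_{\varphi}(a)}\leq 1$, lässt sich $\abs{(\Phi_{\varphi}(a_{i})x,y)-(\Phi_{\varphi}(a)x,y)}$ durch drei Terme abschätzen: einen Term mit $(\Phi_{\varphi}(a_{i}-a)x_{0},y_{0})$, der nach dem zweiten Absatz gegen Null konvergiert, und zwei Fehlerterme, die durch $2\epsilon(\norm[H_{\varphi}]{x}+\norm[H_{\varphi}]{y}+2\epsilon)$ oder ähnlich gleichmäßig in $i$ beschränkt sind. Bildet man $\limsup_{i\in I}$ und lässt $\epsilon\to 0$, so folgt $(\Phi_{\varphi}(a_{i})x,y)\to(\Phi_{\varphi}(a)x,y)$. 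Somit ist $a\mapsto(\Phi_{\varphi}(a)x,y)$ auf $S$ stetig, nach Korollar~\ref{kor:banach-dieud} also auf ganz $A$ stetig, und damit ist $\Phi_{\varphi}$ eine Von-Neumann-Darstellung. Der Hauptaufwand liegt im letzten Absatz in der sorgfältigen $\epsilon/3$-Abschätzung unter Ausnutzung der gleichmäßigen Beschränktheit $\norm{\Phi_{\varphi}(a)}\leq 1$; die Verwendung von Satz~\ref{satz:mult-wstar-stetig} im zweiten Absatz ist konzeptionell der entscheidende Punkt.
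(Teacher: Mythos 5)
Dein Vorschlag ist korrekt und folgt im Wesentlichen demselben Weg wie die Arbeit: Reduktion auf die Funktionale $a\mapsto(\Phi_{\varphi}(a)x,y)_{H_{\varphi}}$, Nachweis der Stetigkeit auf der Einheitskugel via Korollar~\ref{kor:banach-dieud}, explizite schwach-*-Stetigkeit für $x,y$ im dichten Unterraum $A/N_{\varphi}$ mittels Satz~\ref{satz:mult-wstar-stetig}, und Ausdehnung durch gleichmäßige Approximation auf $S$ unter Ausnutzung von $\norm{\Phi_{\varphi}}\leq 1$. Dein $\epsilon/3$-Argument ist nur die ausgeschriebene Fassung des in der Arbeit verwendeten Arguments über gleichmäßige Grenzwerte stetiger Funktionen, und deine direkte Formel $\varphi(c^{*}ab)$ erspart lediglich den in der Arbeit zusätzlich benötigten Rückgriff auf die Stetigkeit von $.^{*}$.
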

\begin{proof}
  Die zu zeigende $\TT_{w^{*},A}|\TT_{w}$-Stetigkeitsbedingung ist zur
  Stetigkeit aller Funktionale
  \begin{displaymath}
    f_{x,y}:
    \begin{cases}
      \hfill A&\to\C\\
      a&\mapsto\left(\Phi_{\varphi}(a)x,y\right)_{H_{\varphi}}
    \end{cases}
  \end{displaymath}
  äquivalent. Wegen Korollar~\ref{kor:banach-dieud} und
  Bemerkung~\ref{bem:wstar-top}(\ref{item:wstar-top-ii}) genügt der
  Beweis der Stetigkeit der Einschränkung auf die Einheitskugel
  $S$. Da nach Konstruktion der Prähilbertraum $A/N_{\varphi}$ dicht
  in $H_{\varphi}$ ist, gibt es Folgen\footnote{Man beachte, dass sich
    diese Aussage auf die Normtopologie von $H_{\varphi}$ bezieht,
    weshalb Folgen ausreichend sind.}
  $(x_{n})_{n\in\N}=(\tilde{x}_{n}+N_{\varphi})_{n\in\N}$
  bzw. $(y_{n})_{n\in\N}=(\tilde{y}_{n}+N_{\varphi})_{n\in\N}$ aus
  $A/N_{\varphi}$, die gegen $x$ bzw. $y$ konvergieren. Insbesondere
  ist $(x_{n})_{n\in\N}$ als konvergente Folge auch beschränkt,
  d.~h. $\norm[H_{\varphi}]{x_{n}}\leq C$ für alle $n\in\N$. Die
  Funktionale $f_{x_{n},y_{n}}$ erfüllen für beliebiges $a\in S$ wegen
  $\norm{\Phi_{\varphi}}\leq 1$ die Abschätzung
  \begin{align}
    \begin{split}\label{eq:gns-darst-wstar-i}
      \abs{f_{x,y}(a)-f_{x_{n},y_{n}}(a)}&=\abs{\left(\Phi_{\varphi}(a)(x-x_{n}),y\right)_{H_{\varphi}}+\left(\Phi_{\varphi}(a)x_{n},y-y_{n}\right)_{H_{\varphi}}}\\
      &\leq\abs{\left(\Phi_{\varphi}(a)(x-x_{n}),y\right)_{H_{\varphi}}}+\abs{\left(\Phi_{\varphi}(a)x_{n},y-y_{n}\right)_{H_{\varphi}}}\\
      &\leq\norm{\Phi_{\varphi}}\norm{a}\norm[H_{\varphi}]{x-x_{n}}\norm[H_{\varphi}]{y}+\norm{\Phi_{\varphi}}\norm{a}\norm[H_{\varphi}]{x_{n}}\norm[H_{\varphi}]{y-y_{n}}\\
      &\leq\norm[H_{\varphi}]{x-x_{n}}\norm[H_{\varphi}]{y}+C\norm[H_{\varphi}]{y-y_{n}}.
    \end{split}
  \end{align}
  Dieser Ausdruck konvergiert für $n\to\infty$ gegen $0$ und ist von
  $a$ unabhängig, sodass $f_{x_{n},y_{n}}$ auf $S$ gleichmäßig gegen
  $f_{x,y}$ konvergiert. Wegen
  \begin{displaymath}
    f_{x_{n},y_{n}}(a)=\left(\Phi_{\varphi}(a)(\tilde{x}_{n}+N_{\varphi}),\tilde{y}_{n}+N_{\varphi}\right)_{H_{\varphi}}=(a\tilde{x}_{n}+N_{\varphi},\tilde{y}_{n}+N_{\varphi})_{H_{\varphi}}=\varphi((a\tilde{x}_{n})^{*}\tilde{y}_{n})=\varphi(\tilde{x}_{n}^{*}a^{*}\tilde{y}_{n})
  \end{displaymath}
  ist die Funktion $f_{x_{n},y_{n}}$ als Verkettung der Operation
  $.^{*}$, der multiplikativen Translationen
  $b\mapsto\tilde{x}_{n}^{*}b$ und $b\mapsto b\tilde{y}_{n}$ sowie des
  Funktionals $\varphi$ schwach-*-stetig; siehe
  Korollar~\ref{kor:wstar-adj-stetig} und
  Satz~\ref{satz:mult-wstar-stetig}. Damit ist $f_{x,y}$ als
  gleichmäßiger Grenzwert stetiger Abbildungen selbst stetig auf $S$.
\end{proof}

Nun sind wir in der Lage, die angestrebte treue
Von-Neumann-Darstellung zu definieren. Dazu betrachten wir die Menge
$\mathcal{S}_{w^{*}}(A)$ aller schwach-*-stetigen Zustände auf $A$ und die von
ihnen induzierten GNS-Darstellungen
$\Phi_{\varphi}:A\to L_{b}(H_{\varphi})$. Analog zum Beweis des Satzes
von Gelfand-Naimark definieren wir die direkte Summe
$H:=\bigoplus_{\varphi\in\mathcal{S}_{w^{*}}(A)}H_{\varphi}$ und die Abbildung
$\Phi:A\to L_{b}(H)$ unter Verwendung von
\eqref{eq:univ-darst}. Klarerweise handelt es sich dabei um eine
Darstellung (im Sinne von Definition~\ref{def:darstellung}), die wir
die \emph{universelle $W^{*}$-Darstellung} nennen\footnote{Dass es
  sich dabei um eine $W^{*}$-Darstellung handelt, wird aber erst der
  Satz von Sakai zeigen.}. Der Satz von Sakai bekommt die folgende
Form:
\begin{satz}[Sakai]\label{satz:sakai}
  Die universelle $W^{*}$-Darstellung $\Phi$ von $A$ ist eine treue
  Von-Neumann- und $W^{*}$-Darstellung. Insbesondere ist $A$ isomorph
  als $W^{*}$-Algebra zu einer Von-Neumann-Algebra.
\end{satz}
\begin{proof}
  Der Beweis gliedert sich in zwei Teile. Wir müssen zeigen, dass
  $\Phi$ eine Von-Neumann-Darstellung ist und dass $\Phi$ treu
  ist. Diese Tatsachen gemeinsam mit
  Korollar~\ref{kor:inj-wstar-homo-inv-wstar} liefern dann die
  Aussage.

  Mit vollkommen analoger Begründung zum Beweis von
  Lemma~\ref{lem:gns-darst-wstar} genügt es für den ersten Teil zu
  zeigen, dass die Funktionale
  $f_{x,y}(a):=(\Phi(a)x,y)_{H}=\sum_{\varphi\in\mathcal{S}_{w^{*}}(A)}(\Phi_{\varphi}(a)x_{\varphi},y_{\varphi})_{H_{\varphi}}$
  auf $S$ stetig sind, wobei $x,y\in H$ beliebig zu wählen sind. Die
  Menge
  \begin{displaymath}
    Y:=\set{(x_{\varphi})_{\varphi\in\mathcal{S}_{w^{*}}(A)}\in
      H}{x_{\varphi}\neq 0\,\,\text{nur für endlich
        viele}\,\,\varphi\in\mathcal{S}_{w^{*}}(A)}
  \end{displaymath}
  ist in $H$ dicht, sodass Folgen $(x_{n})_{n\in\N}, (y_{n})_{n\in\N}$
  aus $Y$ existieren mit $x_{n}\to x$ und $y_{n}\to y$. Die
  Abschätzung~\eqref{eq:gns-darst-wstar-i} zeigt, dass die
  Funktionenfolge $f_{x_{n},y_{n}}|_{S}$ gleichmäßig gegen
  $f_{x,y}|_{S}$ konvergiert. Somit genügt es, die Stetigkeit der
  Funktionale $f_{x_{n},y_{n}}$ zu zeigen. Anders formuliert können
  wir uns auf den Fall
  $x=(x_{\varphi})_{\varphi\in\mathcal{S}_{w^{*}}(A)},y=(y_{\varphi})_{\varphi\in\mathcal{S}_{w^{*}}(A)}\in
  Y$ beschränken. Nach Definition von $Y$ gibt es endliche Mengen
  $F_{x},F_{y}\in\EE(S_{w^{*}}(A))$ mit $x_{\varphi}=0$ für alle
  $\varphi\notin F_{x}$ und $y_{\varphi}=0$ für alle
  $\varphi\notin F_{y}$. Die Menge $F:=F_{x}\cup F_{y}$ ist
  klarerweise ebenfalls endlich und es gilt
  $x_{\varphi}=0=y_{\varphi}$ für jedes $\varphi\notin F$. Wir
  erhalten
  \begin{displaymath}
    f_{x,y}(a)=\sum_{\varphi\in F}(\Phi_{\varphi}(a)x_{\varphi},y_{\varphi})_{H_{\varphi}}.
  \end{displaymath}
  Die einzelnen Summanden sind $\TT_{w^{*},A}$-stetig, da die
  $\Phi_{\varphi}$ Von-Neumann-Darstellungen sind, womit auch deren
  endliche Summe stetig ist.

  Für den zweiten Teil sei $\Phi(a)=0$, also $\Phi_{\varphi}(a)=0$ für
  alle $\varphi\in\mathcal{S}_{w^{*}}(A)$. Setzen wir Elemente der
  Form $b+N_{\varphi}$ in $\Phi_{\varphi}(a)=0$ ein, so erhalten wir
  $ab+N_{\varphi}=0$, also $ab\in N_{\varphi}$, für alle $b\in A$. Das
  bedeutet $\varphi((ab)^{*}ab)=0$ für beliebiges
  $\varphi\in\mathcal{S}_{w^{*}}(A)$. Durch nötigenfalls
  erforderliches Skalieren gilt dies sogar für alle
  schwach-*-stetigen, positiven Funktionale. Aus
  Korollar~\ref{kor:wstar-pos-fkt-trennend} folgt $(ab)^{*}(ab)=0$ und
  daher $ab=0$. Setzen wir $b=a^{*}$, so ergibt sich
  \begin{displaymath}
    \norm{a}=\norm{a^{*}}=\norm{aa^{*}}^{1/2}=0.
  \end{displaymath}
\end{proof}

\begin{bemerkung}
  Nach Lemma~\ref{lem:top-wstar-unteralg} ist die schwach-*-Topologie
  auf einer Von-Neumann-Algebra genau die Spurtopologie der
  ultraschwachen Operatortopologie. Bei unserem abstrakten und
  axiomatischen Zugang zu Von-Neumann-Algebren kann daher die
  ultraschwache Topologie am ehesten als kanonische Wahl unter den
  Operatortopologien angesehen werden. Es sei daran erinnert, dass in
  Definition~\ref{def:vn-alg} alle drei Operatortopologien
  $\TT_{s},\TT_{w},\TT_{uw}$ gleichberechtigt sind.
\end{bemerkung}

\clearpage

\chapter{Eindeutigkeit}
\label{cha:eindeutigkeit}
So erfolgreich der Begriff der $W^{*}$-Algebren dabei ist,
Von-Neumann-Algebren zu axiomatisieren, so unbefriedigend erscheint
die Definition. Eine $W^{*}$-Algebra ist ja eine $C^{*}$-Algebra, die
(als Banachraum) isometrisch isomorph zum Dualraum eines Banachraums
ist. Davon ausgehend haben wir die schwach-*-Topologie auf $A$
definiert, sodass die gesamte Theorie an die Wahl des Prädualraums und
des isometrischen Isomorphismus anknüpft. Legt die Struktur von $A$
diese beiden Objekte nicht in ausreichender Weise fest, so wäre es
denkbar, dass ein weiterer Banachraum samt isometrischem Isomorphismus
existiert, der eine andere schwach-*-Topologie auf $A$ induziert. Dies
wäre nicht nur ästhetisch unvorteilhaft, sondern könnte die Analyse
von $W^{*}$-Algebren erschweren -- für die "`gleiche"' $W^{*}$-Algebra
könnte die Gültigkeit einer Aussage davon abhängen, welche Topologie
man betrachtet.

\section{Schwach-*-stetige und positive Funktionale}
\label{sec:wstar-stetige-pos-fkt}
Für Banachräume $\pred{A}$ und $\pred[2]{A}$ sowie isometrische
Isomorphismen $j:A\to\pred{A}'$ und $j_{2}:A\to\pred[2]{A}'$ ist
klarerweise $j_{2}\circ j^{-1}$ ein isometrischer Isomorphismus
zwischen $\pred{A}'$ und $\pred[2]{A}'$. Es ist zu zeigen, dass auch
$\pred{A}$ und $\pred[2]{A}$ isometrisch isomorph sind, wobei der
Isomorphismus $\pred{A}\to\pred[2]{A}$ mit den Abbildungen $j$ und
$j_{2}$ in gewisser Weise verträglich sein muss. Es wäre ja auch
möglich, dass es zu einem \emph{festen} Prädualraum zwei
\emph{verschiedene} isometrische Isomorphismen gibt, die verschiedene
schwach-*-Topologien induzieren. Dass bereits der erste Schritt alles
andere als trivial ist, zeigt das folgende Beispiel.
\begin{beispiel}\label{bsp:dualraum-iso}
  Der Raum $c(\N,\C)$ der konvergenten, komplexwertigen Folgen und der
  Raum $c_{0}(\N,\C)$ der komplexwertigen Nullfolgen, beide versehen
  mit der Supremumsnorm
  \begin{displaymath}
    \norm[\infty]{(x_{n})_{n\in\N}}:=\sup_{n\in\N}\abs{x_{n}},
  \end{displaymath}
  haben bis auf isometrische Isomorphie den gleichen Dualraum. Es gilt
  nämlich
  \begin{equation}\label{eq:bsp-eindeut-i}
    (c_{0}(\N,\C),\norm[\infty]{\cdot})'\cong (\ell^{1}(\N,\C),\norm[1]{\cdot}) \cong
    (c(\N,\C),\norm[\infty]{\cdot})'
  \end{equation}
  mit dem Raum $\ell^{1}(\N,\C)$ der absolut summierbaren Folgen.
  Dabei sind die Isomorphismen $\ell^{1}(\N,\C)\to c(\N,\C)'$
  bzw. $\ell^{1}(\N,\C)\to c_{0}(\N,\C)'$ durch
  \begin{align*}
    j_{0}&:\begin{cases}
      \ell^{1}(\N,\C)&\to c_{0}(\N,\C)' \\
      \hfill (y_{n})_{n\in\N}&\mapsto\Big((x_{n})_{n\in\N}\mapsto\sum_{n=1}^{\infty}x_{n}y_{n}\Big)
    \end{cases}\\
    \vspace{1cm}
         &\qquad\qquad\qquad\text{sowie}\\
    j&:\begin{cases}
      \ell^{1}(\N,\C)&\to c(\N,\C)' \\
      \hfill (y_{n})_{n\in\N}&\mapsto\Big((x_{n})_{n\in\N}\mapsto
      (\lim_{n\to\infty}x_{n})\cdot
      y_{1}+\sum_{n=1}^{\infty}x_{n}y_{n+1}\Big)
    \end{cases}
  \end{align*}
  gegeben. Wir zeigen zunächst, dass $j_{0}$ isometrisch ist. Ist
  $y=(y_{n})_{n\in\N}\in\ell^{1}(\N,\C)$, so zeigt direktes
  Nachrechnen $\norm{j_{0}(y)}\leq\norm[1]{y}$. Für die umgekehrte
  Ungleichung sei $\alpha_{n}\in\C$ mit $\abs{\alpha_{n}}=1$ und
  $\alpha_{n}y_{n}=\abs{y_{n}}$. Die Folge $x:=(x_{n})_{n\in\N}$
  definiert durch
  \begin{displaymath}
    x_{n}:=
    \begin{cases}
      \alpha_{n},& n\leq N\\
      \hfill 0,& \text{sonst}
    \end{cases}
  \end{displaymath}
  ist für $N\in\N$ klarerweise in $c_{0}(\N,\C)$ enthalten mit
  $\norm[\infty]{x}=1$. Daraus folgt
  \begin{displaymath}
    \norm{j_{0}(y)}\geq\chevr{x}{j_{0}(y)}=\sum_{n=1}^{N}\alpha_{n}y_{n}=\sum_{n=1}^{N}\abs{y_{n}}.
  \end{displaymath}
  Lassen wir in dieser Ungleichung $N$ gegen $\infty$ streben, so
  erhalten wir
  \begin{equation}\label{eq:bsp-eindeut-ii}
    \norm{j_{0}(y)}\geq\sum_{n=1}^{\infty}\abs{y_{n}}=\norm[1]{y},
  \end{equation}
  womit die Isometrie von $j_{0}$ gezeigt ist. Für den Nachweis der
  Surjektivität sei $\varphi\in c_{0}(\N,\C)'$ gegeben. Wir definieren
  $y_{n}:=\chevr{e_{n}}{\varphi}$, wobei
  $e_{n}:=(\delta_{nk})_{k\in\N}$ die $n$-te kanonische Folge
  ist. Setzen wir $y:=(y_{n})_{n\in\N}$ und davon ausgehend
  $\alpha_{n}$ sowie $x$ wie oben, so folgt ähnlich wie in
  \eqref{eq:bsp-eindeut-ii} die Abschätzung
  $\norm{\varphi}\geq\sum_{n=1}^{\infty}\abs{y_{n}}$. Insbesondere
  erhalten wir $y\in\ell^{1}(\N,\C)$. Aus der Linearität von $\varphi$
  und $j_{0}(y)$ ergibt sich, dass die beiden Funktionale auf
  $\spn\set{e_{n}}{n\in\N}$, also auf jenen Folgen mit nur endlich
  vielen Einträgen ungleich $0$, übereinstimmen. Dieser Raum ist dicht
  in $c_{0}(\N,\C)$, sodass wir wegen der Stetigkeit
  $\varphi=j_{0}(y)$ erhalten.

  Für $j$ argumentieren wir analog. Um die Isometrie nachzuweisen, ist
  wieder nur $\norm{j(y)}\geq\norm[1]{y}$ für alle
  $y=(y_{n})_{n\in\N}\in\ell^{1}(\N,\C)$ zu zeigen. Dazu sei
  $\alpha_{n}$ definiert wie oben. Wir betrachten
  \begin{displaymath}
    x_{n}:=
    \begin{cases}
      \alpha_{n+1},& n\leq N\\
      \alpha_{1},& \text{sonst}
    \end{cases}
  \end{displaymath}
  und bemerken $x=(x_{n})_{n\in\N}\in c(\N,\C)$ mit
  $\lim_{n\to\infty}x_{n}=\alpha_{1}$ und $\norm[\infty]{x}=1$. Es
  gilt also
  \begin{displaymath}
    \norm{j(y)}\geq\chevr{x}{j(y)}=\alpha_{1}y_{1}+\sum_{n=1}^{N}\alpha_{n+1}y_{n+1}+\alpha_{1}\sum_{n=N+1}^{\infty}y_{n+1}=\sum_{n=1}^{N+1}\abs{y_{n}}+\alpha_{1}\sum_{n=N+2}^{\infty}y_{n}.
  \end{displaymath}
  Der Grenzübergang $N\to\infty$ liefert auch hier die gewünschte
  Ungleichung $\norm{j(y)}\geq\norm[1]{y}$. Zum Beweis der
  Surjektivität sei $\varphi\in c(\N,\C)'$ beliebig. Klarerweise ist
  die Einschränkung von $\varphi$ auf $c_{0}(\N,\C)$ in
  $c_{0}(\N,\C)'$ enthalten, sodass wir nach dem Obigen
  $\varphi|_{c_{0}(\N,\C)}=j_{0}(z)$ für eine Folge
  $z=(z_{n})_{n\in\N}\in\ell^{1}(\N,\C)$ schreiben können. Wir
  definieren $e:=(1,1,\dots)\in c(\N,\C)$ und damit
  $\gamma:=\chevr{e}{\varphi}$. Für $x=(x_{n})_{n\in\N}\in c(\N,\C)$
  liegt die Folge $x-(\lim_{k\to\infty}x_{k})\cdot e$ in
  $c_{0}(\N,\C)$. Daraus folgt
  \begin{align*}
    \chevr{x}{\varphi}&=\chevr{x-\left(\lim_{k\to\infty}x_{k}\right)\cdot
                        e}{\varphi}+\left(\lim_{k\to\infty}x_{k}\right)\cdot\chevr{e}{\varphi}\\
                      &=\chevr{x-\left(\lim_{k\to\infty}x_{k}\right)\cdot
                        e}{j_{0}(z)}+\left(\lim_{k\to\infty}x_{k}\right)\cdot\chevr{e}{\varphi}\\
                      &=\sum_{n=1}^{\infty}\left(x_{n}-\lim_{k\to\infty}x_{k}\right)z_{n}+\left(\lim_{k\to\infty}x_{k}\right)\cdot\gamma=\\
                      &=\sum_{n=1}^{\infty}x_{n}z_{n}+\left(\lim_{k\to\infty}x_{k}\right)\cdot\left(\gamma-\sum_{n=1}^{\infty}z_{n}\right).
  \end{align*}
  Somit können wir $y_{1}:=\gamma-\sum_{n=1}^{\infty}z_{n}$ und
  $y_{n+1}:=z_{n}$ setzen, um $\varphi=j(y)$ zu erhalten. Wir haben
  also \eqref{eq:bsp-eindeut-i} gezeigt.

  Allerdings sind $c(\N,\C)$ und $c_{0}(\N,\C)$ nicht isometrisch
  isomorph: Die konstante Folge $e=(1,1,\dots)$ ist ein Extremalpunkt
  der Einheitskugel in $c(\N,\C)$, im Raum $c_{0}(\N,\C)$ hat die
  Einheitskugel hingegen keine Extremalpunkte.
\end{beispiel}
In unserer Argumentation werden wir ausgehend von $A$ anstelle des
schwer fassbaren Prädual\-raums den Dualraum untersuchen. Als
Motivation dafür kann die Tatsache dienen, dass sich für eine
herkömmliche schwach-*-Topologie $\sigma(X',X)$ in Banachräumen der
Prädualraum $X$ aus dem Dualraum zurückgewinnen lässt. Es gilt nämlich
$(X',\sigma(X',X))'=\iota(X)$ mit der kanonischen Einbettung
$\iota:X\to X''$. Dabei werden wir zeigen, dass man die
schwach-*-Stetigkeit eines Funktionals $\varphi:A\to\C$ auch ohne
direkte Bezugnahme auf die Topologie und damit den Prädualraum
charakterisieren kann, nämlich über die Ordnungsstruktur auf
$A$. Direkt gelingt dies allerdings nur für \emph{positive}
Funktionale, sodass wir anschließend noch ein Zerlegungsresultat für
Funktionale verwenden.

Bis auf Weiteres halten wir einen Prädualraum $\pred{A}$ und einen
isometrischen Isomorphismus $j:A\to\pred{A}'$ fest und betrachten die
davon ausgehend definierte schwach-*-Topologie $\TT_{w^{*}}$ auf
$A$.

Für das erste Ziel sind noch einige Vorarbeiten notwendig. Zunächst
definieren wir eine weitere Topologie auf $A$. Dazu verwenden wir
nochmals den allerersten Schritt der Konstruktion der universellen
($W^{*}$-)Darstellung, nämlich die Verbindung zwischen positiven
Funktionalen und Seminormen,
vgl. Lemma~\ref{lem:eig-pos-fkt}(\ref{item:eig-pos-fkt-iv}).
\begin{definition}\label{def:q-top}
  \hspace{0mm}
  \begin{enumerate}[label=(\roman*),ref=\roman*]
  \item Sei $\varphi$ ein schwach-*-stetiges, positives Funktional auf
    $A$. Die Seminorm $\alpha_{\varphi}$ auf $A$ sei definiert durch
    $\alpha_{\varphi}(x):=\varphi(x^{*}x)^{1/2}$.
  \item Die von der Familie
    $\set{\alpha_{\varphi}}{\varphi \ \text{ist schwach-*-stetig und
        positiv}}$ von Seminormen erzeugte lokalkonvexe Topologie auf
    $A$ heißt die \emph{q-Topologie}, in Zeichen $\TT_{q,A}$ oder
    kürzer $\TT_{q}$.
  \end{enumerate}
\end{definition}
Bevor wir fortfahren, ist zu klären, dass die Familie von Seminormen
aus dieser Definition separierend ist. Dies ergibt sich unmittelbar
aus Korollar~\ref{kor:wstar-pos-fkt-trennend}: Ist nämlich
$\varphi(x^{*}x)^{1/2}=0$ für alle schwach-*-stetigen und positiven
Funktionale $\varphi$, so folgt aus dem Korollar $x^{*}x=0$ und daraus
$\norm{x}=\norm{x^{*}x}^{1/2}=0$.

Es sei noch betont, dass auch bei der q-Topologie $\pred{A}$ und $j$
in die Definition eingehen; wir unterdrücken diese Abhängigkeit
allerdings zwecks übersichtlicherer Notation.
\begin{bemerkung}\label{bem:gns-sichtweise-2}
  Die Konstruktion der q-Topologie verwendet eine andere Sichtweise
  auf die GNS-Konstruktion als bei der universellen
  ($W^{*}$-)Darstellung; vgl. Bemerkung~\ref{bem:gns-sichtweise-1}. Es
  wirkt nämlich nicht $A$ auf einem Hilbertraum $H$, sondern wir
  betrachten die Konstruktion des zugrundeliegenden Hilbertraums
  $H$. Auch dabei waren die Seminormen $\alpha_{\varphi}$ der
  Ausgangspunkt -- um einen Hilbertraum zu erhalten, haben wir
  zusätzlich nach dem isotropen Anteil faktorisiert, was für die
  q-Topologie nicht notwendig war.
\end{bemerkung}

Ein wesentlicher Zwischenschritt ist es zu zeigen, dass der Dualraum
$(A,\TT_{q})'$ mit $(A,\TT_{w^{*}})'$ übereinstimmt; mit anderen
Worten sind genau die schwach-*-stetigen Funktionale auch be\-züglich
der q-Topologie stetig. Das entscheidende Hilfsmittel dazu ist die
Mackey-Topologie; siehe Definition~\ref{def:mackey-top}. Da wir die
Mackey-Topologie zur schwach-*-Topologie auf $A$ benötigen, gehen wir
analog zur Definition von $\TT_{w^{*}}$ vor und definieren die
Mackey-Topologie auf $A$ mithilfe von $j$ und der Mackey-Topologie
$\tau(\pred{A}',\pred{A})$.
\begin{definition}\label{def:mackey-top-wstar}
  Das Mengensystem
  $\TT_{\tau,A}:=\set{j^{-1}(O)}{O\in\tau(\pred{A}',\pred{A})}$ --
  oder kür\-zer $\TT_{\tau}$ -- bezeichnet man als
  \emph{Mackey-Topologie auf $A$}.
\end{definition}
\begin{bemerkung}\label{bem:mackey-top-wstar}
  \hspace{0mm}
  \begin{enumerate}[label=(\roman*),ref=\roman*]
  \item\label{item:mackey-top-wstar-i} Da $j$ sowohl als Abbildung
    \begin{displaymath}
      j:\left(A,\TT_{w^{*}}\right)\to\left(\pred{A}',\sigma(\pred{A}',\pred{A})\right)
    \end{displaymath}
    als auch als Abbildung
    \begin{displaymath}
      j:\left(A,\TT_{\tau}\right)\to\left(\pred{A}',\tau(\pred{A}',\pred{A})\right)
    \end{displaymath}
    ein Homöomorphismus ist, vererbt sich der Satz von Mackey-Arens,
    Satz~\ref{satz:mackey-arens}, von der gewöhnlichen
    Mackey-Topologie auf die Mackey-Topologie $\TT_{\tau,A}$;
    vgl. Bemerkung~\ref{bem:wstar-top}(\ref{item:wstar-top-ii}) für
    analoge Aussagen über die schwach-*-Topologie auf $A$.
  \item\label{item:mackey-top-wstar-ii} Ein Netz $(x_{i})_{i\in I}$
    aus $A$ konvergiert genau dann gegen $x\in A$ bezüglich der
    Mackey-Topologie $\TT_{\tau}$, wenn $(j(x_{i}))_{i\in I}$
    bezüglich $\tau(\pred{A}',\pred{A})$ gegen $j(x)$
    konvergiert. Explizit bedeutet das, dass für alle $\epsilon>0$ und
    alle kreisförmigen, konvexen und bezüglich
    $\sigma(\pred{A},\pred{A}')$ kompakten Mengen $C\subseteq\pred{A}$
    ein $i_{0}\in I$ existiert, sodass für jedes $\rho\in C$ und
    $i\succcurlyeq i_{0}$ die Ungleichung
    $\abs{\chevr{\rho}{j(x_{i})-j(x)}}<\epsilon$ gilt.
  \end{enumerate}
\end{bemerkung}
Wir wollen Konvergenz bezüglich $\TT_{\tau}$ ohne Verweis auf den
Prädualraum charakterisieren. Die konjugierte Abbildung
$j':\pred{A}''\to (A,\TT(\norm{\cdot}))'$ ist ebenfalls ein
isometrischer Isomorphismus. Mit dieser Notation gilt
\begin{equation}\label{eq:mackey-top-wstar-konv}
  \chevr{\rho}{j(x_{i})-j(x)}=\chevr{j(x_{i}-x)}{\iota(\rho)}=\chevr{x_{i}-x}{j'(\iota(\rho))}.
\end{equation}
Wir erhalten, dass Konvergenz bezüglich der Mackey-Topologie auf $A$
äquivalent ist zur gleichgradig schwachen Konvergenz
(vgl. Bemerkung~\ref{bem:mackey-top-konv}) auf den
$j'\circ\iota$-Bildern der bezüglich $\sigma(\pred{A},\pred{A}')$
kompakten Teilmengen von $\pred{A}$. Es ist wohlbekannt bzw. leicht zu
prüfen, dass die kanonische Einbettung $\iota:\pred{A}\to\pred{A}''$
betrachtet als Abbildung
\begin{equation}\label{eq:iota-homoeo}
  \iota:\left(\pred{A},\sigma(\pred{A},\pred{A}')\right)\to
\left(\iota(\pred{A}),\sigma(\pred{A}'',\pred{A}')|_{\iota(\pred{A})}\right)
\end{equation}
ein Homöomorphismus ist. Somit durchlaufen die Bilder $\iota(C)$ für
$\sigma(\pred{A},\pred{A}')$-kompakte Mengen $C$ genau die
$\sigma(\pred{A}'',\pred{A}')$-kompakten Teilmengen von
$\iota(\pred{A})$. Es geht also um die Frage, welche Mengen als
$j'$-Bilder davon auftreten.

\begin{notation}\label{not:dualraum-wstar}
  Im Folgenden bezeichnet $A^{\#}$ stets den Dualraum von $A$
  bezüglich der schwach-*-Topologie, d.~h. $A^{\#}:=(A,\TT_{w^{*}})'$.
\end{notation}
Da sich herausstellen wird, dass die schwach-*-Topologie eindeutig
bestimmt ist, wird auch $A^{\#}$ nicht vom gewählten Prädualraum
abhängen; siehe Korollar~\ref{kor:wstar-fkt-zerl}. Daher muss der
Prädual\-raum nicht durch die Notation reflektiert werden.
\begin{lemma}\label{lem:tech-mackey-wstar}
  \hspace{0mm}
  \begin{enumerate}[label=(\roman*),ref=\roman*]
  \item\label{item:tech-mackey-wstar-i} Die Einschränkung von $j'$ auf
    $\iota(\pred{A})$ ist ein Homöomorphismus
    \begin{displaymath}
      j'|_{\iota(\pred{A})}:\left(\iota(\pred{A}),\sigma(\pred{A}'',\pred{A}')|_{\iota(\pred{A})}\right)\to \left(A^{\#},\sigma(A^{\#},A)\right).
    \end{displaymath}
  \item\label{item:tech-mackey-wstar-ii} Ein Netz $(x_{i})_{i\in I}$
    aus $A$ konvergiert genau denn gegen $x\in A$ bezüglich der
    Mackey-Topologie auf $A$, wenn das Netz auf den kreisförmigen,
    konvexen und $\sigma(A^{\#},A)$-kompakten Teilmengen von $A^{\#}$
    gleichgradig schwach konvergiert.
  \end{enumerate}
\end{lemma}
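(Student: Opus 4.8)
The plan is to unwind the string of identifications that has been set up in the excerpt, so that the abstract Mackey-topology-on-$A$ convergence gets translated, step by step, into a statement purely about $A$ and $A^{\#}$. For part~(\ref{item:tech-mackey-wstar-i}): I would start from the fact that $j':\pred{A}''\to(A,\TT(\norm{\cdot}))'$ is an isometric isomorphism (it is the conjugate of the isometric isomorphism $j$), so its restriction $j'|_{\iota(\pred{A})}$ is a linear bijection of $\iota(\pred{A})$ onto $j'(\iota(\pred{A}))$. The first thing to check is that $j'(\iota(\pred{A})) = A^{\#}$. For "$\subseteq$'': if $\rho\in\pred{A}$ and $\varphi:=j'(\iota(\rho))$, then by \eqref{eq:mackey-top-wstar-konv} we have $\chevr{x}{\varphi}=\chevr{\rho}{j(x)}$ for all $x\in A$, i.e. $\varphi = (\text{evaluation at }\rho)\circ j$, which is $\sigma(\pred{A}',\pred{A})$-continuous composed with the homeomorphism $j:(A,\TT_{w^{*}})\to(\pred{A}',\sigma(\pred{A}',\pred{A}))$ from Bemerkung~\ref{bem:wstar-top}(\ref{item:wstar-top-ii}), hence $\varphi\in A^{\#}$. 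For "$\supseteq$'': $A^{\#}=(A,\TT_{w^{*}})'$ is carried by $j$ (again the homeomorphism from Bemerkung~\ref{bem:wstar-top}(\ref{item:wstar-top-ii})) onto $(\pred{A}',\sigma(\pred{A}',\pred{A}))'$, which is the classical fact $\iota(\pred{A})$; composing this identification with $j'\circ\iota$ as in \eqref{eq:mackey-top-wstar-konv} shows every element of $A^{\#}$ has the stated form. Thus $j'|_{\iota(\pred{A})}$ is a bijection onto $A^{\#}$. That it is a homeomorphism for the weak topologies is then just transport of structure: $\sigma(A^{\#},A)$ is by definition the initial topology on $A^{\#}$ w.r.t. the evaluations at points of $A$, and under $j'\circ\iota$ the evaluation at $x\in A$ pulls back, via \eqref{eq:mackey-top-wstar-konv}, to the evaluation at $x$ on $\iota(\pred{A})\subseteq\pred{A}''$, which is exactly what generates $\sigma(\pred{A}'',\pred{A}')|_{\iota(\pred{A})}$; so the two topologies correspond under the bijection, and the inverse is continuous by the same argument. (One should note here that $\sigma(\pred{A}'',\pred{A}')|_{\iota(\pred{A})}$ is generated by the evaluations at elements of $\pred{A}'$, and under $j'$ these correspond to evaluations at elements of $A$, since $j'$ is onto $(A,\TT(\norm{\cdot}))' \supseteq A$ — more precisely one restricts to the evaluations coming from $j(A)\subseteq\pred{A}'$, but these suffice to separate points of $\iota(\pred{A})$ and generate the restricted weak topology because $j$ is onto $\pred{A}'$.)

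For part~(\ref{item:tech-mackey-wstar-ii}) I would assemble three homeomorphisms already available. By Bemerkung~\ref{bem:mackey-top-wstar}(\ref{item:mackey-top-wstar-ii}), $x_i\to x$ in $\TT_{\tau,A}$ means: for every $\epsilon>0$ and every circled convex $\sigma(\pred{A},\pred{A}')$-compact $C\subseteq\pred{A}$ there is $i_0$ with $\abs{\chevr{\rho}{j(x_i)-j(x)}}<\epsilon$ for all $\rho\in C$, $i\succcurlyeq i_0$. By \eqref{eq:mackey-top-wstar-konv} this is $\abs{\chevr{x_i-x}{j'(\iota(\rho))}}<\epsilon$ for all $\rho\in C$. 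Now the homeomorphism \eqref{eq:iota-homoeo} sends the circled convex $\sigma(\pred{A},\pred{A}')$-compact subsets $C$ of $\pred{A}$ exactly onto the circled convex $\sigma(\pred{A}'',\pred{A}')$-compact subsets of $\iota(\pred{A})$ (compactness and the linear properties are preserved by any linear homeomorphism), and part~(\ref{item:tech-mackey-wstar-i}) sends those in turn exactly onto the circled convex $\sigma(A^{\#},A)$-compact subsets of $A^{\#}$. Writing $D:=j'(\iota(C))$, the displayed condition becomes: for every $\epsilon>0$ and every circled convex $\sigma(A^{\#},A)$-compact $D\subseteq A^{\#}$ there is $i_0$ with $\abs{\chevr{x_i-x}{\psi}}<\epsilon$ for all $\psi\in D$, $i\succcurlyeq i_0$ — which is precisely the definition (Bemerkung~\ref{bem:mackey-top-konv}(\ref{item:mackey-top-konv-i}), applied with $X=A$, $Y=A^{\#}$) of the equally-weak convergence on the circled convex $\sigma(A^{\#},A)$-compact sets. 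Since $D$ ranges over precisely the admissible sets as $C$ ranges over its admissible sets, the two convergence notions coincide.

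I do not expect a genuine obstacle here — the statement is a bookkeeping lemma whose whole content is "chase the diagram of canonical maps'' — but the one place that needs a little care is matching up the admissible (circled, convex, weak-$*$-compact) families correctly under part~(\ref{item:tech-mackey-wstar-i}), i.e. making sure that a linear homeomorphism between two locally convex spaces does carry circled convex compact sets bijectively onto circled convex compact sets. This is immediate (homeomorphisms preserve compactness; linear maps preserve circledness and convexity; and the inverse is also a linear homeomorphism), but it is the hinge on which the argument turns and should be stated explicitly. A secondary subtlety is the footnote-level point in \eqref{eq:iota-homoeo} and in part~(\ref{item:tech-mackey-wstar-i}) that one must use that $j$ (equivalently $\iota$ restricted appropriately) is a homeomorphism for the relevant weak topologies, not merely a bijection; this is exactly Bemerkung~\ref{bem:wstar-top}(\ref{item:wstar-top-ii}) together with the standard fact recalled around \eqref{eq:iota-homoeo}, so no new work is required.
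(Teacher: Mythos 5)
Your proposal is correct and follows essentially the same route as the paper: part~(\ref{item:tech-mackey-wstar-i}) is proved by the identity $j'(\iota(\rho))\circ j^{-1}=\iota(\rho)$ together with the surjectivity of $j$ (so that the evaluations at elements of $j(A)=\pred{A}'$ generate $\sigma(\pred{A}'',\pred{A}')|_{\iota(\pred{A})}$), and part~(\ref{item:tech-mackey-wstar-ii}) by chasing the chain of linear homeomorphisms through \eqref{eq:mackey-top-wstar-konv} and \eqref{eq:iota-homoeo} and observing that they carry the admissible (kreisförmig, konvex, kompakt) families onto one another. The paper compresses part~(\ref{item:tech-mackey-wstar-ii}) to ``folgt sofort aus (i) und den Bemerkungen vor diesem Lemma''; you have merely written out that bookkeeping explicitly.
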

\begin{proof}
  \hspace{0mm}
  \begin{enumerate}[label=(\roman*),ref=\roman*]
  \item Bisher wissen wir nur, dass $j'$ bijektiv ist als Abbildung
    vom gesamten Bidualraum $\pred{A}''$ in den Dualraum von $A$
    bezüglich der Normtopologie; für das Lemma betrachten wir die
    Einschränkung von $j'$ auf $\iota(\pred{A})$ und den Dualraum
    bezüglich der schwach-*-Topologie auf $A$.

    Daher zeigen wir zunächst, dass die Einschränkung
    $j'|_{\iota(\pred{A})}$ tatsächlich nach $A^{\#}$ abbildet. Nach
    Definition der schwach-*-Topologie auf $A$ ist ein Funktional
    $\varphi:A\to\C$ genau dann $\TT_{w^{*}}$-stetig, wenn
    $\varphi\circ j^{-1}$ stetig bezüglich
    $\sigma(\pred{A}',\pred{A})$ ist. Für $\rho\in\pred{A}$ gilt
    \begin{equation}\label{eq:bew-tech-mackey-wstar}
      j'(\iota(\rho))\circ j^{-1}=\iota(\rho)\circ j\circ j^{-1}=\iota(\rho),
    \end{equation}
    womit die Behauptung gezeigt ist.

    Aus \eqref{eq:bew-tech-mackey-wstar} folgt auch die Surjektivität
    von $j'|_{\iota(\pred{A})}$: Ist $\varphi$ ein schwach-*-stetiges
    Funktional auf $A$, so ist $\varphi\circ j^{-1}$ ein
    $\sigma(\pred{A}',\pred{A})$-stetiges Funktional. Somit gilt
    $\varphi\circ j^{-1}=\iota(\rho)$ für ein $\rho\in\pred{A}$ oder
    anders formuliert $\varphi=j'(\iota(\rho))$.

    Da $j'$ als Abbildung von $\pred{A}''$ in den Dualraum von $A$
    bezüglich der Normtopologie ein isometrischer Isomorphismus ist,
    ist $j'$ und insbesondere die Einschränkung
    $j'|_{\iota(\pred{A})}$ injektiv.
    
    Für die Homöomorphie-Eigenschaft sei $(\iota(\rho_{i}))_{i\in I}$
    ein Netz in $\iota(\pred{A})$ und $x\in A$ beliebig. Es gilt
    \begin{displaymath}
      \chevr{x}{j'(\iota(\rho_{i}))}=\chevr{x}{\iota(\rho_{i})\circ
        j}=\chevr{j(x)}{\iota(\rho_{i})}.
    \end{displaymath}
    Da $j$ surjektiv ist, folgt daraus, dass
    $(j'(\iota(\rho_{i})))_{i\in I}$ genau dann bezüglich
    $\sigma(A^{\#},A)$ gegen $0$ konvergiert, wenn
    $(\iota(\rho_{i}))_{i\in I}$ bezüglich
    $\sigma(\pred{A}'',\pred{A}')|_{\iota(\pred{A})}$ gegen $0$
    konvergiert. Dies ist äquivalent dazu, dass
    $j'|_{\iota(\pred{A})}$ ein Homöomorphismus bezüglich der oben
    angegebenen Topologien ist.
  \item Folgt sofort aus (\ref{item:tech-mackey-wstar-i}) und den
    Bemerkungen vor diesem Lemma.
  \end{enumerate}
\end{proof}
Mit der Topologie $\sigma(A^{\#},A)$ auf $A^{\#}$ wird es möglich,
Abbildungen, die Funktionale auf $A$ liefern, auf Stetigkeit zu
überprüfen. Ein Beispiel für eine derartige Operation ist die folgende
Definition, die wir zunächst für beliebige, nicht notwendigerweise
schwach-*-stetige Funktionale einführen.
\begin{definition}\label{def:l-op}
  Sei $\varphi$ ein Funktional auf $A$ und $a\in A$. Das Funktional
  $L_{a}\varphi$ ist definiert durch
  $(L_{a}\varphi)(x):=\varphi(ax)$.
\end{definition}
Vor dem Hintergrund von
\ref{lem:tech-mackey-wstar}(\ref{item:tech-mackey-wstar-ii}) wird das
folgende Stetigkeitsresultat sehr nützlich sein.
\begin{lemma}\label{lem:l-op-stetig}
  Sei $\varphi$ ein schwach-*-stetiges Funktional auf $A$. Die
  Funktion $a\mapsto L_{a}\varphi$ ist stetig als Abbildung
  $(A,\TT_{w^{*}})\to (A^{\#},\sigma(A^{\#},A))$. Insbesondere ist
  $L_{rS}\varphi:=\set{L_{a}\varphi}{a\in rS}$ für jedes $r>0$ kompakt
  bezüglich $\sigma(A^{\#},A)$.
\end{lemma}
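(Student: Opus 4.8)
The plan is to verify continuity of $a \mapsto L_a\varphi$ via the limit characterization of continuity, using the nets described in Lemma~\ref{lem:tech-mackey-wstar}(\ref{item:tech-mackey-wstar-ii}) only at the very end to extract the compactness claim. First I would fix a schwach-*-stetiges Funktional $\varphi$ and recall that, since $\TT_{w^{*}}$ is the initial topology with respect to all $\TT_{w^{*}}$-stetigen linearen Funktionale on $A$ (Bemerkung~\ref{bem:wstar-top}(\ref{item:wstar-top-ii})), and since $\sigma(A^{\#},A)$ is by definition the initial topology on $A^{\#}$ with respect to the Auswertungsfunktionale $\psi \mapsto \chevr{x}{\psi}$ for $x \in A$, it suffices to check that for each fixed $x \in A$ the composition $a \mapsto \chevr{x}{L_a\varphi} = (L_a\varphi)(x) = \varphi(ax)$ is $\TT_{w^{*}}$-stetig as a map $A \to \C$. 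But $\varphi(ax) = \varphi \circ (a \mapsto ax) \circ \cdots$ — more precisely, the map $a \mapsto ax$ is exactly a right translation, which is schwach-*-stetig by Satz~\ref{satz:mult-wstar-stetig}, and $\varphi$ itself is schwach-*-stetig by hypothesis; hence their composition $a \mapsto \varphi(ax)$ is schwach-*-stetig. This establishes that $a \mapsto L_a\varphi$ is continuous from $(A,\TT_{w^{*}})$ into $(A^{\#},\sigma(A^{\#},A))$.

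Next I would derive the Insbesondere-statement. The Einheitskugel $S$ is $\TT_{w^{*}}$-kompakt by the Satz von Banach-Alaoglu (Bemerkung~\ref{bem:wstar-top}(\ref{item:wstar-top-ii})), so $rS$ is $\TT_{w^{*}}$-kompakt for every $r>0$, being the image of $S$ under the homeomorphism $x \mapsto rx$. Since the continuous image of a compact set is compact, $L_{rS}\varphi = \{L_a\varphi : a \in rS\}$ is kompakt bezüglich $\sigma(A^{\#},A)$. That completes the proof; the hook to Lemma~\ref{lem:tech-mackey-wstar}(\ref{item:tech-mackey-wstar-ii}), which motivates why one cares about $\sigma(A^{\#},A)$-kompakte Teilmengen of $A^{\#}$ at all, is what makes this lemma ``sehr nützlich'' downstream.

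I do not expect a genuine obstacle here, since the lemma is essentially a packaging of Satz~\ref{satz:mult-wstar-stetig} (Stetigkeit der Translationen) together with the definition of the initial topology $\sigma(A^{\#},A)$. The only point requiring a little care is to phrase the reduction cleanly: one must be explicit that it is the right translation $a \mapsto ax$ (not the left translation) that appears, because $(L_a\varphi)(x) = \varphi(ax)$ treats $a$ as the left factor of the product but as the variable, so as a function of $a$ this is the map $a \mapsto ax$ followed by $\varphi$. Once that is said correctly, everything is a one-line composition-of-continuous-maps argument, and the compactness conclusion is immediate from Banach-Alaoglu.
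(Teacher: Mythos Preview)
Your proof is correct and follows essentially the same approach as the paper's: both reduce to the $\TT_{w^{*}}$-Stetigkeit of the translations from Satz~\ref{satz:mult-wstar-stetig} composed with $\varphi$, and then invoke Banach-Alaoglu for the compactness of $rS$. The only point the paper makes explicit that you leave implicit is that the map actually lands in $A^{\#}$, i.e.\ that $x \mapsto \varphi(ax)$ is $\TT_{w^{*}}$-stetig for fixed $a$; this is immediate from the \emph{other} translation $x \mapsto ax$ in Satz~\ref{satz:mult-wstar-stetig} and should be stated before invoking the initial-topology reduction on the codomain.
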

\begin{proof}
  Da sowohl die Translation $x\mapsto ax$ als auch $\varphi$
  schwach-*-stetig sind, gilt $L_{a}\varphi\in A^{\#}$.

  Gelte nun $a_{i}\to a$ bezüglich $\TT_{w^{*}}$ für ein Netz
  $(a_{i})_{i\in I}$. Wieder wegen der Stetigkeit einer Translation,
  diesmal von $b\mapsto bx$ für festes $x\in A$, gilt $a_{i}x\to ax$,
  sodass aus der schwach-*-Stetigkeit von $\varphi$ die Konvergenz
  \begin{displaymath}
    \chevr{x}{L_{a_{i}}\varphi}=\varphi(a_{i}x)\to\varphi(ax)=\chevr{x}{L_{a}\varphi}
  \end{displaymath}
  folgt. Somit ist die behauptete Stetigkeit gezeigt.

  Die zweite Aussage ergibt sich sofort aus der
  $\TT_{w^{*}}$-Kompaktheit der Menge $rS$.
\end{proof}
Zur Bestimmung des Dualraums $(A,\TT_{q})'$ werden wir das folgende
Konzept verwenden.
\begin{definition}\label{def:selbstadj-fkt-re-im}
  Sei $\varphi$ ein lineares Funktional auf $A$.
  \begin{enumerate}[label=(\roman*),ref=\roman*]
  \item Das \emph{adjungierte} Funktional ist definiert durch
    $\varphi^{*}(x):=\overline{\varphi(x^{*})}$.
  \item Im Falle $\varphi^{*}=\varphi$ heißt $\varphi$
    \emph{selbstadjungiert}.
  \item Die linearen Funktionale
    \begin{displaymath}
      \re\varphi:=(\varphi+\varphi^{*})/2 \quad\text{und}\quad \im\varphi:=(\varphi-\varphi^{*})/2i
    \end{displaymath}
    heißen \emph{Real-} und \emph{Imaginärteil} von $\varphi$.
  \end{enumerate}
\end{definition}
Analog zu Lemma~\ref{lem:eig-re-im} gilt:
\begin{lemma}\label{lem:eig-re-im-fkt}
  $\varphi^{*}$, $\re\varphi$ und $\im\varphi$ sind lineare
  Funktionale auf $A$, wobei $\varphi=\re\varphi+i\im\varphi$. Ist
  $\varphi$ schwach-*-stetig, so auch $\varphi^{*}$, $\re\varphi$ und
  $\im\varphi$.
\end{lemma}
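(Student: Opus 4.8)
The plan is to copy the proof of Lemma~\ref{lem:eig-re-im} almost verbatim at the level of functionals instead of algebra elements; the only genuinely new point is the weak-*-continuity assertion, and that will be handed to us by Korollar~\ref{kor:wstar-adj-stetig}.

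First I would verify that $\varphi^{*}$ is linear. Additivity is immediate from the additivity of $.^{*}$ on $A$ and of complex conjugation: $\varphi^{*}(x+y)=\overline{\varphi(x^{*}+y^{*})}=\overline{\varphi(x^{*})}+\overline{\varphi(y^{*})}=\varphi^{*}(x)+\varphi^{*}(y)$. Homogeneity uses $(\lambda x)^{*}=\overline{\lambda}x^{*}$ together with $\overline{\overline{\lambda}z}=\lambda\overline{z}$, giving $\varphi^{*}(\lambda x)=\overline{\varphi(\overline{\lambda}x^{*})}=\overline{\overline{\lambda}\varphi(x^{*})}=\lambda\varphi^{*}(x)$. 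Since $\re\varphi=\frac{1}{2}(\varphi+\varphi^{*})$ and $\im\varphi=\frac{1}{2i}(\varphi-\varphi^{*})$ are by definition complex-linear combinations of the linear functionals $\varphi$ and $\varphi^{*}$, they are linear as well. The identity $\varphi=\re\varphi+i\im\varphi$ is then the trivial computation $\frac{1}{2}(\varphi+\varphi^{*})+i\cdot\frac{1}{2i}(\varphi-\varphi^{*})=\varphi$.

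For the continuity statement I would write $\varphi^{*}$ as the composition $x\mapsto x^{*}\mapsto\varphi(x^{*})\mapsto\overline{\varphi(x^{*})}$, that is, as $\overline{(\,\cdot\,)}\circ\varphi\circ(.^{*})$ where $.^{*}\colon(A,\TT_{w^{*}})\to(A,\TT_{w^{*}})$ is continuous by Korollar~\ref{kor:wstar-adj-stetig}, $\varphi$ is weak-*-continuous by hypothesis, and complex conjugation on $\C$ is a homeomorphism. Hence $\varphi^{*}$ is weak-*-continuous, and consequently $\re\varphi$ and $\im\varphi$, being complex-linear combinations of the weak-*-continuous functionals $\varphi$ and $\varphi^{*}$, are weak-*-continuous too.

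There is essentially no obstacle; the statement is purely formal. The one thing to be careful about is to invoke Korollar~\ref{kor:wstar-adj-stetig} for the weak-*-continuity of $.^{*}$ on $A$ rather than attempting to argue it directly, and to note that a finite linear combination of weak-*-continuous linear functionals is again weak-*-continuous, which is immediate since $\TT_{w^{*}}$ is a vector-space topology.
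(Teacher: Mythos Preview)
Your proof is correct and follows exactly the paper's approach: the paper dismisses the first assertion as ``klar'' and derives the weak-*-continuity from Korollar~\ref{kor:wstar-adj-stetig}, which is precisely what you do, only with the linearity check spelled out in detail.
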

\begin{proof}
  Die erste Aussage ist klar, die zweite folgt aus der
  schwach-*-Stetigkeit von $.^{*}$; siehe
  Korollar~\ref{kor:wstar-adj-stetig}.
\end{proof}
\begin{lemma}\label{lem:dualraum-q-top}
  Es gilt $(A,\TT_{q})'=(A,\TT_{w^{*}})'$.
\end{lemma}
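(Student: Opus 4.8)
The plan is to prove the two inclusions $(A,\TT_{q})'\subseteq A^{\#}$ and $A^{\#}\subseteq(A,\TT_{q})'$ separately, writing $A^{\#}=(A,\TT_{w^{*}})'$ as in Notation~\ref{not:dualraum-wstar}.

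First I would settle $(A,\TT_{q})'\subseteq A^{\#}$. Let $\varphi$ be $\TT_{q}$-continuous. By definition of $\TT_{q}$ there are $\TT_{w^{*}}$-continuous positive functionals $\varphi_{1},\dots,\varphi_{n}$ and $C>0$ with $\abs{\varphi(x)}\le C\max_{k}\alpha_{\varphi_{k}}(x)$; setting $\varphi_{0}:=\varphi_{1}+\dots+\varphi_{n}$, again $\TT_{w^{*}}$-continuous and positive, one gets $\abs{\varphi(x)}\le C\,\varphi_{0}(x^{*}x)^{1/2}$ for all $x\in A$. I would then run the GNS construction of Lemma~\ref{lem:gns-darst} for $\varphi_{0}$: since $\varphi$ vanishes on $N_{\varphi_{0}}$ it factors through $A/N_{\varphi_{0}}$ to a linear functional bounded by $C$ in the Hilbert seminorm, hence extends to $H_{\varphi_{0}}$, and by Riesz there is $\eta\in H_{\varphi_{0}}$ with $\varphi(x)=(x+N_{\varphi_{0}},\eta)_{H_{\varphi_{0}}}$. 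Choose $b_{m}\in A$ with $b_{m}+N_{\varphi_{0}}\to\eta$ in $H_{\varphi_{0}}$ and put $\varphi_{m}:=L_{b_{m}^{*}}\varphi_{0}$, so that $\varphi_{m}(x)=\varphi_{0}(b_{m}^{*}x)=(x+N_{\varphi_{0}},b_{m}+N_{\varphi_{0}})_{H_{\varphi_{0}}}$; each $\varphi_{m}$ lies in $A^{\#}$ by Lemma~\ref{lem:l-op-stetig}. Using that $A$ has a unit (Satz~\ref{satz:wstar_eins}) one has $\alpha_{\varphi_{0}}(x)\le\varphi_{0}(1)^{1/2}$ for $x$ in the norm unit ball, so
\begin{displaymath}
  \abs{\varphi(x)-\varphi_{m}(x)}=\abs{(x+N_{\varphi_{0}},\eta-(b_{m}+N_{\varphi_{0}}))_{H_{\varphi_{0}}}}\le\varphi_{0}(1)^{1/2}\,\norm{\eta-(b_{m}+N_{\varphi_{0}})}_{H_{\varphi_{0}}},
\end{displaymath}
whence $\varphi_{m}\to\varphi$ in the norm of $A'$. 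It remains to know that $A^{\#}$ is norm-closed in $A'$, and this is exactly what Lemma~\ref{lem:tech-mackey-wstar}(\ref{item:tech-mackey-wstar-i}) provides: it identifies $A^{\#}$ with $j'(\iota(\pred{A}))$, the image under the isometric isomorphism $j'$ of the subspace $\iota(\pred{A})\subseteq\pred{A}''$, which is closed because it is complete. Hence $\varphi\in A^{\#}$.

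For the reverse inclusion $A^{\#}\subseteq(A,\TT_{q})'$, the naive bound $\abs{\varphi(x)}\le\varphi(1)^{1/2}\alpha_{\varphi}(x)$ only handles positive functionals, so I would pass to the concrete picture via the Satz von Sakai (Satz~\ref{satz:sakai}): $A$ is $W^{*}$-isomorphic, via some $\Phi$, to a Von-Neumann algebra $M\le L_{b}(H)$, and since $\Phi$ is both a $*$-isomorphism and (by Definition~\ref{def:wstar-homo-darst}) a homeomorphism for the schwach-*-Topologien, $\psi\mapsto\psi\circ\Phi$ sends $\TT_{w^{*},M}$-continuous positive functionals on $M$ bijectively to $\TT_{w^{*},A}$-continuous positive ones on $A$ with $\alpha_{\psi\circ\Phi}=\alpha_{\psi}\circ\Phi$; it therefore suffices to $\TT_{q}$-bound an arbitrary $\psi\in M^{\#}$. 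By Lemma~\ref{lem:top-wstar-unteralg}, $\TT_{w^{*},M}=\TT_{uw}|_{M}$, so by Hahn--Banach for locally convex spaces $\psi$ extends to a $\TT_{uw}$-continuous functional on $L_{b}(H)$, i.e.\ $\psi(T)=\tr(ST)=\tr(TS)$ for some $S\in L^{1}(H)$ (Lemma~\ref{lem:optop-dualraeume}, Lemma~\ref{lem:tr-komm}). Writing $S=AB$ with $A,B$ Hilbert--Schmidt and $\norm[2]{A},\norm[2]{B}\le\norm[1]{S}^{1/2}$ (for instance $A:=V\abs{S}^{1/2}$, $B:=\abs{S}^{1/2}$ from the polar decomposition $S=V\abs{S}$), and using that a product of two Hilbert--Schmidt operators is trace class (Bemerkung~\ref{bem:motiv-notation-hs-spklasse}), the Cauchy--Schwarz inequality $\abs{\tr(XY)}\le\norm[2]{X}\norm[2]{Y}$ for Hilbert--Schmidt $X,Y$, and the trace identities of Lemma~\ref{lem:tr-komm}, one obtains
\begin{displaymath}
  \abs{\psi(T)}=\abs{\tr((TA)B)}\le\norm[2]{TA}\,\norm[2]{B}\le\norm[1]{S}^{1/2}\,\tr(A^{*}T^{*}TA)^{1/2}=\norm[1]{S}^{1/2}\,\varphi_{0}(T^{*}T)^{1/2},
\end{displaymath}
where $\varphi_{0}(X):=\tr(AA^{*}X)$ is a positive $\TT_{uw}$-continuous functional on $L_{b}(H)$ because $AA^{*}\in L^{1}(H)$. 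Restricting $\varphi_{0}$ to $M$ and transporting back along $\Phi$ gives a positive $\TT_{w^{*},A}$-continuous functional dominating the original functional on $A$ in the $\TT_{q}$-sense, which is what was wanted.

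The main obstacle is the inclusion $A^{\#}\subseteq(A,\TT_{q})'$: for non-positive schwach-*-stetige functionals there is no purely abstract Cauchy--Schwarz domination available at this stage of the development, so one is forced to leave the axiomatic setting and compute in $L_{b}(H)$ via Sakai's theorem (equivalently, one would first need a Jordan-type decomposition of such functionals into positive ones). The other inclusion is essentially routine once Lemma~\ref{lem:tech-mackey-wstar} has supplied the norm-closedness of $A^{\#}$; the only slightly delicate points there are the well-definedness of the GNS-induced functional and the \emph{uniform} (not merely pointwise) convergence $\varphi_{m}\to\varphi$ on the unit ball, which is precisely what makes the norm-closedness applicable.
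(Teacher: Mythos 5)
Your proof is correct, but it takes a genuinely different route from the paper in \emph{both} directions. For $(A,\TT_{q})'\subseteq A^{\#}$ the paper does not use GNS at all: it reduces, via Mackey--Arens and Korollar~\ref{kor:mackey-arens}, to showing that a $\TT_{\tau}$-convergent net in $S$ is $\TT_{q}$-convergent, which follows because $\alpha_{\psi}(x_{i}-x)^{2}=\big(L_{(x_{i}-x)^{*}}\psi\big)(x_{i}-x)$ is controlled by the supremum over the $\sigma(A^{\#},A)$-compact set $L_{2S}\psi$ of Lemma~\ref{lem:l-op-stetig}, on which Mackey convergence is uniform by Lemma~\ref{lem:tech-mackey-wstar}(\ref{item:tech-mackey-wstar-ii}). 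Your alternative --- dominate $\varphi$ by a single seminorm $\alpha_{\varphi_{0}}$, represent it by a Riesz vector in $H_{\varphi_{0}}$, approximate it in the norm of $A'$ by the functionals $L_{b_{m}^{*}}\varphi_{0}\in A^{\#}$, and invoke norm-closedness of $A^{\#}=j'(\iota(\pred{A}))$ --- is sound; note that you use part~(\ref{item:tech-mackey-wstar-i}) of Lemma~\ref{lem:tech-mackey-wstar} where the paper uses part~(\ref{item:tech-mackey-wstar-ii}). For $A^{\#}\subseteq(A,\TT_{q})'$ the divergence is larger, and you are not in fact ``forced to leave the axiomatic setting'': the paper stays abstract, reducing to self-adjoint $\varphi$, picking an extreme point $a_{0}$ of $\set{a\in A_{sa}\cap S}{\varphi(a)=\norm{\varphi}}$, showing via Satz~\ref{satz:extremalpkt-selbstadj} that $a_{0}$ is a self-adjoint unitary, and checking that $\psi:=L_{a_{0}}\varphi$ is positive with $\abs{\varphi(x)}\leq\psi(1)^{1/2}\alpha_{\psi}(x)$. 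Your detour through Satz~\ref{satz:sakai} and the trace-class representation is logically admissible (Sakai precedes this lemma and does not depend on it) and is essentially the mechanism the paper itself deploys later for Satz~\ref{satz:wstar-fkt-zerl}; but it imports standard facts not developed in the text (the polar decomposition $S=V\abs{S}$, the Hilbert--Schmidt Cauchy--Schwarz inequality $\abs{\tr(XY)}\leq\norm[2]{X}\norm[2]{Y}$, and the positivity of $T\mapsto\tr(AA^{*}T)$, which only appears later as Lemma~\ref{lem:pos-spklasseop-pos-fkt} though it is provable here), and it leans on the full strength of Sakai's theorem where the paper needs only the extreme-point machinery of Chapter~2 together with weak-*-compactness of $S$. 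Both routes prove the lemma; the paper's is self-contained within the abstract $W^{*}$-theory, while yours trades that for a shorter computation once the concrete operator picture is granted.
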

\begin{proof}
  Sei zunächst $\varphi$ ein $\TT_{q}$-stetiges Funktional auf $A$. Um
  die schwach-*-Stetigkeit von $\varphi$ zu zeigen, genügt es nach dem
  Satz von Mackey-Arens, Satz~\ref{satz:mackey-arens}
  bzw. Bemerkung~\ref{bem:mackey-top-wstar}(\ref{item:mackey-top-wstar-i}),
  die Stetigkeit bezüglich $\TT_{\tau}$ nachzuweisen. Nach
  Korollar~\ref{kor:mackey-arens} wiederum genügt es, die
  $\TT_{\tau}$-Stetigkeit auf der Einheitskugel $S$ zu zeigen. Sei
  dazu $(x_{i})_{i\in I}$ ein bezüglich der Mackey-Topologie
  $\TT_{\tau}$ gegen $x$ konvergentes Netz in $S$. Wenn wir zeigen,
  dass $(x_{i})_{i\in I}$ auch bezüglich der q-Topologie gegen $x$
  konvergiert, dann folgt $\varphi(x_{i})\to \varphi(x)$ und somit die
  behauptete Stetigkeit. Für jedes schwach-*-stetige und positive
  Funktional $\psi$ auf $A$ gilt
  \begin{displaymath}
    \alpha_{\psi}(x_{i}-x)=\psi\big((x_{i}-x)^{*}(x_{i}-x)\big)^{1/2}=\big((L_{(x_{i}-x)^{*}}\psi)(x_{i}-x)\big)^{1/2}.
  \end{displaymath}
  Aus der Dreiecksungleichung folgt
  $(x_{i}-x)^{*}=x_{i}^{*}-x^{*}\in 2S$, sodass wir
  $L_{(x_{i}-x)^{*}}\psi\in L_{2S}\psi$ und daher
  \begin{displaymath}
    0\leq\alpha_{\psi}(x_{i}-x)\leq\sup_{\kappa\in L_{2S}\psi}\big(\kappa(x_{i}-x)\big)^{1/2}=\left(\sup_{\kappa\in L_{2S}\psi}\kappa(x_{i}-x)\right)^{1/2}
  \end{displaymath}
  erhalten. Wegen Lemma~\ref{lem:l-op-stetig} ist $L_{2S}\psi$ kompakt
  bezüglich $\sigma(A^{\#},A)$, sodass die rechte Seite nach
  Lemma~\ref{lem:tech-mackey-wstar}(\ref{item:tech-mackey-wstar-ii})
  gegen $0$ konvergiert. Somit ist $x_{i}\xrightarrow{\TT_{q}} x$
  gezeigt.

  Für die Umkehrung sei $\varphi$ schwach-*-stetig. Wir zeigen die
  Stetigkeit von $\varphi$ bei $0$ bezüglich der q-Topologie. Im
  gesamten restlichen Beweis sei dazu $(x_{i})_{i\in I}$ ein Netz in
  $A$, das bezüglich $\TT_{q}$ gegen $0$ konvergiert. Im Spezialfall
  eines positiven $\varphi$ impliziert die Cauchy-Schwarz'sche
  Ungleichung,
  Lemma~\ref{lem:eig-pos-fkt}(\ref{item:eig-pos-fkt-iii}),
  \begin{displaymath}
    \abs{\varphi(x_{i})}=\abs{\varphi(1\cdot x_{i})}\leq\varphi(1)^{1/2}\varphi(x_{i}^{*}x_{i})^{1/2}=\varphi(1)^{1/2}\alpha_{\varphi}(x_{i}),
  \end{displaymath}
  sodass wir die Konvergenz $\varphi(x_{i})\to 0$ erhalten. Für den
  allgemeinen Fall sei bemerkt, dass es genügt, die
  $\TT_{q}$-Stetigkeit von $\re\varphi$ und $\im\varphi$ zu
  zeigen. Nach Lemma~\ref{lem:eig-re-im-fkt} sind Real- und
  Imaginärteil von $\varphi$ ebenfalls schwach-*-stetig, sodass wir
  ohne Beschränkung der Allgemeinheit annehmen können, dass $\varphi$
  selbstadjungiert ist. Außerdem können wir $\varphi\neq 0$
  annehmen. Wir betrachten die Menge
  \begin{displaymath}
    M:=\set{a\in A_{sa}\cap S}{\varphi(a)=\norm{\varphi}}
  \end{displaymath}
  und zeigen als Erstes $M\neq\emptyset$. Da $S$ bezüglich
  $\TT_{w^{*}}$ kompakt ist, existiert ein $b\in S$ mit
  $\abs{\varphi(b)}=\max_{x\in
    S}\abs{\varphi(x)}=\norm{\varphi}$. Durch Betrachten eines
  geeigneten Vielfachen $\lambda b$ für $\lambda$ in der komplexen
  Einheitskreislinie können wir $\varphi(b)>0$ annehmen. Wegen
  $\varphi^{*}=\varphi$ gilt\footnote{Man beachte, dass wir
    $\varphi(b^{*})=\overline{\varphi(b)}$ in
    Lemma~\ref{lem:pos-fkt-konj} nur für \emph{positive} Funktionale
    gezeigt haben.}
  $\varphi(b^{*})=\overline{\varphi^{*}(b)}=\overline{\varphi(b)}$. Daraus
  folgt
  \begin{equation}\label{eq:bew-dualraum-q-top}
    \varphi(\re b)=\frac{1}{2}(\varphi(b)+\varphi(b^{*}))=\frac{1}{2}(\varphi(b)+\overline{\varphi(b)})=\re\varphi(b)=\varphi(b)=\norm{\varphi}.
  \end{equation}
  Das Element $\re b\in A_{sa}\cap S$ liegt somit in $M$. Weiters ist
  $M$ als $\TT_{w^{*}}$-abgeschlossene Teilmenge von $S$ kompakt
  bezüglich $\TT_{w^{*}}$ und hat daher nach dem Satz von Krein-Milman
  einen Extremalpunkt, den wir mit $a_{0}$ bezeichnen. Wir behaupten,
  dass $a_{0}$ sogar ein Extremalpunkt von ganz $A_{sa}\cap S$
  ist. Ist nämlich $a_{0}=(c+d)/2$ mit $c,d\in A_{sa}\cap S$, so
  erhalten wir $\re\varphi(c)=\varphi(\re c)=\varphi(c)$ wie in
  \eqref{eq:bew-dualraum-q-top}, also $\varphi(c)\in\R$. Analog gilt
  auch $\varphi(d)\in\R$. Daraus folgt
  \begin{displaymath}
    \varphi(c)\leq\abs{\varphi(c)}\leq\norm{\varphi}\cdot\norm{c}\leq\norm{\varphi}
  \end{displaymath}
  und genauso $\varphi(d)\leq\norm{\varphi}$. Wir erhalten
  \begin{displaymath}
    \norm{\varphi}=\varphi(a_{0})=\frac{\varphi(c)+\varphi(d)}{2}\leq\norm{\varphi},
  \end{displaymath}
  was nur für $\varphi(c)=\norm{\varphi}=\varphi(d)$ möglich ist. Es
  folgt $c,d\in M$ und weiter $c=a_{0}=d$, da $a_{0}$ ein
  Extremalpunkt von $M$ ist. Nach
  Satz~\ref{satz:extremalpkt-selbstadj} ist $a_{0}$ unitär, sodass
  $a_{0}^{2}=a_{0}^{*}a_{0}=1$ gilt. Außerdem ist die Abbildung
  $x\mapsto a_{0}x$ ein isometrischer Isomorphismus; siehe
  \eqref{eq:bew-extremalpkt-selbstadj} im Beweis von
  Satz~\ref{satz:extremalpkt-selbstadj}. Setzen wir
  $\psi:=L_{a_{0}}\varphi$, so erhalten wir ein schwach-*-stetiges
  Funktional mit $\norm{\psi}=\norm{\varphi}$. Wegen $a_{0}\in M$ gilt
  \begin{displaymath}
    \psi(1)=\varphi(a_{0})=\norm{\varphi}=\norm{\psi},
  \end{displaymath}
  sodass $\psi$ nach Korollar~\ref{kor:char-pos-fkt} ein positives
  Funktional ist. Wir schätzen $\varphi(x_{i})$ ähnlich wie oben mit
  der Cauchy-Schwarz'schen Ungleichung und unter nochmaliger
  Verwendung von $a_{0}^{2}=1$ ab:
  \begin{displaymath}
    \varphi(x_{i})=\varphi(a_{0}^{2}x_{i})=\psi(a_{0}x_{i})\leq
    \psi(a_{0}^{2})^{1/2}\psi(x_{i}^{*}x_{i})^{1/2}=\psi(1)^{1/2}\alpha_{\psi}(x_{i})
  \end{displaymath}
  Daraus folgt $\varphi(x_{i})\to 0$, also die $\TT_{q}$-Stetigkeit
  von $\varphi$.
\end{proof}
Als weitere Vorarbeit müssen wir zeigen, dass das Supremum
bzw. äquivalent der Grenzwert -- vgl. Satz~\ref{satz:wstar-sup-stetig}
-- eines monoton wachsenden Netzes von Projektionen wieder eine
Projektion ist. Man beachte, dass dies nicht einfach durch
Grenzwertbildung in der Gleichung $p_{i}^{2}=p_{i}$ möglich ist, da
die Multiplikation nicht simultan stetig ist.
\begin{lemma}\label{lem:proj-wstar-abg}
  \hspace{0mm}
  \begin{enumerate}[label=(\roman*),ref=\roman*]
  \item\label{item:proj-wstar-abg-i} Ist $(x_{i})_{i\in I}$ ein
    gleichmäßig beschränktes und bezüglich $\TT_{w^{*}}$ gegen $0$
    konvergentes Netz positiver Elemente, so gilt auch
    $x_{i}^{2}\to 0$.
  \item\label{item:proj-wstar-abg-ii} Ist $(p_{i})_{i\in I}$ ein
    monoton wachsendes Netz von Projektionen, so ist
    $p:=\sup_{i\in I}p_{i}$ ebenfalls eine Projektion.
  \end{enumerate}
\end{lemma}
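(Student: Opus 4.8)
The plan is to prove (\ref{item:proj-wstar-abg-ii}) from (\ref{item:proj-wstar-abg-i}), so I would establish (\ref{item:proj-wstar-abg-i}) first. Fix $C>0$ with $\norm{x_i}\le C$, so $0\le x_i\le C$ by Lemma~\ref{lem:eig-ho}(\ref{item:eig-ho-iii}). Writing $q_i:=x_i^{1/2}$, which exists by Satz~\ref{satz:quwurzel}, and applying Lemma~\ref{lem:eig-ho}(\ref{item:eig-ho-ii}) with $c=q_i$ to the inequality $x_i\le C$ yields $x_i^2=q_i x_i q_i\le C q_i^2=C x_i$; since moreover $x_i^2=x_i^{*}x_i\ge 0$ by Satz~\ref{satz:eig-pos-el}(\ref{item:eig-pos-el-iii}), we get $0\le x_i^2\le C x_i$. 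Hence for every $\TT_{w^{*}}$-continuous positive functional $\varphi$ on $A$ we have $0\le\varphi(x_i^2)\le C\varphi(x_i)\to 0$, because $x_i\to 0$ with respect to $\TT_{w^{*}}$. The operators $x_i^2$ all lie in the set $C^2 S$, which is $\TT_{w^{*}}$-compact by the Satz von Banach-Alaoglu (cf. Bemerkung~\ref{bem:wstar-top}), so it suffices to show that $0$ is the only $\TT_{w^{*}}$-cluster point of $(x_i^2)_{i\in I}$: if $y$ is such a cluster point, pick a subnet $(x_{i(j)}^2)_j$ converging to $y$; then $\varphi(y)=\lim_j\varphi(x_{i(j)}^2)=0$ for every $\TT_{w^{*}}$-continuous positive $\varphi$, whence $y=0$ by Korollar~\ref{kor:wstar-pos-fkt-trennend}(\ref{item:wstar-pos-fkt-trennend-ii}). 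A net in a compact Hausdorff space with unique cluster point converges to it, so $x_i^2\to 0$.

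For (\ref{item:proj-wstar-abg-ii}) the net $(p_i)_{i\in I}$ is monotone increasing and uniformly bounded, since $\norm{p_i}\le 1$ for a projection. By Satz~\ref{satz:wstar-sup-stetig} the supremum $p:=\sup_{i\in I}p_i$ exists and equals $\lim_{i\in I}p_i$ with respect to $\TT_{w^{*}}$; moreover $p=p^{*}$ because $A_{sa}$ is $\TT_{w^{*}}$-closed by Lemma~\ref{lem:sa-pos-wstar-abg}, and $0\le p_i\le p\le 1$. Set $x_i:=p-p_i$. Then $x_i\ge 0$, $\norm{x_i}\le 1$, and $x_i\to 0$ with respect to $\TT_{w^{*}}$, so part (\ref{item:proj-wstar-abg-i}) gives $(p-p_i)^2\to 0$. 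On the other hand, using $p_i^2=p_i$ we expand $(p-p_i)^2=p^2-pp_i-p_ip+p_i$. By Satz~\ref{satz:mult-wstar-stetig} the translations $x\mapsto px$ and $x\mapsto xp$ are $\TT_{w^{*}}$-continuous, so $pp_i\to p^2$ and $p_ip\to p^2$, and of course $p_i\to p$; hence $(p-p_i)^2\to p^2-p^2-p^2+p=p-p^2$. Since $\TT_{w^{*}}$ is Hausdorff, limits are unique, so $p-p^2=0$. Together with $p=p^{*}$ this shows $p$ is a projection.

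The algebra here is routine; the one delicate point is the passage in (\ref{item:proj-wstar-abg-i}) from ``$\varphi(x_i^2)\to 0$ for all $\TT_{w^{*}}$-continuous \emph{positive} $\varphi$'' to ``$x_i^2\to 0$ with respect to $\TT_{w^{*}}$'', since at this stage one cannot yet invoke a decomposition of an arbitrary $\TT_{w^{*}}$-continuous functional into positive ones. This is exactly why I route the argument through the $\TT_{w^{*}}$-compactness of bounded sets together with the separation statement Korollar~\ref{kor:wstar-pos-fkt-trennend}(\ref{item:wstar-pos-fkt-trennend-ii}); the same device compensates for the failure of simultaneous continuity of multiplication, which is what obstructs a direct $\lim p_i^2=(\lim p_i)^2$ computation in (\ref{item:proj-wstar-abg-ii}).
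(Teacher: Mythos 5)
Your proof is correct and takes essentially the same route as the paper's: the same inequality $0\le x_{i}^{2}\le Cx_{i}$ via Lemma~\ref{lem:eig-ho}, the same reduction to testing against $\TT_{w^{*}}$-stetige positive Funktionale using the $\TT_{w^{*}}$-compactness of $C^{2}S$ together with Korollar~\ref{kor:wstar-pos-fkt-trennend} (the paper phrases this as the coincidence of $\TT_{w^{*}}$ and $\sigma(A,E)$ on compacta, you phrase it via uniqueness of cluster points -- the same device), and the identical translation-continuity computation $p^{2}=p^{2}+p^{2}-p$ for part (ii).
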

\begin{proof}
  \hspace{0mm}
  \begin{enumerate}[label=(\roman*),ref=\roman*]
  \item Sei $(x_{i})_{i\in I}$ ein Netz mit den vorausgesetzten
    Eigenschaften, wobei $C>0$ so gewählt ist, dass
    $\norm{x_{i}}\leq C$ für alle $i\in I$ gilt. Wir verwenden die
    Notation aus dem Beweis von Satz~\ref{satz:wstar-sup-stetig}, also
    bezeichnet $E$ die Menge der schwach-*-stetigen, positiven
    Funktionale auf $A$. Da alle $x_{i}^{2}$ in der kompakten Menge
    $C^{2}S$ enthalten sind, genügt es wie im Beweis von
    Satz~\ref{satz:wstar-sup-stetig} zu zeigen, dass das Netz
    $(x_{i}^{2})_{i\in I}$ bezüglich der schwachen Topologie
    $\sigma(A,E)$ gegen $0$ konvergiert. Mit anderen Worten ist die
    Konvergenz $\varphi(x_{i}^{2})\to 0$ für alle schwach-*-stetigen,
    \emph{positiven} Funktionale $\varphi$ zu zeigen.

    Aus Lemma~\ref{lem:eig-ho}(\ref{item:eig-ho-iii}) folgt
    $x_{i}\leq\norm{x_{i}}\leq C$, sodass wir mit
    Lemma~\ref{lem:eig-ho}(\ref{item:eig-ho-ii})
    \begin{displaymath}
      0\leq x_{i}^{2}=(x_{i}^{1/2})^{*}x_{i}x_{i}^{1/2}\leq (x_{i}^{1/2})^{*}Cx_{i}^{1/2}=Cx_{i}
    \end{displaymath}
    erhalten. Da $\varphi$ ein positives Funktional ist, folgt
    \begin{displaymath}
      0\leq \varphi(x_{i}^{2})\leq\varphi(Cx_{i})=C\varphi(x_{i})\to 0.
    \end{displaymath}
  \item Da $A_{sa}$ bezüglich $\TT_{w^{*}}$ abgeschlossen ist, haben
    wir nur $p^{2}=p$ zu zeigen. Die Elemente $x_{i}:=p-p_{i}$ sind
    positiv und konvergieren gegen $0$, da aus
    Satz~\ref{satz:wstar-sup-stetig} die Konvergenz $p_{i}\to p$
    folgt. Außerdem sind sie wegen $\norm{x_{i}}\leq 2$ gleichmäßig
    beschränkt. Mit (\ref{item:proj-wstar-abg-i}) folgt
    $x_{i}^{2}\to 0$. Wegen $p_{i}^{2}=p_{i}$ erhalten wir
    $x_{i}^{2}=p^{2}-pp_{i}-p_{i}p+p_{i}$, sodass die Stetigkeit der
    Translationen $x\mapsto px$ und $x\mapsto xp$ die Gleichung
    \begin{displaymath}
      p^{2}=\lim_{i\in I}pp_{i}+p_{i}p-p_{i}=p^{2}+p^{2}-p
    \end{displaymath}
    bzw. $p^{2}=p$ liefert.
  \end{enumerate}
\end{proof}
Nun können wir die bereits angekündigte Charakterisierung der
schwach-*-stetigen, positiven Funktionale durch die Verträglichkeit
mit der Ordnungsstruktur beweisen.

\begin{satz}\label{satz:char-wstar-fkt-sup}
  Ein positives Funktional $\varphi$ auf $A$ ist genau dann
  schwach-*-stetig, wenn für alle monoton wachsenden, gleichmäßig
  beschränkten Netze $(x_{i})_{i\in I}$ aus positiven Elementen
  \begin{equation}\label{eq:char-wstar-fkt-sup}
    \varphi(\sup_{i\in I}x_{i})=\sup_{i\in I}\varphi(x_{i})
  \end{equation}
  gilt.
\end{satz}
\begin{proof}
  Sei zunächst $\varphi$ schwach-*-stetig. Nach
  Satz~\ref{satz:wstar-sup-stetig} ist ein monoton wachsendes und
  gleichmäßig beschränktes Netz $(x_{i})_{i\in I}$ bezüglich
  $\TT_{w^{*}}$-konvergent mit
  $\lim_{i\in I}x_{i}=\sup_{i\in I}x_{i}$. Wegen der Positivität von
  $\varphi$ gilt für $i\preccurlyeq j$ die Ungleichung
  $\varphi(x_{j}-x_{i})\geq 0$, also
  $\varphi(x_{i})\leq\varphi(x_{j})$. Folglich ist
  $(\varphi(x_{i}))_{i\in I}$ ein monoton wachsendes Netz in
  $[0,+\infty)$. Dieses Netz ist außerdem aufgrund von
  $\abs{\varphi(x_{i})}\leq\norm{\varphi}\cdot\norm{x_{i}}$ ebenfalls
  gleichmäßig beschränkt. Somit ist es konvergent, wobei
  $\lim\varphi(x_{i})=\sup_{i\in I}\varphi(x_{i})$ ist. Aus der
  schwach-*-Stetigkeit folgt
  \begin{displaymath}
    \varphi(\sup_{i\in I}x_{i})=\varphi(\lim_{i\in I}x_{i})=\lim_{i\in
      I}\varphi(x_{i})=\sup_{i\in I}\varphi(x_{i}),
  \end{displaymath}
  also \eqref{eq:char-wstar-fkt-sup}.

  Gelte umgekehrt die Bedingung \eqref{eq:char-wstar-fkt-sup} für alle
  monoton wachsenden und gleichmäßig beschränkten Netze
  $(x_{i})_{i\in I}$. Zunächst zeigen wir, dass es eine bezüglich
  $\leq$ maximale Projektion $p_{0}$ gibt, für die die
  Abbildung\footnote{Diese Abbildung stimmt nicht mit
    $L_{p_{0}}\varphi$ überein!}  $x\mapsto\varphi(xp_{0})$
  schwach-*-stetig ist. Dazu betrachten wir die durch $\leq$
  halbgeordnete Menge
  \begin{displaymath}
    M:=\set{p\in A}{p\ \text{Projektion},\, x\mapsto\varphi(xp)\ \text{schwach-*-stetig}}
  \end{displaymath}
  und verwenden das Lemma von Zorn. Wegen $0\in M$ ist $M$ nicht
  leer. Ist $\KK$ eine $\leq$-Kette in $M$, so betrachten wir die
  Kette als Netz $(q)_{q\in\KK}$, wobei die Richtung auf $\KK$ durch
  die Ordnung $\leq$ gegeben ist. Dieses Netz besteht aus positiven
  Elementen und ist monoton wachsend sowie gleichmäßig beschränkt, da
  Projektionen stets $\norm{q}\leq 1$ erfüllen. Nach
  Satz~\ref{satz:wstar-sup-stetig} existiert das Element
  $p:=\sup_{q\in\KK}q=\lim_{q\in\KK}q$, das wegen
  Lemma~\ref{lem:proj-wstar-abg} eine Projektion ist. Sobald die
  schwach-*-Stetigkeit von $x\mapsto\varphi(xp)$ gezeigt ist, folgt
  $p\in M$, womit die Kette $\KK$ eine obere Schranke in $M$ hat und
  das Lemma von Zorn anwendbar ist. Dafür genügt es nach
  Korollar~\ref{kor:banach-dieud} bzw. Bemerkung~\ref{bem:wstar-top},
  die Stetigkeit auf $S$ zu zeigen. Für $x\in S$ und $q\in\KK$ gilt
  nach der Cauchy-Schwarz'schen Ungleichung,
  Lemma~\ref{lem:eig-pos-fkt}(\ref{item:eig-pos-fkt-iii})
  \begin{align*}
    \abs{\varphi(xp)-\varphi(xq)}&=\abs{\varphi(x(p-q))}=\abs{\varphi\left(x(p-q)^{1/2}(p-q)^{1/2}\right)}\\
                                 &=\abs{\varphi\left(\big((p-q)^{1/2}x^{*}\big)^{*}(p-q)^{1/2}\right)}\\
                                 &\leq\varphi\left(\big((p-q)^{1/2}x^{*}\big)^{*}\big((p-q)^{1/2}x^{*}\big)\right)^{1/2}\varphi(p-q)^{1/2}\\
                                 &=\varphi(x(p-q)x^{*})^{1/2}\varphi(p-q)^{1/2}.
  \end{align*}
  Wegen Lemma~\ref{lem:eig-ho}(\ref{item:eig-ho-v}) gilt
  $\norm{p-q}\leq 1$, sodass wir $\norm{x(p-q)x^{*}}\leq 1$ und weiter
  \begin{equation}\label{eq:bew-char-wstar-fkt-sup-i}
    \abs{\varphi(xp)-\varphi(xq)}\leq\norm{\varphi}^{1/2}\varphi(p-q)^{1/2}
  \end{equation}
  erhalten. Aus der Voraussetzung \eqref{eq:char-wstar-fkt-sup} folgt
  $\varphi(p)=\sup_{q\in\KK}\varphi(q)$. Da die nichtnegativen Zahlen
  $\varphi(q)$ für $q\in\KK$ monoton wachsend sind, stimmt dieses
  Supremum mit dem Grenzwert $\lim_{q\in\KK}\varphi(q)$ überein. Es
  gilt also
  \begin{displaymath}
    \lim_{q\in\KK}\varphi(p-q)=\varphi(p)-\lim_{q\in\KK}\varphi(q)=0,
  \end{displaymath}
  sodass die rechte Seite von \eqref{eq:bew-char-wstar-fkt-sup-i}
  gegen $0$ konvergiert. Folglich konvergieren die Funktionen
  $x\mapsto\varphi(xq)$ für $q\in\KK$ gleichmäßig in $S$ gegen
  $x\mapsto\varphi(xp)$. Dies zeigt die gesuchte Stetigkeit auf $S$
  und damit auf $A$, sodass die maximale Projektion $p_{0}$
  tatsächlich existiert.

  Wir zeigen $p_{0}=1$, indem wir das Gegenteil auf einen Widerspruch
  führen. Wegen $p_{0}\leq 1$ bedeutet dies $1-p_{0}>0$. Nach
  Korollar~\ref{kor:wstar-pos-fkt-trennend}(\ref{item:wstar-pos-fkt-trennend-ii})
  gibt es ein schwach-*-stetiges, positives Funktional $\psi$ mit
  $\psi(1-p_{0})\neq 0$. Da $\psi$ positiv ist, muss $\psi(1-p_{0})>0$
  gelten. Wegen der Positivität von $\varphi$ gilt auch
  $\varphi(1-p_{0})\geq 0$. Indem wir $\psi$ mit einer geeigneten
  positiven Zahl multiplizieren, können wir
  \begin{equation}\label{eq:bew-char-wstar-fkt-sup-ii}
    \varphi(1-p_{0})<\psi(1-p_{0})
  \end{equation}
  annehmen. Als Nächstes zeigen wir, dass es eine Projektion $p_{1}$
  gibt mit $0\neq p_{1}\leq 1-p_{0}$ und
  \begin{equation}\label{eq:bew-char-wstar-fkt-sup-iii}
    \varphi(p)<\psi(p)\quad\text{für alle Projektionen}\  0\neq p\leq p_{1}.
  \end{equation}
  Erneut nehmen wir das Gegenteil an, womit es für alle Kandidaten
  $0\neq p'\leq 1-p_{0}$ eine Projektion $p$ mit
  \begin{equation}\label{eq:bew-char-wstar-fkt-sup-iv}
    0\neq p\leq p'\quad\text{und}\quad \varphi(p)\geq\psi(p)
  \end{equation}
  gibt. Wir wollen nochmals das Lemma von Zorn anwenden, diesmal auf
  die Menge
  \begin{displaymath}
    N:=\set{p\in A}{p\ \text{Projektion}, \,p\leq 1-p_{0}, \,\varphi(p)\geq\psi(p)}.
  \end{displaymath}
  Aus \eqref{eq:bew-char-wstar-fkt-sup-iv} für $p'=1-p_{0}$ folgt
  $N\neq\emptyset$. Ist $\KK$ eine Kette in $N$, so betrachten wir das
  Supremum $p:=\sup_{q\in\KK}q$ und schließen wie oben, dass $p$ eine
  Projektion ist. Direkt nach Definition gilt außerdem
  $p\leq 1-p_{0}$, sodass wir nur noch $\varphi(p)\geq\psi(p)$ zu
  zeigen haben, um $p\in N$ nachzuweisen. Die Funktionale $\varphi$
  und $\psi$ erfüllen beide die Bedingung
  \eqref{eq:char-wstar-fkt-sup}: Für $\varphi$ gilt dies nach
  Voraussetzung, für das schwach-*-stetige $\psi$ nach dem allerersten
  Teil des Beweises. Daraus folgt
  \begin{displaymath}
    \varphi(p)=\varphi\big(\sup_{q\in\KK}q\big)=\sup_{q\in\KK}\varphi(q)\geq\sup_{q\in\KK}\psi(q)=\psi\big(\sup_{q\in\KK}q\big)=\psi(p),
  \end{displaymath}
  sodass wir $p\in N$ erhalten und daher das Lemma von Zorn anwendbar
  ist. Sei $q_{0}$ ein maximales Element von $N$. Da $1-p_{0}$ wegen
  \eqref{eq:bew-char-wstar-fkt-sup-ii} nicht in $N$ enthalten ist,
  gilt $q_{0}<1-p_{0}$ bzw. $1-p_{0}-q_{0}>0$. Wir behaupten, dass
  dieses Element eine Projektion ist, wobei es genügt, $p_{0}+q_{0}$
  als solche zu identifizieren. Die Selbstadjungiertheit ist klar, für
  die Idempotenz berechnen wir
  \begin{equation}\label{eq:bew-char-wstar-fkt-sup-v}
    (p_{0}+q_{0})^{2}=p_{0}+q_{0}+p_{0}q_{0}+q_{0}p_{0}.
  \end{equation}
  Aus $0\leq q_{0}\leq 1-p_{0}$ folgt mit
  Lemma~\ref{lem:eig-ho}(\ref{item:eig-ho-ii})
  \begin{equation}\label{eq:bew-char-wstar-fkt-sup-vi}
    0=p_{0}0p_{0}\leq p_{0}q_{0}p_{0}\leq p_{0}(1-p_{0})p_{0}=0,
  \end{equation}
  also $p_{0}q_{0}p_{0}=0$. Wir erhalten
  \begin{displaymath}
    \norm{q_{0}p_{0}}=\norm{(q_{0}p_{0})^{*}(q_{0}p_{0})}^{1/2}=\norm{p_{0}q_{0}p_{0}}^{1/2}=0.
  \end{displaymath}
  Damit folgt auch $p_{0}q_{0}=(q_{0}p_{0})^{*}=0$, sodass
  $p_{0}+q_{0}$ tatsächlich eine Projektion ist. Die Annahme
  \eqref{eq:bew-char-wstar-fkt-sup-iv} für $p'=1-p_{0}-q_{0}$ liefert
  eine Projektion $0\neq q\leq 1-p_{0}-q_{0}$ mit
  $\varphi(q)\geq\psi(q)$. Eine analoge Rechnung zu
  \eqref{eq:bew-char-wstar-fkt-sup-v} und
  \eqref{eq:bew-char-wstar-fkt-sup-vi} ausgehend von
  $0\leq q\leq 1-q_{0}$ zeigt, dass $q_{0}+q$ eine Projektion
  ist. Außerdem gilt $q_{0}+q\leq 1-p_{0}$ und
  $\varphi(q_{0}+q)\geq\psi(q_{0}+q)$, sodass wir $q_{0}+q\in N$
  erhalten. Dies widerspricht der Maximalität von $q_{0}$, sodass die
  Annahme \eqref{eq:bew-char-wstar-fkt-sup-iv} falsch gewesen sein
  muss. Somit gibt es tatsächlich eine Projektion
  $0\neq p_{1}\leq 1-p_{0}$ mit
  \eqref{eq:bew-char-wstar-fkt-sup-iii}. Lassen wir auch $p=0$ zu, so
  folgt also $\varphi(p)\leq\psi(p)$ für sämtliche Projektionen
  $p\leq p_{1}$.

  Als Zwischenschritt beweisen wir nun, dass sogar
  $\varphi(a)\leq\psi(a)$ für alle positiven Elemente $a$ von
  $B:=p_{1}Ap_{1}$ gilt. Da die Translationen $x\mapsto p_{1}x$ und
  $x\mapsto xp_{1}$ schwach-*-stetig sind (oder auch direkt nach dem
  ersten Beweisschritt von Satz~\ref{satz:mult-wstar-stetig}), ist $B$
  schwach-*-abgeschlossen in $A$ und somit selbst eine
  $W^{*}$-Algebra. Eine Projektion in $p\in B$ ist natürlich auch eine
  Projektion in $A$. Außerdem gilt $p\leq\norm{p}\leq 1$ nach
  Lemma~\ref{lem:eig-ho}(\ref{item:eig-ho-iii}). Wegen $p\in B$ gilt
  $p=p_{1}pp_{1}$, sodass wir mit
  Lemma~\ref{lem:eig-ho}(\ref{item:eig-ho-ii}) die schärfere
  Abschätzung
  \begin{displaymath}
    p=p_{1}pp_{1}\leq p_{1}1p_{1}=p_{1}
  \end{displaymath}
  erhalten. Alle Projektionen in $B$ erfüllen also $p\leq p_{1}$ und
  daher $\varphi(p)\leq\psi(p)$. Für Linearkombinationen
  $b=\sum_{k=1}^{n}\gamma_{k}p_{k}$ von Projektionen in $B$ mit
  nichtnegativen Koeffizienten $\gamma_{k}$ gilt dieselbe Ungleichung
  $\varphi(b)\leq\psi(b)$. Wegen der Zusatzaussage von
  Korollar~\ref{kor:wstar-proj-dicht}, angewandt in $B$, lassen sich
  alle positiven Elemente von $B$ als Grenzwerte bezüglich
  $\norm{\cdot}$ von derartigen Linearkombinationen schreiben. Da
  $\varphi$ und $\psi$ als positive Funktionale stetig bezüglich der
  Normtopologie sind (siehe
  Lemma~\ref{lem:eig-pos-fkt}(\ref{item:eig-pos-fkt-i})), folgt
  tatsächlich $\varphi(a)\leq\varphi(a)$ für alle
  $0\leq a\in B=p_{1}Ap_{1}$.
  
  Um unsere ursprüngliche Annahme $1-p_{0}>0$ zu widerlegen, zeigen
  wir $p_{0}+p_{1}\in M$, was der Maximalität von $p_{0}$
  widerspricht. Eine weitere Variante der Rechnungen
  \eqref{eq:bew-char-wstar-fkt-sup-v} und
  \eqref{eq:bew-char-wstar-fkt-sup-vi}, diesmal ausgehend von
  $0\leq p_{1}\leq 1-p_{0}$, zeigt, dass $p_{0}+p_{1}$ eine Projektion
  ist. Folglich ist der Beweis abgeschlossen, wenn wir die
  schwach-*-Stetigkeit der Abbildung
  \begin{displaymath}
    x\mapsto\varphi(x(p_{0}+p_{1}))=\varphi(xp_{0})+\varphi(xp_{1})
  \end{displaymath}
  gezeigt haben. Wegen $p_{0}\in M$ ist $x\mapsto\varphi(xp_{0})$
  stetig. Für $x\mapsto\varphi(xp_{1})$ zeigen wir die Stetigkeit
  bezüglich der q-Topologie; dies reicht nach
  Lemma~\ref{lem:dualraum-q-top} aus. Nach einem schon wiederholt
  verwendeten, einfachen Resultat der Funktionalanalysis müssen wir
  nur die $\TT_{q}$-Stetigkeit bei $0$ zeigen. Sei also
  $(x_{i})_{i\in I}$ ein Netz in $A$, das bezüglich der q-Topologie
  gegen $0$ konvergiert. Mit der Cauchy-Schwarz'schen Ungleichung,
  Lemma~\ref{lem:eig-pos-fkt}(\ref{item:eig-pos-fkt-iii}), erhalten
  wir
  \begin{displaymath}
    \abs{\varphi(x_{i}p_{1})}=\abs{\varphi(1\cdot
      x_{i}p_{1})}\leq\varphi(1)^{1/2}\varphi(p_{1}x_{i}^{*}x_{i}p_{1})^{1/2}.
  \end{displaymath}
  Das Element $p_{1}x_{i}^{*}x_{i}p_{1}$ liegt in $B$ und ist positiv,
  sodass der obige Zwischenschritt
  \begin{displaymath}
    \varphi(p_{1}x_{i}^{*}x_{i}p_{1})\leq\psi(p_{1}x_{i}^{*}x_{i}p_{1})
  \end{displaymath}
  liefert. Kombinieren wir diese beiden Abschätzungen, so folgt
  \begin{displaymath}
    \abs{\varphi(x_{i}p_{1})}\leq\varphi(1)^{1/2}\psi(p_{1}x_{i}^{*}x_{i}p_{1})^{1/2}.
  \end{displaymath}
  Das Funktional $\kappa(x):=\psi(p_{1}xp_{1})$ ist schwach-*-stetig
  und positiv, da wegen Lemma~\ref{lem:eig-ho}(\ref{item:eig-ho-ii})
  für $a\geq 0$ auch $p_{1}ap_{1}\geq 0$ gilt und $\psi$ ein positives
  Funktional ist. Daraus ergibt sich
  \begin{displaymath}
    \abs{\varphi(x_{i}p_{1})}\leq\varphi(1)^{1/2}\alpha_{\kappa}(x_{i})\to 0,
  \end{displaymath}
  also $\varphi(x_{i}p_{1})\to 0$ und infolge die behauptete
  $\TT_{q}$-Stetigkeit.

  Somit haben wir $p_{0}\neq 1$ auf einen Widerspruch geführt. Also
  gilt $1\in M$, woraus die schwach-*-Stetigkeit von $\varphi$ folgt.
\end{proof}
Da die Bedingung~\eqref{eq:char-wstar-fkt-sup} keinerlei Verweis auf
die schwach-*-Topologie oder den Prädual\-raum enthält, erhalten wir
als unmittelbares Korollar die folgende Aussage, die einen wichtigen
Schritt in Richtung Eindeutigkeit des Prädualraums darstellt.
\begin{korollar}\label{kor:char-wstar-fkt-sup}
  Sei $\pred[1]{A}$ ein weiterer Prädualraum von $A$ und
  $j_{1}:A\to\pred[1]{A}'$ ein isometrischer Isomorphismus. Bezeichnet
  $\TT_{w^{*}}^{(1)}$ die ausgehend von $\pred[1]{A}$ und $j_{1}$
  gebildete schwach-*-Topologie, so ist ein \emph{positives}
  Funktional $\varphi$ auf $A$ genau dann $\TT_{w^{*}}$-stetig, wenn
  es $\TT_{w^{*}}^{(1)}$-stetig ist.
\end{korollar}

Als Nächstes zeigen wir das bereits angekündigte Zerlegungsresultat,
präzise formuliert dass man ein schwach-*-stetiges Funktional
$\varphi$ in der Form
\begin{displaymath}
  \varphi=(\varphi_{1}-\varphi_{2})+i(\varphi_{3}-\varphi_{4})
\end{displaymath}
mit schwach-*-stetigen und \emph{positiven} Funktionalen $\varphi_{k}$
für $k=1,\dots,4$ darstellen kann. Die Ähnlichkeit zur analogen
Zerlegung in $C^{*}$-Algebren aus \eqref{eq:pos-negteil-span-i} in
Bemerkung~\ref{bem:pos-negteil-span} ist kein Zufall. Wir werden
nämlich in unserem Beweis, abweichend von
\cite[1.14.3~Theorem]{sakai:cstar-wstar}, diese Darstellung verwenden.

Da jede $W^{*}$-Algebra $A$ nach dem Satz von Sakai,
Satz~\ref{satz:sakai}, isomorph zu einer Von-Neumann-Algebra ist, wird
es genügen, den Fall einer Von-Neumann-Algebra zu betrachten. Wir
bestimmen zunächst die schwach-*-stetigen Funktionale.
\begin{lemma}\label{lem:vn-alg-dualraum}
  Sei $A\leq L_{b}(H)$ eine Von-Neumann-Algebra und
  \begin{displaymath}
    \theta:(L_{b}(H),\TT_{uw})\to\big(L^{1}(H)',\sigma(L^{1}(H)',L^{1}(H))\big)
  \end{displaymath}
  der kanonische lineare Homöomorphismus aus
  Satz~\ref{satz:uw-optop-wstar}, also $\theta(T)=\tr(.T)$. Dann sind
  die schwach-*-stetigen Funktionale auf $A$ gegeben durch alle
  Funktionale der Form $\varphi=\tr(S.)|_{A}$ für Spurklasseoperatoren
  $S\in L^{1}(H)$. Zu gegebenem $\varphi$ ist der Operator $S$ dabei
  bis auf Elemente des Linksannihilators
  \begin{displaymath}
    \lanh{(\theta(A))}=\set{S\in L^{1}(H)}{\tr(ST)=0\
      \text{für alle}\ T\in A}
  \end{displaymath}
  eindeutig bestimmt.
\end{lemma}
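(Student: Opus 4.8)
The plan is to reduce the statement to the already-established description of the dual of $\bigl(L_{b}(H),\TT_{uw}\bigr)$ together with the identification of the $W^{*}$-topology on a $W^{*}$-subalgebra. First I would record that, since $A$ is a von Neumann algebra, $A$ is $\TT_{uw}$-closed in $L_{b}(H)$; because $\TT_{uw}=\TT_{w^{*},L_{b}(H)}$ by Bemerkung~\ref{bem:wstar-top}(\ref{item:wstar-top-i}) and $\theta$ is a homeomorphism onto $\bigl(L^{1}(H)',\sigma(L^{1}(H)',L^{1}(H))\bigr)$ by Satz~\ref{satz:uw-optop-wstar}, the image $\theta(A)$ is $\sigma(L^{1}(H)',L^{1}(H))$-closed, i.e.\ $A$ is a $W^{*}$-Unteralgebra of $L_{b}(H)$ in the sense of Bemerkung~\ref{bem:motiv-wstar}. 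Lemma~\ref{lem:top-wstar-unteralg} then yields $\TT_{w^{*},A}=(\TT_{w^{*},L_{b}(H)})|_{A}=(\TT_{uw})|_{A}$, so "schwach-*-stetig on $A$" literally means "continuous for the subspace topology $(\TT_{uw})|_{A}$".

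With this in hand the two inclusions are short. For the easy direction, given $S\in L^{1}(H)$ the functional $T\mapsto\tr(ST)$ lies in $\bigl(L_{b}(H),\TT_{uw}\bigr)'$ by \eqref{eq:optop-dualraeume-b}, hence is $\TT_{uw}$-continuous, and therefore $\tr(S.)|_{A}$ is $(\TT_{uw})|_{A}$-continuous, i.e.\ weak-*-continuous on $A$. For the converse, let $\varphi$ be weak-*-continuous on $A$; viewing $\varphi$ as a continuous linear functional on the linear subspace $\bigl(A,(\TT_{uw})|_{A}\bigr)$ of the locally convex space $\bigl(L_{b}(H),\TT_{uw}\bigr)$, the Hahn--Banach extension theorem for locally convex spaces (dominate $\varphi$ near $0$ by the Minkowski functional of an absolutely convex $\TT_{uw}$-neighbourhood of $0$) gives a $\TT_{uw}$-continuous linear extension $\tilde\varphi$ on $L_{b}(H)$; by \eqref{eq:optop-dualraeume-b}, $\tilde\varphi=\tr(S.)$ for some $S\in L^{1}(H)$, hence $\varphi=\tilde\varphi|_{A}=\tr(S.)|_{A}$. (Alternatively one can avoid Hahn--Banach by unwinding the predual $\pred{A}=L^{1}(H)/M$ with $M=\lanh{(\theta(A))}$ from Bemerkung~\ref{bem:motiv-wstar}: $\bigl(A,\TT_{w^{*},A}\bigr)'$ is, via $j_{A}$, the canonical image of $\pred{A}$, and tracing through $\tau$, the quotient map $\pi$ and $j_{A}$ turns the functional induced by $\sigma+M$ into $a\mapsto\tr(\sigma a)=\tr(a\sigma)$ by Lemma~\ref{lem:tr-komm}.)

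For uniqueness, $\tr(S_{1}.)|_{A}=\tr(S_{2}.)|_{A}$ is equivalent to $\tr\bigl((S_{1}-S_{2})T\bigr)=0$ for all $T\in A$; since $\tr\bigl((S_{1}-S_{2})T\bigr)=\chevr{S_{1}-S_{2}}{\theta(T)}$, this is exactly $S_{1}-S_{2}\in\lanh{(\theta(A))}$. I do not expect a genuine obstacle: the whole argument is a repackaging of Lemma~\ref{lem:optop-dualraeume}, Lemma~\ref{lem:top-wstar-unteralg} and the locally-convex Hahn--Banach extension. The one point demanding a line of care is verifying that $\TT_{w^{*},A}$ really coincides with $(\TT_{uw})|_{A}$ — i.e.\ that $A$ qualifies as a $W^{*}$-Unteralgebra — and, along the Hahn--Banach route, that a functional continuous on a subspace admits a continuous extension to the ambient locally convex space.
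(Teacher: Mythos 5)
Your proposal is correct and follows essentially the same route as the paper: identify $\TT_{w^{*},A}$ with $(\TT_{uw})|_{A}$ via Lemma~\ref{lem:top-wstar-unteralg}, get one inclusion by restricting the $\TT_{uw}$-continuous functionals from Lemma~\ref{lem:optop-dualraeume}, the other by Hahn--Banach extension followed by Lemma~\ref{lem:optop-dualraeume}, and read off uniqueness from the definition of $\lanh{(\theta(A))}$.
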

\begin{proof}
  Nach Lemma~\ref{lem:top-wstar-unteralg} ist die schwach-*-Topologie
  auf $A$ die Spurtopologie der schwach-*-Topologie auf
  $L_{b}(H)$. Diese ist genau die ultraschwache Operatortopologie
  $\TT_{uw}$; siehe
  Bemerkung~\ref{bem:wstar-top}(\ref{item:wstar-top-i}).

  Jedes Funktional der Form $\tr(S.)|_{A}$ ist als Einschränkung eines
  gemäß Lemma~\ref{lem:optop-dualraeume} bezüglich $\TT_{uw}$ stetigen
  Funktionals stetig bezüglich $(\TT_{uw})|_{A}$.

  Sei umgekehrt $\varphi$ schwach-*-stetig auf $A$, also stetig
  bezüglich $(\TT_{uw})|_{A}$. Nach dem Satz von Hahn-Banach gibt es
  eine Fortsetzung zu einem $\TT_{uw}$-stetigen Funktional $f$ auf
  $L_{b}(H)$. Lemma~\ref{lem:optop-dualraeume} liefert einen
  Spurklasseoperator $S\in L^{1}(H)$ mit $f=\tr(S.)$. Wir erhalten
  $\varphi=\tr(S.)|_{A}$. Die Eindeutigkeitsaussage folgt direkt aus
  der Definition von $\lanh{(\theta(A))}$.
\end{proof}
Das nächste Lemma verknüpft die Positivität von Spurklasseoperatoren
mit der Positivität des induzierten Funktionals.
\begin{lemma}\label{lem:pos-spklasseop-pos-fkt}
  Für einen positiven Spurklasseoperator $S$ ist auch das Funktional
  $\varphi=\tr(S.)$ positiv.
\end{lemma}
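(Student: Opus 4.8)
The plan is to reduce the statement to the classical fact that for a positive trace-class operator $S$ and a positive bounded operator $T$, one has $\tr(ST)\geq 0$. Concretely, I would argue as follows. Let $S\in L^{1}(H)^{+}$ and let $a\in A^{+}$ be an arbitrary positive element of the given $C^{*}$-algebra $A\subseteq L_{b}(H)$ (here $A$ is the Von-Neumann-Algebra from the statement, but positivity of $\varphi$ only requires us to look at positive operators). By Satz~\ref{satz:quwurzel} there is a positive square root $q=a^{1/2}\in A^{+}$, so that $a=q^{*}q$. Then $\tr(Sa)=\tr(Sq^{*}q)$, and by the cyclicity of the trace, Lemma~\ref{lem:tr-komm} — which applies since $S$ is trace-class, so all the products involved are again trace-class — we get $\tr(Sq^{*}q)=\tr(qSq^{*})$.

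Next I would rewrite $qSq^{*}=(q|S|^{1/2})(q|S|^{1/2})^{*}$ using the positive square root $|S|^{1/2}=S^{1/2}$ of $S$ (which exists by the functional calculus, cf.\ Notation~\ref{not:hr}, since $S\geq 0$ means $|S|=S$). Thus $qSq^{*}=(qS^{1/2})(S^{1/2}q^{*})=(S^{1/2}q^{*})^{*}(S^{1/2}q^{*})$. Setting $R:=S^{1/2}q^{*}$, this is $R^{*}R$, which is a positive operator; moreover $R=S^{1/2}q^{*}$ is trace-class because $S^{1/2}$ is Hilbert-Schmidt (as $S$ is trace-class, $S^{1/2}=|S|^{1/2}$ is a Hilbert-Schmidt operator by Definition~\ref{def:hs-spklasse-op}) and the Hilbert-Schmidt operators form an ideal; hence $R^{*}R$ is trace-class as well. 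So it remains to see that $\tr(R^{*}R)\geq 0$, which is immediate from Definition~\ref{def:spur}: for any orthonormal basis $E$ of $H$, $\tr(R^{*}R)=\sum_{e\in E}(R^{*}Re,e)=\sum_{e\in E}(Re,Re)=\sum_{e\in E}\norm{Re}^{2}\geq 0$. Combining the chain of equalities yields $\varphi(a)=\tr(Sa)=\tr(R^{*}R)\geq 0$, so $\varphi$ is positive.

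An alternative, perhaps even shorter route avoids the square root of $S$ altogether: write $a=q^{*}q$ with $q=a^{1/2}\in A$, use $\tr(Sa)=\tr(qSq^{*})$ as above, and then observe directly that $qSq^{*}$ is a positive operator (being of the form $b^{*}Sb$ with $S\geq 0$, which is positive in $L_{b}(H)$ by Lemma~\ref{lem:eig-ho}(\ref{item:eig-ho-ii}) applied to $0\leq S$, or simply by $(qSq^{*}x,x)=(Sq^{*}x,q^{*}x)\geq 0$), and moreover trace-class. Then one invokes the standard fact that the trace of a positive trace-class operator is nonnegative — either by the basis formula in Definition~\ref{def:spur}, noting that $(qSq^{*}e,e)\geq 0$ for every basis vector $e$, or by factoring $qSq^{*}$ through its own square root.

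I do not expect a genuine obstacle here; the only point requiring a little care is the bookkeeping of which operators land in $L^{1}(H)$ so that the trace and the cyclicity identity Lemma~\ref{lem:tr-komm} are legitimately applicable, and making sure the factorization $qSq^{*}=R^{*}R$ (or the direct positivity argument) is stated cleanly. The essential input is that the trace is nonnegative on positive trace-class operators, which follows transparently from its defining series as a sum of the nonnegative quantities $(qSq^{*}e,e)=\norm{S^{1/2}q^{*}e}^{2}$.
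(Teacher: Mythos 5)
Your argument is correct and follows essentially the same route as the paper's proof: cyclically permute a square root through the trace via Lemma~\ref{lem:tr-komm} so that the argument of the trace becomes a positive trace-class operator of the form $R^{*}R$, and then read off nonnegativity from the defining series $\sum_{e\in E}\norm{Re}^{2}$ (the paper conjugates by $S^{1/2}$ to get $S^{1/2}TS^{1/2}$, you conjugate by $a^{1/2}$ to get $qSq^{*}$ — same idea). One small imprecision: $R=S^{1/2}q^{*}$ is only Hilbert--Schmidt, not trace-class (the Hilbert--Schmidt ideal property gives membership in $L^{2}(H)$, not $L^{1}(H)$); what you actually need, and what does hold, is that $R^{*}R$ is trace-class, being the product of two Hilbert--Schmidt operators (cf.\ Lemma~\ref{lem:hs-equiv}).
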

\begin{proof}
  Wir müssen $\tr(ST)\geq 0$ für alle Operatoren $T\in L_{b}(H)$ mit
  $T\geq 0$ nachweisen.

  Dazu beweisen wir zunächst $\tr(R)\geq 0$ für positive
  Spurklasseoperatoren $R$. Ist $E$ eine Orthonormalbasis von $H$, so
  gilt
  \begin{equation}\label{eq:bew-pos-spklasseop-pos-fkt}
    \tr(R)=\sum_{e\in E}(Re,e)=\sum_{e\in
      E}(R^{1/2}R^{1/2}e,e)=\sum_{e\in E}(R^{1/2}e,R^{1/2}e)\geq 0.
  \end{equation}
  
  Für einen positiven Operator $T\in A$ folgt
  $0=S^{1/2}0S^{1/2}\leq S^{1/2}TS^{1/2}$ aus
  Lemma~\ref{lem:eig-ho}(\ref{item:eig-ho-ii}). Wir behaupten, dass
  $S^{1/2}T$ und $S^{1/2}$ Hilbert-Schmidt-Operatoren sind; siehe
  Definition~\ref{def:hs-spklasse-op}. Dazu verwenden wir
  Lemma~\ref{lem:hs-equiv} und zeigen, dass das Quadrat des Betrags
  der beiden Operatoren ein Spurklasseoperator ist. Für $S^{1/2}$ gilt
  \begin{displaymath}
    \abs{S^{1/2}}^{2}=S\in L^{1}(H)
  \end{displaymath}
  und für $S^{1/2}T$ berechnen wir
  \begin{displaymath}
    \abs{S^{1/2}T}^{2}=(S^{1/2}T)^{*}(S^{1/2}T)=T^{*}ST,
  \end{displaymath}
  was nach Lemma~\ref{lem:eig-spklasse}(\ref{item:eig-spklasse-ii})
  ebenfalls ein Spurklasseoperator ist. Somit können wir
  Lemma~\ref{lem:tr-komm} mit $S^{1/2}$ anstelle von $S$ und
  $S^{1/2}T$ anstelle von $T$ anwenden und erhalten nach
  \eqref{eq:bew-pos-spklasseop-pos-fkt}
  \begin{displaymath}
    \tr(ST)=\tr\left(S^{1/2}\left(S^{1/2}T\right)\right)=\tr\left(\left(S^{1/2}T\right)S^{1/2}\right)\geq 0.
  \end{displaymath}
\end{proof}
Nach diesen Vorbereitungen können wir das Zerlegungsresultat
beweisen.
\begin{satz}\label{satz:wstar-fkt-zerl}
  Ist $\varphi$ ein schwach-*-stetiges Funktional auf der
  $W^{*}$-Algebra $A$, so existieren schwach-*-stetige und positive
  Funktionale $\varphi_{1},\dots,\varphi_{4}$ mit
  \begin{displaymath}
    \varphi=(\varphi_{1}-\varphi_{2})+i(\varphi_{3}-\varphi_{4}).
  \end{displaymath}
  Insbesondere ist der Raum $A^{\#}$ der schwach-*-stetigen Funktionale
  genau die lineare Hülle der schwach-*-stetigen und positiven
  Funktionale.
\end{satz}
\begin{proof}
  Wir nehmen zunächst an, dass $A\leq L_{b}(H)$ eine
  Von-Neumann-Algebra ist. Nach Lemma~\ref{lem:vn-alg-dualraum} gibt
  es einen Spurklasseoperator $S\in L^{1}(H)$ mit
  $\varphi=\tr(S.)|_{A}$. Nun betrachten wir $S$ als Element der
  $C^{*}$-Algebra $L_{b}(H)$ und schreiben $S$ in der Form
  \begin{equation}\label{eq:bew-wstar-fkt-zerl}
    S=\big((\re S)^{+}-(\re S)^{-}\big)+i\big((\im S)^{+}-(\im
    S)^{-}\big)
  \end{equation}
  für Operatoren $(\re S)^{\pm},(\im S)^{\pm}\in L_{b}(H)$,
  vgl. \eqref{eq:pos-negteil-span-i} in
  Bemerkung~\ref{bem:pos-negteil-span}(\ref{item:pos-negteil-span-ii}). Diese
  Operatoren sind sogar Spurklasseoperatoren, wobei wir nur
  $(\re S)^{\pm}$ behandeln: Es folgt $\re S\in L^{1}(H)$ direkt aus
  der Definition $\re S=(S+S^{*})/2$ und
  Lemma~\ref{lem:eig-spklasse}(\ref{item:eig-spklasse-ii}). Für
  Positiv- und Negativteil sei bemerkt, dass nach Definition von
  $L^{1}(H)$ auch $\abs{\re S}$ ein Spurklasseoperator ist. Daraus
  erhalten wir nach Gleichung \eqref{eq:absbetrag} in
  Bemerkung~\ref{bem:absbetrag}
  \begin{displaymath}
    (\re S)^{\pm}=\frac{1}{2}(\abs{\re S}\pm\re S)\in L^{1}(H).
  \end{displaymath}
  Mit den Funktionalen
  \begin{displaymath}
    \varphi_{1}:=\tr((\re S)^{+}.)|_{A}, \quad \varphi_{2}:=\tr((\re
    S)^{-}.)|_{A}, \quad \varphi_{3}:=\tr((\im S)^{+}.)|_{A}, \quad \varphi_{4}:=\tr((\im
    S)^{-}.)|_{A}
  \end{displaymath}
  folgt die Behauptung aus \eqref{eq:bew-wstar-fkt-zerl} sowie
  Lemma~\ref{lem:pos-spklasseop-pos-fkt} mit der Beobachtung, dass
  Einschrän\-kungen positiver Funktionale wieder positiv sind.

  Sei nun $A$ eine allgemeine $W^{*}$-Algebra. Nach dem Satz von
  Sakai, Satz~\ref{satz:sakai}, gibt es eine Von-Neumann-Algebra
  $B\leq L_{b}(H)$ und einen $W^{*}$-Isomorphismus $\Phi:A\to B$, also
  einen isometrischen Isomorphismus, der gleichzeitig ein
  Homöomorphismus bezüglich der schwach-*-Topologien ist. Ist
  $\varphi$ schwach-*-stetig auf $A$, so ist folglich
  $\psi:=\varphi\circ\Phi^{-1}$ schwach-*-stetig auf $B$. Nach dem
  oben Bewiesenen kann man $\psi$ darstellen in der Form
  \begin{displaymath}
    \psi=(\psi_{1}-\psi_{2})+i(\psi_{3}-\psi_{4})
  \end{displaymath}
  mit schwach-*-stetigen und positiven Funktionalen $\psi_{k}$ auf
  $B$. Als isometrischer Isomorphismus bildet $\Phi$ positive Elemente
  in $A$ auf positive Elemente in $B$ ab, sodass auch die Funktionale
  $\varphi_{k}:=\psi_{k}\circ\Phi$ positiv sind. Da $\Phi$ ein
  schwach-*-Homöomorphismus ist, sind die $\varphi_{k}$ auch
  schwach-*-stetig und es gilt
  \begin{displaymath}
    \varphi=(\varphi_{1}-\varphi_{2})+i(\varphi_{3}-\varphi_{4}).
  \end{displaymath}

  Die Zusatzaussage folgt unmittelbar daraus, dass eine
  Linearkombination schwach-*-stetiger Funktionale selbst
  schwach-*-stetig ist.
\end{proof}
\begin{bemerkung}
  Es sei nicht verschwiegen, dass man mit einer anderen Konstruktion
  eine Zerlegungsaussage erhalten kann, für die sogar ein
  Eindeutigkeitsresultat gilt. Für ein selbstadjungiertes Funktional
  $\varphi$ kann man die positiven Funktionale $\varphi_{1}$ und
  $\varphi_{2}$ nämlich auf eindeutige Art so wählen, dass
  $\norm{\varphi}=\norm{\varphi_{1}}+\norm{\varphi_{2}}$ gilt;
  siehe~\cite[Theorem~1.14.3]{sakai:cstar-wstar}.

  Der Schritt von der Von-Neumann-Algebra hin zu einer allgemeinen
  $W^{*}$-Algebra fußt entscheidend auf der Erweiterung des Satzes von
  Sakai, dass nicht nur $\Phi$, sondern auch $\Phi^{-1}$
  schwach-*-stetig ist. Da wir keine Eindeutigkeitsaussage benötigen,
  haben wir den Beweis in der obigen, transparenteren Form geführt.
\end{bemerkung}
Damit erhalten wir die folgende Verallgemeinerung von
Korollar~\ref{kor:char-wstar-fkt-sup}:
\begin{korollar}\label{kor:wstar-fkt-zerl}
  Sei $\pred[1]{A}$ ein weiterer Prädualraum von $A$ und
  $j_{1}:A\to\pred[1]{A}'$ ein isometrischer Isomorphismus. Bezeichnet
  $\TT_{w^{*}}^{(1)}$ die ausgehend von $\pred[1]{A}$ und $j_{1}$
  gebildete schwach-*-Topologie, so ist ein \emph{beliebiges}
  Funktional $\varphi$ auf $A$ genau dann $\TT_{w^{*}}$-stetig, wenn
  es $\TT_{w^{*}}^{(1)}$-stetig ist.
\end{korollar}
\begin{proof}
  Wenden wir Satz~\ref{satz:wstar-fkt-zerl} sowohl in
  $(A,\TT_{w^{*}})$ als auch in $\left(A,\TT_{w^{*}}^{(1)}\right)$ an, so folgt
  mit Korollar~\ref{kor:char-wstar-fkt-sup}
  \begin{align*}
    \big(A,\TT_{w^{*}}\big)'&=\spn\set{\varphi}{\varphi \ \text{ist} \,\TT_{w^{*}}\text{-stetig
                      und positiv}} \\
                    &=\spn\set{\varphi}{\varphi \ \text{ist} \,\TT_{w^{*}}^{(1)}\text{-stetig
        und positiv}}=\left(A,\TT_{w^{*}}^{(1)}\right)'.
  \end{align*}
\end{proof}
Dieses Korollar zeigt, dass die schwach-*-stetigen Funktionale vom
gewählten Prädualraum unabhängig sind, was die etwas ungenaue
Schreibweise aus Notation~\ref{not:dualraum-wstar} rechtfertigt.

\section{Das Eindeutigkeitsresultat}
\label{sec:eind-resultat}
Zum Abschluss kommen wir zum zentralen Ergebnis dieses Kapitels,
nämlich der Eindeutigkeit des Prädualraums und der
schwach-*-Topologie.
\begin{satz}\label{satz:pd-wstar-top-eindeut}
  Sei $\pred[1]{A}$ ein weiterer Prädualraum von $A$ und
  $j_{1}:A\to\pred[1]{A}'$ ein isometrischer Isomorphismus. Bezeichne
  außerdem $\TT_{w^{*}}^{(1)}$ die ausgehend von $\pred[1]{A}$ und
  $j_{1}$ gebildete schwach-*-Topologie. Dann sind die beiden
  Prädualräume isometrisch isomorph, also existiert eine lineare und
  isometrische Bijektion $\chi:\pred{A}\to\pred[1]{A}$. Außerdem gilt
  $\TT_{w^{*}}=\TT_{w^{*}}^{(1)}$.
\end{satz}
\begin{proof}
  Wir betrachten nochmals den Homöomorphismus
  \begin{displaymath}
    j'|_{\iota(\pred{A})}:\left(\iota(\pred{A}),\sigma(\pred{A}'',\pred{A}')|_{\iota(\pred{A})}\right)\to
    \left(A^{\#},\sigma(A^{\#},A)\right)
  \end{displaymath}
  aus Lemma~\ref{lem:tech-mackey-wstar}. Insbesondere ist
  $j'|_{\iota(\pred{A})}:\iota(\pred{A})\to A^{\#}$ eine lineare
  Bijektion. Da die kanonische Abbildung $\iota:\pred{A}\to\pred{A}''$
  injektiv ist, ist die Komposition
  \begin{equation}\label{eq:bew-pd-wstar-top-eindeut-i}
    j'\circ\iota:\pred{A}\to A^{\#}
  \end{equation}
  ebenfalls eine lineare Bijektion, genauso wie die analog gebildete
  Funktion
  \begin{equation}\label{eq:bew-pd-wstar-top-eindeut-ii}
    j_{1}'\circ\iota_{1}:\pred[1]{A}\to A^{\#}.
  \end{equation}
  Es sei nochmals explizit darauf hingewiesen, dass der Dualraum
  $A^{\#}$ in \eqref{eq:bew-pd-wstar-top-eindeut-i} bezüglich
  $\TT_{w^{*}}$ gebildet wird, wohingegen in
  \eqref{eq:bew-pd-wstar-top-eindeut-ii} mit $\TT_{w^{*}}^{(1)}$
  gearbeitet wird. Nach Korollar~\ref{kor:wstar-fkt-zerl} stimmen die
  beiden Dualräume aber überein. Somit können wir die lineare
  Bijektion
  \begin{displaymath}
    \chi:=(j_{1}'\circ\iota_{1})^{-1}\circ (j'\circ\iota):\pred{A}\to\pred[1]{A}
  \end{displaymath}
  betrachten. Da die konjugierten Abbildungen $j'$ und $j_{1}'$ wegen
  der Isometrie von $j$ und $j_{1}$ genauso wie die kanonischen
  Abbildungen $\iota$ und $\iota_{1}$ isometrisch sind, ist $\chi$ der
  gesuchte isometrische Isomorphismus.

  Für die Gleichheit der schwach-*-Topologien $\TT_{w^{*}}$ und
  $\TT_{w^{*}}^{(1)}$ sei daran erinnert, dass diese Topologien als
  initiale Topologie bezüglich
  \begin{displaymath}
    j:A\to\big(\pred{A}',\sigma(\pred{A}',\pred{A})\big)
  \end{displaymath}
  bzw. analog für $j_{1}$ und $\pred[1]{A}$ definiert sind. Die
  schwach-*-Topologie auf $\pred{A}'$ bzw. $\pred[1]{A}'$ ist
  ebenfalls eine initiale Topologie, nämlich bezüglich aller
  Funktionale der Form $\iota(\rho):\pred{A}'\to\C$
  bzw. $\iota_{1}(\rho_{1}):\pred[1]{A}'\to\C$ mit $\rho\in\pred{A}$
  bzw. $\rho_{1}\in\pred[1]{A}$. Da das Bilden einer initialen
  Topologie bekanntermaßen assoziativ ist, ist die schwach-*-Topologie
  $\TT_{w^{*}}$ bzw. $\TT_{w^{*}}^{(1)}$ die initiale Topologie
  bezüglich sämtlicher Verkettungen
  $\iota(\rho)\circ j=j'(\iota(\rho))$ bzw.
  $\iota_{1}(\rho_{1})\circ j_{1}=j_{1}'(\iota_{1}(\rho_{1}))$. Diese
  Kompositionen durchlaufen nach den obigen Überlegungen beide die
  gleiche Menge, nämlich den Dualraum $A^{\#}$. Somit stimmen beide
  schwach-*-Topologien mit der initialen Topologie bezüglich aller
  schwach-*-stetigen\footnote{Man beachte, dass man diese Funktionale
    nach den Sätzen~\ref{satz:wstar-fkt-zerl} und
    \ref{satz:char-wstar-fkt-sup} ohne Bezugnahme auf die Topologie
    beschreiben kann.} Funktionale überein und sind folglich gleich.
\end{proof}


\clearpage
\addcontentsline{toc}{chapter}{Literaturverzeichnis}
\bibliographystyle{gerabbrv}
\bibliography{/home/clemens/Dokumente/std_bibl}

\end{document}